  \def\ZZ{\mathbb{Z}}
  \def\NN{\mathbb N}
  \def\r{\mathfrak{r}}
  \def\D{\mathcal{D}}
  \def\H{\mathbb{H}}
  \def\M{\mathcal{M}}
  \def\chleq{\leq\!\!\!\leq }
   \def \chll{<\!\!\!<}
  \def\U{\mathcal U}
  \def\GGL{\SO_0(1,2)}
  \def\isom{Isom}
  \def\lsem{[\![}
  \def\rsem{]\!]}
\def\V{\mathcal V}
\def\C{\mathcal C}
\def\A{\mathcal A}
\def\TT{\mathcal T}
\def\R{\mathbb{R}}
\def\E{\mathbf E}
\def\PSigmaS{\P}
\def\grad{\overrightarrow{\mathbf{grad}}}
\def\d{\mathrm{d}}
\def\D{\mathcal{D}}
\def\SO{\mathrm{SO}}
\def\E{\mathbb{E}}
\def\sing{\mathrm{Sing}}
\def\RR{\mathbb{R}}
\newcommand{\mass}[1]{{\mathbb E}^{1,2}_{#1}}
\def\isom{\mathrm{Isom}}
\def\susp{\mathrm{susp}}
\def\reg{\Reg}
\def\V{\mathcal V}
\def\W{\mathcal W} 
\def\SO{\mathrm{SO}}
\def\R{\mathcal R}
\def\SS{\mathbb{S}}
\def\quadform{Q}
\def\H{\mathbb{H}}
\def\C{\mathcal C}
\def\E{\mathbb{E}}
\def\sing{\mathrm{Sing}}
\def\isom{\mathrm{Isom}}
\def\d{\mathrm{d}}
\def\reg{\mathrm{Reg}}
\def\D{\mathcal{D}}
\def\U{\mathcal{U}}
\def\grad{\mathrm{grad}}
\def\M{\mathcal M}
\def\r{\mathfrak{r}}
\def\sing{\mathrm{Sing}}
\def\susp{\mathrm{susp}}
\def\PP{\mathbb{P}}
\def\P{\mathcal{P}}
\def\CellPSigmabar{\overline{\P}_\TT}
\def\A{\mathcal A}
\def\CellPSigma{\P_\TT}
\def\GG{\isom_0(\mass{})}
\def\calP{\mathcal{P}}
\newcommand{\fonction}[5]{\displaystyle#1:\begin{array}{l|rcl}
& \displaystyle #2 & \longrightarrow & \displaystyle #3 \\
	& \displaystyle #4 & \longmapsto & \displaystyle #5 \end{array}}
\newcommand{\fonctiondeux}[7]{\displaystyle#1:\begin{array}{l|rcl}
& \displaystyle #2 & \longrightarrow & \displaystyle #3 \\
	& \displaystyle #4 & \longmapsto & \displaystyle #5 \\ 
	& \displaystyle #6 & \longmapsto & \displaystyle #7\end{array}}	
\newcommand{\fonctionn}[4]{\displaystyle\begin{array}{l|rcl}
& \displaystyle #1 & \longrightarrow & \displaystyle #2 \\
	& \displaystyle #3 & \longmapsto & \displaystyle #4 \end{array}}
\definecolor{MyGreen}{rgb}{0.0,.5,0.0}
\definecolor{MyDarkRed}{rgb}{0.7,0,0}
\title{Alexandrov Theorem for 2+1 flat radiant spacetimes}
\author{L\'eo Brunswic}
\affil{Univ Lyon, Ens de Lyon, Univ Lyon1, CNRS, Centre de Recherche Astrophysique de
Lyon UMR5574, F–69007, Lyon, France}
\newtheorem{theo}{Theorem}	
\newtheorem{lem}{Lemma}[section]		  
\newtheorem{prop}[lem]{Proposition}
\newtheorem{cor}[lem]{Corollary}
\newtheorem*{theon}{Theorem}
\newtheorem*{problem}{Problem}
\newenvironment{claim}[1]{\par\noindent\underline{Claim:}\space#1}{}
\theoremstyle{definition}
\newtheorem{defi}[lem]{Definition}	
\newtheorem*{rem}{Remark}			 
\begin{document}

\maketitle
\begin{abstract}	
A classical Theorem of Alexandrov states that the map associating its boundary to a convex polyhdedron of the 3-dimensional Euclidean space is a bijection from the set of convex polyhdedron up to congruence to the set of isometry classes of locally Euclidean metric on the 2-sphere with conical singularities smaller that $2\pi$. Fillastre  proved a similar statement for locally Euclidean metric on higher genus surfaces with conical singularities bigger than $2\pi$ by embedding their universal covering in 3-dimensional Minkowski space as the boundary of Fuchsian polyhedra.
The original proofs of Alexandrov and Fillastre both rely on invariance of domain Theorem hence are not effective. Volkov, in his thesis, provided a variational, hence effective, proof of Alexandrov Theorem which has then  been generalised by Bobenko, Izmestiev and Fillastre. The present work goes further by adapting Volkov's variational method to provide an effective version of Fillastre Theorem and extend Fillastre's result: we show that for any closed locally Euclidean surface $\Sigma$ with conical singularities of arbitrary angles $(\theta_i)_{i\in\lsem 1,n\rsem}$ and any choice of Lorentzian angles $(\kappa_i)_{i\in \lsem1,n\rsem}$ such that $\kappa_i<\theta_i$ and $\kappa_i\leq 2\pi$, there exists a locally Minkoswki 3-manifold $M$ of linear holonomy with conical singularities $(\kappa_i)_{i\in \lsem1,n\rsem}$ and a convex polyedron $P$ in $M$ whose boundary is isometric to $\Sigma$; furthermore such a couple $(M,P)$ is unique.

\end{abstract}
\tableofcontents

\section{Introduction}
	\begin{wrapfigure}{r}{0.5\textwidth}
			\includegraphics[width=0.45\linewidth]{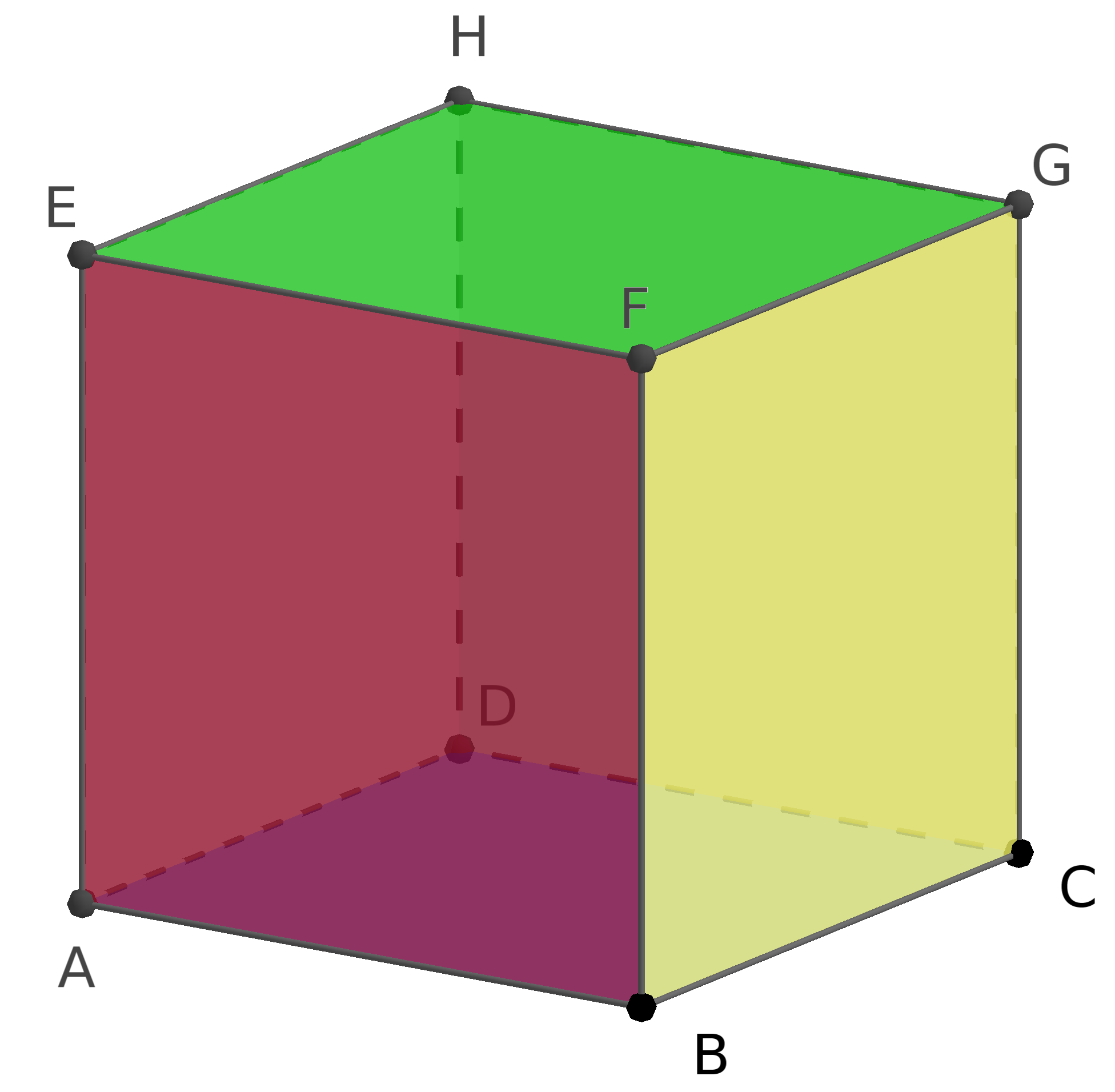}
			\includegraphics[width=0.45\linewidth]{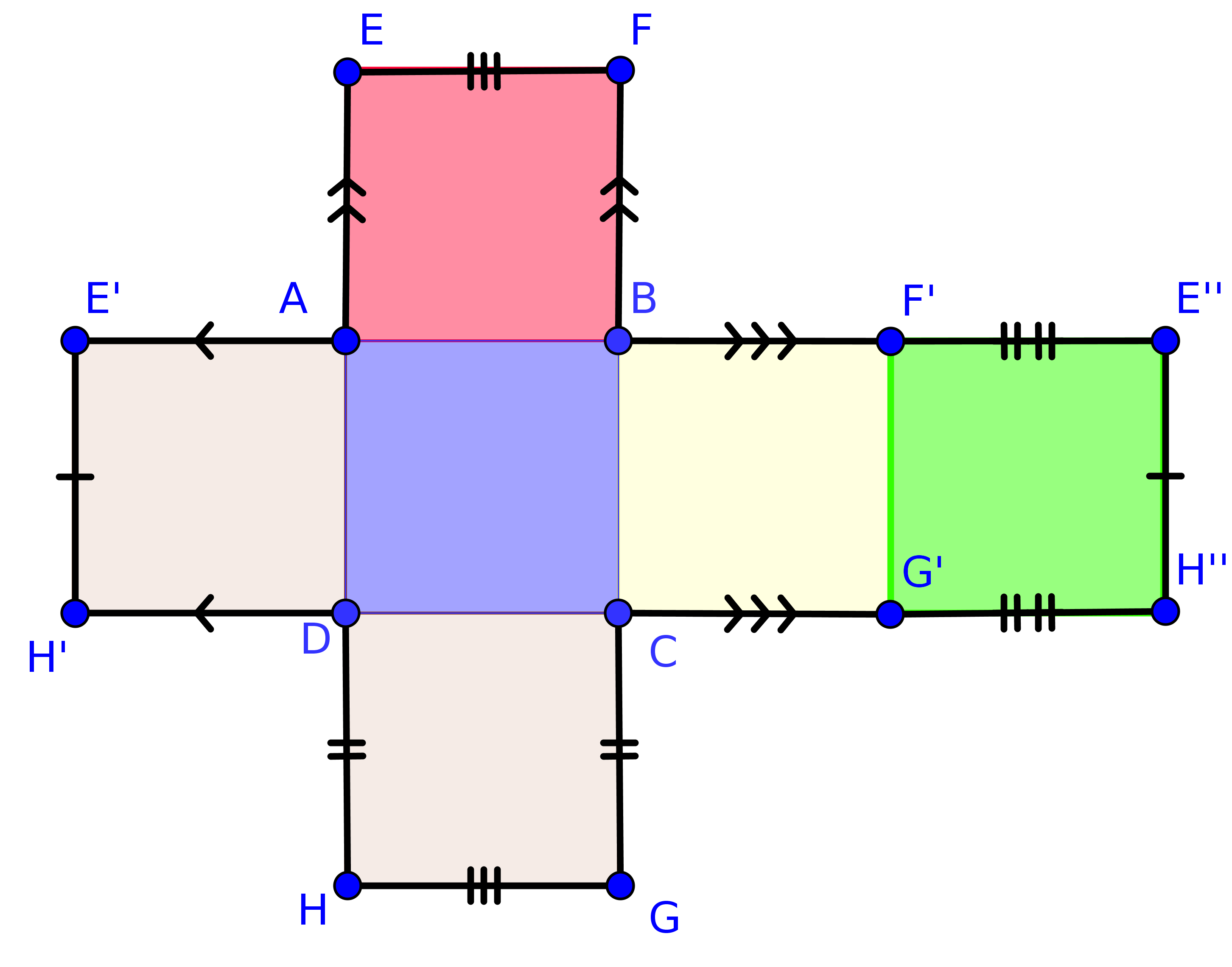}
		\end{wrapfigure}
		Let $C$ be a cube in the 3-dimensional Euclidean space $\E^3$
		and consider $\Sigma:=\partial C$
		its boundary represented on the figure. On the one hand, $\Sigma$ is a surface homeomorphic to the 2-dimensional sphere $\SS^2$ ; on the other hand, $\Sigma$ is naturally endowed with a locally euclidean metric with 6 conical singularities of angles $3\pi/2$.
		More generally, the boundary of any compact convex polyhedron in $\E^3$ is homeomorphic to the sphere and is naturally endowed with a locally Euclidean metric with conical singularities of angles lesser than $2\pi$.
		A classical theorem of Alexandrov \cite{MR13540} shows that this construction is actually bijective.
		\begin{theon}[\cite{MR13540}] Let $\Sigma$ be a locally Euclidean
			surface with conical singularities of angles lesser than $2\pi$ and homeo\-morphic to the sphere $\SS^2$,
			there exists a compact convex polyhedron $P$ in $\E^3$ such that $\partial P$ is isometric to $\Sigma$. Furthermore, two such polyhedra are congruent.
		\end{theon}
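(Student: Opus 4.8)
The plan is to run the classical continuity argument, treating \emph{uniqueness} (rigidity) and \emph{existence} (surjectivity of a realization map) separately; at the end I indicate the variational route that renders the argument effective, which is the one the present paper develops in the Lorentzian setting.

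First I would fix the number $n$ of cone points and set up two moduli spaces of matching dimension. On the geometric side let $\mathcal{P}_n$ denote the set of compact convex polyhedra of $\E^3$ with at most $n$ vertices, taken up to the action of $\isom(\E^3)$; on the metric side let $\mathcal{M}_n$ denote the set of isometry classes of locally Euclidean metrics on $\SS^2$ with at most $n$ conical singularities of angles in $(0,2\pi)$. Triangulating a metric of $\mathcal{M}_n$ with vertex set the cone points shows that its generic stratum is cut out by $3n-6$ edge lengths, and a direct count gives $\dim\mathcal{P}_n=3n-6$ as well, the cone angles being functions of the edge lengths. The realization operator is the boundary map $\Phi:\mathcal{P}_n\to\mathcal{M}_n$ sending $P$ to the intrinsic metric of $\partial P$, which is well defined and continuous, and the theorem is exactly the assertion that $\Phi$ is a bijection.

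Uniqueness amounts to injectivity of $\Phi$. I would deduce it from Cauchy's rigidity theorem together with Alexandrov's refinement allowing the combinatorial types of $P$ and $P'$ to differ a priori: given an isometry $f:\partial P\to\partial P'$, one superposes the two edge graphs on the common sphere, labels each edge by the sign of the variation of the dihedral angle, and the Cauchy arm lemma forbids the resulting sign pattern unless every dihedral angle is preserved, whence $f$ extends to an element of $\isom(\E^3)$. Existence I would obtain by the invariance of domain method: $\Phi$ is continuous and, by the uniqueness just proved, injective between manifolds of equal dimension, so Brouwer's invariance of domain forces $\Phi$ to be open, and its image is open. A compactness argument---fixed total area and cone angles bounded away from $0$ and $2\pi$ give uniform diameter bounds and prevent collapse or escape of vertices---shows $\Phi$ is proper, hence its image is closed; since each stratum of $\mathcal{M}_n$ is connected, the image is everything and $\Phi$ is onto.

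The main obstacle is the stratified nature of the two spaces: degenerate configurations (coalescing vertices, vanishing faces, flat dihedral angles) where the combinatorial type of $P$ jumps form lower strata across which one must control $\Phi$, and it is precisely the properness at these degenerations, together with the infinitesimal rigidity of convex polyhedra guaranteeing that $\Phi$ is a local diffeomorphism on each top stratum, that requires genuine work. I note that Volkov's variational method avoids invariance of domain altogether: fixing $\Sigma$, one cones it from a free interior apex with height parameters $(h_i)$ and studies a functional whose gradient measures the mismatch between the prescribed and the realized cone angles; strict concavity of this functional yields uniqueness of its critical point and coercivity yields its existence, producing the realizing polyhedron effectively. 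This is the approach that the present paper adapts, replacing $\E^3$ by a radiant singular Minkowski manifold, to prove its main theorem.
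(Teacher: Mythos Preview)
The paper does not prove this statement; it is quoted as a background result from Alexandrov's original work and the surrounding text only says that Alexandrov established it by the deformation method and that Bobenko--Izmestiev later reproved it variationally. There is therefore no proof in the paper to compare against.

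Your outline is a faithful sketch of the classical deformation/continuity proof the paper attributes to Alexandrov: set up equidimensional moduli spaces $\mathcal P_n$ and $\mathcal M_n$, obtain injectivity of the boundary map from Cauchy-type rigidity, and then combine invariance of domain with properness and connectedness to deduce surjectivity. You also correctly identify the genuine technical burden, namely controlling the stratification by combinatorial type and the behaviour at degenerations. Your final paragraph on Volkov's variational alternative is accurate and matches the method the present paper actually develops (in the Lorentzian context), but for the Euclidean Alexandrov theorem itself the paper gives only the citation.
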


		Using a so-called deformation method, Alexandrov proved generalisations to convex polyhedron in $\H^3$ and $\SS^3$ ; this method is however not effective since it does not provide an efficient way to construct the  convex polyhedra these Theorems predict.
		In the 2000's,
		Izmestiev and Bobenko
		gave a new proof of Alexandrov theorem by a variationnal, therefore effective, method while Rivin, Hodgson, Schlenker and Fillastre
		proved generalisations to Lorentzian spaceforms (Minkoswki, de Sitter and Anti-de Sitter) in which case conical singularities of the locally Euclidean surface have angles greater than $2\pi$.
		The Alexandrov problem can then be stated in a more general context that has been recently studied in a systematic way by Fillastre and Izmestiev.
		\begin{problem}
		 Let $\Sigma$ be a closed surface of genus $g$ endowed with singular metric of constant curvature
	 $K\in \{-1,0,1\}$ and cone angles all bigger that
	 $2\pi$ (case $\varepsilon=-)$ or all lesser than $2\pi$ (case $\varepsilon =+$).
		 Denoting by $X_K^\varepsilon$ the model space of constant curvature $K$ riemannian if $\varepsilon=+$ and lorentzian if $\varepsilon=-$.

		 Is there a convex polyhedron $P$ of $X_{K}^{\varepsilon}$ which boundary  is isometric to the universal cover of $\Sigma$?
		 Furthermore, is this polyhedron essentially unique?
		\end{problem}

\begin{wrapfigure}{r}{0.4\textwidth}
		  \begin{tabular}{|c|c|c|c|c|}\hline
			$g$	&$K$	&$\varepsilon$	& DM & VM \\ \hline
			$0$	&$-1$	&$+$&\cite{MR13540}&\cite{MR2410380}  \\
			$0$	&$0$	&$+$&\cite{MR2127379}& \\
			$0$	&$1$	&$+$&\cite{MR2127379}& \\
			$0$	&$1$	&$-$&\cite{Hodgson1993}& \\ \hline
			$1$	&$-1$	&$+$&&\cite{MR2469522}\\
			$1$	&$1$	&$-$&& \cite{MR2813423}\\
			\hline
			$\geq2$&$-1$&$+$&\cite{MR2313089}& \\
			$\geq2$&$-1$&$-$&\cite{MR2794916}& \\
			$\geq2$&$0$	&$-$&\cite{MR2794916}& [B] \\
			$\geq2$&$1$	&$-$&\cite{MR2208419}& \\ \hline
		  \end{tabular}
		\end{wrapfigure}

Gauss-Bonnet formula gives a constraint on $(g,K,\varepsilon)$ ; the table on the right is base upon \cite{fillastre:hal-00535675} and sums up all possible situations
together with references to proofs by deformation (DM) and/or variational (VM) methods ; [B] refers to the present work. Proving Alexandrov-Fillastre Theorem --- case $(g,K,\varepsilon)=(\geq2,0,-)$ and  $X_K^\varepsilon$ is Minkowski space $ \E^{1,2}$ --- by a variational method is primary motivation of the present work. 
	To this end, we adapt the variational method successfully used by Bobenko, Fillastre and Izmestiev  \cite{MR2410380,MR2453328,MR2469522}; we derive Alexandrov-Fillastre, and obtain a as a result a generalization to a class of singular locally minkowski 3-manifolds : radiant spacetimes we shall describe precisely in the next section.
	 \begin{theon} Let $\Sigma$ be a closed locally Euclidean surface of genus $g$ with $s$ marked conical singularities of angles $(\theta_i)_{i\in \lsem 1,s\rsem}$.
For all $\kappa\in \prod_{i=1}^s[0,2\pi]\cap [0,\theta_i[$, there exists a radiant singular flat spacetime   $M$ homeomorphic to $\Sigma\times \RR$ with exactly $s$ singular lines of angles $\kappa_1,\cdots, \kappa_s$ and
a convex polyhedron $P\subset M$ which boundary is isometric to $\Sigma$.
	Furthermore, such a couple $(M,P)$ is unique up to equivalence.
	\end{theon}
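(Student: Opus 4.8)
The plan is to adapt Volkov's variational argument, in the form developed by Bobenko, Izmestiev and Fillastre \cite{MR2410380,MR2453328,MR2469522}, to the radiant Lorentzian setting, treating one triangulation at a time and then gluing.

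\textbf{The parameter space.} First I would fix a geodesic triangulation $\mathcal T$ of $\Sigma$ whose vertex set is exactly the set $\{x_1,\dots,x_s\}$ of cone points; a (weighted) Delaunay triangulation provides a canonical such choice, whose existence is a preliminary lemma. To a tuple of radii $r=(r_1,\dots,r_s)$ of positive timelike distances from a center, and to a triangle $x_ix_jx_k$ of $\mathcal T$ carrying the Euclidean side lengths prescribed by $\Sigma$, one associates a \emph{Lorentzian simplex}: the convex hull in $\E^{1,2}$ of a spacelike triangle congruent to $x_ix_jx_k$ together with the unique point of its past lying at Lorentzian distances $r_i,r_j,r_k$ from its three vertices. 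Gluing these simplices along the faces issued from the center, with all apices identified, yields a radiant singular flat structure on a polyhedral cone over $\Sigma$, which extends radially to a spacetime $M$ homeomorphic to $\Sigma\times\RR$: its holonomy is linear, its spacelike faces reassemble into a convex polyhedral surface isometric to $\Sigma$ bounding a convex polyhedron $P$, and along the $i$-th radiant axis it carries a conical singular line whose angle $\kappa_i(r)$ is the sum of the (Lorentzian) dihedral angles of the incident simplices. The set $\mathcal D_{\mathcal T}$ of radii for which every simplex is nondegenerate and of the correct causal type, and for which the reassembled surface is convex, is open; I would show that it is nonempty and that it is a natural chart for the cell $\CellPSigma$ of $\adm$.

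\textbf{The functional.} On $\mathcal D_{\mathcal T}$ I would introduce a discrete Hilbert--Einstein functional $\mathcal H_{\mathcal T}$ — a Legendre-type primitive of the curvatures, assembled from the Lorentzian volumes of the simplices — characterised, via the Lorentzian Schl\"afli differential formula applied simplex by simplex, by $\partial\mathcal H_{\mathcal T}/\partial r_i=\kappa_i(r)$ up to the additive constant fixing the normalisation. The analytic core is that $\mathcal H_{\mathcal T}$ is concave on $\mathcal D_{\mathcal T}$, and strictly concave transversally to the obvious rescaling direction; this reduces to negative semidefiniteness of the Hessian of a single Lorentzian simplex's contribution, the indefinite-signature analogue of the mixed-volume computation of Bobenko and Izmestiev. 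Granting it, set $\mathcal H_{\bar\kappa}(r):=\mathcal H_{\mathcal T}(r)-\sum_{i=1}^s\kappa_i r_i$: this function is concave, and its critical points are exactly the radii for which the associated radiant structure has singular lines of the prescribed angles $\bar\kappa=(\kappa_1,\dots,\kappa_s)$, since there $\gradH=0$, i.e.\ $\kappa_i(r)=\kappa_i$ for all $i$.

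\textbf{Existence.} I would then show $\mathcal H_{\bar\kappa}$ attains an interior maximum on $\mathcal D_{\mathcal T}$. The hypotheses $\kappa_i<\theta_i$ and $\kappa_i\le2\pi$ should be exactly what gives the required boundary control: near each face of $\partial\mathcal D_{\mathcal T}$ along which some simplex degenerates, or along which a vertex angle $\kappa_i(r)$ tends to one of the limiting values $\theta_i$, $2\pi$ (or $0$), the partial $\partial\mathcal H_{\bar\kappa}/\partial r_i=\kappa_i(r)-\kappa_i$ acquires a sign forbidding a maximum there, and along rescaling the linear term $-\sum_i\kappa_i r_i$ forces $\mathcal H_{\bar\kappa}\to-\infty$; combined with concavity this gives the coercivity needed for a maximiser $r^\star\in\mathcal D_{\mathcal T}$. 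At $r^\star$ the radiant spacetime $M$ built from $(\mathcal T,r^\star)$ has exactly the $s$ prescribed singular lines, is homeomorphic to $\Sigma\times\RR$, and contains the convex polyhedron $P$ with $\partial P$ isometric to $\Sigma$.

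\textbf{Uniqueness, and the main obstacle.} Conversely, given any admissible pair $(M,P)$, convexity forces the vertices of $\partial P$ to be precisely the cone points of $\Sigma$ (where $\Sigma$ is flat so is $\partial P$, hence not a vertex), so $P$ is cut by a geodesic triangulation $\mathcal T$ with the correct vertex set into Lorentzian simplices, and its radii form a critical point of $\mathcal H_{\bar\kappa}$ on $\mathcal D_{\mathcal T}$; strict concavity (transversally to rescaling, which is itself rigid once the base metric and the angles are fixed) pins that critical point down uniquely in the cell. It then remains to compare triangulations: any two triangulations of a fixed convex $P$ are joined by a finite sequence of diagonal flips through coplanar degenerations, across which $\mathcal H_{\mathcal T}$ and its gradient glue compatibly over $\adm$, so the critical radii — hence $(M,P)$ up to equivalence — do not depend on $\mathcal T$ (equivalently, one may work throughout with the Delaunay triangulation). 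I expect the main obstacle to lie precisely in the Lorentzian simplex geometry underlying the second and third steps: constructing the volume primitive, proving the Schl\"afli formula and the definiteness of its Hessian in indefinite signature, and determining exactly which degenerations of $\mathcal D_{\mathcal T}$ the conditions $\kappa_i<\theta_i$ and $\kappa_i\le2\pi$ rule out — together with the bookkeeping needed to glue the per-triangulation analysis into a statement over all of $\adm$ and to confirm that it is $(M,P)$, and not merely the radii, that is determined up to equivalence.
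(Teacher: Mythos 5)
Your plan is the same variational strategy the paper follows: parametrize convex polyhedral embeddings by the timelike distances $h_\sigma$ from the apex, build the radiant spacetime by gluing the cones over affinely embedded triangles, define a discrete Einstein--Hilbert functional whose differential is $\sum_\sigma(\kappa_\sigma(h)-\bar\kappa_\sigma)\,\d h_\sigma$ via the Lorentzian Schl\"afli formula, establish definiteness of its Hessian, and use $\bar\kappa_\sigma\le 2\pi$, $\bar\kappa_\sigma<\theta_\sigma$ to control the gradient on the boundary of the admissible domain (this last point is exactly the role of the Lorentzian Volkov lemma, Theorem \ref{theo:volkov_lorentz}). Two places where your execution diverges are genuine gaps rather than bookkeeping. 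First, the existence step: the Hessian computed in Proposition \ref{prop:H_convexe} is positive semidefinite (diagonally dominant), so with gradient $\kappa-\bar\kappa$ the functional is \emph{convex}, not concave --- you have imported the sign of the Euclidean Bobenko--Izmestiev case, which flips here. More importantly, your coercivity claim (``along rescaling the linear term $-\sum_i\kappa_i r_i$ forces $\mathcal H_{\bar\kappa}\to-\infty$'') ignores that the geometric part $\sum_\sigma h_\sigma\kappa_\sigma(h)+\sum_e l_e\theta_e$ grows at the competing linear rate $\sum_\sigma\theta_\sigma h_\sigma$ along the rescaling ray, since $\kappa(\tau)\to\theta$ at infinity (Lemma \ref{lem:limite_infty}); the correct statement is that the directional derivative tends to $\sum_\sigma(\theta_\sigma-\bar\kappa_\sigma)>0$, and the Volkov bounds control the gradient on the finite boundary only componentwise. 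The paper does not attempt a direct extremization at all: it compactifies the domain by a point at infinity, shows the gradient field is nonvanishing on the entire boundary, homotopes it to a radial field, and concludes by an index count that there is exactly one critical point. If you insist on a direct min/coercivity argument you must at least get the convexity direction right and justify why the gradient's boundary behaviour forbids the infimum from being attained on a wall.

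Second, the per-triangulation gluing. The boundary control only becomes available after one proves (Theorem \ref{theo:domain_description}, via the flipping algorithm) that the union $\PSigmaS$ of all cells $\CellPSigma$ is a single convex domain with finitely many cells, whose support planes are either $\{\tau_\sigma=0\}$ or $\{Q^*=0\}$ for an \emph{unflippable} immersed hinge; it is precisely unflippability that produces, at each boundary point, a singular cone containing a coplanar wedge of angle at least $\min(\pi,\Theta)$, which is the hypothesis under which the Volkov bound $\kappa_\sigma\ge\min(2\pi,\theta_\sigma)>\bar\kappa_\sigma$ applies. Knowing the walls of a single cell $\mathcal D_{\TT}$ does not distinguish the genuine boundary of the moduli of convex embeddings from internal walls crossed by flips, and flip-connectivity of triangulations of a fixed $P$ is neither needed nor used by the paper (uniqueness of the $Q$-convex extension, Proposition \ref{prop:unique_Q_convexe}, replaces it). Finally, the case $\bar\kappa_\sigma=0$ allowed by the statement (BTZ/cusp lines) requires the separate restriction to the face $\{\tau_\sigma=0,\ \sigma\in Z\}$ carried out in the paper's final proof; your sketch does not address it.
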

	Equivalence in our contexte has to be understood in the following way: 
	
	Note that we accept Lorentzian conical singularities of angle $0$, the meaning of such singularities will be made clear in the following sections along with a general description of radiant spacetimes.

	The variational method proceeds as follows. Define $\mass{}$ the 3-dimensionnal Minkoswki space, namely the affine space $\RR^3$ together with the quadratic form $Q = -\d t^2+\d x^2+\d y^2$ in cartesian coordinates $(t,x,y)$, and proceed as follows: 
	
					\begin{enumerate}
				\item consider a closed locally Euclidean surface $\Sigma$ of genus $g$ with $s\in \NN^*$ marked conical singularities $\theta_1,\cdots, \theta_s \in \RR_+^*$ and define $S$ the set of marked points;
				\item choose an arbitrary couple $(\tau,\TT)$ with $\tau : S \rightarrow \RR_+^*$  and $\TT$ a triangulation of $\Sigma$ whose set of vertices is $S$;
				\item for each triangle $T$ of $\TT$, choose a direct affine embeding of $T$ into $J^+(O):=\{t>0, Q<0\}\subset \mass{}$ in such a way that for each vertex $s$ of $T$ we have $T(s)=-\tau(s)$; 
				\item to each triangle $T$ is then associated the cone of rays from $O:=(0,0,0)$ through $T$ in $\mass{}$;  glue these cones together following the same combinatorics as $\TT$; the gluing is a 3-manifold $M$ endowed with a flat Lorentzian metrics on the complement of the rays through the vertices of $\TT$, furthermore we have a natural embedding of $\iota: \Sigma \rightarrow M$ in such a way that $\iota(\Sigma)$ is the boundary of a polyhedron $P$ of $M$;
				\item study the domain of $\tau \in (\RR_+)^{S}$ so that the polyhedron $P$ is convex, $\iota$ is then called convex, and show that for a given $\tau$ there is at most one triangulation $\TT$ (up to equivalence) for which the embedding $\iota$ is convex; a $\tau$ is then admissible if it does has such a triangulation;
				\item  define an Einstein-Hilbert functional on the space of admissible $\tau \in (\RR_+^*)^{S}$ in such a way that its critical points each induces a manifold $M$ with a non singular metric around the rays through the vertices of $\TT$; 	
				\item finally,  study this functional and show it admits a unique critical point.
		   \end{enumerate}
		  
		Another view point on our result is given by Penner  \cite{penner1987,MR3052157}. Penner constructed a cellulation of the Decorated Teichmüller space of a closed surface $\Sigma$ with $s$ marked points $S=\{\sigma_1,\cdots,\sigma_s\}$ viewed as the space of marked finite volume complete hyperbolic surface with $s$ cusps homeomorphic to $\Sigma\setminus S$ together with a choice of a positive number on each cusp.
		Consider such a surface $\Sigma^*$, the universal covering of $\Sigma^*$ naturally identifies with the usual hyperbolic plane $\H^2:=\{(t,x,y)\in \mass{}~|~Q(t,x,y)=-1,~t>0\}$ in $\mass{}$ and the positive number $\lambda_\sigma$ on each cusp $\sigma$ corresponds to a point on the lightlike rays corresponding to the cusp:
		\begin{itemize}
		 \item there exists a unique horocycle $\mathcal H_{\sigma,\lambda_\sigma}$ of length  $\lambda_\sigma$ around $\sigma$;
		 \item consider a ray $\R$ fixed by a parabolic holonomy of $\Sigma^*$ and consider a points $p\in \R$, the intersection of the future light cone of $p$ (ie the set $\{q\in \mass{}~|~ Q(q-p)=0, t(q-p)>0\}$) with $\H^2$ is a horocycle around $\R$ and every horocycles are obtained in this manner.
		\end{itemize}
		Penner then considers the surface obtained as the boundary of the convex hull of these points, he shows the surface obtained is locally Euclidean, its quotient by the holonomy of $\Sigma^*$ is a locally Euclidean surface $\Sigma_{\E^2}$ with $s$ conical singularities. Furthermore, the convex hull is a polyedron, the faces of which induce a cellulation on $\Sigma_{\E^2}$ with marked points $S$. He notes that this cellulation is simply the Delaunay cellulation of $(\Sigma_{\E^2},S)$. 
		It is not hard to see that:
		\begin{enumerate}
		 \item this construction actually defines a natural bijection from the decoracted Teichmüller space of $(\Sigma,S)$ to the deformation space of locally Euclidean metric on $\Sigma$ with arbitrary conical singularities on $S$;
		 \item The quotient by the holonomy of $\Sigma^*$ of the union of  $I^+(O):=\{ (t,x,y)\in \mass{}~|~-t^2+x^2+y^2 <0,~t>0\}$ with the rays fixed by parabolic holonomy of $\Sigma^*$, is a radiant spacetime with $s$  conical singularities all of angle $0$.
		\end{enumerate}
		Penner construction can thus be seen as the special of our Theorem were $\kappa=0$ and $(\Sigma,S)$ runs through all locally Euclidean surface with $s$ conical singularities at $S$ of arbitrary angles.

	\subsection*{Acknowledgements}
		This work has been initiated as part of a PhD project at Laboratoire de Mathématiques d'Avignon, Université d'Avignon et des Pays de Vaucluse, under Thierry Barbot's supervision, then continued as part of a project that has received funding from the European Research Council (ERC) under the European Union's Horizon 2020 research and innovation programme (grant agreement ERC advanced grant 740021--ARTHUS, PI: Thomas Buchert). The author thanks Thierry Barbot and Thomas Buchert for their continuous support, encouragments and valuable remarks,  François Fillastre and Marc Troyanov for their many corrections and comments on earlier versions of the manuscript,  Graham Smith for his remarks that led to significative simplifications, as well as Masoud Hasani, Jean-Marc Schlenker, Frédéric Paulin, Gye-Seon Lee, Suhyoung Choi,  Francis Bonahon, Anna Wienhard, Ivan Izmestiev, Erwann Delay, Miguel Sánchez, Abdelghani Zeghib, Philippe Delanoë,    Daniel Monclair, Vincent Pécastaing, Roman Prosanov, Rabah Souam, Andrea Seppi, Tengren Zhang,  Jeffrey Danciger, Nicolas Tholozan,  Qiyu Chen and Clément Guérin  for valuable discussions.
		
\section{Radiant 2+1 Singular spacetimes}

		We denote by $\mass{}$ the 3-dimensionnal Minkoswki space (ie the oriented affine space $\RR^3$ together with the quadratic form $Q:=-\d t^2+\d x^2+\d y^2$) and by $\GG$ the identity component of the Lie group of affine isometries of $\mass{}$. We denote by $O:=(0,0,0)\in \mass{}$ the origin of $\mass{}$.
		A vector $u\neq 0$ is spacelike (resp. timelike, resp. lightlike, resp. causal) if $Q(u)>0$ (resp. $Q(u)<0$, resp. $Q(u)=0$, resp. $Q(u)\geq 0$). A causal vector is future (resp. past) if its $t$ coordinate is positive (resp. negative).
		Minkowski space is naturally endowed with two order relations: the causal order $\leq$  and the chronological order $\chleq$ (the associated strict relation is denoted by $\chll$).  Given $p,q\in \mass{}$ then  $p<q$ (resp. $p\ll q$) if $q-p$ is future causal (resp. future timelike). The group $\GG$ preserves the orientation of $\mass{}$ as well as the causal and the chronological orders. We define the causal future of $p$ denoted by $J^+(p) := \{q\in M~|~ p \leq q\}$, as well as the chronological future of $p$ denoted by $I^+(p):=\{q\in M~|~ p\chll q\}$. The causal past as well as the chronological past are defined accordingly. 		
		A plane in $\mass{}$ is spacelike (resp. timelike, resp. lightlike) if the induced quadratic form is positive definite (resp. definite, resp. degenerated), a normal to such a place is a timelike vector (resp. spacelike vector, resp. lightlike vector). 
		
		\subsection{Flat spacetime analytical structures}
		
		The couple $(\GG,\mass{})$ is an analytical geometrical structure in the sense of Ehresmann \cite{MR794193}, Thurston or Goldmann \cite{MR957518}. A flat spacetime is a $(\GG,\mass{})$-manifold and for brievety sake, we will write $\mass{}$-manifold instead of $(\GG,\mass{})$-manifold. In any flat spacetime $M$ and for any $p\in M$,   A curve in a flat spacetime is future causal (resp. timelike) if is is increasing for the causal (resp. chronological) order.
		
		The couple $(\SO_0(1,2),I^+(O))$  is also an analytical geometrical structure.
		For brievety sake, we will write $\C$ instead of $I^+(O)$ and $\C$-manifold instead of $(\SO_0(1,2),\C)$-manifold. Such a manifold  is in particular a $(\GG,\mass{})$-manifold but is naturally endowed with a stronger structure. Indeed, in $\C$  the foliation given by the future causal geodesic rays from the origin is invariant under the action of $\GGL$ hence any $\C$-manifold is naturally endowed with a causal geodesic foliation. A $\C$-manifold $M$ thus comes with a height function $Q : M \rightarrow \RR_+$ induced by the function $Q:\C\rightarrow \RR_+$.

		\subsection{Singular $(G,X)$-manifolds}
		
		Let $(G,X)$ be an analytical structure, following \cite{GXramcover} we define a singular $(G,X)$-manifold as a Hausforff second countable topological $M$ space endowed with a $(G,X)$-structure on a open and dense  subset $\U$ locally connected in $M$. There exists a unique maximal extension of this $(G,X)$-structure to a maximal open and dense  subset $\reg(M)$ locally connected in $M$ called the regular locus of $M$.
		A morphism of such manifold is given by a continuous map which is a $(G,X)$-morphism on a open and dense  subset $\U'\subset \reg(M)$ locally connected in $M$. 
		
		A singular $(G,X)$-manifold is locally modeled on a familly $(X_\alpha)_{\alpha\in A}$ if for all $\alpha\in A$, $X_\alpha$ is a singular $(G,X)$-manifold and for all $x\in M$, there exists a neighborhood $\U$ of $x$ and an open $\V$ of some $X_\alpha$ such that $\U$ is isomorphic to $\V$.

		\subsection{Local models of singular lines}
		
			We now introduce the local models of the singular $\C$-manifolds we will consider.
			\begin{defi}[Massive particles model space]
		  Let $\alpha\in \RR_+^*$, the manifold $\mass{\alpha}$ is $\RR^3$ endowed with the flat Lorentzian metric $$\d s_\alpha ^2 = -\d t^2+\d r^2+\left(\frac{\alpha}{2\pi} r\right)^2 \d \theta^2$$ on $\reg(\mass{\alpha}):=\{r>0\}$ complement of the line $\sing(\mass{\alpha}):=\{r=0\}$ where $(t,r,\theta)$ are cylindrical coordinates of $\RR^3$. 
		\end{defi}

		For $\alpha>0$, the metric on $\mass{\alpha}$ induces a  unique $(\GG,\mass{})$-structure on $\reg(\mass{\alpha})$ such that the curves $t\mapsto c(t)=(t,r_0,\theta_0)$ are future causal for $r_0>0$ and all $\theta_0\in \RR/2\pi\ZZ$.
		\begin{defi}[BTZ line model space]
		  The manifold $\mass{0}$ is $\RR^3$ endowed with the flat Lorentzian metric $$\d s_0 ^2 = -2\d \tau \d \r+\d \r^2+ \r^2\d \theta^2$$ on $\reg(\mass{0}):=\{\r>0\}$ complement of the line $\sing(\mass{0}):=\{\r=0\}$ where $(\tau,\r,\theta)$ are cylindrical coordinates of $\RR^3$. 
		\end{defi}
		
		The metric on $\mass{0}$ induces a  unique $(\GG,\mass{})$-structure on $\reg(\mass{0})$ such that the curves $\tau\mapsto c(\tau)=(\tau,\r_0,\theta_0)$ are future causal for $\r_0>0$ and all $\theta_0\in \RR/2\pi\ZZ$.

		Note that the singular line of a massive particle is a timelike line while the singular line of $\mass{0}$ is lightlike. In a radiant 2+1 singular spacetime, the singular lightlike lines are all BTZ lines and the timelike singular lines are all massive particles.
		
		The model spaces $\mass{\geq0}$ are  singular $\mass{}$-manifold but not singular $\C$-manifolds. We thus introduce the following
		
		\begin{defi} For $\alpha\geq 0$ define $\C_\alpha := Int(J^+(O))$ with $O=(0,0,0)\in \mass{\alpha}$.
		\end{defi}
				
		\subsection{Causal structure}
			A $\C$-manifold $M$ comes with a causal structure eg a familly $(\leq_\U,\ll_\U)_{\U}$ of transitive relations each defined on an open subset $\U$ of $M$ which is inherited from the causal and chronological relation of $\C$. The causal structure on $\reg(\C_\alpha)$ can be extended to $\C_\alpha$ so that any $\C_{\geq0}$-manifold $M$ comes with a causal structure.
			A future causal curve is then a curve in $M$ which is locally increasing for $\leq$, the causal past/future of a point $p$ can then be defined accordingly and we denote them by $J^-(p)$ and $J^+(p)$ respectively.
			
			Note that $\leq_\U$ is an order relation for $\U$ small enough but this is not necessarily the case for $\leq_M$. We say that a $\C_{\geq0}$-manifold $M$ is {\it causal}  if $\leq_M$ is an order relation, we say furthermore that $M$ is {\it globally hyperbolic} if it is causal and for any $p,q\in M$, $J^+(p)\cap J^-(q)$ is compact. A {\it Cauchy-surface} of $M$ is a topological 2-dimensional submanifold $\Sigma$ in $M$ which intersects every future causal curves exactly once. One can prove a version of Geroch Theorem for $\C_{\geq0}$-manifolds (see for instance \cite{MR3783554}) which states that a $\C_{\geq0}$-manifold $M$ admits a Cauchy-surface if and only if it is globally hyperbolic. A $\C_{\geq0}$-manifold is {\it Cauchy-compact} is it admits a compact Cauchy-surface.
			
			A morphism $M_1\rightarrow M_2$ between globally hyperbolic $\C_{\geq0}$-manifolds is a Cauchy-embedding if it is injective and sends a Cauchy-surface of $M_1$ to a Cauchy-surface of $M_2$, the latter is then called a Cauchy-extension of $M_1$. A manifold $M_1$ is {\it Cauchy-maximal} if for any Cauchy-embedding $M_1\xrightarrow{\varphi}M_2$, the map $\varphi$ is an isomorphism. One can prove \cite{thesis,BTZI} a version of Choquet-Bruhat-Geroch Theorem for $\C_{\geq0}$-manifolds following the lines of \cite{sbierski_geroch} which states that any $\C_{\geq0}$-manifold admits a unique Cauchy-maximal Cauchy-extension.

		\subsection{$\H^2$-structure of the space of leaves and suspensions}
		
		Let $M$ be a $\C_{\geq0}$-manifold, $\reg(M)$ admits a natural causal geodesic foliation. We notice that in the model spaces $\C_\alpha$ the foliation can be extended to the whole $\C_\alpha$, furthermore by Propsition 1 of \cite{BTZI} if $\varphi:\U_\alpha\rightarrow \U_\beta$ is an a.e. $\GGL$-isomorphism between neighborhoods of singular points in $\C_\alpha$ and $\C_\beta$ respectively then $\alpha=\beta$ and $\varphi$ is induced  by an element of $\GGL$; hence the extended foliation to the whole $\C_\alpha$ induces a causal foliation on $M$.

		\begin{defi} For $\alpha\in \RR_+$, define $\H_\alpha^2$ as the space of leaves of $\C_\alpha$ and define the natural projection $\pi_\alpha : M\rightarrow \H_\alpha$.
		\end{defi}

		\begin{prop} For $\alpha \geq 0$, $\H_\alpha^2$ is homeomorphic to $\RR^2$ and comes with a natural singular $\H^2$-structure whose singular locus contains at most one point. 
		Furthermore,
		\begin{itemize}
		 \item if $\alpha=2\pi$, $\H^2_\alpha$ is regular and isomorphic to $\H^2$
		 \item if $2\pi\neq\alpha>0$, the singular point is a conical singularity of angle $\alpha$;
		 \item if $\alpha=0$, the singular point is a cusp.
		\end{itemize}
		\end{prop}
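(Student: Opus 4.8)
The plan is to understand each model $\C_\alpha$ by developing its regular part — together with the foliation by future causal geodesic rays from $O$ — into ordinary Minkowski space $\mass{}$, and then to realise the leaf space $\H^2_\alpha$ as the spacelike surface $\{Q=-1\}$ transverse to that foliation, whose induced Riemannian metric I can compute by hand. Since $Q$ is the canonical height function carried by any $\C$-manifold, this surface is canonical, so the resulting $(\SO_0(1,2),\H^2)$-structure on the leaf space is natural; the only leaf possibly not meeting $\{Q=-1\}$ is the one supported on the singular line, and it accounts for the (at most one) singular point of $\H^2_\alpha$, all other leaves giving genuinely regular points.

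\emph{Case $\alpha>0$ (including $\alpha=2\pi$).} Writing the geodesic equation of $\d s_\alpha^2$ in cylindrical coordinates, conservation of the momentum conjugate to $\theta$ forces $\dot\theta\equiv0$ along any geodesic that meets the axis $\{r=0\}$; hence every causal geodesic ray from $O$ lies in a slice $\{\theta=\theta_0\}$ and is radial, $(t,r,\theta)=(\tau\cosh\rho_0,\tau\sinh\rho_0,\theta_0)$ with $\rho_0\geq0$, $\tau>0$, and these rays sweep out $\C_\alpha=\mathrm{Int}(J^+(O))=\{t>r\}$, the single leaf $\{r=0,\,t>0\}$ (the value $\rho_0=0$) included. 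Each leaf meets $\{Q=-1\}=\{t^2-r^2=1,\,t>0\}$ exactly once, so $\H^2_\alpha$ is identified with that surface; parametrising it by $t=\cosh\rho,\ r=\sinh\rho$ ($\rho\geq0$), $\theta\in\RR/2\pi\ZZ$ and substituting into $\d s_\alpha^2$, the induced metric is $\d\rho^2+\sinh^2\!\rho\,\d\phi^2$ with $\phi=\tfrac{\alpha}{2\pi}\theta\in\RR/\alpha\ZZ$: the hyperbolic cone of angle $\alpha$. This is homeomorphic to $\RR^2$ and has constant curvature $-1$ off its apex (so the $\H^2$-structure is genuine there); the apex is exactly the point where the axis crosses $\{Q=-1\}$, and it is a conical singularity of angle $\alpha$, regular precisely when $\alpha=2\pi$ (in which case $\mass{2\pi}=\mass{}$ and $\H^2_{2\pi}\cong\H^2$).

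\emph{Case $\alpha=0$.} Here I first produce a developing map of $\reg(\mass 0)$ into $\mass{}$ written in null coordinates $(u,v,y)$ with $Q=-\d u\,\d v+\d y^2$: one checks directly that $(\tau,\r,\theta)\mapsto(u,v,y)=\bigl(\r,\ 2\tau+(\theta^2-1)\r,\ \r\theta\bigr)$ is a local isometry onto $\{u>0\}$ whose restriction to $\C_0$ develops onto $I^+(O)$, with holonomy of the loop $\theta\mapsto\theta+2\pi$ the parabolic $(u,v,y)\mapsto(u,\ v+4\pi y+4\pi^2 u,\ y+2\pi u)$ fixing the null line $\{u=y=0\}$ pointwise. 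Thus $\reg(\C_0)$ carries a genuine $\C$-structure, and pulling $Q$ back gives $Q=\r(\r-2\tau)$, so $\{Q=-1\}\cap\C_0$ is the graph $\tau=\tfrac{1+\r^2}{2\r}$. Substituting into $\d s_0^2$, its induced metric is $\r^{-2}\d\r^2+\r^2\d\theta^2=\d\rho^2+e^{2\rho}\d\theta^2$ ($\rho=\ln\r$, $\theta\in\RR/2\pi\ZZ$), a complete hyperbolic metric in horocyclic coordinates whose $\rho\to-\infty$ end is a cusp (the loops $\{\rho=\mathrm{const}\}$ have length $2\pi e^{\rho}\to0$); the BTZ line is the unique leaf of $\C_0$ not meeting $\{Q=-1\}$, and adjoining the corresponding point caps that cusp. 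Hence $\H^2_0$ is homeomorphic to $\RR^2$, is hyperbolic off that one point, and that point is a cusp.

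The remaining steps are routine: that the extended foliation of $\C_\alpha$ descends to a well-defined, Hausdorff leaf space (for which $Q$ together with an angular coordinate furnish local coordinates, and which is in any case part of the $\C_{\geq0}$-framework recalled above, cf.\ \cite{BTZI}), that the singular line really is a single leaf of $\mathrm{Int}(J^+(O))$, and the two metric substitutions. I expect the one genuinely non-mechanical point to be finding the developing map of $\mass 0$ — equivalently, spotting the change of variables that exhibits $-2\d\tau\,\d\r+\d\r^2+\r^2\d\theta^2$ as an open piece of Minkowski space with a parabolic identification — which is where the real work of the $\alpha=0$ case is concentrated.
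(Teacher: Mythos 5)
Your proof is correct and follows essentially the same route as the paper: the transversal $\{Q=-1\}$ you use is exactly the paper's surface $\mathcal H^*$ (namely $\{t^2-r^2=1\}$ for $\alpha>0$ and $\{\tau=\tfrac{1+\r^2}{2\r}\}$ for $\alpha=0$), and identifying the cone point of angle $\alpha$ resp.\ the cusp from the induced metric is precisely what the paper does, only you carry out the metric computations ($\d\rho^2+\sinh^2\!\rho\,\d\phi^2$ and $\d\rho^2+e^{2\rho}\d\theta^2$) more explicitly. The extra material on the developing map and parabolic holonomy of $\mass{0}$ is correct but not needed for this statement.
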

		\begin{proof}
			\begin{itemize}
			 \item To begin with, in $\C_\alpha$, define the plane $\Pi := \{t=1\}$ if $\alpha>0$ and $\Pi:=\{\tau=1\}$ if $\alpha=0$. The plane $\Pi$ intersects each leaf exactly once $\pi_{|\Pi}$ is an homeomorphism.
			 
			 \item Define the surface $\mathcal H^* := \{\tau=\frac{1+\r^2}{2\r}, \r>0\}$ if $\alpha=0$ and $\mathcal H^* := \{t^2-r^2=1, r>0\}$ if $\alpha >0 $. The Lorentzian metric of $\C_\alpha$ induces a hyperbolic metric on $\mathcal H^*$ which  intersects each leaf of $\reg(\C_\alpha)$ exactly once and the projection $\C_\alpha\rightarrow \H^2_\alpha$ induces an homeomorphism $\mathcal H^* \simeq (\H^2_\alpha\setminus \sing(\C_\alpha))$. Hence $\H^2_\alpha$ has a $\H^2$-structure defined the complement of $\sing(\C_\alpha))$ eg on the complement of a subset containing at most one point. 
			 
			 \item If $\alpha=2\pi$ then $\C_\alpha\simeq \C$ and the result follows.
			 \item If $\alpha=0$, one can check that $\mathcal H^*$ is complete and that the singular point of $\H_\alpha^2$ has a neighborhood of finite volume. The singular point is thus a cusp.
			 \item If $2\pi\neq \alpha>0$, then one can check that the length of the circle of radius $r>0$ in $\H^2_\alpha$ around the singular point is $\alpha r$. The singular point is a conical singularity of angle $\alpha$.
			\end{itemize}

		\end{proof}
	
		\begin{defi} Let $\Sigma$ be a $\H_{\geq0}$-manifold, let $(\U_i,\V_i,\varphi_i,\alpha_i)_{i\in I}$ be a $\H^2_{\geq0}$-atlas of $\Sigma$ with $\V_i\subset \H^2_{\alpha_i}$, let $\U_{ij}:=\U_i\cap \U_j$  and $\V_{ij}:=\varphi_i(\U_i\cap \U_j)$ for $i,j\in I$ such that $\U_i\cap \U_j\neq \emptyset$. We add the convention that $\alpha_i\neq 2\pi$ if and only if $\V_i$ contains a neighborhood of the singular point of $\H^2_{\alpha_i}$ so that for any $i,j\in I$ such that $\U_{ij}\neq \emptyset$ and $\U_i$ contains a singular point, then $\alpha_i=\alpha_j$ and the change of charts $\V_{ij}\xrightarrow{\varphi_{ij}} \V_{ji}$ comes from some $\phi_{ij}\in \GGL$ acting both on $\H^2_{\alpha_i}$ and $\C_{\alpha_i}$.
		
		 Define the suspension $\susp(\Sigma)$ of $\Sigma$ as the gluing of $\left(\pi_{\alpha_i}^{-1}(\V_i)\right)_{i\in I}$  via the maps $\left(\pi_{\alpha_i}^{-1}(\V_{ij}) \xrightarrow{\phi_{ij}} \pi_{\alpha_j}^{-1}(\V_{ji})\right)_{i,j\in I}$.
		\end{defi}
		\begin{rem} The suspension $\susp$ is a functor from the category of $\H^2_{\geq0}$-manifolds to the category of $\C_{\geq0}$-manifolds.
		 
		\end{rem}

		\begin{rem} By construction, $\susp(\Sigma)$ is a $\C_{\geq0}$-manifold with a natural projection $\susp(\Sigma) \rightarrow \Sigma$.   One can check that diamonds $J^+(p)\cap J^-(q)$ are compact and that $\susp(\Sigma)$ is causal, hence globally hyperbolic. Furthermore, the projection natural projection induces an homeomorphism $\pi : \Sigma_0 \rightarrow \Sigma$ for any  Cauchy-surface $\Sigma_0$.

		\end{rem}
		Another way to construct the suspension of a $\H^2_{\geq0}$-surface $\Sigma$ is to choose a geodesic cellulation of $\Sigma$ such that each cell is a polygon of $\overline {\H}^2$. 
		The surface $\Sigma$ can thus be seen as a gluing of a familly of cells $\mathcal P = (P_i)_{i\in I}$ along their edges $\mathcal E =(e_{i}^{(j)})_{i\in I,j\in J_i}$ (where $J_i$ parametrizes the edges of $P_i$) via  isometries $\phi_{e,e'} \in \GGL$ sending the edge $e$ to the edge $e'$. We denote by $\mathcal G$ the set of couples $(e,e')\in \mathcal E$ such that $e$ is glued to $e'$.
		
		We can then construct $\susp(\Sigma)$ by gluing the cones $C_i:=\pi^{-1}(P_i)$ for $i\in I$ along their faces $(\pi^{-1}(e))_{e\in\mathcal E}$ via the isometries $(\phi_{e,e'})_{(e,e')\in \mathcal G}$.

		\subsection{Radiant spacetimes}
		
		\begin{defi} A radiant spacetime is a Cauchy-compact Cauchy-maximal globally hyperbolic $\C_{\geq0}$-manifold $M$.
		\end{defi}
		We now state and prove a structure Theorem for radiant spacetimes. This result is in the line of Mess Theorem \cite{mess} and is akin to previous results by Bonsante and Seppi \cite{seppiH2} or the  author \cite{BTZI} though in a much simpler context. To the author's knowledge, while this result is expected and "folkoric", there is no existing reference to point to. We therefore provide a proof.

		\begin{theo}\label{theo:radiant_struc} Let $M$ be a radiant spacetime, there exists a compact singular $\H^2_{\geq0}$-manifold $\Sigma$ such that $M \simeq \susp(\Sigma)$.
		\end{theo}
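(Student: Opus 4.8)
The strategy is to recover the $\H^2_{\geq 0}$-surface $\Sigma$ as the space of leaves of the radiant geodesic foliation on $M$, and then identify $M$ with the suspension of that space. Concretely, let $M$ be a radiant spacetime. By the discussion preceding the theorem, $\reg(M)$ carries a canonical causal geodesic foliation, and in each singular model $\C_\alpha$ this foliation extends across the singular line; by Proposition 1 of \cite{BTZI} the extension is chart-independent, so $M$ itself carries a well-defined causal foliation $\F$ whose leaves are the radiant rays. Define $\Sigma := M/\F$, the leaf space, with the quotient topology, and let $\pi : M \to \Sigma$ be the projection.

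The first main step is to endow $\Sigma$ with a singular $\H^2_{\geq 0}$-structure and to show $\Sigma$ is a compact surface. Locally $M$ is modeled on open sets of some $\C_{\alpha}$, and by the previous Proposition the leaf space of $\C_\alpha$ is $\H^2_\alpha$, which is homeomorphic to $\RR^2$ and carries a singular $\H^2$-structure; these local leaf-space charts are glued by the same cocycle $(\phi_{ij})$ that gives the $\C_{\geq 0}$-atlas of $M$ (here one uses again Proposition 1 of \cite{BTZI} to see that transition maps preserving the foliation descend to $\GGL$-maps of the $\H^2_\alpha$'s, and that the convention distinguishing $\alpha \neq 2\pi$ is compatible with the one in the definition of $\susp$). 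This gives $\Sigma$ the structure of an $\H^2_{\geq 0}$-manifold. For compactness and the surface property: pick a compact Cauchy surface $\Sigma_0 \subset M$ (it exists since $M$ is Cauchy-compact). Every radiant ray is a future causal curve, so it meets $\Sigma_0$ at most once by global hyperbolicity, and a Geroch-type argument (each inextensible causal curve meets a Cauchy surface exactly once) shows it meets $\Sigma_0$ exactly once — hence $\pi|_{\Sigma_0} : \Sigma_0 \to \Sigma$ is a continuous bijection, and since $\Sigma_0$ is compact and $\Sigma$ Hausdorff, it is a homeomorphism. Thus $\Sigma$ is a compact surface. One should check $\pi|_{\Sigma_0}$ is a morphism of $\H^2_{\geq 0}$-structures where $\Sigma_0$ inherits one, but in fact it is cleaner to take the $\H^2_{\geq 0}$-structure produced on the leaf space as the definition and not compare with an a priori structure on $\Sigma_0$.

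The second main step is the isomorphism $M \simeq \susp(\Sigma)$. The suspension comes, by its construction via a cellulation, with a natural projection $\susp(\Sigma) \to \Sigma$; I will build an isomorphism over $\Sigma$. Choose a geodesic cellulation of $\Sigma$ with polygonal cells $(P_i)$ in $\overline{\H}^2$; lifting through $\pi$ and using the local models, $\pi^{-1}(P_i)$ in $M$ is isomorphic as a $\C_{\geq 0}$-piece to the cone $C_i = \pi^{-1}(P_i)$ over $P_i$ appearing in the construction of $\susp(\Sigma)$ — this is precisely the content of the "another way to construct the suspension" paragraph, read backwards, provided the cells are small enough to lie in single charts of $M$. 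The gluing maps along faces $\pi^{-1}(e)$ are determined by the $\GGL$-elements $\phi_{e,e'}$ of the cellulation of $\Sigma$ in both $M$ and $\susp(\Sigma)$, so the local isomorphisms patch to a global isomorphism $\Phi : \susp(\Sigma) \to M$ of $\C_{\geq 0}$-manifolds. Finally, both $M$ and $\susp(\Sigma)$ are globally hyperbolic, and $\Phi$ restricts to an isomorphism on a Cauchy surface (the image of $\Sigma$ under a section), hence is a Cauchy-embedding; since $M$ is Cauchy-maximal and $\susp(\Sigma)$ is globally hyperbolic with the same Cauchy topology, $\Phi$ is onto and an isomorphism — or, more directly, one checks $\Phi$ is a bijection because it is a bijection on leaf spaces and on each leaf.

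The step I expect to be the main obstacle is the \emph{local} claim that $M$ near a singular line is isomorphic, \emph{as a foliated} $\C_{\geq 0}$-manifold, to a neighborhood of the singular line in some $\C_\alpha$, with the radiant foliation matching the model foliation: one must rule out "exotic" foliated structures and show that the holonomy around a singular line is, up to conjugacy in $\GGL$, exactly the rotation/parabolic model, and that the angle parameter $\alpha$ is well-defined. This is where Proposition 1 of \cite{BTZI} is essential, together with the fact that a radiant spacetime is by definition locally modeled on the $\C_\alpha$; so the obstacle is really bookkeeping — tracking the $\alpha \neq 2\pi$ convention, the orientation, and the causal orientation through all the gluings — rather than a deep difficulty. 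A secondary subtlety is verifying that the leaf space is Hausdorff and that distinct leaves stay distinct globally, which again follows from global hyperbolicity of $M$ via the Cauchy surface.
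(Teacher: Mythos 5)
Your proposal follows essentially the same route as the paper: identify $\Sigma$ with the leaf space of the radiant foliation (homeomorphic to a compact Cauchy surface), endow it with the induced singular $\H^2_{\geq0}$-structure chart by chart, and use Cauchy-maximality and Cauchy-compactness to promote the natural comparison map with $\susp(\Sigma)$ to an isomorphism. The only caveat is the direction of that map: since $M$ need not a priori contain the full cones $\pi^{-1}(P_i)$, the natural map is an injective morphism $M\rightarrow\susp(\Sigma)$ (not $\susp(\Sigma)\rightarrow M$), and it is Cauchy-maximality of $M$, exactly as in the paper, that forces this injection to be onto.
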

		\begin{proof}
		 Let $\Sigma_0$ be a Cauchy-surface of $M$ and consider the natural projections $\pi_\alpha : \C_\alpha\rightarrow \H_\alpha$.		 
		 Consider  a $\C$-atlas $(\varphi_i,\U_i,\V_i)_{i\in I}$ of $\reg(M)$ such that each $\V_i$ is causally convex in $\C$, write $\U_{ij}:=\U_i\cap \U_j$ for $i\in I$ and for $i,j\in I$ such that $\U_i\cap \U_j\neq \emptyset$ write $\V_{ij}:= \varphi_i(\U_i\cap \U_j)$ as well as $\W_{ij}:=\pi(\V_{ij})\subset \H^2$. We then have a unique $\phi\in \GGL$ such that $\forall x\in \V_ij, \varphi_j\circ \varphi_{i}(x)=\phi\cdot x$. 
		 Hence, for any $i,j\in I$ such that $\U_i\cap \U_j\neq \emptyset$ we have the following commutative diagrams :
		 
		 $$
		 \xymatrix{\Sigma_0&&&M&&&\Sigma_0\\
		 \Sigma_0\cap \U_i\ar[u]\ar@/_2pc/[ddd]^\simeq&\ar[l]_ {\supset}\U_i\ar[ddl]^{\varphi_i}\ar[urr]^{\subset}&&\U_{ij}\ar[u]\ar[drr]^{\simeq}_{\varphi_j}\ar[dll]^{\varphi_i}_{\simeq}&&\U_j\ar[r]^{\subset}\ar[ull]_{\supset}\ar[ddr]_ {\varphi_j}& \Sigma_0\cap \U_j\ar[u]\ar@/^2pc/[ddd]_\simeq\\
		 &\V_{ij}\ar[r]_{\subset}\ar@{=}[d]&\C\ar@{-->}[rr]^{\exists! \phi\in \GGL}&&\C&\V_{ji}\ar[l]^{\supset}\ar@{=}[d]&
		 \\ \V_i\ar[d]^\pi&\ar[l]\V_{ij}\ar[d]^{\pi}\ar[r]_{\subset}&\pi^{-1}(\W_{ij})\ar[u]\ar@{-->}[rr]^\phi\ar[d]^\pi&&\pi^{-1}(\W_{ji})\ar[u]\ar[d]^{\pi}&\ar[l]^{\supset}\V_{ji}\ar[r]\ar[d]^{\pi}&\V_j \ar[d]^\pi 
		 \\ \W_i &\ar[l]\W_{ij}\ar@{=}[r]&\W_{ij}\ar@{-->}[rr]^\phi&&\W_{ji}\ar@{=}[r]&\W_{ij}\ar[r]& \W_j
		 } $$
		
		Since $\Sigma_0$ is acausal, the projection the maps $\Sigma_0\cap \U_i\rightarrow \W_i$ are injective and by definition surjective; $\Sigma_0$ as well as all the $\W_i$ are 2-dimensional manifolds, by invariance of domain the maps $\Sigma_0\cap \U_i\rightarrow \W_i$ are then homeomorphisms.
		The $\C$-structure on $M$ thus induces on $\Sigma_0$ a singular $\H^2$-structure, we call $\Sigma$ this singular $\H^2$-manifold. Proceeding the same way around singular points of $M$, the local models $\C_\alpha$ of $M$ induces a local model $\H_ \alpha$ for each singular point of $\Sigma$. The suspension $\susp(\Sigma)$ of $\Sigma$ is then given by the induced gluing of the cones $\pi^{-1}_{\alpha_i}(\W_i)$ along the $\pi^{-1}_{\alpha}(\W_{ij})$.
		
		One can then define a natural map $M\xrightarrow{\iota} \susp(\Sigma)$ on each chart $(\U,\V,\varphi)$ of the $(\C_\alpha)_{\alpha\geq0}$-atlas of $M$ with $\V\subset \C_\alpha$ as 
		$\iota : \U \rightarrow \pi_\alpha^{-1}(\pi_\alpha(\V)), x\mapsto \varphi(x)$. By construction,  the map $\iota$ is an injective a.e. $\C$-morphism.  Since $M$ is Cauchy-maximal and Cauchy-compact by Proposition 4 in \cite{BTZI}, the map $\iota$ is surjective thus an isomorphism.

		\end{proof}
		\begin{cor} Any radiant spacetime admits a embedded natural $\H^2_{>0}$-surface  which is a Cauchy-surface of its $\C_{>0}$ part.	
		 
		\end{cor}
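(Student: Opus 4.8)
The plan is to extract the surface directly from the structure Theorem. By Theorem~\ref{theo:radiant_struc} we may write $M\simeq\susp(\Sigma)$ for some compact singular $\H^2_{\geq0}$-manifold $\Sigma$; let $B\subset\Sigma$ be its finite set of cusp singularities and put $\Sigma_{>0}:=\Sigma\setminus B$, a singular $\H^2_{>0}$-manifold (hyperbolic, with finitely many conical singularities and finitely many cuspidal ends). First I would identify the $\C_{>0}$-part of $M$. Since $\susp$ is a functor and, over a cusp neighbourhood, $\susp$ is $\pi_0^{-1}$ of that neighbourhood in $\C_0$, deleting the BTZ line from $\C_0$ leaves $\reg(\C_0)$, which is a $\C$-manifold, hence a $\C_{>0}$-manifold; all other local models of $M$ being $\C_\alpha$ with $\alpha>0$, the maximal open sub-$\C_{>0}$-manifold $M_{>0}$ of $M$ is exactly $M$ with its BTZ lines removed, and $M_{>0}\simeq\susp(\Sigma_{>0})$. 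As in the discussion of $\susp$ one checks that $\susp(\Sigma_{>0})$ is causal with compact diamonds, hence globally hyperbolic.

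Next I would build the candidate surface from the local pictures of the leaf spaces. For each $\alpha\geq0$ recall from the proof of the Proposition on $\H^2_\alpha$ the spacelike surface $\mathcal H^*\subset\C_\alpha$; for $\alpha>0$ adjoin the singular point to get $\widehat{\mathcal H}_\alpha$ (carrying the $\H^2_\alpha$-structure), and set $\widehat{\mathcal H}_0:=\mathcal H^*$. In $\C_\alpha$ this surface is the level set $\{Q=1\}$ of the height function $Q$, hence invariant under the structure group $\GGL$, which preserves $Q$; therefore the suspension gluings, induced by elements of $\GGL$, carry these surfaces onto one another and assemble them into an embedded surface $S=Q^{-1}(1)$, where now $Q\colon M_{>0}\to\RR_+$ is the height function of $M_{>0}$ (defined since $M_{>0}$ is a $\C_{>0}$-manifold and $\GGL$ preserves $Q$). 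The surface $S$ meets $\sing$ only at conical points on the massive lines, so $S\subset M_{>0}$. By the Proposition on $\H^2_\alpha$, the $\C_{>0}$-structure of $M$ induces on $S$ exactly the $\H^2_{>0}$-structure of $\Sigma_{>0}$; and the projection $\pi\colon M_{>0}\to\Sigma_{>0}$ restricts to a continuous bijection $S\to\Sigma_{>0}$ --- each leaf meets $\{Q=1\}$ exactly once, $Q$ being a homeomorphism of each leaf onto $(0,+\infty)$ --- hence, by invariance of domain, to a homeomorphism. Thus $S$ is the desired embedded natural $\H^2_{>0}$-surface; there remains to see that it is a Cauchy surface of $M_{>0}$.

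This last step is the heart of the argument. First, $Q$ is a time function on $M_{>0}$: in a $\C_\alpha$-chart it is the restriction of the height function of $\C_\alpha$, which increases strictly along future causal curves --- this is the reverse triangle inequality in $I^+(O)$ when $\alpha>0$ (extended over the massive line), and a direct computation in $\mass{0}$ when $\alpha=0$, where moreover the cylindrical coordinates $\r$ and $\tau$ are non-decreasing along future causal curves. In particular $S=Q^{-1}(1)$ is acausal. Second, every inextendible causal curve $\gamma$ of $M_{>0}$ meets $S$: since $Q\circ\gamma$ is continuous and strictly increasing, it suffices to show that $Q\to+\infty$ at the future end and $Q\to0$ at the past end of $\gamma$, for then $1$ is attained, and exactly once. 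At the future end, $\gamma$ cannot accumulate on a BTZ line (there $\r$, and hence $Q$, would decrease), so its projection to $\Sigma_{>0}$ eventually stays in a compact $K$ away from the cuspidal ends; if $Q$ were bounded, $\gamma$ would eventually be confined to the compact set $\pi^{-1}(K)\cap\{c\leq Q\leq c'\}$, impossible for a future-inextendible causal curve of the globally hyperbolic $M_{>0}$. At the past end, if $\gamma$ accumulates on a BTZ line then $\tau$ remains bounded while $\r\to0$, forcing $Q\to0$; otherwise $\gamma$ is past-inextendible already in $M$, and the same compactness argument, now using that $\Sigma$ is compact together with the fact that in each $\C_\alpha$ past-inextendible causal curves tend to $\{Q=0\}$, again gives $Q\to0$. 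Hence $S$ is a Cauchy surface of $M_{>0}$, which proves the corollary. The main obstacle is exactly this regularity of the height function on the $\C_{>0}$-part --- ``$Q\to+\infty$ toward the future, $Q\to0$ toward the past'' along inextendible causal curves --- which is handled by reducing to the explicit models via the global hyperbolicity of $M$ and $M_{>0}$ and the monotonicity of the radial coordinate near the BTZ lines.
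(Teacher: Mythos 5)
The paper states this corollary without proof, and your route is clearly the intended one: write $M\simeq\susp(\Sigma)$ via Theorem~\ref{theo:radiant_struc}, identify the $\C_{>0}$-part with $\susp(\Sigma\setminus\{\text{cusps}\})$, and take for the natural surface the level set $Q^{-1}(1)$, i.e.\ the glued copies of the hyperboloids $\mathcal H^*$ completed by the conical points on the massive lines. Your identification of $S$ with $\Sigma_{>0}$ through the leaf projection, and the monotonicity of $Q$ along future causal curves (including the computation in the BTZ model), are correct.

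There is, however, a non-sequitur in the step you yourself call the heart of the argument. You infer that, since a future-inextendible causal curve $\gamma$ of $M_{>0}$ cannot accumulate on a BTZ line (true, because $\r$ is non-decreasing toward the future), ``its projection to $\Sigma_{>0}$ eventually stays in a compact $K$ away from the cuspidal ends''. Staying away from the BTZ line is not the same as projecting away from the cusp: in $\C_0$ the curves $\tau\mapsto(\tau,\r_0,\theta_0)$ with $\r_0>0$ fixed are future lightlike, remain at constant coordinate distance $\r_0$ from the singular line, and yet their projections converge to the cusp point of $\H^2_0$, since the leaf through $(\tau,\r_0,\theta_0)$ is labelled by $\r_0/\tau\to0$; more generally the preimage of a cusp neighbourhood is $\{0<\r<\varepsilon\tau\}$, which contains points with $\r$ arbitrarily large. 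Your imprisonment argument therefore does not cover future-inextendible curves whose projections keep returning to the cusp region, and these genuinely exist. The gap can be closed by treating that case separately: along such a curve one has $\r\geq\r(s_0)>0$ on the relevant portion while $\r/\tau\to0$ along a subsequence, hence $\tau\to+\infty$ and $Q=\r\,(2\tau-\r)\geq\r(s_0)(2-\varepsilon)\tau\to+\infty$, so $Q\to+\infty$ follows from monotonicity anyway. But as written the dichotomy ``either accumulates on a BTZ line or projects into a compact set'' is false, and the conclusion that $1$ is attained by $Q\circ\gamma$ is not established for all inextendible causal curves.
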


		\begin{lem}\label{lem:future_radiant} Let $M$ be a radiant spacetime and let $\Sigma\subset \M$ be a Cauchy-surface. Denote by $\R:M\rightarrow \Sigma$ the function that associate to $x\in M$ the unique intersection point with $\Sigma$ of the leave  through $x$ of the natural foliation of $M$; denote as well $M_{>0}$ the $\C_{>0}$-part of $M$.
		
		Then, 
		$$J^+_M(\Sigma) = \overline{\{x\in M_{>0}~|~ Q(x)\geq Q(\R(x))\}}.$$ 
		\end{lem}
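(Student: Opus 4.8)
The plan is to prove first the intermediate identity
\[
J^+_M(\Sigma)\cap M_{>0}=\{x\in M_{>0}\ |\ Q(x)\geq Q(\R(x))\},
\]
and then to recover the stated formula by taking closures, the closure being there precisely to absorb the boundary behaviour along the BTZ lines. The geometric fact underlying everything is that $x$ and $\R(x)$ always lie on a common leaf $\ell$ of the natural foliation, and that when $x\in M_{>0}$ this leaf is everywhere timelike: in the local models $\C_\alpha$ with $\alpha>0$ the leaves are exactly the future timelike geodesic rays issued from the (missing) apex together with the timelike massive-particle line; in particular $\R(x)\in M_{>0}$ as well, and $\ell$ is a connected inextendible causal curve, hence crosses $\Sigma$ exactly once, so $\R$ is well defined. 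A one-chart computation in $\C_\alpha$ shows that, along such a ray, $Q$ read off the affine parameter is strictly increasing; globalising this along the connected curve $\ell$ (the foliation being coherently future oriented) gives the monotonicity lemma I would use: $Q$ is strictly increasing along any future timelike leaf, and for $p,q$ on a common timelike leaf one has $p\leq q\iff Q(p)\leq Q(q)$ and $p\ll q\iff Q(p)<Q(q)$.

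Granting this, the intermediate identity is short. If $x\in M_{>0}$ and $Q(x)\geq Q(\R(x))$, then $\R(x)\leq x$ along $\ell$ and $\R(x)\in\Sigma$, so $x\in J^+_M(\Sigma)$. Conversely, let $x\in M_{>0}\cap J^+_M(\Sigma)$ and pick $q\in\Sigma$ with $q\leq x$; if $Q(x)<Q(\R(x))$ held, then $x\ll\R(x)$ by the monotonicity lemma, hence $q\ll\R(x)$ with $q$ and $\R(x)$ both in the Cauchy surface $\Sigma$, which is impossible since a future timelike curve joining $q$ to $\R(x)$ would meet $\Sigma$ more than once. Hence $Q(x)\geq Q(\R(x))$.

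Finally I would pass to closures. Since $M$ is globally hyperbolic and $\Sigma$ is compact, $J^+_M(\Sigma)$ is closed, so closing up the intermediate identity already gives $\overline{\{x\in M_{>0}\,|\,Q(x)\geq Q(\R(x))\}}\subseteq J^+_M(\Sigma)$. For the reverse inclusion, take $x\in J^+_M(\Sigma)$: if $x\in M_{>0}$ there is nothing to do; otherwise $x$ lies on a BTZ line, the only part of $M$ outside $M_{>0}$. Choosing $q\in\Sigma$ with $q\leq x$ and producing, in the local model $\C_0$, a future timelike curve issued from $x$, one gets that $I^+_M(x)$ is a non-empty open set with $x\in\overline{I^+_M(x)}$ and $I^+_M(x)\subseteq I^+_M(\Sigma)\subseteq J^+_M(\Sigma)$; intersecting with the open dense set $M_{>0}$ then shows $x\in\overline{J^+_M(\Sigma)\cap M_{>0}}$, which finishes the proof.

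I expect the two delicate points to be, on the one hand, the careful globalisation of the chart-wise monotonicity of $Q$ along a leaf that traverses several charts — this is where one genuinely needs that leaves are connected inextendible causal curves meeting $\Sigma$ exactly once, so that both $\R$ and the monotonicity statement are meaningful — and, on the other hand, the boundary bookkeeping near the BTZ lines, which is exactly the reason the right-hand side of the statement is a closure rather than the naive set.
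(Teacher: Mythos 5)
Your proof is correct, but it reaches the reverse inclusion $J^+_M(\Sigma)\subseteq\overline{\{x\in M_{>0}\mid Q(x)\geq Q(\R(x))\}}$ by a genuinely different route than the paper. The paper only ever proves the ``easy'' direction, namely that monotonicity of $Q$ along timelike leaves gives $\{x\in M_{>0}\mid \pm Q(x)\geq\pm Q(\R(x))\}\subset J^\pm_M(\Sigma)$, and then closes the argument by a purely set-theoretic complementarity trick: the two closed sets $A_\pm$ cover $M$ (density of $M_{>0}$), their intersection is pinched between $\Sigma$ and $J^+_M(\Sigma)\cap J^-_M(\Sigma)=\Sigma$, and together with $J^+_M(\Sigma)\cup J^-_M(\Sigma)=M$ this forces $A_\pm=J^\pm_M(\Sigma)$ without ever arguing pointwise in the other direction. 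You instead prove the exact identity $J^+_M(\Sigma)\cap M_{>0}=\{x\in M_{>0}\mid Q(x)\geq Q(\R(x))\}$ by contradiction via achronality of the Cauchy surface (if $Q(x)<Q(\R(x))$ then $q\leq x\ll\R(x)$ produces a causal curve meeting $\Sigma$ twice), and then treat the BTZ points separately by exhibiting a future timelike curve leaving the singular line in the model $\C_0$ and intersecting with the dense open set $M_{>0}$. Your version costs an explicit local computation at the singular lines and an appeal to achronality/push-up, but it buys the sharper unclosured statement on $M_{>0}$, which the paper's squeeze argument does not directly yield; the paper's version is shorter, handles $J^+$ and $J^-$ simultaneously, and never needs to look at the BTZ lines at all. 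Both rest on the same three ingredients (monotonicity of $Q$ along leaves, closedness of $J^\pm_M(\Sigma)$ from global hyperbolicity and compactness of $\Sigma$, and density of $M_{>0}$), so either write-up is acceptable; the one imprecision worth fixing in yours is that the concatenated curve from $q$ to $\R(x)$ is only causal (not timelike) on its first leg, which is harmless since a future causal curve meeting a Cauchy surface twice is already the desired contradiction.
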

		\begin{proof}
		Since $\Sigma$ is a Cauchy-surface of $M$, $J^+_M(\Sigma)\cap J^-_M(\Sigma) = \Sigma$ and $J^+_M(\Sigma)\cup J^-_M(\Sigma) = M$. 
		Since $Q$ is increasing toward the future along the time-like leaves of the natural foliatiion of $M$, then $$\{x\in M_{>0}~|~ \pm Q(x)\geq  \pm Q(\R(x))\} \subset J^\pm_M(\Sigma).$$  Furthermore, since $M$ is globally hyperbolic and $\Sigma$ compact, $J^{\pm}_M(\Sigma)$ are closed. Hence 
		$$ \overline{\left\{x\in M_{>0}~|~ \pm Q(x)\geq  \pm Q(\R(x))\right\}} \subset J^\pm_M(\Sigma).$$
		Since $M_{>0}$ is dense in $M$, we have 
		$$ \bigcup_{\epsilon \in \{+,-\}}\overline{\left\{x\in M_{>0}~|~ \epsilon Q(x)\geq  \epsilon Q(\R(x))\right\}}= M $$
		furthermore 
		 $$\Sigma \subset \bigcap_{\epsilon \in \{+,-\}}\overline{\left\{x\in M_{>0}~|~ \epsilon Q(x)\geq  \epsilon Q(\R(x))\right\}} ~\subset~  J^+_M(\Sigma)\cap J^-_M(\Sigma) = \Sigma$$
		and it follows that 
		$$ \overline{\left\{x\in M_{>0}~|~ \pm Q(x)\geq  \pm Q(\R(x))\right\}} = J^\pm_M(\Sigma).$$
		
		 \end{proof}

				\section{Convex $\tau$-suspension and polyhedral embedding}
				\label{sec:tau_susp}

					In the present section we shall define and study a construction, we call $\tau$-suspension of a singular locally Euclidean surface $(\Sigma,S)$.  Every cellulation considered are geodesic in the sense that 2-facets are isometric to convex polygons of the Euclidean space $\E^2$ or equivalently if the image of developping map of each cell is a convex polygon of $\E^2$.
					\begin{defi}
					Let $(\Sigma,S)$ be a compact Euclidean surface with conical singularities with a finite subset $S$ of marked points such that $\sing(\Sigma)\subset S$,
					and let $\mathcal C$ be a cellulation of $(\Sigma,S)$.
				$\mathcal C$ is adapted if the set of vertices of $\mathcal C$ is exactly $S$.

					\end{defi}
					
					\begin{defi} Let $(\Sigma,S)$ be a compact Euclidean surface with conical singularities with a finite subset $S$ of marked points such that $\sing(\Sigma)\subset S$. Let $M$  be a singular $\mass{}$-manifold.
					An embedding $\iota : \Sigma \rightarrow M$ is  polyhedral if there exists a geodesic adapted cellulation $\mathcal C$ of $(\Sigma,S)$ such that on each cell $C$, the restriction of $\iota$ to  
					 $Int(C)$ is an isometric affine map into the regular locus of $L$.
					\end{defi}
					
					The notion of isometric affine map is well defined in this context. Indeed,  both $\E^2$ and $\mass{}$ are affine spaces endowed with a semi-Riemannian metric, the regular locii of $\Sigma$ and $M$ are endowed with a $\E^2$-structure and a $\mass{}$-structure respectively.

					The quadratic form on $\mass{}$ is a $\SO_0(1,2)$-invariant  function defined on the underlying vector space $\overrightarrow{\mass{}}$: 
			$$\fonction{\quadform}{\overrightarrow{\mass{}}}{\RR}{(t,x,y)}{t^2-x^2-y^2}$$
		We extend the definition of $\quadform$ to $\mass{}$ via the identification $\mass{}\rightarrow \overrightarrow{\mass{}}, x\mapsto x-O$.
		The map $T$ is positive on the future of the origin in $\mass{}$, namely $J^+(O):= \{(t,x,y)\in \RR^3 ~|~ -t^2+x^2+y^2\leq 0 \text{~and~} t>0\}$;  furthermore, it  induces a Cauchy time function on  $I^+(O)$ i.e. a increasing map for $\leq$ which is surjective on every inextendible future causal curve of $I^+(O)$. 
		Since the $T$ is $\SO_0(1,2)$-invariant, it induces a well defined non $\leq$-decreasing function on every radiant 2+1 singular spacetimes.

		In a radiant 2+1 singular spacetimes, the surface $T = 1$ is a hyperbolic surface with conical singularities and cusps which is complete and has finite volume. One can prove that the association $M \mapsto \{T=1\}$ induces a bijection from the deformation space of marked radiant 2+1 singular spacetimes to the deformation space of marked finite volume complete hyperbolic surfaces with conical singularities and cusps.
		
		\subsection{Affine embedding of triangles into $\mass{}$}

							\begin{lem}\label{lem:prolongement}
								Let $T=[ABC]$ be a non degenerated Euclidean triangle and let $\tau : \{A,B,C\} \rightarrow \RR$.
								
								There exists a unique $(\tau_0,\omega)\in \RR\times \E^2$
								such that the map
								 $$\fonction{\widetilde \tau}{\E^2}{\RR}{x}{\tau_0-d(x,\omega)^2 }$$
								 extends $\tau$.

								 Furthermore, if $\tau\geq 0$  then $\tau_0>0 $ and $\widetilde \tau > 0$ on the triangle $[ABC]$ except possibly at $A,B$ or $C$.
							\end{lem}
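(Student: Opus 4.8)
The plan is to reduce the statement to an affine interpolation problem. Fix any auxiliary origin $O'\in\E^2$ and write $\vec x:=x-O'$. For $\omega\in\E^2$ one has $d(x,\omega)^2=|\vec x|^2-2\langle \vec x,\omega-O'\rangle+|\omega-O'|^2$, so the map $x\mapsto \tau_0-d(x,\omega)^2$ is exactly the map $x\mapsto f(x)-|\vec x|^2$, where $f(x)=(\tau_0-|\omega-O'|^2)+2\langle \vec x,\omega-O'\rangle$ is affine; conversely, any affine $f(x)=c_0+2\langle \vec x,v\rangle$ arises this way from $\omega:=O'+v$ and $\tau_0:=c_0+|v|^2$. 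Hence finding a suitable $(\tau_0,\omega)$ amounts to finding an affine function $f:\E^2\to\RR$ that agrees with $x\mapsto \tau(x)+|x-O'|^2$ at the three points $A,B,C$.

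Existence and uniqueness of such an $f$ are the standard fact that an affine function on $\E^2$ is determined by its values at three affinely independent points, and $A,B,C$ are affinely independent precisely because $T=[ABC]$ is non-degenerate. Reading off the linear part of $f$ gives $\omega$, and then its constant part gives $\tau_0$; this proves existence. For uniqueness, if $(\tau_0,\omega)$ and $(\tau_0',\omega')$ both extend $\tau$, then $\widetilde\tau+|\cdot-O'|^2$ and $\widetilde\tau'+|\cdot-O'|^2$ are affine and agree on $A,B,C$, hence coincide; equating linear parts gives $\omega=\omega'$ and then equating constants gives $\tau_0=\tau_0'$. The answer does not depend on the auxiliary point $O'$, since $(\tau_0,\omega)$ is intrinsically determined by $\widetilde\tau$ (its maximal value is $\tau_0$, attained only at $\omega$).

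For the last assertion, I would exploit that $\widetilde\tau(x)=\tau_0-d(x,\omega)^2$ is \emph{strictly} concave on $\E^2$ and that $\max_{\E^2}\widetilde\tau=\widetilde\tau(\omega)=\tau_0$. Assume $\tau\geq 0$. Let $x\in[ABC]$ not be a vertex and write $x=\lambda_A A+\lambda_B B+\lambda_C C$ with $\lambda_P\geq 0$ and $\sum_P\lambda_P=1$; since $x$ is not a vertex, at least two of the coefficients are positive, and the corresponding vertices are distinct, so strict concavity (equality case of Jensen's inequality) yields $\widetilde\tau(x)>\sum_P\lambda_P\widetilde\tau(P)=\sum_P\lambda_P\tau(P)\geq 0$. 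Thus $\widetilde\tau>0$ on $[ABC]$ off the vertices, and a vertex is the only place where $\widetilde\tau$ may vanish, namely when $\tau(P)=0$ — which is exactly the stated exception. Finally, choosing any interior point $x_0$ of $T$, we get $\tau_0=\widetilde\tau(\omega)\geq\widetilde\tau(x_0)>0$.

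I do not anticipate a genuine obstacle: the result is a three-point affine interpolation combined with a one-line concavity estimate on a triangle. The only points that require a little care are (i) phrasing the correspondence between ``$\widetilde\tau$ has circular level sets'' and ``$\widetilde\tau+|\cdot-O'|^2$ is affine'' so that it is manifestly independent of the choice of $O'$, and (ii) using the \emph{strict} concavity precisely — that is, the equality case of Jensen's inequality — in order to obtain strict positivity at non-vertices rather than merely $\widetilde\tau\geq 0$.
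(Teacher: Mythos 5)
Your proof is correct and follows essentially the same route as the paper: your reduction of $x\mapsto\tau_0-d(x,\omega)^2$ to ``affine function minus $|x-O'|^2$'' is just a coordinate-free phrasing of the paper's explicit linear system (whose non-singularity is exactly the affine independence of $A,B,C$), and your strict-concavity/Jensen argument for positivity on the triangle off the vertices is the same as the paper's appeal to strict concavity. No gaps.
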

							\begin{proof} Identify $\E^2$ to $\RR^2$ via cartesian coordinates $(x,y)$; without loss of generality, we can assume $A=(0,0)$ and we write $B=(x_B,y_B)$ and $C=(x_C,y_C)$.
							 Finding  $\widetilde \tau$ is equivalent to solving the following system in $\omega=(x_\omega,y_\omega)$ and $\tau_0$.
							 \begin{eqnarray*}
							   \left\{ \begin{array}{l}
										\displaystyle \tau_A=\tau_0- x_\omega^2-y_\omega^2  \\
										\displaystyle  \tau_B=\tau_0- (x_\omega-x_B)^2-(y_\omega-y_B)^2\\
										\displaystyle \tau_C=\tau_0- (x_\omega-x_C)^2-(y_\omega-y_C)^2
									   \end{array}
									\right. &\Leftrightarrow& \left\{ \begin{array}{l}
										 \displaystyle x_\omega^2+y_\omega^2+\tau_A= \tau_0 \\
										\displaystyle  \tau_B-\tau_A-x_B^2-y_B^2=2x_\omega x_B +2y_\omega y_B\\
										\displaystyle  \tau_C-\tau_A-x_C^2-y_C^2=2x_\omega x_C +2y_\omega y_C\\
									   \end{array}
									\right.  \\
							 \end{eqnarray*}
							 Since $A,B,C$ are in general position, the second and third line form a non singular linear system with unknown $(x_\omega,y_\omega)$. The first line is already solved.
							 Existence and uniqueness of $\widetilde \tau$ follows.

							 Assume $\tau\geq 0$ (resp. $\tau>0$), since $A,B,C$ are distinct, $\omega$ is distinct from one of them, say $P \in \{A,B,C\}$, then $0\leq \tau_P = \tau_0-d(P,\omega)^2< \tau_0$.
							 Furthermore, $\widetilde \tau$ is strictly concave then its minimum on $[ABC]$  is reached in the set of extremal points eg $\{A,B,C\}$ and nowhere else.

							 \end{proof}

							\begin{lem} \label{lem:isom_equiv}
							Let $A,B,A',B' \in J^+(O)$, $A\neq B$, $A'\neq B'$, such that $\quadform(A)=\quadform(A')$, $\quadform(B)=\quadform(B')$ and $\quadform(B-A)=\quadform(B'-A')$, there exists a unique isometry $\gamma\in \SO_0(1,2)$ such that 
							$\gamma A = A'$ and $\gamma B=B'$.  Furthermore, if $C$ is on a given side of the oriented plane $(OAB)$ then $\gamma C$ is on the same side of $(OA'B')$.
							\end{lem}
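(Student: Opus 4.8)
The plan is to reduce everything to linear algebra, using that $\SO_0(1,2)$ acts on $\mass{}$ by linear maps fixing $O$ and that the associated bilinear form $\langle u,v\rangle:=\tfrac{1}{2}\big(\quadform(u+v)-\quadform(u)-\quadform(v)\big)$ is recovered from $\quadform$. With the identity $\langle A,B\rangle=\tfrac{1}{2}\big(\quadform(A)+\quadform(B)-\quadform(B-A)\big)$, the three hypotheses say exactly that the Gram matrices of $(A,B)$ and of $(A',B')$ for $\langle\cdot,\cdot\rangle$ coincide. Since the statement refers to the oriented plane $(OAB)$, I would work under the (implicit) assumption that $O,A,B$ are affinely independent, i.e. $A,B$ linearly independent; set $P:=\RR A+\RR B$ and $P':=\RR A'+\RR B'$. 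The first thing to record is that $P$ is non-degenerate: a lightlike $2$-plane contains causal vectors only along its single null direction (any vector orthogonal to that null generator is spacelike), so it cannot contain two independent future causal vectors; hence $\quadform|_P$ is non-degenerate, its Gram matrix has rank $2$, so $A',B'$ are linearly independent as well and $\quadform|_{P'}$ is likewise non-degenerate.

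For uniqueness, if $\gamma_1,\gamma_2\in\SO_0(1,2)$ both send $A$ to $A'$ and $B$ to $B'$, then $\delta:=\gamma_2^{-1}\gamma_1$ fixes $A$ and $B$, hence $P$ pointwise. As $P$ is non-degenerate we have $\overrightarrow{\mass{}}=P\oplus P^{\perp}$ with $P^{\perp}$ a line; $\delta|_{P^{\perp}}$ is an isometry of a non-degenerate $1$-dimensional form, so $\delta|_{P^{\perp}}=\pm\mathrm{id}$, and $\det\delta=1$ combined with $\delta|_P=\mathrm{id}$ forces $\delta|_{P^{\perp}}=\mathrm{id}$. Thus $\delta=\mathrm{id}$ and $\gamma_1=\gamma_2$.

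For existence, equality of the Gram matrices says the linear map $\phi_0\colon P\to P'$, $A\mapsto A'$, $B\mapsto B'$, is an isometry of $(P,\quadform|_P)$ onto $(P',\quadform|_{P'})$. By Witt's extension theorem --- or, concretely, by sending a $\quadform$-generator of the line $P^{\perp}$ to one of $(P')^{\perp}$ with the same $\quadform$-value, which exists because $P$ and $P'$ have equal signature --- $\phi_0$ extends to some $\phi\in O(1,2)$. The only elements of $O(1,2)$ sending $A\mapsto A'$ and $B\mapsto B'$ are $\phi$ and $r\phi$, where $r$ is the reflection fixing $P'$ pointwise and acting by $-1$ on $(P')^{\perp}$; exactly one of $\phi,r\phi$ has determinant $1$, call it $\gamma$. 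It remains to check $\gamma$ is orthochronous, and this is where $J^+(O)$ enters: $\gamma$ sends the nonzero future causal vector $A$ to the nonzero future causal vector $A'$, and since a non-orthochronous isometry interchanges the two nappes of the solid light cone, $\gamma$ must preserve $J^+(O)$; hence $\gamma\in\SO_0(1,2)$.

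For the last assertion, orient the plane $(OAB)$ by declaring $C\notin(OAB)$ to be on the positive side when $(A,B,C)$ is a positively oriented basis of $\overrightarrow{\mass{}}$, and $(OA'B')$ by the analogous rule with $(A',B',\cdot)$. Because $\gamma\in\SO_0(1,2)$ preserves the orientation of $\mass{}$, we have $\det\gamma=1$ on $\overrightarrow{\mass{}}$, so for $C\notin(OAB)$, $\det(A',B',\gamma C)=\det(\gamma A,\gamma B,\gamma C)=\det\gamma\cdot\det(A,B,C)=\det(A,B,C)$, whence $\gamma C$ lies on the side of $(OA'B')$ corresponding to the side of $(OAB)$ that contains $C$. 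The step I expect to require the most care is the descent from $O(1,2)$ to its identity component $\SO_0(1,2)$ --- pinning down the determinant and the time-orientation at once --- which is precisely what the hypothesis $A,B,A',B'\in J^+(O)$ buys us; the affine independence of $O,A,B$ (needed both to make $(OAB)$ a plane and to get non-degeneracy of $P$) is the other point to flag.
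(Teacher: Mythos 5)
Your proof is correct, and it takes a genuinely different route from the one in the paper. The paper argues by orbit and stabilizer: it first uses the transitivity of $\SO_0(1,2)$ on each level set $(\quadform_{|J^+(O)})^{-1}(\tau_0)$ to send $A$ to $A'$ by some $\gamma_0$, and then identifies the orbit of $\gamma_0 B$ under the one-parameter stabilizer of $A'$ (elliptic or parabolic according to whether $(OA')$ is timelike or lightlike) with the set $\{x\in J^+(O) : \quadform(x)=\quadform(B'),\ \quadform(x-A')=\quadform(B'-A')\}$, to which $B'$ belongs by hypothesis. You instead encode the hypotheses as equality of Gram matrices, observe that the span $P=\RR A+\RR B$ of two independent future causal vectors is necessarily non-degenerate, and extend the resulting isometry $P\to P'$ by Witt's theorem, pinning down the correct element of $O(1,2)$ by its determinant and its action on the future cone. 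What your route buys is a clean, simultaneous treatment of all three claims: uniqueness falls out of the splitting $\overrightarrow{\mass{}}=P\oplus P^{\perp}$ (the only isometries fixing $P$ pointwise are $\mathrm{id}$ and the reflection $r$, and $\det$ discriminates them), and the orientation statement is the one-line computation $\det(\gamma A,\gamma B,\gamma C)=\det(A,B,C)$; the paper's orbit--stabilizer argument establishes existence naturally but must handle uniqueness and the side-preservation separately. Your approach also surfaces the implicit hypothesis honestly: when $A$ and $B$ are proportional (so that $(OAB)$ is not a plane), the stabilizer of $A'$ fixes $B'$ as well and uniqueness genuinely fails, so the affine independence of $O,A,B$ that you flag is indeed needed for the statement, not just for your method.
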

							\begin{proof}
							  For all $\tau_0\geq0$, the group $\SO_0(1,2)$ acts transitively on each sets $(\quadform_{|J^+(O)})^{-1}(\tau_0)$. The thus exists some $\gamma_0\in \SO_0(1,2)$ such that $\gamma_0 A = A'$. The stabilizer of $A'$ under the action of $\SO_0(1,2)$ is a 1-parameter subgroup (either parabolic or elliptic depending on wether $(OA')$ is lightlike or timelike); under its action the orbit of $\gamma B$  is $$\left\{x \in J^+(O)~:~ \quadform(x-A') = \quadform\left(\gamma_0 B-A'\right), \quadform(x)= \quadform(\gamma_0 B)\right\}.$$
							  On the one hand, $\quadform(B')=\quadform(B)=\quadform(\gamma_0B)$ and $\quadform(\gamma_0 B-A')=\quadform(\gamma_0(B-A)) = \quadform(B-A)$
							\end{proof}
								
							\begin{prop}\label{prop:affine_emb}
								Let $T=[ABC]$ be an oriented non degenerated Euclidean triangle and let $\tau : \{A,B,C\}\rightarrow \RR_+$. 
								There exists an  isometric direct affine embedding $\iota : T\rightarrow J^+(O)$ such that $\tau = -\quadform\circ \iota_{|\{A,B,C\}}$ where $\iota(T)$ is endowed with the orientation  induced by a future pointing normal vector. 
								
								Furthermore, 
								\begin{itemize}
								 \item such an embedding is unique up to the action of $\SO_0(1,2)$;
								 \item $-\quadform\circ \iota = \widetilde \tau$ where $\widetilde \tau$ is given by Lemma \ref{lem:prolongement}.
								 \end{itemize}

							\end{prop}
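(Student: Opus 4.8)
The plan is to reduce everything to Lemmas~\ref{lem:prolongement} and~\ref{lem:isom_equiv}. I would first apply Lemma~\ref{lem:prolongement} to $T=[ABC]$ and $\tau$ to obtain the unique pair $(\tau_0,\omega)$ with $\widetilde\tau(x)=\tau_0-d(x,\omega)^2$ extending $\tau$; since $\tau\geq 0$, the same lemma also gives $\tau_0>0$. Put $h:=\sqrt{\tau_0}$ and consider the spacelike plane $\Pi:=\{t=h\}\subset\mass{}$, with the Euclidean metric it inherits and the orientation induced by the future unit normal $\partial_t$. Let $\iota$ be the restriction to $T$ of an orientation-preserving Euclidean isometry from $\E^2$ onto $\Pi$ that sends $\omega$ to the orthogonal projection of $O$ onto $\Pi$. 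Then $\iota$ is a direct isometric affine embedding, and because the spatial part of $\iota(x)$ lies at Euclidean distance $d(x,\omega)$ from the projection of $O$, one computes $-\quadform\circ\iota=h^2-d(\cdot,\omega)^2=\widetilde\tau$ on $\E^2$. In particular $-\quadform\circ\iota$ restricts to $\tau$ on $\{A,B,C\}$, and $\iota(x)\in J^+(O)$ precisely where $\widetilde\tau(x)\geq 0$, which by Lemma~\ref{lem:prolongement} holds throughout $T$. This gives existence and, for this $\iota$, the second bullet.

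For the second bullet in general, observe that for \emph{any} isometric affine map $\iota'$ from $\E^2$ into $\mass{}$ with spacelike image, a direct expansion of $\quadform$ along the image plane shows that $-\quadform\circ\iota'$ has the same shape as $\widetilde\tau$ --- an affine function of the point minus its squared Euclidean distance to a fixed point; if in addition $-\quadform\circ\iota'$ agrees with $\tau$ at the three non-collinear points $A,B,C$, then the uniqueness part of Lemma~\ref{lem:prolongement} forces $-\quadform\circ\iota'=\widetilde\tau$. So the identity holds for every embedding satisfying the hypotheses, not merely the one above.

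For uniqueness up to $\SO_0(1,2)$, let $\iota_1$ and $\iota_2$ both satisfy the hypotheses. No triple $O,\iota_k(A),\iota_k(B)$ is collinear: otherwise $\iota_k(B)-\iota_k(A)$ would be proportional to the future causal vector $\iota_k(A)$, hence causal, contradicting that it is a spacelike vector of length $d(A,B)>0$. Since $\iota_k(A),\iota_k(B)\in J^+(O)$ and the quantities $\quadform(\iota_k(A))$, $\quadform(\iota_k(B))$ and $\quadform(\iota_k(B)-\iota_k(A))$ are determined by $\tau$ and $T$ (hence independent of $k$), Lemma~\ref{lem:isom_equiv} provides a unique $\gamma\in\SO_0(1,2)$ with $\gamma\iota_1(A)=\iota_2(A)$ and $\gamma\iota_1(B)=\iota_2(B)$, which moreover carries the side of the plane $(O\iota_1(A)\iota_1(B))$ containing $\iota_1(C)$ onto the corresponding side of $(O\iota_2(A)\iota_2(B))$. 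Now $P:=\gamma\iota_1(C)$ and $P':=\iota_2(C)$ share the same values of $\quadform$, of $\quadform(\cdot-\iota_2(A))$ and of $\quadform(\cdot-\iota_2(B))$; these amount to two affine equations plus one quadratic one, so their common solution set has at most two elements, interchanged by the reflection that fixes the timelike plane $(O\iota_2(A)\iota_2(B))$ pointwise. Since $\iota_1,\iota_2$ are direct with the future-normal orientation and $\gamma$ is orientation preserving, $P$ and $P'$ lie on the same side of that plane, hence $P=P'$. Finally $\iota_2$ and $\gamma\circ\iota_1$ are affine and agree on the non-collinear triple $A,B,C$, so $\iota_2=\gamma\circ\iota_1$ on all of $T$.

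The step I expect to be most delicate is the orientation bookkeeping of the last paragraph: pinning down which side of $(O\iota_k(A)\iota_k(B))$ the ``direct, future-normal'' normalisation forces $\iota_k(C)$ onto, and checking this matches, under $\gamma$, the ``same side'' output of Lemma~\ref{lem:isom_equiv}. This relies in particular on the fact that $\iota_k(C)$ cannot lie on that plane, since $\iota_k(T)$ is spacelike whereas $(O\iota_k(A)\iota_k(B))$ contains the causal vector $\iota_k(A)$. The rest is routine: existence is an explicit computation inside the plane $\{t=h\}$, and the boundary cases $\tau(s)=0$ cause no trouble since $J^+(O)$ is closed, such a vertex being mapped simply to its boundary.
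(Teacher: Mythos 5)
Your proof is correct and follows essentially the same route as the paper: existence by explicitly placing the triangle in the horizontal plane $\{t=\sqrt{\tau_0}\}$ with $\omega$ sent to the foot of the perpendicular from $O$ (the paper's translation by $(\sqrt{\tau_0},-\overrightarrow{O\omega})$ is the same map), and uniqueness via Lemma~\ref{lem:isom_equiv} together with the observation that the two candidate images of $C$ are exchanged by the indirect reflection across the timelike plane $(O\,\iota(A)\,\iota(B))$, so the orientation convention selects exactly one. Your added checks (non-collinearity of $O,\iota(A),\iota(B)$, and the direct verification that $-\quadform\circ\iota'=\widetilde\tau$ for \emph{any} admissible embedding rather than only the constructed one) are details the paper leaves implicit, and they are sound.
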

							\begin{proof}
								Endow  $\mass{}$ with  cartesian coordinates $(x,y,t)$, write $O=(0,0,0)$ the origin and identify $\E^2$ with $\{t=0\}\subset \mass{}$.
								Take $(\tau_0,\omega)\in \RR\times \E^2$ and $\widetilde \tau$ given by Lemma \ref{lem:prolongement} and define $$ \fonction{\iota}{T}{\mass{}}{x}{x+
								\overrightarrow u} \quad \text{with} \quad 
								\overrightarrow u=\begin{pmatrix}\sqrt{\tau_0}\\ -\overrightarrow{O\omega}\end{pmatrix}
	$$
								Write $\omega = (x_\omega,y_\omega)$. For $(x,y)\in T$, we have $$-\quadform\circ \iota(x,y) = \sqrt{\tau_0}^2-(x-x_\omega)^2-(y-y_\omega)^2 = \widetilde \tau(x,y). $$
								Since $\tau\geq 0$, by Lemma \ref{lem:prolongement}, $\widetilde \tau\geq 0$ hence $-\quadform\circ \iota_{|T}\geq0$. Moreover, $\sqrt{\tau_0}>0$ thus $\iota(T)\subset J^+(O)$.  The existence statement follows as well as the second additionnal point.
								
								If $\iota$ and $\iota'$ are two such embeddings, by Lemma \ref{lem:isom_equiv}, there exists a unique isometry sending $\iota(A)$ on $\iota'(A)$ and $\iota(B)$ on $\iota'(B)$. The, there exists exactly two points $P_1,P_2\in J^+(O)$ such that $-\quadform(P_i)=\tau(C)$ and $\d(A,C)^2 = \quadform(\iota(P_i)-\iota(A))$ and  $\d(B,C)^2 = \quadform(\iota(P_i)-\iota(B))$
								for $i\in \{1,2\}$. Since these two points are image from one another by the reflexion across the plane $(O\,\iota(A)\,\iota(B))$ which is indirect and preserves $\leq$, exactly one induces the right orientation.

							\end{proof}

							\begin{defi}[Distance-like function]
								Let $(\Sigma,S)$ be singular locally Euclidean surface.
								A function $f:\Sigma \rightarrow \RR$ is distance-like if
								there exists a geodesic triangulation $\TT$ of $\Sigma$ whose vertices contains $S$ such that for all $T\in \TT$, there exists  
								$\omega \in \E^2$ and $\tau_0 \in \RR$ such that
								$$\forall x\in T, \quad f(x) = \tau_0-\d(\D(x),\omega)^2$$
								where $\D:T\rightarrow \E^2$ is a developping map of $T$.

								Such a triangulation is adapted to $f$ 
							\end{defi}
							
							\begin{rem} 
							 Let $(\Sigma,S)$ be singular locally Euclidean surface, let $M$ be a radiant spacetime. For any polyhedral embedding $\iota:\Sigma\rightarrow M$,  the map $Q\circ \iota : \Sigma\rightarrow \RR_+$ is distance-like.
							\end{rem}

							\begin{prop}\label{prop:prolongement} 
							Let $(\Sigma,S)$ be singular locally Euclidean surface.
							Let  $\TT$ be an adapted triangulation of $(\Sigma,S)$.

									For all $\tau:S\rightarrow \RR$, there exists a unique distance-like extension $\widetilde \tau$ for which $\TT$ is adapted.
							\end{prop}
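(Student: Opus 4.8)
The plan is to reduce the statement to a purely local, triangle-by-triangle construction using Lemma~\ref{lem:prolongement}, and then to verify that the local pieces are forced to agree on shared edges, so that they patch into a single globally well-defined distance-like function.

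\medskip

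\textbf{Construction of $\widetilde\tau$.} Fix the adapted triangulation $\TT$. For each triangle $T\in\TT$, choose a developping map $\D_T:T\to\E^2$; since $\TT$ is adapted the three vertices of $T$ lie in $S$, so $\tau$ restricts to a function on $\{A,B,C\}=\D_T(\text{vertices of }T)$, and by Lemma~\ref{lem:prolongement} there is a unique pair $(\tau_0^T,\omega_T)\in\RR\times\E^2$ with $\tau_0^T-\d(\cdot,\omega_T)^2$ extending $\tau$ over $\D_T(T)$. Define $\widetilde\tau$ on $\mathrm{Int}(T)$ by $x\mapsto \tau_0^T-\d(\D_T(x),\omega_T)^2$; this is independent of the choice of $\D_T$ because any two developping maps of $T$ differ by a Euclidean isometry, which changes $\omega_T$ accordingly and leaves the value $\tau_0^T-\d(\D_T(x),\omega_T)^2$ unchanged. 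By construction $\widetilde\tau$ agrees with $\tau$ at every vertex, and if $\TT$ is adapted to $\widetilde\tau$ then $\widetilde\tau$ is distance-like for $\TT$. So it remains to see that the pieces glue continuously across edges (and around vertices), i.e.\ that $\widetilde\tau$ is genuinely a function on $\Sigma$.

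\medskip

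\textbf{Gluing along an edge.} Let $e$ be an edge of $\TT$ shared by triangles $T$ and $T'$, with endpoints $P,Q\in S$. Unfold $T$ and $T'$ into a common chart of $\E^2$ (possible since a neighbourhood of $\mathrm{Int}(e)$ in $\Sigma$ is isometric to a planar strip, as $e$ contains no singular point in its interior), so that $T\cup T'$ develops to a planar quadrilateral. On $T$ the function $\widetilde\tau$ is $\tau_0^T-\d(\cdot,\omega_T)^2$ and on $T'$ it is $\tau_0^{T'}-\d(\cdot,\omega_{T'})^2$; both restrict along the segment $[PQ]$ to a function of the form $a-\d(\cdot,\omega)^2$ taking the prescribed values $\tau(P),\tau(Q)$ at the endpoints. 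A function $a-\d(x,\omega)^2$ restricted to a line is determined, along that line, by its affine part: writing it as $a-|x|^2+2\langle x,\omega\rangle-|\omega|^2$, the ambiguity is precisely the quadratic term $-|x|^2$, which is common to both sides. Hence along $[PQ]$ the two expressions differ by an affine function vanishing at the two distinct points $P,Q$, namely they differ by the affine function $0$; so $\widetilde\tau$ extends continuously across $\mathrm{Int}(e)$. Consistency at a vertex $s\in S$ is automatic since every triangle incident to $s$ takes the value $\tau(s)$ there; thus $\widetilde\tau$ is a well-defined continuous function on $\Sigma$, distance-like with $\TT$ adapted.

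\medskip

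\textbf{Uniqueness.} If $g:\Sigma\to\RR$ is distance-like, extends $\tau$, and $\TT$ is adapted to $g$, then on each $T\in\TT$ there are $(a,\omega)$ with $g\circ\D_T^{-1}=a-\d(\cdot,\omega)^2$, and since $g$ agrees with $\tau$ at the three vertices of $T$, Lemma~\ref{lem:prolongement} forces $(a,\omega)=(\tau_0^T,\omega_T)$; hence $g=\widetilde\tau$ on $\mathrm{Int}(T)$ for every $T$, and by continuity $g=\widetilde\tau$.

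\medskip

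The only genuinely delicate point is the edge-gluing step: one must be sure that the matching conditions imposed at the two endpoints $P,Q$ of a shared edge are enough to force the two locally defined quadratic functions to coincide along the whole edge. The argument above turns on the observation that these functions are quadratic with a fixed leading term $-|x|^2$, so their difference along the edge is affine and hence determined by two point values; the hypothesis that $\TT$ is adapted (so that both endpoints lie in $S$, where $\tau$ and hence the value is prescribed) is exactly what makes this work, and this is where the proof would otherwise break down.
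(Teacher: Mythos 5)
Your proof is correct and follows the same route as the paper, whose entire argument is ``Apply Lemma~\ref{lem:prolongement} to each triangle of $\TT$.'' The only difference is that you explicitly verify the edge-compatibility of the triangle-wise extensions (via the observation that the difference of two such quadratics along a line is affine and vanishes at both endpoints in $S$), a check the paper leaves implicit.
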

							\begin{proof} Apply Lemma \ref{lem:prolongement} to each triangle of $\TT$.
							\end{proof}
							\begin{defi} 
							Let $(\Sigma,S)$ be singular locally Euclidean surface.
							For an adapted triangulation  $\TT$ of  $(\Sigma,S)$ and $\tau:S\rightarrow \RR_+$ we denote by
							  $\widetilde \tau_{\TT,\tau}$ the extension of $\tau$ given by Proposition \ref{prop:prolongement}.
							  
							\end{defi}

							\begin{defi}[Equivalent Triangulations ]\label{defi:triang_equi}
							Let $(\Sigma,S)$ be singular locally Euclidean surface.
								Let  $\tau:S\rightarrow \RR_+$,
								two triangulations $\TT_1,\TT_2$ of $(\Sigma,S)$ are $\tau$-equivalent if $$\widetilde \tau_{\tau,\TT_1}=\widetilde \tau_{\tau,\TT_2}.$$
							\end{defi}

							\begin{defi}[$\tau$-suspension] \label{defi:tau_susp}
							Let $(\Sigma,S)$ be singular locally Euclidean surface and 
							$\widetilde \tau : \Sigma \rightarrow \RR$ distance-like. 
	
							Choose a triangulation $\TT$ adapted to $\widetilde \tau$ (but not necessarily adapted to $(\Sigma,S)$.
							For each  $T\in \TT$, denote by $\iota_T:T\rightarrow J^+(O)$ the affine embedding of $T$ given by Proposition \ref{prop:affine_emb} and denote by $C_T:=\{t\cdot\iota_T(x) ~:~ t\in \RR_+^*, x\in T\}$. For each edge $e$ of $\TT$ bounding $T_1,T_2 \in \TT$ , let $\gamma_e$ be the isometry given by Lemma \ref{lem:isom_equiv} sending the face of $C_{T_2}$ associated to $e$ to the face of $C_{T_1}$ associated to $e$.

							Define $M(\widetilde \tau)$ as the radiant spacetime obtained by gluing the family $(C_T)_{T\in \TT}$ via the isometries $(\gamma_e)_{e \in \text{ edges}(\TT)}$

							\end{defi}

							\begin{prop}  
							 Let $(\Sigma,S)$ be singular locally Euclidean surface and 
							$\widetilde \tau : \Sigma \rightarrow \RR$ distance-like. The spacetime $M(\widetilde \tau)$ does not depend on the choice of the  triangulation $\TT$ adapted to $\widetilde \tau$.							 
							\end{prop}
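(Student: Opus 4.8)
The plan is to compare the spacetimes coming from two adapted triangulations by routing through a common refinement. Two facts suffice. \textbf{Fact A:} if $\TT'$ is a geodesic triangulation adapted to $\widetilde\tau$ that refines $\TT$ (each vertex, edge, triangle of $\TT$ being a vertex, a union of edges, a union of triangles of $\TT'$), then the spacetime built from $\TT'$ is canonically isomorphic to the one built from $\TT$. \textbf{Fact B:} any two geodesic triangulations $\TT_1,\TT_2$ of $(\Sigma,S)$ adapted to $\widetilde\tau$ admit a common refinement $\TT'$ that is again geodesic and adapted to $\widetilde\tau$. Granting both, the spacetime from $\TT_1$ is isomorphic to the one from $\TT'$, hence to the one from $\TT_2$, which is the proposition.

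\emph{Fact A} carries the content. Fix $T\in\TT$ and let $T_1,\dots,T_k\in\TT'$ be the triangles contained in $T$, so $T=\bigcup_j T_j$; the vertices of the $T_j$ that are not vertices of $T$ are not vertices of $\TT$, hence are regular points of $\Sigma$, since $\sing(\Sigma)\subset S$ lies in the vertex set of $\TT$. Because $\TT$ is adapted, $\widetilde\tau$ agrees on $T$ with $\tau_0-\d(\D(\cdot),\omega)^2$ for the pair $(\tau_0,\omega)$ attached to $T$; a developing map of $T$ restricts to one of $T_j$, so $\widetilde\tau|_{T_j}$ has the same form, and uniqueness in Lemma~\ref{lem:prolongement} forces the pair attached to $T_j$ by adaptedness of $\TT'$ to be $(\tau_0,\omega)$ as well. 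Hence $\iota_T|_{T_j}$ is a direct isometric affine embedding of $T_j$ into $J^+(O)$ with $-\quadform\circ(\iota_T|_{T_j})=\widetilde\tau|_{T_j}$, so by the uniqueness clause of Proposition~\ref{prop:affine_emb} one may take $\iota_{T_j}=\iota_T|_{T_j}$; consequently $C_{T_j}=\RR_+^*\cdot\iota_T(T_j)\subset C_T$ and $\bigcup_j C_{T_j}=C_T$. For an edge $e$ of $\TT'$ interior to $T$ the embeddings of its two adjacent triangles both restrict $\iota_T$, hence coincide along $e$, so $\gamma_e=\mathrm{id}$ by uniqueness in Lemma~\ref{lem:isom_equiv}; for an edge $e$ of $\TT'$ contained in an edge $\hat e$ of $\TT$ the same uniqueness gives $\gamma_e=\gamma_{\hat e}$. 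Gluing the $C_{T_j}$ along the edges of $\TT'$ therefore first reassembles each $C_T$ by identity maps --- a neighbourhood of the ray through a new interior vertex is thus an open subset of $C_T$ carrying a genuine flat $\C$-structure (by Lemma~\ref{lem:prolongement}, $\widetilde\tau>0$ on the interior of a triangle), so no new singular line appears --- and then glues the $C_T$ by the isometries $\gamma_{\hat e}$ of the construction from $\TT$. The two spacetimes are identified: the cone decomposition coming from $\TT'$ is the subdivision of the one from $\TT$.

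For \emph{Fact B}, let $G$ be the union of the $1$-skeleta of $\TT_1$ and $\TT_2$; distinct geodesic edges of $\Sigma$ overlap along a sub-segment or meet in finitely many points, so $G$ is a finite geodesic graph whose vertices comprise those of $\TT_1$ and $\TT_2$ (in particular $\sing(\Sigma)$ and $S$) together with finitely many regular transversal crossings. The $1$-skeleton of $\TT_i$ meets no interior of a $\TT_i$-triangle, so $G$ is disjoint from $\mathrm{Int}(T^{(1)})\cap\mathrm{Int}(T^{(2)})$ whenever $T^{(i)}\in\TT_i$; hence each connected component $F$ of $\Sigma\setminus G$ is a full component of some $\mathrm{Int}(T^{(1)})\cap\mathrm{Int}(T^{(2)})$, a surface piece of compact closure lying in $\reg(\Sigma)$ with piecewise-geodesic boundary. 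Triangulating each $F$ by finitely many Euclidean triangles, adding auxiliary vertices if needed (all regular), yields a geodesic triangulation $\TT'$ refining both $\TT_1$ and $\TT_2$; on each of its triangles $\widetilde\tau$ restricts to $\tau_0-\d(\D(\cdot),\omega)^2$ for the pair attached to the enclosing $\TT_1$-triangle, so $\TT'$ is adapted to $\widetilde\tau$, and its vertex set contains $S$.

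The step I expect to demand the most care is \emph{Fact B}: checking that $G$ is a genuine finite cell complex, that its $2$-cells lie in the regular locus with piecewise-geodesic boundary, and that the geodesic subtriangulations assemble into a triangulation refining both $\TT_i$. Once that is in place, \emph{Fact A} is bookkeeping driven entirely by the uniqueness statements in Lemma~\ref{lem:prolongement}, Lemma~\ref{lem:isom_equiv} and Proposition~\ref{prop:affine_emb}, and chaining gives the proposition.
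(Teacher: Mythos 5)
Your proposal is correct and follows essentially the same route as the paper: pass to a common refinement adapted to $\widetilde\tau$ (whose existence the paper merely asserts, while you construct it by overlaying the $1$-skeleta), then use the uniqueness clauses of Lemma~\ref{lem:prolongement}, Lemma~\ref{lem:isom_equiv} and Proposition~\ref{prop:affine_emb} to show that subdividing a triangle $T$ yields a gluing canonically identified with the subcones of $C_T$. Your Fact~A is exactly the paper's induction on adjacent subtriangles of $T$, so the two arguments coincide in substance.
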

							
							\begin{proof}							   
							  Consider two triangulations $\TT_1$ and $\TT_2$ adapted to $\widetilde \tau$. There exists a geodesic triangulation of $(\Sigma,S)$ adapted to $\widetilde \tau$ thinner that both $\TT_1$ and $\TT_2$. It thus suffices to show that 
							  on a given triangle $T\in \TT_1$ any decomposition of $T$ into smaller triangles $(T_i)_{i\in \lsem 1,n\rsem}$ induces a gluing isomorphic to $C_T$.
							  
							  Consider two adjacent triangles $T_1,T_2 \subset T$ sharing a common oriented edge $e$. On the one hand, we have  affine embeddings $\iota_{1}$ and $\iota_{2}$, on the other hand we have the affine embedding $\iota_T$. The isometry from $\gamma$ gluing $C_{T_2}$ to $C_{T_1}$ is the unique isometry $\gamma\in \SO_0(1,2)$ sending $\iota_{2}(e)$ on $\iota_1(e)$. We notice that   $\gamma\iota_2$ and $ \iota_{T|T_2}$ both satisfy the hypotheses of Proposition \ref{prop:affine_emb}, they thus differ by an isometry of $SO_0(1,2)$ and since the image of $e$ is identical, the uniqueness part of Lemma \ref{lem:isom_equiv} implies this isometry is the identity; hence $\gamma \iota_2 = \iota_{T|T_2}$. Therefore, $\gamma C_{T_2}$ is exactly the sub-domain $C_{T}$ corresponding to $T_2\subset T$, hence the gluing of $C_{T_1}$ with $C_{T_2}$ is isomorphic to the subdomain of $C_T$ corresponding to $(T_1 \cup T_2)\subset T$. By induction, the result follows.`

							\end{proof}

					\begin{defi} Let $(\Sigma,S)$ be singular locally Euclidean surface, let $(M_1,\iota_1)$ and $(M_2,\iota_2)$ be two radiant spacetimes together with a polyhedral embedding of $(\Sigma,S)$. 
					
					We say that $(M_1,\iota_1)$  is equivalent to $(M_2,\iota_2)$  if there exists an isomorphism $\varphi : M_1\rightarrow M_2$ such that $\iota_2 = \varphi \circ \iota_1$.
					 
					\end{defi}

					\begin{theo} 
					Denoting by $\sim$ the equivalence relation among polyhedral embeddings and distance-like functions, the function
					 $$ \fonctionn{ \left\{(M,\iota) ~:~ \begin{array}{c}
					 M \text{ radiant}\\
												  \iota~\text{polyhedral} \\ \text{embedding} 
												  \end{array}
\right\}/\sim }{ \left\{ \widetilde \tau   ~:~\begin{array}{c}
												  \widetilde \tau \text{ distance-like on $(\Sigma,S))$}
												  \end{array}\right\}/\sim   }{(\iota,M)}{Q\circ \iota  }$$
					is bijective of inverse $\widetilde \tau \mapsto M(\widetilde \tau)$.
					
					\end{theo}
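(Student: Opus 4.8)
The plan is to show the two maps $(M,\iota)\mapsto Q\circ\iota$ and $\widetilde\tau\mapsto M(\widetilde\tau)$ are mutually inverse, after checking each is well defined on the relevant quotients. First I would check well-definedness of $\widetilde\tau\mapsto M(\widetilde\tau)$: by the Proposition just proved, $M(\widetilde\tau)$ does not depend on the triangulation $\TT$ adapted to $\widetilde\tau$, and if $\widetilde\tau_1\sim\widetilde\tau_2$ then (the equivalence being pullback by an isometry of $\Sigma$) one transports a triangulation and the affine embeddings $\iota_T$ equivariantly, so the glued spacetimes, together with the embeddings of $\Sigma$, are equivalent. For well-definedness of $(M,\iota)\mapsto Q\circ\iota$, the Remark after Definition of distance-like functions already records that $Q\circ\iota$ is distance-like; and if $(M_1,\iota_1)\sim(M_2,\iota_2)$ via $\varphi:M_1\to M_2$ with $\iota_2=\varphi\circ\iota_1$, then since $\varphi$ is a $\C_{\geq0}$-isomorphism it preserves the height function $Q$ (as $Q$ is $\SO_0(1,2)$-invariant and descends to every radiant spacetime), so $Q\circ\iota_2=Q\circ\iota_1$, hence the two distance-like functions on $(\Sigma,S)$ coincide, a fortiori are $\sim$-equivalent.

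Next I would verify $Q\circ(\text{embedding of }\Sigma\text{ in }M(\widetilde\tau))=\widetilde\tau$. By Definition \ref{defi:tau_susp}, $M(\widetilde\tau)$ is glued from the cones $C_T=\{t\cdot\iota_T(x):t>0,\,x\in T\}$, and $\Sigma$ embeds as $\bigcup_T \iota_T(T)$. On each triangle $T$, Proposition \ref{prop:affine_emb} gives precisely $-Q\circ\iota_T=\widetilde\tau_{|T}$ (the second bullet of that Proposition identifies it with $\widetilde\tau_{\TT,\tau}$, i.e. the distance-like extension). Wait — the map in the Theorem is $Q\circ\iota$, and Proposition \ref{prop:affine_emb} produces $\tau=-Q\circ\iota$; so strictly the statement should read $-Q\circ\iota$ or $\iota_T$ should land in $J^+(O)$ with the convention that $Q$ there denotes $-\quadform$, as is consistent with the height function $Q:\C_\alpha\to\RR_+$ introduced earlier. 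Taking $Q$ to be this nonnegative height function, we get $Q\circ\iota_T=\widetilde\tau_{|T}$ on each $T$, and since these agree on overlaps (shared edges) by construction of the gluing isometries $\gamma_e$, they patch to $Q\circ\iota=\widetilde\tau$ globally. This shows one composite is the identity.

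For the other composite, start from $(M,\iota)$ with $\iota$ polyhedral, set $\widetilde\tau:=Q\circ\iota$, and show $(M,\iota)\sim(M(\widetilde\tau),\iota_{\widetilde\tau})$. Pick a geodesic adapted cellulation $\C$ realizing $\iota$, refine it to a triangulation $\TT$ adapted to $\widetilde\tau$ (possible since on each cell $Q\circ\iota$ is already of the form $\tau_0-d(\cdot,\omega)^2$, the cell's own triangulation working). On each triangle $T$, the restriction $\iota_{|T}$ is an isometric affine map into $\reg(M)$ with $Q\circ\iota_{|T}=\widetilde\tau_{|T}$; composing with a chart of $M$ toward $\C_{\alpha}$ and using that $M$ is radiant, Theorem \ref{theo:radiant_struc} writes $M\simeq\susp(\Sigma_0)$, so the leaves through $\iota(T)$ sweep out a cone canonically identified with $C_T$ from Definition \ref{defi:tau_susp} — here one invokes the uniqueness clause of Proposition \ref{prop:affine_emb} to match the affine embedding of $T$ used to build $C_T$ with $\iota_{|T}$ up to $\SO_0(1,2)$, and the uniqueness in Lemma \ref{lem:isom_equiv} to see the edge-gluings agree. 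Assembling these local identifications along the combinatorics of $\TT$ yields an isomorphism $\varphi:M(\widetilde\tau)\to M$ (surjective because $M$ is Cauchy-maximal and Cauchy-compact, by the argument used in the proof of Theorem \ref{theo:radiant_struc}) with $\varphi\circ\iota_{\widetilde\tau}=\iota$, i.e. $(M,\iota)\sim(M(\widetilde\tau),\iota_{\widetilde\tau})$.

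The main obstacle is this last step: producing a \emph{global} isomorphism $\varphi:M(\widetilde\tau)\to M$ rather than merely a cell-by-cell comparison. The delicate points are (i) checking the local identifications are compatible across edges of $\TT$, which reduces to the uniqueness statements in Lemma \ref{lem:isom_equiv} and Proposition \ref{prop:affine_emb} and mirrors the argument in the Proposition on triangulation-independence of $M(\widetilde\tau)$; and (ii) upgrading the resulting a.e.\ $\C$-morphism defined on $J^+_M(\Sigma)$ (described by Lemma \ref{lem:future_radiant}) to all of $M$, which is exactly where Cauchy-maximality enters, again as in the proof of Theorem \ref{theo:radiant_struc}. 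Everything else is bookkeeping of the gluing data.
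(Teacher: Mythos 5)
Your proposal is correct and follows essentially the same route as the paper: the identity $Q\circ\iota_{\widetilde\tau}=\widetilde\tau$ comes from Proposition \ref{prop:affine_emb}, and injectivity is obtained by comparing $(M,\iota)$ with $(M(Q\circ\iota),\iota_{Q\circ\iota})$ cell by cell via the uniqueness clauses of Lemma \ref{lem:isom_equiv} and Proposition \ref{prop:affine_emb}, globalized through Theorem \ref{theo:radiant_struc}. The only cosmetic difference is that the paper assembles the global isomorphism by showing that the induced homeomorphism $h$ between the leaf spaces $\Sigma_1,\Sigma_2$ is an isometry and then applying the suspension functor $\susp(h)$, whereas you glue the cone-by-cone identifications directly and invoke Cauchy-maximality for surjectivity; your remark on the sign convention ($Q$ versus $-\quadform$) is also well taken.
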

					\begin{proof}
						Denote by $\Phi$ the function above.
						For any $\widetilde \tau$ distance-like on $(\Sigma,S)$, by Propostion \ref{prop:affine_emb},  the construction of $M(\widetilde \tau)$ ensures $\Phi(M(\widetilde \tau))=\widetilde \tau$. Hence, $\Phi$ is surjective.
						Let $(M_1,\iota_1)$ be polyhedral embedding of $(\Sigma,S)$, let $\widetilde\tau := \Phi(M_1,\iota_1)$,  $M_2 = M(\widetilde \tau)$ with its  polyhedral embedding $\iota_2:\Sigma\rightarrow M_2$. 
						By Theorem \ref{theo:radiant_struc}, for $i\in \{1,2\}$,  $M_i$ is isomorphic to $\susp(\Sigma_i$ with $\Sigma_i$, the space of leaves of the natural causal foliation of $M_i$ endowed with its $\H^2_{\geq0}$-structure. Define the natural projections $\pi_i:M_i\rightarrow \Sigma_i$. Denote by $\R : \C \rightarrow \H^2$ the map that associate to any $x\in \C$ the intersection point of the ray through $x$ with $\H^2\subset \C$. 
						
						 For $i\in \{1,2\}$, the map  $\pi_i\circ  \iota_i : \Sigma\rightarrow \Sigma_i$ is an homeomorphism.  The map $h := \pi_2\circ  \iota_2 \circ (\pi_1\circ \iota_1)^{-1}$ is then an homeomorphism.						
						We shall prove $g$ is an a.e. $\H$-morphism from $\Sigma_1$ to $\Sigma_2$ and hence that $\susp(h) : M_1\rightarrow M_2$ is an isomorphism.

						Choose a geodesic triangulation $\TT$ of $\Sigma$ adapted to $\widetilde \tau$, its image by $\pi_i\circ \iota_i$ is a geodesic triangulation of $\Sigma_i$. Note that $h$ sends cell of $\Sigma_1$ to cell of $\Sigma_2$, thus in order to prove that $h$ is a $\H^2$-morphism, it suffices to prove that its restrictions to each cell of $\Sigma_1$  are isometries.

						Let $T\in \TT$, $x\in T\setminus S$, and, for $i\in \{1,2\}$, choose a chart $(\U_i,\V_i,\varphi_i)$ of $M_i$ around $\iota_i(x)$ such that  $\V_i$ is a cone of $\C$.
						Let $T_\Sigma\subset T\setminus S$ be a triangle of $\Sigma$ containing $x$.
						For $i\in\{1,2\}$, write $T_{M_i} := \iota_i(T_\Sigma)$, $T_{\Sigma_i} := \pi_i\circ \iota_i(T_\Sigma)$, $T^{(i)}_{\C} := \varphi_i(T_{M_i})$ and $T^{(i)}_{\H^2}:=\R\circ \varphi_i(T_{M_i})$. By construction of the $\H^2$-structure on $\Sigma_i$, $\varphi_i$ induces a chart $\overline \varphi_i : T_{\Sigma_i}\rightarrow T_{\H^2}^{(i)}$. By Lemma \ref{lem:isom_equiv}, there exists a unique $\phi\in \GGL$ such that $\varphi_2\circ \iota_2 = \phi\circ \varphi_1\circ \iota_1$. Since $\R$ commutes with the action of $\GGL$, we then have $\R\circ \varphi_2\circ \iota_2 = \phi\circ\R\circ  \varphi_1\circ \iota_1$. The following commutative diagram sums up the situation.

						$$
						\xymatrix{
						\Sigma\ar[ddd]^{\pi_1\circ\iota_1}_{\rotatebox{90}{$\sim$}}\ar@{=}[rrrrrr]&&&&&&\Sigma\ar[ddd]_{\pi_2\circ \iota_2}^{\rotatebox{90}{$\sim$}}
						\\&T_{\Sigma}\ar[ul]_*-{\rotatebox{-45}{$\supset$}} \ar[d]_{\rotatebox{90}{$\sim$}}^{\iota_1}\ar@{=}[rrrr]&&&&T_{\Sigma}\ar[d]_{\rotatebox{90}{$\sim$}}^{\iota_2}\ar[ur]^*-{\rotatebox{45}{$\subset$}}&
						\\& T_{M_1}\ar[d]_{\rotatebox{90}{$\sim$}}^{\pi_1}\ar[r]_{\sim}^{\varphi_1}& T_{\C}^{(1)} \ar[d]_{\rotatebox{90}{$\sim$}}^{\R} \ar@{-->}[rr]^{\exists!\phi \in \GGL}
						&&T_{\C}^{(2)} \ar[d]_{\rotatebox{90}{$\sim$}}^{\R}&T_{M_2}\ar[d]_{\rotatebox{90}{$\sim$}}^{\pi_2}\ar[l]^{\sim}_{\varphi_2}&
						\\ \Sigma_1&\ar[l]_{\supset}T_{\Sigma_1} \ar[r]_{\sim}^{\overline\varphi_1}& T^{(1)}_{\H^2} \ar@{-->}[rr]^\phi
						&& T^{(2)}_{\H^2}&T_{\Sigma_2} \ar[l]^{\sim}_{\overline\varphi_2} \ar[r]^\subset& \Sigma_2
						}
						$$
						
						Therefore, the (co-)restriction of $h$ from $T_{\Sigma_1}$ to $T_{\Sigma_2}$ is an isometry. It follows that $h$ is an isometry from triangle of $\pi_1\circ \iota_1(\TT)$ to triangle of $\pi_2\circ\iota_2(\TT)$.

					\end{proof}

				\subsection{Convex embeddings}

					We start by precising the notion of convex embedding \ref{defi:plong_conv}.
					Corollary \ref{prop:plong_conv} is the main result of this subsection: it provides 
					a parametrization of convex embeddings by a domain of $\RR^S$.

					\begin{defi}[Convex Polyhedral embedding] \label{defi:plong_conv}
						Let $M$ be a radiant spacetime with  $\iota:\Sigma\rightarrow M$ a polyhedral embedding.

						The embedding $\iota$ is convex if $J^+(\iota(\Sigma))$ is convex in the sense that for any spacelike geodesic $c:[a,b]\rightarrow M$, if $\{c(a),c(b)\}\subset J^+(\iota(\Sigma))$ then 
						$c([a,b]) \subset J^+(\iota(\Sigma))$

					\end{defi}
					\begin{defi}[Q-convexity on $\RR$]
						A function $f : \RR \rightarrow \RR$ is Q-convex (resp. Q-concave)
						if $f$ is continuous, piecewise $\mathscr{C}^1$ and if for all $t_0\in I$,
						$$\lim_{t_0^-}f' \leq \lim_{t_0^+} f'\quad\quad \left( \mathrm{resp.}~~ \lim_{t_0^-}f' \geq \lim_{t_0^+} f'\right).$$
					\end{defi}

					\begin{defi}[Q-convexity on $\E^2_{>0}$-surface]

						A function $\widetilde \tau : \Sigma \rightarrow \RR$ is Q-convex (resp. Q-concave) if for all geodesic 
						$c: I \rightarrow \Sigma\setminus S$, the restriction of $\widetilde\tau$ to $c$ is Q-convex (resp. Q-concave).
					\end{defi}

					\begin{lem}\label{lem:diff_Q_convexe} Let $f,g:[a,b]\rightarrow \RR$ two functions piecewise of the form $x\mapsto-x^2+\alpha x+\beta$ and Q-convex such that $f(a)\geq g(a)$
					and $f(b)\geq g(b)$. If $f$ is $\mathscr C^1$ then $g\leq f$.
					\end{lem}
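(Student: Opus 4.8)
The plan is to study the difference $h:=f-g$ and show that it is concave on $[a,b]$; the statement then follows at once from the boundary hypotheses. First I would pick a common subdivision $a=x_0<x_1<\dots<x_N=b$ refining the two subdivisions adapted to the piecewise-quadratic expressions of $f$ and of $g$. On each subinterval $[x_{k-1},x_k]$ we may write $f(x)=-x^2+\alpha_k x+\beta_k$ and $g(x)=-x^2+\alpha_k' x+\beta_k'$, so the leading terms cancel and $h(x)=(\alpha_k-\alpha_k')x+(\beta_k-\beta_k')$ is affine there. Since $f$ and $g$ are continuous, so is $h$; thus $h$ is a continuous piecewise-affine function with slope $m_k:=\alpha_k-\alpha_k'$ on $[x_{k-1},x_k]$.

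Next I would check that the slopes $m_k$ are non-increasing in $k$. Fix an interior node $x_k$. Because $f$ is $\mathscr C^1$, its one-sided derivatives at $x_k$ agree, i.e. $-2x_k+\alpha_k=-2x_k+\alpha_{k+1}$, so $\alpha_{k+1}=\alpha_k$; because $g$ is Q-convex, $\lim_{x_k^-}g'\le\lim_{x_k^+}g'$, i.e. $-2x_k+\alpha_k'\le-2x_k+\alpha_{k+1}'$, so $\alpha_{k+1}'\ge\alpha_k'$. Subtracting gives $m_{k+1}-m_k=(\alpha_{k+1}-\alpha_k)-(\alpha_{k+1}'-\alpha_k')\le 0$. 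A continuous piecewise-affine function on a compact interval whose successive slopes are non-increasing is concave (a standard fact, obtained by comparing difference quotients, or by noting that its distributional second derivative is a non-positive measure), so $h$ is concave on $[a,b]$.

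Finally, concavity yields, for every $t\in[0,1]$ and $x=(1-t)a+tb$, the inequality $h(x)\ge(1-t)h(a)+t\,h(b)$. Since by hypothesis $h(a)=f(a)-g(a)\ge 0$ and $h(b)=f(b)-g(b)\ge 0$, we get $h\ge 0$ on $[a,b]$, that is $g\le f$, as claimed.

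The argument is essentially bookkeeping; the one point that must be handled carefully is the \emph{direction} in which the Q-(semi)convexity hypotheses enter. The hypothesis that $f$ is $\mathscr C^1$ is used precisely to ensure that $f$ contributes no downward corner to $h$ --- equivalently, $f$ is simultaneously Q-convex and Q-concave, in fact a single parabola $-x^2+\alpha x+\beta$ --- whereas the Q-convexity of $g$ is exactly what makes $-g$, and hence $h$, concave; if $f$ were only Q-convex the conclusion would fail. The only other thing worth stating cleanly is the elementary fact that a continuous piecewise-affine function with non-increasing slopes is concave.
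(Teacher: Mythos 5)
Your proof is correct and follows essentially the same route as the paper: both observe that the $-x^2$ terms cancel so the difference $f-g$ is continuous piecewise affine, that the $\mathscr C^1$ hypothesis on $f$ together with the Q-convexity of $g$ makes this difference concave (equivalently, $g-f$ convex), and then conclude from the sign at the endpoints. Your version just spells out the slope bookkeeping that the paper leaves implicit.
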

					\begin{proof}
						$g-f$ is piecewise affine, since $f$ is $\mathscr C^1$ and $g$ Q-convex, $g-f$ is Q-convex thus convex. Since $g-f$ is negative both at $a$ and $b$, it is negative on
						$[a,b]$.
					\end{proof}

					\begin{prop}\label{prop:conv_Q_conv}
						Let $\widetilde \tau:\Sigma \rightarrow \RR_+$ distance-like and $M:=M(\widetilde \tau)$ with its associated polyhedral embedding $\iota:\Sigma \rightarrow M$.

						The embedding $\iota$ is convex if and only if $\widetilde \tau$ is Q-convex.
					\end{prop}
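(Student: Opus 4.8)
The plan is to slice everything by the natural foliation over a geodesic of the leaf space, reducing to a two-dimensional statement in $\E^{1,1}$, and there to recognise $Q$-convexity of $\widetilde\tau$ as the assertion that the sliced surface is the upper envelope of its face-planes, so that its causal future is a finite intersection of half-planes.

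\textbf{Step 1: the two-dimensional slices.} Write $M=M(\widetilde\tau)\simeq\susp(\Sigma_h)$ as in Theorem~\ref{theo:radiant_struc}, with $\pi\colon M\to\Sigma_h$ the leaf projection and $\R\colon M\to\iota(\Sigma)$ the associated retraction along leaves; fix a triangulation $\TT$ adapted to $\widetilde\tau$. For a spacelike geodesic $c\colon[a,b]\to\Reg(M)$ put $\delta:=\pi\circ c$ and $N_\delta:=\pi^{-1}(\delta)$. Then $N_\delta$ is totally geodesic in $M$ and isometric to a convex sector of $\E^{1,1}$ (a cone from the vertex over a spacelike segment of a time-like plane), and $c$ lies in $N_\delta$; so it suffices to prove, for every such $\delta$, that $J^+(\iota(\Sigma))\cap N_\delta$ is convex in $N_\delta$. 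The crucial point is that $\iota(\Sigma)\cap N_\delta$ is a \emph{spacelike polygonal arc} $\widehat\Sigma_\delta$: identifying $\Sigma_h$ with $\Sigma$ through $\pi\circ\iota$, the curve $\delta$ meets each cell $T$ of $\TT$ along a Euclidean straight segment --- here affineness of $\iota_T$ (Proposition~\ref{prop:affine_emb}) is used, since a hyperbolic geodesic of the leaf space is, inside the cone $C_T$, the trace of a plane through $O$, hence projects to $\iota_T^{-1}$ of a Euclidean line --- so $\iota$ carries $\delta\cap T$ to a straight segment lying in the spacelike affine plane of the face $\iota_T(T)$. Along $\widehat\Sigma_\delta$, parametrised by arclength $\mu$, one has $Q(\widehat\Sigma_\delta(\mu))=\widetilde\tau(\delta(\mu))$, which on the piece lying in $T$ equals $\widetilde\tau_T(\delta(\mu))=-\mu^2+\alpha_T\mu+\beta_T$ by Lemma~\ref{lem:prolongement}. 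Finally, by Lemma~\ref{lem:future_radiant} applied in $M$ and (its proof) applied inside $N_\delta$, $J^+(\iota(\Sigma))\cap N_\delta=\overline{\{x\in N_\delta:Q(x)\ge Q(\R(x))\}}$, the set of points of $N_\delta$ lying weakly beyond $\widehat\Sigma_\delta$ along the leaves (its ``radial epigraph'').

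\textbf{Step 2: $\widetilde\tau$ $Q$-convex $\Rightarrow$ $\iota$ convex.} Keep the notation of Step~1 and set $v(\mu):=\widetilde\tau(\delta(\mu))$, piecewise of the form $-\mu^2+\alpha_i\mu+\beta_i$ on the $i$-th cell interval $[\mu_i,\mu_{i+1}]$; $Q$-convexity of $\widetilde\tau$ means exactly that the break of $v$ at each $\mu_i$ is upward. Since the difference of two such parabolas is affine, an elementary telescoping over the breaks (each contributing a term $(\text{nonnegative slope jump})\times(\mu-\mu_i)$) gives $v(\mu)\ge-\mu^2+\alpha_i\mu+\beta_i$ for \emph{all} $\mu$ and all $i$; equivalently $\widehat\Sigma_\delta$ lies in the causal future of the line $\ell_i$ of $N_\delta$ carrying its $i$-th edge. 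As $\widehat\Sigma_\delta\subset\bigcup_i\ell_i$, the radial-epigraph description of Step~1 then yields $J^+(\iota(\Sigma))\cap N_\delta=\bigcap_i J^+_{N_\delta}(\ell_i)$, a finite intersection of future half-planes of $N_\delta\simeq\E^{1,1}$, hence convex; in particular any spacelike geodesic of $N_\delta$ with endpoints in it stays inside. As $\delta$ was arbitrary and every spacelike geodesic of $\Reg(M)$ lies in some $N_\delta$, $J^+(\iota(\Sigma))$ is convex, so $\iota$ is convex. (This is the step for which the comparison of $Q$-convex piecewise-parabolic functions, Lemma~\ref{lem:diff_Q_convexe}, is the natural tool.)

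\textbf{Step 3: $\iota$ convex $\Rightarrow$ $\widetilde\tau$ $Q$-convex, by contraposition.} If $\widetilde\tau$ is not $Q$-convex there is a geodesic $\delta$ of $\Sigma\setminus S$ along which $v=\widetilde\tau\circ\delta$ has a strictly downward break at some $\mu_*$, sitting over an edge of $\TT$. In $N_\delta$ this means that near the corner $v_0:=\widehat\Sigma_\delta(\mu_*)$ the arc $\widehat\Sigma_\delta$ dips strictly below one of its two adjacent carrier lines $\ell_i,\ell_{i+1}$ on each side of $v_0$, so that locally $J^+(\iota(\Sigma))\cap N_\delta$ is a union of two transverse future half-planes at $v_0$, a reflex region, hence not convex. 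Concretely, choosing $x_1,x_2\in J^+(\iota(\Sigma))$ just radially beyond $\widehat\Sigma_\delta$ on either side of $v_0$ and close enough to it, the geodesic of $N_\delta$ joining them --- a geodesic of $M$ since $N_\delta$ is totally geodesic --- is almost tangent to $\widehat\Sigma_\delta$, hence spacelike, and passes through the notch at $v_0$, i.e.\ radially inside $\widehat\Sigma_\delta$, hence out of $J^+(\iota(\Sigma))$. So $\iota$ is not convex.

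\textbf{Main obstacle.} The heart of the argument is Step~1: exhibiting $\iota(\Sigma)\cap N_\delta$ as a genuine spacelike \emph{polygonal} arc whose $Q$-height reproduces $\widetilde\tau$ along $\delta$. This rests on the not-a-priori-obvious compatibility between the two structures on the underlying surface --- the locally Euclidean one seen by $\iota$, and the hyperbolic one on the leaf space --- namely that hyperbolic geodesics are Euclidean straight inside each cell of the \emph{adapted} triangulation; affineness of the maps $\iota_T$ is exactly what makes this work. Once that identification and the radial-epigraph form of $J^+(\iota(\Sigma))$ (Lemma~\ref{lem:future_radiant}) are in place, the convexity dichotomy is the short ``intersection of half-planes versus reflex notch'' argument above. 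Secondary care is needed for spacelike geodesics meeting the singular lines of $M$ or the vertex $O$ (handled by density of $\Reg(M)$ together with closedness of $J^+$, cf.\ Lemma~\ref{lem:future_radiant}), and for geodesics $\delta$ running into a conical point or a cusp of $\Sigma_h$.
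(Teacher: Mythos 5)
Your proof is correct and follows essentially the same route as the paper's: both rest on the radial-graph description of $J^+(\iota(\Sigma))$ from Lemma \ref{lem:future_radiant} and on comparing the piecewise functions of the form $s\mapsto -s^2+\alpha s+\beta$ given by $Q$ along a spacelike geodesic and along the surface above it (Lemma \ref{lem:diff_Q_convexe}), with the converse localized at a single strictly $Q$-concave edge exactly as in the paper. Your packaging through the totally geodesic slices $N_\delta\simeq$ a wedge of $\E^{1,1}$ is a tidier presentation of the paper's ``direct computation in a chart''; the only point you gloss over at the same level as the paper is that the pulled-back path $\delta$ is generally a \emph{broken} Euclidean path in $\Sigma$, so identifying the sign of the break of $v$ at $\mu_i$ with $Q$-convexity of $\widetilde\tau$ across the edge requires noting that this sign is governed by the fold of the two face-planes at $\iota(e)$ (the jump equals $2\langle\gamma(\mu_i),\Delta\gamma'\rangle$ with $\Delta\gamma'$ causal), independently of the crossing direction.
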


					\begin{proof}
					\begin{itemize}
						\item Assume that $\widetilde \tau$ is  Q-convex and consider a spacelike geodesic $c:[a,b]\rightarrow$ such that $c(a),c(b) \in J^+(\Sigma)$. A direct computation in a chart gives that both $ Q\circ c$ and $ Q\circ \R\circ c$ are  piecewise of the form $s \mapsto -s^2+\alpha s+\beta$ and that  $\iota^{-1}\circ \R\circ c$ has the same Q-convexity as $\widetilde \tau$ on edges of any given triangulation adapted to $\widetilde \tau$.  Futhermore, $ Q\circ c$ is $\mathscr C^1$, then by Lemma \ref{lem:diff_Q_convexe}, the sign of $ Q\circ c- Q\circ \R\circ c$ is constant and Lemma \ref{lem:future_radiant} allows to conclude that  $c([a,b])\subset J^+(\Sigma)$. Finally, $J^+(\Sigma)$ is convex, hence $\iota$ is convex.
						\item Assume that $\widetilde \tau$ is  not Q-convex, there thus exists edge $e$ in $\Sigma$ around which $\widetilde \tau$ is strictly $Q$-concave. Consider two points $x,y$ in $\Sigma$ each in a different side of said edge. We can choose $x,y$ closed enough so that they lie in a chart of $M$ around $\iota(e)$. Then consider the geodesic $c:[a,b]\rightarrow M$ in this chart from $x$ to $y$; we can apply the same line of reasonning as above to show that $Q\circ c<Q\circ \R\circ c$ on $]a,b[$, thus that $c(]a,b[)$ is not in $J^+(\iota(\Sigma))$ and hence $J^+(\iota(\Sigma))$ is not convex.

					\end{itemize}

					\end{proof}

					\begin{prop} \label{prop:unique_Q_convexe}

					Let  $\tau\in \RR_+^{S}$, up to equivalence, there is at most one triangulation $\TT$ 
						such that the distance-like extension  $\widetilde \tau_{\TT,\tau}: \Sigma\rightarrow \RR_+$ is Q-convex.
					\end{prop}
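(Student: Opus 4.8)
The plan is to show that any triangulation $\TT$ for which $\widetilde\tau_{\TT,\tau}$ is Q-convex must be the ``Delaunay-type'' triangulation determined by $\tau$, and hence any two such triangulations are $\tau$-equivalent. First I would recall from Proposition~\ref{prop:prolongement} that once a triangulation $\TT$ is fixed, $\widetilde\tau_{\TT,\tau}$ is uniquely determined, and on each triangle $T=[ABC]$ it has the form $x\mapsto \tau_0^T - d(x,\omega_T)^2$ for a center $\omega_T\in\E^2$ (in a developing chart) and height $\tau_0^T$; equivalently, lifting to $\mink$ via Proposition~\ref{prop:affine_emb}, each triangle is realized as an affine spacelike triangle in $J^+(O)$ and $-Q\circ\iota = \widetilde\tau_{\TT,\tau}$. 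The key geometric reformulation is that Q-convexity of $\widetilde\tau$ across an interior edge $e$ shared by triangles $T_1,T_2$ is exactly an ``empty disk''/local Delaunay condition: writing $\widetilde\tau_i(x)=\tau_0^{(i)}-d(x,\omega_i)^2$ on the two sides in a common developing chart around $e$, the jump of the derivative of $\widetilde\tau$ across $e$ has a sign governed by the relative position of $\omega_1,\omega_2$ (equivalently, by whether the circumscribed ``paraboloid section'' of $T_1$ lies above or below the fourth vertex of $T_2$). So Q-convexity at every edge is equivalent to a global local-Delaunay condition with respect to the power/height data coming from $\tau$.

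Next I would invoke the standard fact, in the weighted (power-diagram) setting, that a triangulation all of whose interior edges satisfy the local empty-disk condition is the weighted Delaunay triangulation and is unique up to the ambiguity on degenerate (cocircular/coplanar) configurations — and that this ambiguity is precisely what ``$\tau$-equivalence'' (Definition~\ref{defi:triang_equi}) quotients out, since two locally Delaunay triangulations differ only by flips across edges where the derivative jump vanishes, and such flips do not change the extension $\widetilde\tau_{\TT,\tau}$. Concretely: given two triangulations $\TT_1,\TT_2$ with $\widetilde\tau_{\TT_i,\tau}$ both Q-convex, pass to a common geodesic refinement $\TT'$ of both (which exists since both triangulations have vertex set containing $S$ and we may subdivide), observe that $\widetilde\tau_{\TT_i,\tau}$ restricted to $\TT'$-triangles is still piecewise of the quadratic form above, and deduce from Lemma~\ref{lem:diff_Q_convexe} applied along geodesic segments crossing the edges of $\TT_1$ and $\TT_2$ that $\widetilde\tau_{\TT_1,\tau}$ and $\widetilde\tau_{\TT_2,\tau}$ must coincide: both are Q-convex, agree on $S$, and are forced to be the unique ``lower envelope''-type function; any two such are equal, which is exactly $\TT_1\sim\TT_2$.

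More precisely, the decisive step I would isolate is: \emph{a Q-convex distance-like extension of $\tau$ with a given adapted triangulation is, as a function on $\Sigma$, the pointwise maximum over all distance-like functions that are $\leq$-realizable and agree with $\tau$ on $S$} — or dually the pointwise minimum — so that it does not depend on the triangulation at all. To get this I would argue locally: on a geodesic $c$ crossing a single edge $e$ of $\TT_1$ transversally, the restriction of $\widetilde\tau_{\TT_1,\tau}$ is Q-convex and piecewise of the form $-s^2+\alpha s+\beta$, hence determined by its values at the endpoints and the convexity constraint; comparing with $\widetilde\tau_{\TT_2,\tau}$ along the same geodesic via Lemma~\ref{lem:diff_Q_convexe} (using that one of the two is $\mathscr C^1$ on the relevant subsegment after a further refinement, or handling the jump terms directly) forces one $\leq$ the other, and by symmetry they are equal. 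Since $\Sigma$ is covered by such geodesic segments through a dense set of points, $\widetilde\tau_{\TT_1,\tau}=\widetilde\tau_{\TT_2,\tau}$ everywhere, i.e. $\TT_1$ and $\TT_2$ are $\tau$-equivalent.

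The main obstacle I anticipate is the bookkeeping around the $\mathscr C^1$ hypothesis in Lemma~\ref{lem:diff_Q_convexe}: neither $\widetilde\tau_{\TT_1,\tau}$ nor $\widetilde\tau_{\TT_2,\tau}$ is globally smooth, so one cannot apply the lemma directly on a segment that crosses edges of both triangulations. The fix is to restrict attention to subsegments lying in a single triangle of $\TT_2$ (where $\widetilde\tau_{\TT_2,\tau}$ is a genuine smooth quadratic, hence $\mathscr C^1$) and crossing at most edges of $\TT_1$, apply the lemma there to get $\widetilde\tau_{\TT_1,\tau}\leq\widetilde\tau_{\TT_2,\tau}$ on that subsegment, then symmetrize; a small compactness/continuity argument across the finitely many triangles of the common refinement patches these local inequalities into the global equality. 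The other point requiring care is making the ``local Delaunay $\Leftrightarrow$ Q-convexity at an edge'' dictionary precise enough that the classical uniqueness-of-Delaunay argument can be cited, but since the final statement only asserts uniqueness \emph{up to $\tau$-equivalence}, the envelope argument of the previous paragraph already suffices and sidesteps having to develop the weighted-Delaunay machinery in full.
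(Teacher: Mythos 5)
Your proposal is essentially the paper's argument: both proofs reduce to comparing the two extensions via Lemma \ref{lem:diff_Q_convexe}, using that they agree on $S$ and that each is Q-convex while the other is $\mathscr C^1$ on suitable segments, and conclude by symmetry that the two extensions coincide (which is exactly $\tau$-equivalence). The only point to tighten is the propagation step where you worry about endpoints: the paper does it in two passes --- first along each edge of $\TT_1$ (whose endpoints lie in $S$, where the two functions are equal, and on which $\widetilde\tau_{\tau,\TT_1}$ is a single quadratic hence $\mathscr C^1$ while $\widetilde\tau_{\tau,\TT_2}$ is piecewise quadratic Q-convex), then along chords of each triangle of $\TT_1$ with endpoints on its boundary --- which supplies exactly the endpoint inequalities your ``subsegment inside a triangle of $\TT_2$'' version leaves unjustified; the weighted-Delaunay digression is, as you note yourself, unnecessary.
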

					\begin{proof}
					  Let $\TT_1$ and $\TT_2$ be two triangulations of $\Sigma$ such that both $f_1:=\widetilde \tau_{\tau,\TT_1}$
					  and $f_2:=\widetilde \tau_{\tau,\TT_2}$ are Q-convex.
					  For all $e$ edge of $\TT_1$, the function $f_{1|e}$ is quadratic while
					  the function $f_{2|e}$ is piecewise quadratic and Q-convex;
					  by Lemma \ref{lem:diff_Q_convexe}, it thus follows that $f_2\leq f_1$ on $e$.
					  For any $T$ triangle of $\TT_1$, $f_1\geq f_2$ on $\partial T$ and applying again Lemma \ref{lem:diff_Q_convexe}
					  along any segment $[a,b]$ of $T$ with $a,b\in \partial T$, we deduce that $f_1\geq f_2$ on $T$.  Therefore, $f_1\geq f_2$ on $\Sigma_1$. We show that same way that $f_1\leq f_2$ hence  $f_1=f_2$. The triangulations $\TT_1$ and $\TT_2$ are then equivalent.

					\end{proof}

					\begin{defi}[Admissible times] \label{defi:temps_admissbles}

						Define $\PSigmaS$ the set of $\tau\in \RR_+^{S}$ such that there exists an adapted  triangulation $\TT$ of $\Sigma$ inducing a Q-convex distance like extension $\widetilde \tau_{\tau,\TT}$. Elements of  $\PSigmaS$ are called admissible times.

						For $\tau \in \PSigmaS$, we denote by  $\TT_\tau$ the unique adapted triangulation of 
						 $\Sigma$ (up to equivalence) such that $\widetilde \tau_{\tau,\TT_\tau}$ is Q-convex. We define as well $\widetilde \tau_\tau := \widetilde \tau_{\tau,\TT_\tau}$ and $M(\tau):=M(\widetilde \tau_\tau)$.

					\end{defi}

					\begin{cor} The function
					  $$ \fonctionn{\{ \widetilde \tau : \Sigma\rightarrow \RR_+ ~~\text{ distance-like Q-convex}\} }{\PSigmaS}{\widetilde \tau}{\tau=\widetilde\tau_{|S}}$$
					  is bijective.
					\end{cor}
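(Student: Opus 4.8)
The plan is to prove the two inclusions separately: surjectivity (together with the fact that the map really lands in $\PSigmaS$) is essentially formal, while injectivity is the substance and is obtained by the comparison technique already used in Proposition~\ref{prop:unique_Q_convexe}, i.e. repeated application of Lemma~\ref{lem:diff_Q_convexe}.

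\emph{Surjectivity and well-definedness.} Given $\tau\in\PSigmaS$, Definition~\ref{defi:temps_admissbles} provides an adapted triangulation $\TT_\tau$ of $(\Sigma,S)$ such that $\widetilde\tau_\tau:=\widetilde\tau_{\tau,\TT_\tau}$ is Q-convex; by Proposition~\ref{prop:prolongement} it is distance-like, and by the last assertion of Lemma~\ref{lem:prolongement} it is nonnegative on $\Sigma$ (since $\tau\geq 0$ on each triangle), so $\widetilde\tau_\tau\colon\Sigma\to\RR_+$ is a preimage of $\tau$. Conversely, if $\widetilde\tau\colon\Sigma\to\RR_+$ is distance-like and Q-convex and $\TT$ is an adapted triangulation of $(\Sigma,S)$ realizing its distance-like form, then by the uniqueness in Proposition~\ref{prop:prolongement} one has $\widetilde\tau=\widetilde\tau_{\widetilde\tau|_S,\TT}$, which is Q-convex, so $\widetilde\tau|_S\in\PSigmaS$; thus the map is well defined.

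\emph{Injectivity.} Let $\widetilde\tau_1,\widetilde\tau_2\colon\Sigma\to\RR_+$ be distance-like and Q-convex with $\widetilde\tau_1|_S=\widetilde\tau_2|_S=:\tau$, and for $i=1,2$ let $\TT_i$ be an adapted triangulation of $(\Sigma,S)$ with $\widetilde\tau_i=\widetilde\tau_{\tau,\TT_i}$, so that the vertex set of $\TT_i$ is exactly $S$. Fix an edge $e$ of $\TT_1$: its endpoints lie in $S$, where $\widetilde\tau_1$ and $\widetilde\tau_2$ agree; along $e$ the function $\widetilde\tau_1$ is a single quadratic of the form $s\mapsto -s^2+\alpha s+\beta$ (this is the computation of the proof of Lemma~\ref{lem:prolongement} restricted to a line, hence $\mathscr C^1$), while $\widetilde\tau_2|_e$ is piecewise of the same form and Q-convex because $\widetilde\tau_2$ is. Lemma~\ref{lem:diff_Q_convexe} gives $\widetilde\tau_2\leq\widetilde\tau_1$ on $e$, hence on the whole $1$-skeleton of $\TT_1$. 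For a triangle $T$ of $\TT_1$, $\widetilde\tau_1|_T$ is a single quadratic, so along any geodesic segment $[a,b]\subset T$ with $a,b\in\partial T$ one has $\widetilde\tau_2\leq\widetilde\tau_1$ at $a,b$, $\widetilde\tau_1|_{[a,b]}$ is $\mathscr C^1$ of the above form, and $\widetilde\tau_2|_{[a,b]}$ is piecewise of that form and Q-convex; Lemma~\ref{lem:diff_Q_convexe} gives $\widetilde\tau_2\leq\widetilde\tau_1$ on $[a,b]$. Every interior point of $T$ lies on such a segment, so $\widetilde\tau_2\leq\widetilde\tau_1$ on $T$, hence on all of $\Sigma$. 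Exchanging the roles of $\widetilde\tau_1$ and $\widetilde\tau_2$ (now using $\TT_2$) yields $\widetilde\tau_1\leq\widetilde\tau_2$, whence $\widetilde\tau_1=\widetilde\tau_2$.

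\emph{Main obstacle.} The comparison above is symmetric only because each $\TT_i$ has vertex set exactly $S$: if $\TT_i$ carried an extra vertex $v\notin S$, some edge of $\TT_i$ would have an endpoint at which merely one of the two inequalities is available, and the induction would break down. The genuinely delicate input is therefore that a distance-like Q-convex function admits an adapted triangulation \emph{of $(\Sigma,S)$} in the sense of this section, i.e. that its crease locus has no vertex outside $S$ — the same rigidity that makes $\TT_\tau$ unique in Proposition~\ref{prop:unique_Q_convexe}. I would secure this point by analysing a regular crease vertex: Q-convexity forces the quadratic pieces meeting there to be pairwise ordered in a way incompatible with a genuine vertex of the crease graph lying away from $S$, so that any adapted triangulation can be pruned down to vertex set $S$. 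The auxiliary facts — that the restriction of a distance-like function to a geodesic is piecewise of the form $-s^2+\alpha s+\beta$, and that Q-convexity is inherited by such restrictions — are routine and already appear in the proof of Proposition~\ref{prop:conv_Q_conv}.
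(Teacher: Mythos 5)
Your first two paragraphs are exactly the paper's (implicit) argument: surjectivity is Definition~\ref{defi:temps_admissbles} read backwards, and your injectivity argument is a verbatim rerun of the proof of Proposition~\ref{prop:unique_Q_convexe} (edges first, then chords of triangles, via Lemma~\ref{lem:diff_Q_convexe}). Up to that point the proof is correct and matches the paper, \emph{provided} both functions are realized by triangulations whose vertex set is exactly $S$.

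The problem is your third paragraph. You correctly isolate the delicate point --- the literal definition of ``distance-like'' allows the subordinate triangulation to have vertices outside $S$ --- but the rigidity statement you propose to close it with is false: Q-convexity does \emph{not} forbid genuine crease vertices away from $S$. Concretely, take $\Sigma=\RR^2/\ZZ^2$ with $S=\{\bar 0\}$ and $c\geq 1/2$, and set $\widetilde\tau(x)=c-\d(x,\ZZ^2)^2=\max_{n\in\ZZ^2}\bigl(c-|x-n|^2\bigr)$. This function is nonnegative, is distance-like for the triangulation coning each Voronoi square from its center (vertex set $\ZZ^2\cup(\ZZ^2+(\tfrac12,\tfrac12))\supset S$), and is Q-convex because along any geodesic it is the upper envelope of finitely many maps $s\mapsto -s^2+\alpha s+\beta$, so all derivative jumps are nonnegative. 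It restricts to $c\cdot\mathbf 1_S\in \PSigmaS$ (Proposition~\ref{prop:delaunay_interieur} and Corollary~\ref{cor:translation}), yet it differs from the adapted Delaunay extension $\widetilde\tau_{c\mathbf 1_S,\TT}$ (the former is $<c$ off the lattice, the latter $>c$), and its crease locus has genuine $4$-valent vertices at the half-integer points, which no adapted triangulation can accommodate. So no local analysis of regular crease vertices can ``prune'' such a function down to vertex set $S$: the asymmetry you worried about is real, and with the broad reading of ``distance-like'' the map is simply not injective. The statement has to be read with the domain restricted to distance-like functions subordinate to an \emph{adapted} triangulation of $(\Sigma,S)$ --- the reading forced by the definition of a polyhedral embedding, which requires an adapted cellulation, and by the notation $\widetilde\tau_{\tau,\TT}$, which is only defined for adapted $\TT$. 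Under that reading your first two paragraphs already constitute the whole proof and the third is unnecessary; as written, the third paragraph asserts something untrue and would not survive an attempt to carry it out.
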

					\begin{prop}\label{prop:plong_conv} With $\sim$ the equivalence relation defined by 
					$$ (M_1,\iota_1)\sim (M_2,\iota_2) \quad \Leftrightarrow \quad \exists h:M_1\rightarrow M_2 \text{ isomorphism s.t. } \iota_2=h\circ \iota_1$$The function
					  $$ \fonctionn{ \left\{(M,\iota) ~:~ \begin{array}{c}M \text{ radiant}\\
												  \iota:\Sigma\rightarrow M ~\text{polyhedral} \\ \text{convex embedding} 
												  \end{array}
\right\}/\sim }{\PSigmaS}{(\iota,M)}{\left(Q\circ \iota\right)_{|S}  }$$
					  is bijective.
					\end{prop}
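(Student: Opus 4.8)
The plan is to exhibit the claimed bijection as a composition of maps already shown to be bijective in the excerpt, so that the proof reduces to bookkeeping the equivalence relations and checking compatibility of domains; no new analytic input is needed, since the substantive work has been done in Propositions \ref{prop:conv_Q_conv} and \ref{prop:unique_Q_convexe} and in the preceding Corollary.

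First I would invoke the Theorem on $\tau$-suspensions: the map $\Phi:(M,\iota)\mapsto Q\circ\iota$ is a bijection from equivalence classes of polyhedral embeddings of $(\Sigma,S)$ onto distance-like functions $\widetilde\tau:\Sigma\to\RR_+$, with inverse $\widetilde\tau\mapsto M(\widetilde\tau)$ together with its canonical polyhedral embedding. Before restricting I would check that $(M,\iota)\mapsto (Q\circ\iota)_{|S}$ is well defined on $\sim$-classes: if $h:M_1\to M_2$ is an isomorphism of radiant spacetimes with $\iota_2=h\circ\iota_1$, then $h$ is an a.e.\ $\C$-morphism, hence intertwines the two height functions, so $Q\circ\iota_2=Q\circ h\circ\iota_1=Q\circ\iota_1$ on $\Sigma$, and in particular the restriction to $S$ depends only on the class. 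That the values of $Q\circ\iota$ lie in $\RR_+$ rather than merely in $\RR$ follows from Proposition \ref{prop:affine_emb} together with Lemma \ref{lem:prolongement}, since on each cell $\iota$ is an isometric affine map into $J^+(O)$, where $Q\ge 0$.

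Next I would cut $\Phi$ down to the convex locus using Proposition \ref{prop:conv_Q_conv}: a polyhedral embedding is convex precisely when $\widetilde\tau=Q\circ\iota$ is Q-convex, so $\Phi$ restricts to a bijection between equivalence classes of \emph{convex} polyhedral embeddings of $(\Sigma,S)$ and \emph{Q-convex} distance-like functions $\Sigma\to\RR_+$. I would then apply the Corollary immediately preceding the statement, which says that restriction to $S$ is a bijection from Q-convex distance-like functions onto $\PSigmaS$. Composing these two bijections gives exactly $(M,\iota)\mapsto (Q\circ\iota)_{|S}$, and reading the inverse through the composition identifies it with $\tau\mapsto (M(\tau),\iota)$ in the notation of Definition \ref{defi:temps_admissbles}: $M(\tau)=M(\widetilde\tau_\tau)$, where $\widetilde\tau_\tau$ is the unique, up to $\tau$-equivalence, Q-convex distance-like extension of $\tau$, and $\iota$ is its canonical polyhedral embedding, which is convex again by Proposition \ref{prop:conv_Q_conv}.

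The delicate point — the step I would treat most carefully — is the reconciliation of the two notions of triangulation hidden in the definitions: ``distance-like'' allows a geodesic triangulation whose vertex set merely \emph{contains} $S$, whereas ``admissible'' requires an \emph{adapted} triangulation whose vertex set is exactly $S$. This matching is precisely what is already packaged into Propositions \ref{prop:prolongement} and \ref{prop:unique_Q_convexe} and into the preceding Corollary, namely the uniqueness up to $\tau$-equivalence of the adapted Q-convex triangulation and the bijectivity of restriction to $S$, so no further argument is required here. Beyond these consistency checks the proof is genuinely just the composition of three bijections, the analytic heart — the equivalence ``$\iota$ convex $\Leftrightarrow$ $\widetilde\tau$ Q-convex'' and the uniqueness of the adapted triangulation — having already been established.
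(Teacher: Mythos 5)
Your proof is correct and matches the paper's (implicit) intent exactly: the paper states this Proposition without a written proof, placing it immediately after the three ingredients you compose — the bijection $(M,\iota)\mapsto Q\circ\iota$ onto distance-like functions, Proposition \ref{prop:conv_Q_conv} identifying convexity with Q-convexity, and the Corollary identifying Q-convex distance-like functions with $\PSigmaS$ via restriction to $S$. Your additional checks (invariance under $\sim$, positivity of $Q\circ\iota$, and the adapted-versus-containing-$S$ triangulation issue) are exactly the right bookkeeping.
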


			\section{Domain of admissible times}
				\label{sec:etude_domaine_convexe}
					In the whole section, we give ourselves a marked locally euclidean surface with conical singularities $(\Sigma,S)$. While Proposition \ref{prop:plong_conv} above parametrizes polyhedral embeddings by the domain $\PSigmaS\subset \RR^S$, for now little is known about the latter and before studying the image of  $\tau\mapsto M(\tau)$ we shall provide a thorough description. More precisely, we prove the following.
					\begin{theo}\label{theo:domain_description} Let  $\mathbf{1}_S$ the indicator function of $S$ and $H$ the linear hyperplane of $\RR^S$ normal to $\mathbf{1}_S$ and $\pi$ the orthogonal projection onto $H$. Define $\overline\PSigmaS = \pi(\PSigmaS)\subset H$. 
					Then 
					\begin{enumerate}[(a)]
					 \item $\overline\PSigmaS$ is a convex compact polyhedron
					 \item $\displaystyle\PSigmaS = (\overline{\PSigmaS} + \RR\cdot\mathbf{1}_S)\cap \RR_+^S.$
					 \item The interior of $\overline \PSigmaS$ contains $0\in \RR^S$
					 \item With $\mathscr T := \{\TT_\tau : \tau\in \PSigmaS\}$, each $\CellPSigmabar:= \{\pi(\tau) ~|~\TT_\tau=\TT\}\subset \overline \PSigmaS$  is a convex polyhedra of $H$  for $\TT\in E$. Furthermore, the family $(\CellPSigmabar)_{\TT\in \mathscr T}$ is a finite cellulation of $\overline{\PSigmaS}$.
					 \item The support planes of $\PSigmaS$ are either of the form $"\tau_\sigma = 0"$ or $"Q^*(\tau)=0"$ with $Q$ an unflippable immersed hinge (see Definitions \ref{defi:hinge}, \ref{defi:immersed_hinge}, \ref{defi:Q_forme_affine} and \ref{defi:immersed_hinge_flippable} below).					
					\end{enumerate}
					
					\end{theo}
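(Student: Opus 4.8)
The plan is to study $\widetilde\tau_{\tau,\TT}$ as a function of the pair $(\tau,\TT)$ and exploit the linearity in $\tau$ (for fixed $\TT$) together with the minimum/maximum principles coming from Lemmas \ref{lem:prolongement} and \ref{lem:diff_Q_convexe}. First I would set up the basic scaling/translation invariance: for a fixed triangulation $\TT$, the map $\tau\mapsto\widetilde\tau_{\tau,\TT}$ is affine (indeed linear) by Lemma \ref{lem:prolongement}, and adding $\lambda\mathbf 1_S$ to $\tau$ adds the constant $\lambda$ to $\widetilde\tau_{\tau,\TT}$ everywhere, which preserves $Q$-convexity and preserves $\TT_\tau$; so $\P_\Sigma + \RR\cdot\mathbf 1_S = \P_\Sigma$. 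Combined with the fact that $\P_\Sigma\subset\RR_+^S$ is cut out by the finitely many hyperplanes/halfspaces described in (e), this gives (b): $\P_\Sigma$ is a union over $\TT$ of the polyhedral cones $\{\tau: \widetilde\tau_{\tau,\TT}\text{ is }Q\text{-convex}\}\cap\RR_+^S$, each $Q$-convexity condition being a finite conjunction of linear inequalities (one per edge, comparing the slopes of the two incident quadratic pieces — a linear condition on $\tau$ since the quadratic pieces depend linearly on $\tau$). This simultaneously gives the "support planes" description in (e) once one identifies each such slope-inequality with the vanishing of an affine form $Q^*(\tau)$ attached to an immersed hinge, and the boundary faces $\tau_\sigma=0$ coming from $\RR_+^S$.

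Next, for (a) and (d): I would observe that $\mathscr T$ is finite. The cleanest route is a volume/degree bound — there are only finitely many combinatorial types of geodesic triangulation of $(\Sigma,S)$ that can arise as some $\TT_\tau$, because a Delaunay-type argument (the $Q$-convex extension is, up to the radiant-spacetime dictionary, the "Delaunay" extension, cf. the Penner discussion in the introduction) shows $\TT_\tau$ is determined by $\tau$ as the unique Delaunay triangulation for the weighted point configuration, and the Delaunay complex of $n$ weighted points takes finitely many combinatorial types. Granting finiteness, each cell $\overline{\P}_\TT = \pi(\{\tau:\TT_\tau=\TT\})$ is the projection of a polyhedral cone, hence a polyhedron in $H$; the cells tile $\overline{\P_\Sigma}$ because every $\tau\in\P_\Sigma$ has a well-defined $\TT_\tau$ (Proposition \ref{prop:unique_Q_convexe}), and they meet along common faces because on a shared face the two triangulations are simultaneously $Q$-convex, forcing $\widetilde\tau_{\tau,\TT_1}=\widetilde\tau_{\tau,\TT_2}$ by Lemma \ref{lem:diff_Q_convexe}, i.e. the triangulations are $\tau$-equivalent there. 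Compactness of $\overline{\P_\Sigma}$ is the point that needs real work: I would show $\overline{\P_\Sigma}$ is closed (a limit of $Q$-convex distance-like extensions with the same underlying $\TT$ is $Q$-convex, and only finitely many $\TT$ occur) and bounded — boundedness is where one uses that the cone angles $\theta_i$ are finite: if some $\pi(\tau)$ escaped to infinity in a direction, a vertex weight would have to dominate all others, and the local picture near that vertex (a Euclidean cone of angle $\theta_\sigma$ developed into $J^+(O)$) would force the link of $\sigma$ to fail to close up, contradicting that the developing map of $\Sigma$ around $\sigma$ has total angle $\theta_\sigma<\infty$; equivalently the far-away behaviour forces a triangle to degenerate.

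For (c): $0\in\RR^S$ corresponds, under the correspondence, to the constant function and hence (after translating by $\mathbf 1_S$) to a horizontal slice of a single ray-cone from $O$, which is manifestly $Q$-convex — in fact strictly so in every edge once one perturbs — and moreover $\tau\equiv c$ has all slopes equal to $0$ in the relevant hinge computation, so all the defining inequalities of (e) hold strictly at $\pi(0)=0$; since also $0\in\mathrm{int}\,\RR_+^S$ is irrelevant because $\P_\Sigma$ is $\mathbf 1_S$-invariant, this places $0$ in the interior of $\overline{\P_\Sigma}$.

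The main obstacle I anticipate is the compactness in (a): proving boundedness of $\overline{\P_\Sigma}$ requires a genuinely quantitative argument controlling how a $Q$-convex extension behaves when one vertex weight is much larger than its neighbours, and this is exactly the place where the hypothesis on finite cone angles (as opposed to, say, allowing $\theta_i=\infty$) must enter — I would expect to need the detailed hinge analysis promised in Definitions \ref{defi:hinge}--\ref{defi:immersed_hinge_flippable}, in particular the notion of an \emph{unflippable} immersed hinge, to pin down which support planes are "at infinity" and to rule out escape to infinity. The finiteness of $\mathscr T$, while morally a Delaunay fact, also needs care in the singular setting and may itself lean on the same hinge machinery.
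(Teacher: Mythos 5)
Your overall architecture (translation invariance along $\mathbf{1}_S$ for (b), hinge inequalities for (e), finiteness of $\mathscr T$ plus transverse compactness for (a) and (d)) matches the paper's, but there are two genuine gaps. First, \emph{convexity} of $\PSigmaS$ --- and hence the word ``convex'' in (a) and the whole of (e) --- is never established. You present $\PSigmaS$ as the union over triangulations $\TT$ of the polyhedral cones $\CellPSigma=\{\tau:\widetilde\tau_{\tau,\TT}\text{ is Q-convex}\}$, but a union of convex sets is not convex, and the paper explicitly flags this. The resolution (Proposition \ref{prop:domaine_convexe}) is to show that $\PSigmaS$ is the \emph{intersection} $\bigcap_{(Q,\eta)}(Q^*)^{-1}(\RR_-)$ over the \emph{unflippable} immersed hinges of $(\Sigma,S)$; this is exactly what singles out unflippable hinges in (e) and it requires the full flipping-algorithm analysis (Proposition \ref{prop:algo_stop} together with Lemmas \ref{lem:arrete_ext} and \ref{lem:arrete_deg}, which show that unflippable hinges are automatically $\tau$-legal whenever $\tau\in\PSigmaS$, and that the algorithm terminates on a $\tau$-Delaunay triangulation whenever one exists). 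You gesture at unflippable hinges only at the very end, as something you ``expect to need''; without this the proof of (a) and (e) does not close.

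Second, the two quantitative steps are not supplied and your sketches of them would not go through as stated. For compactness, the ``link of $\sigma$ fails to close up'' heuristic is not the mechanism: the actual bound $|\tau_A-\tau_B|\le C$ comes from constructing, for every geodesic between marked points, unflippable immersed hinges of type $(x_0,L)$ with $L$ arbitrarily large (Lemma \ref{lem:charniere_type_existence}, itself a nontrivial exponential-map argument), whose affine forms degenerate to $|\tau_A-\tau_C|\le\varepsilon\max\tau+K$ (Lemma \ref{lem:charniere_type_rel}); these global constraints, not a local link computation, give Lemma \ref{lem:admissible_compact}. Relatedly, finiteness of $\mathscr T$ cannot be quoted as a generic fact about weighted Delaunay complexes: in the paper it follows from an edge-length bound $\ell\le 2\sqrt{C}$ (Lemma \ref{lem:max_quad}) combined with a branched-covering argument (Lemma \ref{lem:triangulation_finie}), and the bound $C$ itself comes from the compactness statement, so the logical order matters. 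Finally, your argument for (c) is incorrect in detail: at $\tau=0$ the hinge inequalities are \emph{not} all strict --- every hinge interior to a Delaunay cell is $0$-critical --- so ``all defining inequalities hold strictly at $0$'' fails. The paper's Proposition \ref{prop:delaunay_interieur} instead uses that circumcenters of Delaunay cells lie in the interiors of their cells to get strictness only across Delaunay edges, and then reruns the flipping algorithm within subtriangulations of the Delaunay cellulation to conclude.
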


					The starting point is to study "local" criteria for Q-convexity. By  local we meaning at each edge of a given triangulation, the following definitions make this notion precise.
					\begin{defi}[Hinge]\label{defi:hinge} A hinge is a tetragon $[ABCD]\subset \E^2$ together with a diagonal $[AC]$ such that $[AC]\subset [ABCD]$.
					\end{defi}
					Beware that the tetragon of a  hinge need not be convex. If convex with vertices in general positions, a tetragon may define two hinges: one for each interior diagonal; otherwise, only one hinge may be defined.
					\begin{defi}[Hinge flipping]
					 Let $Q=([ABCD],[AC])$ be hinge. If $[ABCD]$ is convex and the four points $A,B,C,D$ are in general positions, then $Q$ is flippable and its flipping is the hinge $Q'=([ABCD],[DB])$.
					\end{defi}

					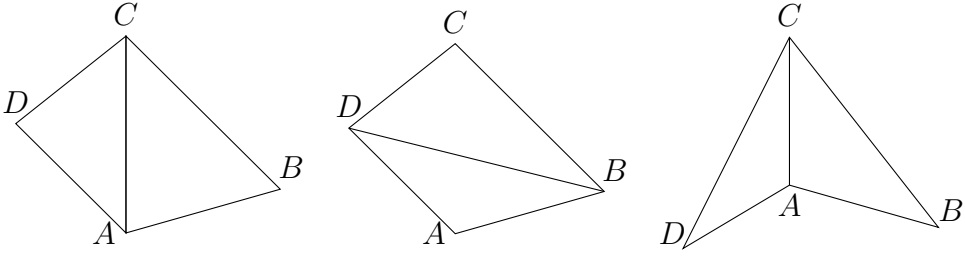
\begin{figure}[h]
					\caption{Different Hinges}
					
							\begin{tikzpicture}[scale=0.29]
								\draw (0,0) -- (-5,5) -- (0,9) -- cycle;
								\draw  (0,0) -- (7,2) -- (0,9) -- cycle;
								\coordinate[label=$A$] (A) at (-1,-1);
								\coordinate[label=$D$] (B) at (-5,5);
								\coordinate[label=$C$] (C) at (0,9);
								\coordinate[label=$B$] (D) at (7.5,2);
							\end{tikzpicture} 
							\begin{tikzpicture}[scale=0.28]
								\draw (0,0) -- (-5,5) -- (0,9) -- (7,2)-- cycle;
								\draw (-5,5) -- (7,2);
								\coordinate[label=$A$] (A) at (-1,-1);
								\coordinate[label=$D$] (B) at (-5,5);
								\coordinate[label=$C$] (C) at (0,9);
								\coordinate[label=$B$] (D) at (7.5,2);
							\end{tikzpicture}
							\begin{tikzpicture}[scale=0.28]
								\draw (0,0) -- (-5,-3) -- (0,7) -- (7,-2)-- cycle;
								\draw (0,0) -- (0,7);
								\coordinate[label=$A$] (A) at (0,-2);
								\coordinate[label=$D$] (B) at (-5.5,-3.3);
								\coordinate[label=$C$] (C) at (0,7);
								\coordinate[label=$B$] (D) at (7.6,-2.2);
							\end{tikzpicture}
							
							From left to right  a hinge $([ABCD],[AC])$, its flipping $([ABCD],[DB])$ and a non convex hinge.
					\end{figure}
					\begin{defi}[Weighted hinge]
						A weighted hinge is the datum of a hinge,  $Q=([ABCD],[AC])$, and 
						a function $\tau:\{A,B,C,D\}\rightarrow\RR$.
					\end{defi}
					\begin{defi}[$\tau$-legal/$\tau$-critical hinge] Let $(Q,\tau)$ is be weighted hinge. 
					Denote by $\widetilde\tau_{\tau,Q} : Q \rightarrow \RR$ the distance-like function induces by the triangulation $\TT=([ABC],[ADC])$.
					A hinge $Q$ is $\tau$-legal (resp. $\tau$-critical, resp. $\tau$-illegal) if $\widetilde \tau_{\tau,Q}$ is Q-convex (resp. $\mathscr C^1$, resp. strictly Q-concave) 					\end{defi}

					Each edge $e$ of a given triangulation $\TT$ provides a hinge, indeed $e$ bounds two triangles $T_1,T_2\in \TT$ and the gluing of this two triangles along $e$ is a hinge. Beware that two such  triangles might be actually the same in $\TT$ (a triangle glued to itself) but we take two copies to construct the hinge. More generally, we will need to consider immersed hinges.
					\begin{defi}\label{defi:immersed_hinge} An immersed hinge is the datum of a hinge $Q$ in  $\E^2$ and an isometric
						immersion $\eta : Q \rightarrow \Sigma$.		
						An immersed hinge $(Q,\eta)$ is embedded if $\eta$ is an embedding.
					\end{defi}
					The hinge associated to an edge is embedded if and only if the triangles bounded by $e$ are different in $\TT$. 
					
					After an analysis of critera ensuring $\tau$-legality of a given hinge, we notice the set of $\tau$ for which a given hinge   is $\tau$-legal is the set of solutions of an affine inequality hence a convex set. Then we turn to the whole surface and try to construct triangulations for which all hinges are $\tau$-legal for a given $\tau$. 
					
					\begin{defi}[$\tau$-Delaunay triangulation]
						Let $\TT$ be an adpated triangulation of $\Sigma$.

						The triangulation $\TT$ is $\tau$-Delaunay if the following equivalent properties are satisfied:
						\begin{enumerate}[(i)]
							\item $\widetilde \tau_{\tau,\TT}$ is Q-convex;
							\item every hinge of $\TT$ is $\tau$-legal.
						\end{enumerate}
						
					\end{defi}
					For a given triangulation $\TT$, the set of $\tau\in\RR_ +^S$ such that $\TT$ is $\tau$-Delaunay is the set solutions of a system of affine inequality hence a convex set (hence part $(d)$ of Theorem \ref{theo:domain_description}). However, $\PSigmaS$ is a possibly infinite union of such domains, therefore convexity is not a direct Corollary. We thus reverse the problem and construct a $\tau$-Delaunay triangulation  $\tau$ given a priori.
					
					The definition is of $\tau$-Delaunay triangulation is coherent with the usual definition of Delaunay triangulation. Indeed, an adapted triangulation of $(\Sigma,S)$ is a subtriangulation of the Delaunay cellulation if and only if it is $0$-Delaunay. 
					The Delaunay cellulation can either be constructed as the dual of the Voronoi cellulation (see for instance of a thorough exposition \cite{MR1135877}) or via a flipping algorithm starting from a given adapted triangulation. The flipping algorithm is based upon the following remark (Lemma \ref{lem:inverse_Qstar}): for a $\tau$ given, if a hinge is $\tau$-illegal then its flipping (if it exists) is $\tau$-legal. The algorithm then proceeds by flipping $\tau$-illegal hinges one by one in the hope that after finitely many iteration there won't be any $\tau$-illegal hinges left. Proposition \ref{prop:algo_stop} ensures the algorithm  behaves mostly as expected: it stops after finitely many iterations on a triangulation without any flippable $\tau$-illegal hinges. To complete the analysis of the flipping algorithm, we show the resulting triangulation is $\tau$-Delaunay if and only if there exists such a triangulation. 
					
					We end the section applying the results obtained on the flipping algorithm to prove Theorem \ref{theo:domain_description}.

					\subsection{Q-convexity on hinges}\label{subsec:Qconvexhinge}

						Before going any further, we notice that the group $\isom(\E^2)$ acts naturally on weighted hinges and preserves legality.

						\label{sec:critere_Qconvexe}

						 In this subsection, we give ourselves a hinge
						 $Q=([ABCD],[AC])$ and  weights $\tau$. For simplicity sake, we  choose a cartesian coordinated system $(x,y)$ of $\E^2$, set $A=O$ the origin of this coordinate system and put $C$ on the vertical axis above $A$. Write $\omega$ and $\tau_0$ (resp. $\omega'$ and $\tau_0'$) the parameters given 
						 by Lemma \ref{lem:prolongement} on $[ABC]$ (resp. $[ADC]$) for the weights $\tau$;
						 define $$\fonction{\tau_{ABC}}{\E^2}{\RR}{x}{t_0-\d(x,\omega)^2} \quad \quad \fonction{\tau_{ADC}}{\E^2}{\RR}{x}{t_0'-\d(x,\omega')^2}.$$
						Note that $\d(\omega,C)^2-\d(\omega,A)^2 = \d(\omega',C)^2-\d(\omega',A)^2  $
						hence $\overrightarrow{\omega\omega'} \perp \overrightarrow{AC}$,					
						the following picture sums-up the situation.
						\begin{center}
							\begin{tikzpicture}[scale=0.4]
								\draw (0,0) -- (-5,5) -- (0,9) -- cycle;
								\draw  (0,0) -- (7,2) -- (0,9) -- cycle;
								\coordinate[label=$A$] (A) at (-1,-1);
								\coordinate[label=$D$] (B) at (-5,5);
								\coordinate[label=$C$] (C) at (0,9);
								\coordinate[label=$B$] (D) at (7.5,2);
								\draw [ style=dashed] (-10,4) -- (10,4);
								\draw [line width = 0.5pt](0.5,4) -- (0.5,4.5) -- (0,4.5);
								\draw (-2,4) node[above]{$\omega'$} ;
								\draw (-2,4) node{$\bullet$} ;
								\draw (6,4) node[above]{$\omega$} ;
								\draw (6,4) node{$\bullet$} ;
							\end{tikzpicture}
						\end{center}

						\begin{prop}[Q-convexity criteria]\label{prop:Qconv_critere}
							Under this subsection hypotheses, the following are equivalent:
							\begin{enumerate}[(i)]
							 \item $\widetilde \tau_{Q,\tau}$ is Q-convex;
							 \item $\widetilde \tau_{Q,\tau}$ is Q-convex along some segment crossing $[AC]$;
							 \item  $\tau_{ABC}\leq \tau_{ACD}$ on $[ACD]$ and $\tau_{ABC}(B)\geq \tau_{ACD}$ on $[ABC]$.
							 \item $\tau_{ABC}(D)\leq \tau_{ACD}(D)$ or $\tau_{ABC}(B)\geq \tau_{ACD}(B)$.
							 
							 \item $x_{\omega}\geq x_{\omega'}$
							 \item
							 $$\left(\frac{y_B}{|x_B|}+\frac{y_D}{|x_D|}\right)\tau_C+\left(\frac{AC-y_B}{|x_B|}+\frac{AC-y_D}{|x_D|}\right)\tau_A\leq \frac{AC}{|x_D|}\tau_D+\frac{AC}{|x_B|}\tau_B+K$$
							 with
							 $$K=\frac{AC}{|x_B|}\left(AB^2-ACy_B\right)+\frac{AC}{|x_D|}\left(AD^2-ACy_D\right).$$
							\item Denoting by $\overrightarrow u\wedge \overrightarrow v$ the determinant $|\overrightarrow u \overrightarrow v|$ :
							 $$\left(\overrightarrow{AB}\wedge\overrightarrow{AD}\right) \tau_C+\left(\overrightarrow{CD} \wedge\overrightarrow{CB}\right)\tau_A- \left(\overrightarrow{CA}\wedge \overrightarrow{CB}\right)\tau_D-\left(\overrightarrow{AC}\wedge \overrightarrow{AD}\right)\tau_B-K\leq 0$$
							 with
							 $$K=
							 \overrightarrow{AC}\wedge \overrightarrow{AD}
								\left(\overrightarrow{AB}\cdot \overrightarrow{CB}\right)
							 +
							 \overrightarrow{CA}\wedge \overrightarrow{CB}
								\left(\overrightarrow{AD}\cdot \overrightarrow{CD}\right)
							 .$$
							\end{enumerate}
						\end{prop}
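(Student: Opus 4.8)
The plan is to prove the chain of equivalences by first writing everything explicitly in the chosen coordinate system, where $A=O$, $C=(0,AC)$ on the vertical axis, $B$ has positive abscissa $x_B>0$, and $D$ has negative abscissa $x_D<0$. By Lemma \ref{lem:prolongement} the circumcentre $\omega$ of $[ABC]$ and $\omega'$ of $[ADC]$ both lie on the perpendicular bisector of $[AC]$, i.e.\ on the horizontal line $y=AC/2$; only their abscissae $x_\omega, x_{\omega'}$ differ. The two quadratic functions $\tau_{ABC}$ and $\tau_{ACD}$ agree on the whole line $(AC)$ (since they agree at $A$ and $C$ and both restrict to quadratics of the form $\mathrm{const}-y^2$ along that line with the same leading term — their difference along $(AC)$ is affine and vanishes at two points, hence identically zero). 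Therefore $\tau_{ABC}-\tau_{ACD}$ is an affine function on $\E^2$ vanishing on $(AC)$, hence of the form $x\mapsto c\cdot x$ for some constant $c$ — and a direct expansion gives $\tau_{ABC}(x,y)-\tau_{ACD}(x,y) = 2(x_\omega-x_{\omega'})\,x$ (wait: $\tau_{ABC} = \tau_0 - (x-x_\omega)^2-(y-AC/2)^2$, so the difference is $-( -2x_\omega x) + (-2x_{\omega'}x) + (\tau_0-\tau_0') - (x_\omega^2 - x_{\omega'}^2)$; the constant part vanishes because the difference is zero on $x=0$, leaving $2(x_\omega-x_{\omega'})x$). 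This single identity is the engine of the whole proposition.

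With that identity in hand the equivalences fall out quickly. For $(i)\Leftrightarrow(ii)$: Q-convexity of $\widetilde\tau_{Q,\tau}$ along a geodesic segment only ``sees'' the edge $[AC]$ it crosses, and the jump in the derivative across $[AC]$ is, up to a positive factor depending on the crossing angle, exactly $x_\omega-x_{\omega'}$ evaluated along that segment's direction; since this sign is independent of which crossing segment we pick, Q-convexity along one crossing segment is equivalent to Q-convexity along all of them, which (the only breakpoints being on $[AC]$, as $\widetilde\tau$ is smooth on each triangle) is equivalent to global Q-convexity on the hinge. For $(ii)/(iii)/(iv)\Leftrightarrow(v)$: the difference $\tau_{ABC}-\tau_{ACD}=2(x_\omega-x_{\omega'})x$ is nonnegative on the half-plane $x\le 0$ (which contains the triangle $[ACD]$) and nonpositive on $x\ge 0$ (containing $[ABC]$) precisely when $x_\omega\ge x_{\omega'}$; evaluating at the single vertices $B$ (with $x_B>0$) and $D$ (with $x_D<0$) already pins down the sign of $x_\omega-x_{\omega'}$, which gives $(iv)$, and the stronger ``on the whole triangle'' statements in $(iii)$ then follow for free from the linearity of the difference. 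Conditions $(iii)$ and $(iv)$ are thus literally two readings of the same sign condition, and all of them are equivalent to $(v)$.

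Finally, $(v)\Leftrightarrow(vi)\Leftrightarrow(vii)$ is pure bookkeeping: I would solve the linear systems from Lemma \ref{lem:prolongement} explicitly for $x_\omega$ and $x_{\omega'}$ — with $A=O$ the second and third equations of that system read $2x_\omega x_B + 2\cdot(AC/2)\, y_B \cdot 0$-style relations; more precisely $x_\omega$ is a linear combination of $\tau_A,\tau_B,\tau_C$ with coefficients built from the coordinates of $B$, and symmetrically for $x_{\omega'}$ in terms of $\tau_A,\tau_D,\tau_C$ and the coordinates of $D$. Writing out $x_\omega - x_{\omega'}\ge 0$ and clearing the (sign-controlled) denominators $|x_B|,|x_D|$ produces $(vi)$ verbatim, including the constant $K$; the determinant form $(vii)$ is then obtained by re-expressing the coordinate expressions $x_B, y_B, AB^2$ etc.\ through the wedge products $\overrightarrow{AB}\wedge\overrightarrow{AD}$ and dot products $\overrightarrow{AB}\cdot\overrightarrow{CB}$, using that in our coordinates $x_B = \overrightarrow{AC}\wedge\overrightarrow{AB}/AC$ and similar identities, so that $(vii)$ is the coordinate-free rewriting of $(vi)$ and manifestly $\isom(\E^2)$-equivariant. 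The only genuine obstacle is algebraic stamina in the last step — making sure the constant $K$ comes out exactly as stated after clearing denominators and translating to determinants — together with careful sign tracking for $x_B>0$, $x_D<0$ (which is where the absolute values $|x_B|,|x_D|$ enter); the conceptual content is entirely carried by the affine-difference identity $\tau_{ABC}-\tau_{ACD}=2(x_\omega-x_{\omega'})x$.
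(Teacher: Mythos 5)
Your proof is correct and follows essentially the same route as the paper's: everything rests on the observation that $\omega$ and $\omega'$ have equal ordinates (equivalently $\overrightarrow{\omega\omega'}\perp\overrightarrow{AC}$), which you package as the identity $\tau_{ABC}-\tau_{ACD}=2(x_\omega-x_{\omega'})\,x$ and the paper packages as the statement that the gradient difference is horizontal, with the same explicit resolution of the linear system for $(vi)$ and $(vii)$. One minor slip that does not affect the argument: $\omega$ and $\omega'$ do not in general lie on the perpendicular bisector $y=AC/2$ of $[AC]$ (that would require $\tau_A=\tau_C$), and they are centres of the distance-like extensions rather than circumcentres; they merely share the same ordinate, but since the $y$-parts cancel in the difference, your key identity and everything downstream of it remain valid.
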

						\begin{proof}

						 \begin{itemize}
							\item $(i)\Rightarrow (ii)$ is by definition.
							\item $(ii)\Rightarrow (i)$. Since the line $(\omega\omega')$ is normal to $(AC)$ it follows that $\frac{\partial\tau_{ABC}}{\partial y} = \frac{\partial\tau_{ACD}}{\partial y} $, then $\overrightarrow\grad\, \tau_{[ABC]}-\overrightarrow\grad \tau_{[ACD]}$ is horizontal and the sign of $\langle \overrightarrow\grad\,\tau_{[ABC]}-\overrightarrow\grad \tau_{[ACD]}| \overrightarrow u\rangle$ does not depend on $\overrightarrow u$ as long a $\overrightarrow u$ is directed toward increasing $x$. 
							\item $(v)$ is equivalent to $\frac{\partial\tau_{ABC}}{\partial x} \geq \frac{\partial\tau_{ACD}}{\partial x}$ which is equivalent to Q-convexity along the direction normal to $[AC]$. Hence, $(i)\Rightarrow (v)$ and $(v)\Rightarrow (ii)$.
							\item $(i)\Rightarrow (iii)$. Let $P\in [ABC]$, choose some $P'\in [ACD]$ such that $[P'P]$ crosses $[AC]$. The function $\tau_{[ACD]}$ is $\mathscr C^1$ while  $\widetilde \tau_{Q,\tau}$ is Q-convex along $[P'P]$. The same argument as in the proof of  Lemma  \ref{lem:diff_Q_convexe} gives the first inequality. The second is proven the same way. 
							\item $(iii)\Rightarrow(iv)$ is trivial.
							
							\item To prove $(iv)\Rightarrow (ii)$, consider any segment $[PB]$ with $P\in [ACD]$. Along such a segment,  $\widetilde \tau_{Q,\tau}$ is either Q-convex or  strictly Q-concave. The inequality $\tau_{ABC}(B)\geq \tau_{ACD}(B)$ implies it is the former. The same argument shows $\tau_{ABC}(D)\leq \tau_{ACD}(D) \Rightarrow (ii)$.
							
						  \item $(v)\Leftrightarrow (vi)$ Solve explicitely the system in the proof of Lemma \ref{lem:prolongement}  for both sides in $(v)$.
						  \item $(vii)$ is a geometric rewriting  of $(vi)$.
						 \end{itemize}

						\end{proof}

						The previous Proposition shows Q-convexity is an affine constraint on $\tau$ for a given hinge. Since we will have to consider multiple hinges for multiple triangulations, we introduce the following.
						\begin{defi}[Affine form of a hinge] \label{defi:Q_forme_affine}

							Let $Q=([ABCD],[AC])$ be  a hinge, define the affine form associated to $Q$ by:
							$$\fonction{Q^*}{\RR_+^{\{A,B,C,D\}}}{\RR}{\tau}{
								  \lambda_C\tau_C+
								  \lambda_A\tau_A
									-\lambda_D\tau_D-
									\lambda_B\tau_B-K
								 }
							$$
							where
							\begin{eqnarray*}
							 \lambda_C=\overrightarrow{AB}\wedge\overrightarrow{AD},&&\lambda_A=\overrightarrow{CD} \wedge\overrightarrow{CB},\\
							 \lambda_D=\overrightarrow{CA}\wedge \overrightarrow{CB},&&\lambda_B=\overrightarrow{AC}\wedge \overrightarrow{AD},
							\end{eqnarray*}
							  $$K= \overrightarrow{AC}\wedge \overrightarrow{AD}
								\left(\overrightarrow{AB}\cdot \overrightarrow{CB}\right)
							 +
							 \overrightarrow{CA}\wedge \overrightarrow{CB}
								\left(\overrightarrow{AD}\cdot \overrightarrow{CD}\right).$$

						\end{defi}
						\begin{rem}\label{rem:forme_linaire_Qconvexe} The affine form $Q^*$ is defined  in such a way that $\widetilde \tau_{Q,\tau}$
						is Q-convex if and only if $Q^*(\tau)\leq 0$.

						\end{rem}

						\begin{rem}
						 If $(Q,\eta)$ is an immersed hinge of $(\Sigma,S)$ with $\eta$ sending vertices into $S$
						 and with $Q=([ABCD],[AC])$,
						 we can then define a corresponding affine form  $\RR_+^{S}\rightarrow \RR$
						  $$\fonctionn{\RR_+^S}{\RR}{\tau}{Q^*(\tau\circ \eta_{|\{A,B,C,D\} }) }.$$
						 If there is no ambiguity we shall also denote it by $Q^*$.
						\end{rem}
						\begin{rem} A hinge $Q$ is $\tau$-critical if and only if $Q^*(\tau)=0$.

						\end{rem}

						\begin{lem}\label{lem:inverse_Qstar}
				 Let $Q=([ABCD],[AC])$ be a flippable hinge and let $Q'$ its flipped hinge. 
				 As functions $\RR^{\{A,B,C,D\}}\rightarrow \RR$ we have:
				 
				 $$Q'^*=-Q^* $$
				\end{lem}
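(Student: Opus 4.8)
The claim is the identity $Q'^* = -Q^*$ as affine functions $\RR^{\{A,B,C,D\}} \to \RR$, where $Q = ([ABCD],[AC])$ is a flippable hinge and $Q' = ([ABCD],[DB])$ is its flip. The plan is to compute $Q'^*$ directly from Definition \ref{defi:Q_forme_affine}, being careful about the role played by the labelling of vertices relative to the chosen diagonal, and then to match coefficients against $-Q^*$ using only elementary bilinear identities for the determinant $\wedge$ and the dot product in $\E^2$.

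First I would pin down how the vertex data of $Q'$ feeds into the formula. The affine form $Q^*$ of a hinge is written in terms of a tetragon $[ABCD]$ and the diagonal joining the \emph{first and third} listed vertices. So to evaluate $Q'^* = ([ABCD],[DB])^*$ I must relabel the vertices cyclically so that $D$ and $B$ become the ``$A$'' and ``$C$'' of the formula: writing $(A',B',C',D') := (D,A,B,C)$ we have $[A'B'C'D'] = [DABC]$ as the same cyclic tetragon and $[A'C'] = [DB]$, so $Q'^*(\tau) = \lambda_{C'}\tau_{C'} + \lambda_{A'}\tau_{A'} - \lambda_{D'}\tau_{D'} - \lambda_{B'}\tau_{B'} - K'$ with the $\lambda$'s and $K'$ built from $A',B',C',D'$. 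Substituting back $A'=D$, $B'=A$, $C'=B$, $D'=C$ gives an explicit expression for $Q'^*$ in terms of $\overrightarrow{DA}, \overrightarrow{DC}, \overrightarrow{BA}, \overrightarrow{BC}$ and their pairings.

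Next I would simplify each coefficient. For the $\tau$-linear part: $Q'^*$ has $\tau_B$-coefficient $\lambda_{C'} = \overrightarrow{A'B'}\wedge\overrightarrow{A'D'} = \overrightarrow{DA}\wedge\overrightarrow{DC}$, while $-Q^*$ has $\tau_B$-coefficient $+\lambda_B = \overrightarrow{AC}\wedge\overrightarrow{AD}$; I would check these agree using $\overrightarrow{DA}\wedge\overrightarrow{DC} = (\overrightarrow{DA})\wedge(\overrightarrow{DA}+\overrightarrow{AC}) = \overrightarrow{DA}\wedge\overrightarrow{AC} = \overrightarrow{AC}\wedge\overrightarrow{AD}$ (using bilinearity, antisymmetry, and $\overrightarrow{u}\wedge\overrightarrow{u}=0$). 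The same two-step manipulation handles the $\tau_A$, $\tau_C$, $\tau_D$ coefficients: in each case both sides reduce to the \emph{same} signed area of a triangle on three of the four points, the sign bookkeeping being exactly what the cyclic relabelling supplies. (A useful sanity check is that the coefficient of $\tau_A + \tau_B + \tau_C + \tau_D$ must be zero on each side, reflecting the invariance of Q-convexity under adding a constant — Remark after Proposition \ref{prop:prolongement} — so only three of the four coefficient-matchings are independent.) Finally, for the constant terms I would expand $K'$ and $-K$, each a sum of two products of the form $(\text{signed area})\cdot(\text{dot product})$, and verify they coincide after the same substitution $\overrightarrow{DA}=\overrightarrow{CA}-\overrightarrow{CD}$ etc.; alternatively, and more cleanly, one observes that once the three independent $\tau$-coefficients match, the remark that $\widetilde\tau_{Q,\tau}$ is Q-convex iff $Q^*(\tau)\le 0$ while $\widetilde\tau_{Q',\tau}$ is Q-convex iff $Q'^*(\tau)\le 0$, combined with Proposition \ref{prop:Qconv_critere}(i)$\Leftrightarrow$the corresponding statement for $Q'$, forces $Q'^*$ and $-Q^*$ (both nonzero affine forms with proportional linear parts and proportionality constant $-1$ by coefficient-matching) to be equal also in the constant term — because a hinge is $\tau$-critical iff its flip is $\tau$-critical, i.e. the zero sets coincide.

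The main obstacle I expect is purely the bookkeeping of signs: getting the cyclic relabelling right so that the diagonal $[DB]$ really plays the role of ``$[AC]$'' with the correct induced orientation of the tetragon, and then tracking the signs of the four determinant coefficients through the substitution. There is no analytic difficulty — everything is a finite identity among bilinear expressions in four vectors in the plane — so the proof will be short; the risk is a misplaced minus sign, which the $\tau$-constant-sum check and the critical-hinge/flip symmetry both guard against.
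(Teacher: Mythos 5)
Your plan is correct and, for the linear part, is essentially the paper's own proof: the paper also matches the four $\lambda$-coefficients one by one using bilinearity and antisymmetry of $\wedge$ (writing e.g. $\overrightarrow{DB}=\overrightarrow{DC}+\overrightarrow{CB}$), just without packaging the bookkeeping as an explicit cyclic relabelling $(A',B',C',D')=(D,A,B,C)$ — your relabelling is a cleaner way to organize the same computation and produces the same primed coefficients. The only genuine divergence is the constant term: the paper avoids expanding $K'$ by rewriting $K=(AB\cdot CB\cdot CD\cdot DA)\sin(\widehat{BAD}+\widehat{DCB})$ and $K'=(AB\cdot CB\cdot CD\cdot DA)\sin(\widehat{CBA}+\widehat{ADC})$ and using that the four angles sum to $0 \bmod 2\pi$, whereas you propose either brute-force expansion (fine, if tedious) or a zero-set argument. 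Be careful with the latter: in the paper, ``$Q$ is $\tau$-critical iff $Q'$ is $\tau$-critical'' is stated as a \emph{corollary} of this lemma, so quoting it as given would be circular. The argument can be rescued — criticality means the distance-like extension on the tetragon is $\mathscr C^1$, i.e.\ all four weighted vertices admit a single global extension $\tau_0-\d(\cdot,\omega)^2$, a condition manifestly independent of the choice of diagonal — but you would need to say this explicitly rather than cite the flip-invariance of criticality as a known fact.
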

				\begin{proof}
					Following the notations of definition \ref{defi:Q_forme_affine} we write : 
					$$\fonction{Q^*}{\RR_+^{\{A,B,C,D\}}}{\RR}{\tau}{
								  \lambda_C\tau_C+
								  \lambda_A\tau_A
									-\lambda_D\tau_D-
									\lambda_B\tau_B-K
								 }$$ $$
					\fonction{Q'^{*}}{\RR_+^{\{A,B,C,D\}}}{\RR}{\tau}{
								  \lambda_C'\tau_C+
								  \lambda_A'\tau_A
									-\lambda_D'\tau_D-
									\lambda_B'\tau_B-K'
								 }
							$$
				where
							\begin{eqnarray*}
							 \lambda_C=\overrightarrow{AB}\wedge\overrightarrow{AD},&&\lambda_A=\overrightarrow{CD} \wedge\overrightarrow{CB},\\
							 \lambda_D=\overrightarrow{CA}\wedge \overrightarrow{CB},&&\lambda_B=\overrightarrow{AC}\wedge \overrightarrow{AD},\\
							 \lambda_D'=-\overrightarrow{BC}\wedge\overrightarrow{BA},&&\lambda_B'=-\overrightarrow{DA} \wedge\overrightarrow{DC},\\
							 \lambda_A'=-\overrightarrow{DB}\wedge \overrightarrow{DC},&&\lambda_C'=-\overrightarrow{BD}\wedge \overrightarrow{BA},
							\end{eqnarray*}
						We check that
					 $$ \lambda_A'=-\overrightarrow{DB}\wedge \overrightarrow{DC}
					  = -(\overrightarrow{DC}+\overrightarrow{CB})\wedge\overrightarrow{DC}
					  =-\overrightarrow{CB}\wedge \overrightarrow{DC} = -\overrightarrow{CD}\wedge \overrightarrow{CB}
					  = -\lambda_A$$
					 and we check the same way that $\lambda_B'=-\lambda_B$, $\lambda_C'=-\lambda_C$ and $\lambda_D'=-\lambda_D$.

					A quick way to prove that $K' = -K$ is to notice that 
					$$K = (AB\cdot CB\cdot CD \cdot DA) \sin\left(\widehat{BAD}+\widehat{DCB}\right) $$
					$$K' = (AB\cdot CB\cdot CD \cdot DA) \sin\left(\widehat{CBA}+\widehat{ADC}\right) $$
					and that $\widehat{BAD}+\widehat{DCB}+\widehat{CBA}+\widehat{ADC} = 0\, \mathrm{mod}\,  2\pi$
				
				\end{proof}

						\begin{cor}
						 Let $(Q,\tau)$ be a weighted flippable hinge,
						 then $Q$ is $\tau$-critical if and only if its flipping $Q'$ is $\tau$-critical.
						\end{cor}

						\begin{cor} \label{cor:inversion_Qconvexe} Let $(Q,\tau)$ be a weighted flippable hinge and $Q'$ the flip of $Q$.
						
						If $Q$ is not $\tau$-critical, then the following are equivalent:
						\begin{enumerate}
						 \item $Q$ is $\tau$-legal;
						 \item $Q'$ is $\tau$-illegal.
						\end{enumerate}
						\end{cor}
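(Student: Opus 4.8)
The plan is to translate the three notions---$\tau$-legal, $\tau$-critical, $\tau$-illegal---into the sign of the single affine form $Q^*$, and then to use the identity $Q'^* = -Q^*$ supplied by Lemma~\ref{lem:inverse_Qstar}.

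First I would record the dictionary provided by the remarks following Definition~\ref{defi:Q_forme_affine}: by Remark~\ref{rem:forme_linaire_Qconvexe}, $\widetilde\tau_{Q,\tau}$ is Q-convex (i.e. $Q$ is $\tau$-legal) if and only if $Q^*(\tau)\le 0$; and $Q$ is $\tau$-critical if and only if $Q^*(\tau)=0$. Restricted to any segment crossing the diagonal $[AC]$, the function $\widetilde\tau_{Q,\tau}$ is piecewise of the form $x\mapsto -x^2+\alpha x+\beta$ with a single break point, so the trichotomy Q-convex / $\mathscr C^1$ / strictly Q-concave at that point is governed entirely by the sign of the jump of the derivative; hence, via Proposition~\ref{prop:Qconv_critere}, $Q$ is $\tau$-illegal if and only if $Q^*(\tau)>0$. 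The same three equivalences hold verbatim for the flipped hinge $Q'$ with its own affine form $Q'^*$.

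Then I would invoke Lemma~\ref{lem:inverse_Qstar}: as functions on $\RR^{\{A,B,C,D\}}$ one has $Q'^* = -Q^*$, so $Q'$ is $\tau$-illegal if and only if $Q'^*(\tau)>0$, i.e. $Q^*(\tau)<0$. Finally I would feed in the hypothesis that $Q$ is not $\tau$-critical, i.e. $Q^*(\tau)\neq 0$: under this assumption the non-strict condition $Q^*(\tau)\le 0$ defining $\tau$-legality of $Q$ is equivalent to the strict condition $Q^*(\tau)<0$. Chaining the equivalences yields ``$Q$ is $\tau$-legal $\iff Q^*(\tau)<0 \iff Q'$ is $\tau$-illegal'', which is the claim.

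There is no genuine obstacle: the substance is entirely contained in Lemma~\ref{lem:inverse_Qstar} and the sign dictionary, both already established. The one point deserving care---and the reason the statement is stated for non-critical hinges---is the asymmetry of the conventions: ``legal'' is the \emph{non-strict} inequality $Q^*\le 0$ while ``illegal'' is its strict reverse $Q^*>0$, so the boundary locus $\{Q^*(\tau)=0\}$ (precisely the $\tau$-critical hinges) must be excluded for the equivalence to hold; on that locus $Q$ is $\tau$-legal and $Q'$ is again $\tau$-critical, hence $\tau$-legal rather than $\tau$-illegal.
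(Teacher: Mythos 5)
Your proposal is correct and follows exactly the route the paper intends: the corollary is stated without proof precisely because it is the immediate combination of the sign dictionary ($\tau$-legal $\Leftrightarrow Q^*(\tau)\leq 0$, $\tau$-critical $\Leftrightarrow Q^*(\tau)=0$, $\tau$-illegal $\Leftrightarrow Q^*(\tau)>0$) with Lemma~\ref{lem:inverse_Qstar}'s identity $Q'^*=-Q^*$. Your closing remark correctly identifies why the non-criticality hypothesis is needed, consistent with the preceding corollary that criticality is preserved under flipping.
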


						\begin{lem}\label{lem:kernel_hinge} For any hinge $Q$, 
						  the indicator function $\mathbf{1}_S$ is in the kernel of the linear part of $Q^*$, eg 
						  $$\forall \tau\in \RR^S, \forall \lambda\in \RR, Q^*(\tau+\lambda\mathbf{1}_S)=Q^*(\tau).$$
						\end{lem}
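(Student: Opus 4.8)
The plan is to reduce the statement to a single cancellation among the four determinants of Definition \ref{defi:Q_forme_affine}. Since $Q^*$ is an affine form and $\mathbf{1}_S$ takes the value $1$ at each of the four hinge vertices $A,B,C,D$, a one-line computation gives, for every $\tau\in\RR^S$ and $\lambda\in\RR$, the identity $Q^*(\tau+\lambda\mathbf{1}_S)-Q^*(\tau)=\lambda\,(\lambda_C+\lambda_A-\lambda_D-\lambda_B)$; in the immersed-hinge version the form is obtained from $Q^*$ by precomposing with restriction to the vertices, and $\mathbf{1}_S$ pulls back to the constant function $1$ on $\{A,B,C,D\}$, so the same reduction applies. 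Hence it suffices to show $\lambda_C+\lambda_A-\lambda_D-\lambda_B=0$.

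To prove this I would expand using only the bilinearity and antisymmetry of the determinant $\overrightarrow{u}\wedge\overrightarrow{v}$ together with Chasles' relation. Pairing the two coefficients with common second factor $\overrightarrow{AD}$ yields $\lambda_C-\lambda_B=(\overrightarrow{AB}-\overrightarrow{AC})\wedge\overrightarrow{AD}=\overrightarrow{CB}\wedge\overrightarrow{AD}$, and pairing the two with common second factor $\overrightarrow{CB}$ yields $\lambda_A-\lambda_D=(\overrightarrow{CD}-\overrightarrow{CA})\wedge\overrightarrow{CB}=\overrightarrow{AD}\wedge\overrightarrow{CB}$. By antisymmetry of $\wedge$ these two expressions are opposite, so their sum vanishes, which is precisely the required identity.

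I do not expect any genuine obstacle here: the constant $K$ of Definition \ref{defi:Q_forme_affine} plays no role, being part of the constant term of the affine form, and the entire content is the determinant cancellation above. The only thing to watch is the sign bookkeeping in Definition \ref{defi:Q_forme_affine}, namely that $\lambda_D$ and $\lambda_B$ occur in $Q^*$ with a minus sign; it is exactly this pattern of signs that makes the four coefficients cancel in pairs rather than accumulate.
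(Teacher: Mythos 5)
Your proof is correct and is essentially the same as the paper's: both reduce the claim to the identity $\lambda_A+\lambda_C-\lambda_B-\lambda_D=0$ and verify it by pairing $\lambda_C-\lambda_B=\overrightarrow{CB}\wedge\overrightarrow{AD}$ with $\lambda_A-\lambda_D=\overrightarrow{AD}\wedge\overrightarrow{CB}$ and invoking antisymmetry. Your remarks on the irrelevance of $K$ and on the pullback of $\mathbf{1}_S$ in the immersed case are correct and slightly more explicit than the paper's writeup.
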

						\begin{proof}
						 Using notations of Definition \ref{defi:Q_forme_affine}, we have
						 \begin{eqnarray*}
						  \lambda_A+\lambda_C-\lambda_B-\lambda_D &=& \overrightarrow{CD}\wedge \overrightarrow{CB} +\overrightarrow{AB}\wedge\overrightarrow{AD} - \overrightarrow{AC}\wedge \overrightarrow{AD}- \overrightarrow{CA}\wedge \overrightarrow{CB}  \\
						  &=& \overrightarrow{AD}\wedge \overrightarrow{CB}+ \overrightarrow{CB}\wedge\overrightarrow{AD} \\
						  &=&0.
						 \end{eqnarray*}

						\end{proof}

						\begin{cor}\label{cor:translation}
							For all $\tau \in \calP_{\Sigma}$ and all $\lambda \in \RR$, 
							$$\tau+\lambda \mathbf{1}_S\in \PSigmaS \quad \Leftrightarrow \quad \tau+\lambda \mathbf{1}_S\geq 0.$$
						\end{cor}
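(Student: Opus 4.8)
The plan is to deduce the statement from the affine description of admissibility already obtained, and in particular from the translation invariance of the hinge forms. The implication ``$\tau+\lambda\mathbf{1}_S\in\PSigmaS\ \Rightarrow\ \tau+\lambda\mathbf{1}_S\geq0$'' is immediate: by Definition~\ref{defi:temps_admissbles} one has $\PSigmaS\subset\RR_+^S$, so every element of $\PSigmaS$ is non-negative componentwise.

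For the converse, I would start from $\tau\in\PSigmaS$ and $\lambda\in\RR$ with $\tau+\lambda\mathbf{1}_S\geq0$, and take $\TT:=\TT_\tau$, the adapted triangulation provided by Definition~\ref{defi:temps_admissbles}, for which $\widetilde\tau_{\tau,\TT}$ is Q-convex; equivalently $\TT$ is $\tau$-Delaunay, i.e.\ every hinge $Q$ of $\TT$ satisfies $Q^*(\tau)\leq0$ by Remark~\ref{rem:forme_linaire_Qconvexe}. The key input is Lemma~\ref{lem:kernel_hinge}: the indicator $\mathbf{1}_S$ lies in the kernel of the linear part of every hinge form $Q^*$, so $Q^*(\tau+\lambda\mathbf{1}_S)=Q^*(\tau)\leq0$ for every hinge $Q$ of $\TT$. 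Hence $\TT$ is also $(\tau+\lambda\mathbf{1}_S)$-Delaunay, so $\widetilde\tau_{\tau+\lambda\mathbf{1}_S,\TT}$ is Q-convex; together with the standing hypothesis $\tau+\lambda\mathbf{1}_S\in\RR_+^S$, the definition of admissible times then yields $\tau+\lambda\mathbf{1}_S\in\PSigmaS$.

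I do not expect a genuine obstacle here; the statement is essentially a bookkeeping consequence of Lemma~\ref{lem:kernel_hinge}. The only points worth spelling out are the chain of equivalences ``$\widetilde\tau_{\tau,\TT}$ Q-convex $\Leftrightarrow$ $\TT$ is $\tau$-Delaunay $\Leftrightarrow$ $Q^*(\tau)\leq0$ for every hinge of $\TT$'', which is exactly the definition of $\tau$-Delaunay triangulations combined with Remark~\ref{rem:forme_linaire_Qconvexe}, and the observation that passing from $\tau$ to $\tau+\lambda\mathbf{1}_S$ merely shifts $\widetilde\tau_{\tau,\TT}$ by the constant $\lambda$ on each triangle (in the linear system of Lemma~\ref{lem:prolongement} only $\tau_0$ changes, into $\tau_0+\lambda$, while $\omega$ is unchanged). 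Consequently no separate positivity check on $\widetilde\tau_{\tau+\lambda\mathbf{1}_S,\TT}$ is needed: once the values at the vertices of $\TT$ are non-negative, Lemma~\ref{lem:prolongement} forces non-negativity on each triangle.
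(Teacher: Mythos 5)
Your proof is correct and follows exactly the route the paper intends: the corollary is stated as an immediate consequence of Lemma~\ref{lem:kernel_hinge} (the paper gives no separate proof), namely that $Q^*(\tau+\lambda\mathbf{1}_S)=Q^*(\tau)$ for every hinge of $\TT_\tau$, so the only remaining constraint from Definition~\ref{defi:temps_admissbles} is componentwise non-negativity. Your additional remark that the extension merely shifts by the constant $\lambda$ (only $\tau_0$ changes in Lemma~\ref{lem:prolongement}) is a correct and harmless elaboration.
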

						\begin{cor}\label{cor:theo3_b}
						With the notations of Theorem \ref{theo:domain_description},
							$$\PSigmaS = (\overline{\PSigmaS} + \RR\cdot\mathbf{1}_S)\cap \RR_+^S$$
						\end{cor}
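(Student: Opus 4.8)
The plan is to deduce this directly from Corollary~\ref{cor:translation}, which already encodes the essential content: membership in $\PSigmaS$ is invariant under adding any multiple of $\mathbf{1}_S$, provided one stays in $\RR_+^S$. The only work left is bookkeeping with the orthogonal decomposition $\RR^S = H \oplus \RR\cdot\mathbf{1}_S$ given by the projection $\pi$.

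First I would prove the inclusion $\PSigmaS \subseteq (\overline{\PSigmaS} + \RR\cdot\mathbf{1}_S)\cap \RR_+^S$. Let $\tau \in \PSigmaS$. By definition $\tau \geq 0$, so $\tau \in \RR_+^S$. Moreover $\pi(\tau) \in \pi(\PSigmaS) = \overline{\PSigmaS}$ and $\tau - \pi(\tau) \in \RR\cdot\mathbf{1}_S$ since $\pi$ is the orthogonal projection onto $H = \mathbf{1}_S^{\perp}$; hence $\tau = \pi(\tau) + \big(\tau - \pi(\tau)\big) \in \overline{\PSigmaS} + \RR\cdot\mathbf{1}_S$. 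This gives the first inclusion.

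For the reverse inclusion, take $\tau \in (\overline{\PSigmaS} + \RR\cdot\mathbf{1}_S)\cap \RR_+^S$ and write $\tau = \bar\sigma + \lambda\mathbf{1}_S$ with $\bar\sigma \in \overline{\PSigmaS}$ and $\lambda \in \RR$. Since $\overline{\PSigmaS} = \pi(\PSigmaS)$, there is $\sigma \in \PSigmaS$ with $\pi(\sigma) = \bar\sigma$, and then $\sigma = \bar\sigma + \mu\mathbf{1}_S$ for some $\mu \in \RR$ because $\sigma - \pi(\sigma) \in \RR\cdot\mathbf{1}_S$. Therefore $\tau = \sigma + (\lambda - \mu)\mathbf{1}_S$. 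Now $\tau \in \RR_+^S$ means $\sigma + (\lambda-\mu)\mathbf{1}_S \geq 0$, so Corollary~\ref{cor:translation} applied to $\sigma \in \PSigmaS$ with $\lambda - \mu$ in place of $\lambda$ yields $\tau = \sigma + (\lambda-\mu)\mathbf{1}_S \in \PSigmaS$. This completes the proof. There is no real obstacle here: the statement is a formal consequence of Corollary~\ref{cor:translation}, whose substance in turn rests on Lemma~\ref{lem:kernel_hinge}, i.e.\ on the fact that $\mathbf{1}_S$ lies in the kernel of the linear part of every hinge form $Q^*$.
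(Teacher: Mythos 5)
Your proof is correct and follows exactly the route the paper intends: the corollary is stated without proof precisely because it is the formal consequence of Corollary~\ref{cor:translation} (hence of Lemma~\ref{lem:kernel_hinge}) together with the definition $\overline{\PSigmaS}=\pi(\PSigmaS)$ and the decomposition $\RR^S=H\oplus\RR\cdot\mathbf{1}_S$, which is what you carry out. Nothing is missing.
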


					\subsection{Flipping algorithm}
										
						Let $\TT$ be an adapted triangulation of $(\Sigma,S)$. Consider $(Q,\eta)$ an immersed hinge  given by an edge of $\TT$.
						We would like to flip $(Q,\eta)$ ie construct a new triangulation of $(\Sigma,S)$ with $\eta(Q)$ replaced by $\eta(Q')$ with $Q'$ the flip of $Q$.
 						There are four cases: 
						\begin{itemize}
						 \item if $\eta$ is not an embedding then the diagonal one wants to replace is also a side of the hinge. Hence, one cannot simply replace it without modifying the triangulation $\TT$ elsewhere;
						 \item if $\eta$ is embedded but $Q$ is not flippable.

						 \item if $\eta$ is embedded and $Q$ is flippable, then the flipped hinge $Q'$ is well defined, $\eta : Q' \rightarrow \Sigma$ is well defined, $\eta(Q')= \eta(Q)$ so that we only modify $\TT$ locally and the new triangulation $\TT'$ is composed of non degenerated triangles.
						\end{itemize}

						This remark  motivates the following definitions.

						\begin{defi}[Flippable immersed hinge]\label{defi:immersed_hinge_flippable}
						An immersed hinge $(Q,\eta)$ is  flippable if it is embedded and $Q$ is flippable; it is unflippable otherwise.
						\end{defi}

						\begin{defi}[Flipping algorithm]
							Let  $\TT_0$ be any adapted triangulation of  $(\Sigma,S)$ and let  $\tau : S \rightarrow \RR_+$.
							The flipping algorithm proceeds as follows:
								\begin{enumerate}
								\item Set $i=0$.
								\item Make a list $L$ of $\tau$-illegal  flippable immersed hinge $(Q,\eta)$ induced by the edges of the current triangulation  $\TT_i$.
								\item If $L$ is non empty,
									\begin{enumerate}
										\item Choose some immersed hinge $(Q,\eta)$ in $L$.
										\item Replace the hinge $(Q,\eta)$ by its flip $(Q',\eta)$ in $\TT_i$ to obtain a new
										triangulation $\TT_{i+1}$.
										\item increment $i$ and go to step $(2)$.
									\end{enumerate}
								\item If $L$ is empty, the algorithm stops and returns $(\TT_j)_{j\leq i}$.
							 \end{enumerate}
						\end{defi}
						The goal of the section is to prove the following.
						\begin{prop} \label{prop:algo_stop}
						  Let $\tau : S\rightarrow \RR_+$.
						  For any starting triangulation $\TT_0$, the flipping algorithm for $\tau$ starting at $\TT_0$
						  stops on some triangulation $\TT_\tau$ after finitely many iterations and  every flippable immersed hinge in $\TT_\tau$ is $\tau$-legal. 
						  Furthermore, 
						  
						  \begin{itemize}
						   \item $\tau\in \PSigmaS$ if and only if $\TT_\tau$ is $\tau$-Delaunay;
						   \item $\displaystyle\max_\Sigma\widetilde \tau_{\tau, \TT_\tau}\leq \max_S \tau + \max_\Sigma \widetilde \tau_{0,\TT_0}$.
						  \end{itemize}
						\end{prop}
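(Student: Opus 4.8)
The plan is to establish termination via a monotone functional, and then to characterize the output triangulation. First I would prove termination. The natural candidate for a monovariant is (up to equivalence) the distance-like extension itself: if $(Q,\eta)$ is a $\tau$-illegal flippable hinge of $\TT_i$, then by Corollary \ref{cor:inversion_Qconvexe} its flip $Q'$ is $\tau$-legal, and by Remark \ref{rem:forme_linaire_Qconvexe} together with Lemma \ref{lem:inverse_Qstar} the distance-like function strictly decreases on the interior of the flipped edge region: more precisely, writing $f_i := \widetilde\tau_{\tau,\TT_i}$, the functions $f_i$ and $f_{i+1}$ agree outside $\eta(Q)$, and on $\eta(Q)$ we have $f_{i+1}\leq f_i$ with strict inequality on the interior of the new diagonal (this is exactly the content of Lemma \ref{lem:diff_Q_convexe} applied to the two piecewise-quadratic functions on a crossing segment, using that the $\tau$-illegal hinge is strictly $Q$-concave). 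Hence the sequence $(f_i)$ is pointwise nonincreasing and strictly decreasing somewhere at each step. Since each $f_i$ is determined by a triangulation of $(\Sigma,S)$ with vertex set $S$ and all $f_i$ agree on $S$ (they all extend $\tau$), and since for a fixed vertex set there are only finitely many isotopy classes of geodesic triangulations that can arise with uniformly bounded combinatorial complexity, one must rule out the possibility of infinitely many distinct triangulations. The standard argument (as in the Euclidean Delaunay flip algorithm) is that $\sum_{T\in\TT_i}\int_T f_i$, or equivalently a discrete Dirichlet-type energy, is a strictly decreasing function taking values in a set with no accumulation point from below, because there are only finitely many triangulations of $(\Sigma,S)$ with vertices $S$ that are \emph{geodesic and embedded}; I would invoke that finiteness (it follows from the fact that each such triangulation is a subtriangulation of some Delaunay-type cellulation, or directly from compactness of $\Sigma$ and a lower bound on edge lengths) to conclude the algorithm halts after finitely many steps.

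Next, the termination clause: when $L$ is empty, by definition every flippable immersed hinge of $\TT_\tau$ is $\tau$-legal, which is the second assertion. For the first bullet, suppose $\tau\in\PSigmaS$; then there exists an adapted triangulation $\TT'$ with $\widetilde\tau_{\tau,\TT'}$ $Q$-convex, and by Proposition \ref{prop:unique_Q_convexe} it is the unique such up to equivalence. I would argue that $\TT_\tau$ must coincide with it: using the monotonicity $f_i\leq f_0$ and the fact that a $Q$-convex distance-like extension is \emph{minimal} among distance-like extensions with the given boundary values on $S$ (this minimality follows from Lemma \ref{lem:diff_Q_convexe}, exactly as in the proof of Proposition \ref{prop:unique_Q_convexe}: on each edge of $\TT'$ the $Q$-convex function $\widetilde\tau_{\tau,\TT'}$ is quadratic hence lies below any other $Q$-convex piecewise-quadratic extension, and then one propagates into triangles), the limit $f_\infty := \widetilde\tau_{\tau,\TT_\tau}$ satisfies $f_\infty\geq \widetilde\tau_{\tau,\TT'}$; on the other hand if $\TT_\tau$ were not $\tau$-Delaunay it would contain a $\tau$-illegal hinge, which by emptiness of $L$ must be unflippable — and here I would need to show an unflippable $\tau$-illegal hinge cannot occur when $\tau\in\PSigmaS$, which follows because the $Q$-convex extension witnesses that every hinge (flippable or not) is $\tau$-legal, combined with $f_\infty = \widetilde\tau_{\tau,\TT'}$ forced by the two inequalities $f_\infty\geq \widetilde\tau_{\tau,\TT'}$ and $f_\infty\leq f_0$ is not quite enough, so the cleaner route is: $\TT_\tau$ has no flippable $\tau$-illegal hinge, and if $\tau\in\PSigmaS$ one shows directly that the flip moves applied along the way cannot have created an unflippable illegal hinge from a Delaunay-compatible configuration — concluding $\TT_\tau$ is $\tau$-Delaunay, and conversely if $\TT_\tau$ is $\tau$-Delaunay then $\widetilde\tau_{\tau,\TT_\tau}$ is $Q$-convex so $\tau\in\PSigmaS$ by definition.

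For the quantitative bound $\max_\Sigma \widetilde\tau_{\tau,\TT_\tau}\leq \max_S\tau + \max_\Sigma \widetilde\tau_{0,\TT_0}$, I would use the monotonicity established above, $\widetilde\tau_{\tau,\TT_\tau}\leq \widetilde\tau_{\tau,\TT_0}$, which reduces the claim to $\max_\Sigma\widetilde\tau_{\tau,\TT_0}\leq \max_S\tau+\max_\Sigma\widetilde\tau_{0,\TT_0}$; this in turn is a triangle-by-triangle statement, where on a single triangle $T=[ABC]$ one compares the paraboloid $\widetilde\tau$ with vertex values $\tau(A),\tau(B),\tau(C)$ to the paraboloid $\widetilde\tau_0$ with vertex values $0,0,0$: both have the \emph{same} quadratic part $-\d(\cdot,\omega)^2$-type leading term shifted by the same affine correction determined by the triangle geometry, so their difference is the affine interpolation of $(\tau(A),\tau(B),\tau(C))$, which is bounded by $\max_S\tau$; adding back $\max_\Sigma\widetilde\tau_{0,\TT_0}$ gives the bound. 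The main obstacle I anticipate is the termination proof — specifically, pinning down \emph{which} monovariant is genuinely monotone and verifying it cannot decrease indefinitely; getting the right functional (a discrete convex/Dirichlet energy, or directly the ordered set of distance-like extensions) and checking it decreases strictly at every flip while only finitely many values are possible is the delicate part, and the subtlety that some $\tau$-illegal hinges are unflippable (so the algorithm can stop \emph{without} producing a $\tau$-Delaunay triangulation) must be handled carefully in the $\tau\in\PSigmaS$ dichotomy.
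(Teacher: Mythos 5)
Your overall skeleton (pointwise monotone decrease of the distance-like extension under flips, finiteness of the set of reachable triangulations, then the Delaunay characterization) matches the paper's, and your treatment of the quantitative bound is correct. But there are two genuine gaps. First, your termination argument rests on the claim that there are only finitely many geodesic adapted triangulations of $(\Sigma,S)$ with vertex set $S$ (or that each is a subtriangulation of a Delaunay-type cellulation). This is false in general: geodesic arcs between points of $S$ can wind around the surface, so there are infinitely many geodesic triangulations of bounded combinatorial complexity but unbounded edge length (think of a one-vertex triangulation of a flat torus for each choice of slopes). The finiteness you need is conditional: the monotonicity gives the uniform bound $\widetilde\tau_{\tau,\TT_i}\leq \max_S\tau+\max_\Sigma\widetilde\tau_{0,\TT_0}$ (your last bullet, which must therefore be proved \emph{first}, not last), and a bounded nonnegative distance-like function that is $\mathscr C^1$ along an edge forces that edge to have length at most $2\sqrt{C}$ (Lemma \ref{lem:max_quad}); only then does finiteness of geodesics of bounded length between marked points (Lemma \ref{lem:triangulation_finie}) yield finitely many reachable triangulations, and injectivity of the sequence concludes.

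Second, the direction ``$\tau\in\PSigmaS$ implies $\TT_\tau$ is $\tau$-Delaunay'' is not established by your argument. Your intermediate claim that a $Q$-convex extension ``witnesses that every hinge (flippable or not) is $\tau$-legal'' is false for flippable hinges of an arbitrary triangulation; the correct and nontrivial statement is that every \emph{unflippable} hinge is $\tau$-legal whenever $\tau$ admits some $Q$-convex distance-like extension, and this requires a genuine geometric argument (Lemma \ref{lem:arrete_ext}: for $Q=([ABCD],[AC])$ unflippable with $C$ in the convex hull of $[ABD]$, compare $\widetilde\tau_{Q,\tau}$ and the given $Q$-convex extension against the $\mathscr C^1$ extension on the big triangle $[ABD]$, plus Lemma \ref{lem:arrete_deg} to reduce the non-embedded case to the embedded one). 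Your fallback, showing that ``the flip moves applied along the way cannot have created an unflippable illegal hinge,'' is not a workable strategy: legality of unflippable hinges is a static property of $\tau$ independent of the history of flips, and no dynamical invariant of the algorithm will substitute for it.
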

						\begin{rem} The notation $\TT_\tau$ of this last Proposition is coherent with the one introduced in Definition \ref{defi:temps_admissbles}.
						 
						\end{rem}

						Two Lemmas are key to the proof, the first is Lemma \ref{lem:decroissante} which states that $\widetilde \tau_{\tau, \TT_i}$ is decreasing along the iterations of the algorithm; the second is Lemma \ref{lem:arrete_ext} which implies that unflippable hinges are always $\tau$-legal for $\tau\in \PSigmaS$. Lemma \ref{lem:arrete_ext} will be again useful in the following section.
						
						\begin{lem}\label{lem:decroissante}
						 Let $\tau : S \rightarrow \RR_+$ and let $\TT_0$ be a an adapted triangulation. Let $(\TT_i)_{i\in I}$ be the sequence of triangulation given by 
						 the flipping algorithm with weights $\tau$ and starting at $\TT_0$, where
						 $I=\lsem 0,n\rsem$ or
						 $\NN$.

						 Then, the associated sequence of distance-like functions $(\widetilde \tau_{\tau,\TT_i})_{i\in I}$ is decreasing :
						 \begin{itemize}
						  \item for all $i,j\in I$ with $i\leq j$ then  $\widetilde \tau_{\tau,\TT_i}\geq \widetilde \tau_{\tau,\TT_j}$;
						  \item for all $i,j\in I$ with $i<j$ there exists $x\in \Sigma_1$ such that $$\widetilde \tau_{\tau,\TT_i}(x)> \widetilde \tau_{\tau,\TT_j}(x).$$
						 \end{itemize}
						\end{lem}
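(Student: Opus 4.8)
The plan is to reduce the assertion to a single iteration of the algorithm and, for one flip, to compare the two distance-like functions on the affected hinge by means of Lemma~\ref{lem:diff_Q_convexe}. Concretely, it suffices to prove: if $\TT'$ is obtained from an adapted triangulation $\TT$ by flipping one $\tau$-illegal flippable immersed hinge $(Q,\eta)$, with $Q=([ABCD],[AC])$ and flip $Q'=([ABCD],[BD])$, then $\widetilde\tau_{\tau,\TT'}\le\widetilde\tau_{\tau,\TT}$ everywhere on $\Sigma$, with strict inequality at some point. Granting this, the non-strict part of the Lemma for $i\le j$ follows by chaining the inequalities along $\TT_i,\TT_{i+1},\dots,\TT_j$; and for $i<j$ one picks an index $k$ with $i\le k<j$ and a point $x$ with $\widetilde\tau_{\tau,\TT_k}(x)>\widetilde\tau_{\tau,\TT_{k+1}}(x)$, so that the chain gives $\widetilde\tau_{\tau,\TT_i}(x)\ge\widetilde\tau_{\tau,\TT_k}(x)>\widetilde\tau_{\tau,\TT_{k+1}}(x)\ge\widetilde\tau_{\tau,\TT_j}(x)$.

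For the single flip, first I would localize. Since $(Q,\eta)$ is flippable it is embedded, so $\eta(Q)$ is an embedded tetragon, equal to the union of the two cells $\eta([ABC]),\eta([ACD])$ of $\TT$, which $\TT'$ replaces by $\eta([ABD]),\eta([BCD])$ while leaving every other cell untouched. Since the distance-like extension of $\tau$ is computed cell by cell (proof of Proposition~\ref{prop:prolongement}), $\widetilde\tau_{\tau,\TT}$ and $\widetilde\tau_{\tau,\TT'}$ agree on $\Sigma\setminus\eta(Q)$; on $\eta(Q)$, after developing into $\E^2$, they are exactly the hinge functions $\widetilde\tau_{\tau,Q}$ and $\widetilde\tau_{\tau,Q'}$, so the whole problem is the comparison of these two on the tetragon $[ABCD]$.

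The comparison itself goes as follows. As $Q$ is $\tau$-illegal, $\widetilde\tau_{\tau,Q}$ is strictly $Q$-concave across $[AC]$, so $Q^*(\tau)>0$; by Lemma~\ref{lem:inverse_Qstar}, $Q'^*(\tau)=-Q^*(\tau)<0$, hence $Q'$ is $\tau$-legal and $\widetilde\tau_{\tau,Q'}$ is $Q$-convex (Remark~\ref{rem:forme_linaire_Qconvexe}). On each of the four sides $AB,BC,CD,DA$ both functions reduce, in arclength, to the unique map $t\mapsto -t^2+\alpha t+\beta$ interpolating $\tau$ at the two endpoints, hence coincide there; on the old diagonal $[AC]$, $\widetilde\tau_{\tau,Q}|_{[AC]}$ is a single such quadratic (in particular $\mathscr C^1$) while $\widetilde\tau_{\tau,Q'}|_{[AC]}$ is piecewise of that form, $Q$-convex, and agrees with it at $A,C$, so Lemma~\ref{lem:diff_Q_convexe} gives $\widetilde\tau_{\tau,Q'}\le\widetilde\tau_{\tau,Q}$ on $[AC]$. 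Thus $\widetilde\tau_{\tau,Q'}\le\widetilde\tau_{\tau,Q}$ on $\partial[ABC]$ and on $\partial[ACD]$. On the convex triangle $[ABC]$, $\widetilde\tau_{\tau,Q}$ is a single quadratic (hence $\mathscr C^1$ and $Q$-convex) and $\widetilde\tau_{\tau,Q'}$ is $Q$-convex, so applying Lemma~\ref{lem:diff_Q_convexe} along every segment with endpoints on $\partial[ABC]$ yields $\widetilde\tau_{\tau,Q'}\le\widetilde\tau_{\tau,Q}$ there, and likewise on $[ACD]$; hence $\widetilde\tau_{\tau,\TT'}\le\widetilde\tau_{\tau,\TT}$ on $\Sigma$. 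For strictness, if the two functions agreed on all of $\eta(Q)$, then $\widetilde\tau_{\tau,Q}$ would be $Q$-convex as well as strictly $Q$-concave across $[AC]$ — impossible — so they differ at some point, where by the inequality just proved the strict inequality holds.

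The main obstacle, such as it is, is the boundary comparison on the former diagonal $[AC]$ together with its propagation into the two triangles: this is exactly what forces one to notice that $\widetilde\tau_{\tau,\TT}$, restricted to each of \emph{its own} triangles, is a single quadratic and hence $\mathscr C^1$, which is precisely the regularity hypothesis that Lemma~\ref{lem:diff_Q_convexe} imposes on its dominating function. Once this is arranged, everything else is bookkeeping about which cells the flip modifies.
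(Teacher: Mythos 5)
Your proof is correct and follows essentially the same route as the paper's: localize to the flipped hinge, use Lemma~\ref{lem:inverse_Qstar}/Corollary~\ref{cor:inversion_Qconvexe} to get Q-convexity of $\widetilde\tau_{\tau,Q'}$, and compare the two functions via Lemma~\ref{lem:diff_Q_convexe}. Your two-step organization (first on the old diagonal $[AC]$, then chord by chord inside each triangle) is just a more carefully spelled-out version of the paper's ``segments going from side to side of $[ABCD]$'', and your contradiction argument for strictness at some point is enough for the second bullet.
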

						\begin{proof} Let $i\in I$ such that $i+1\in I$.
						The triangulation $\TT_{i+1}$ is obtained from $\TT_i$ by flipping an embedded hinge, say $(Q,\eta)$ of $\TT_i$ with $Q=([ABCD],[AC])$.
						Then
						\begin{itemize}
						 \item $\forall x\in \Sigma_1\setminus \eta(Int(Q)),~~ \widetilde \tau_{\tau,\TT_i}(x)=\widetilde \tau_{\tau,\TT_{i+1}}(x)$.
						Indeed, for $x\notin \eta(Q)$, the triangle containing $x$ is the same in $\TT_i$ and $\TT_{i+1}$.

						 \item $\forall x\in \eta(Int(Q)),~~ \widetilde \tau_{\tau,\TT_i}(x)>\widetilde \tau_{\tau,\TT_{i+1}}(x)$.
								Indeed, $\widetilde \tau_{\tau,Q}$ and $\widetilde \tau_{\tau,Q'}$   are equal on $[AB],[BC],[CD]$ and $[DA]$ ;  by hypothesis
								$\widetilde \tau_{\tau,Q}$ is strictly Q-concave and, from Lemma \ref{cor:inversion_Qconvexe}, $\widetilde \tau_{Q',\tau}$ is strictly Q-convex. Applying Lemma \ref{lem:diff_Q_convexe} on segments going from side to side of $[ABCD]$
								we obtain $x\in 
								Q$, $\widetilde \tau_{\tau,Q} > \widetilde \tau_{\tau,Q'}$ on $Int(Q)$.
						\end{itemize}
						\end{proof}
						\begin{cor}\label{cor:flip_injective}
							The sequence $(\TT_i)_{i\in I}$ given by the flipping algorithm is injective.
						\end{cor}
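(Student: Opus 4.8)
The plan is to read off injectivity as an immediate consequence of the strict-monotonicity clause of Lemma~\ref{lem:decroissante}.

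First I would record that every triangulation $\TT_i$ produced by the flipping algorithm is again an adapted triangulation of $(\Sigma,S)$: a flip replaces one diagonal $[AC]$ of a quadrilateral $[ABCD]$ by the other diagonal $[DB]$, hence leaves the vertex set unchanged, so it stays equal to $S$, and the triangles stay non-degenerate by the definition of a flippable hinge. In particular Proposition~\ref{prop:prolongement} applies to each $\TT_i$, which both makes the notation $\widetilde\tau_{\tau,\TT_i}$ legitimate and, crucially, tells us that for a fixed $\tau$ the distance-like extension $\widetilde\tau_{\tau,\TT_i}$ depends only on the triangulation $\TT_i$.

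Then I would argue by contradiction. Suppose $\TT_i=\TT_j$ for some $i<j$ in $I$. Since $\widetilde\tau_{\tau,\cdot}$ depends only on the triangulation, this forces $\widetilde\tau_{\tau,\TT_i}=\widetilde\tau_{\tau,\TT_j}$ as functions on $\Sigma$. But the second bullet of Lemma~\ref{lem:decroissante}, applied to the pair $i<j$, furnishes a point at which $\widetilde\tau_{\tau,\TT_i}$ is strictly larger than $\widetilde\tau_{\tau,\TT_j}$ --- a contradiction. Hence $i\mapsto\TT_i$ is injective.

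There is essentially no obstacle here: all the work has been pushed into Lemma~\ref{lem:decroissante}. The only point that deserves to be spelled out --- and the only place where something could slip --- is the stability of the class of adapted triangulations under flips, which is what guarantees that $\widetilde\tau_{\tau,\TT_i}$ and Lemma~\ref{lem:decroissante} are meaningfully defined along the entire sequence; granting that, the corollary is a one-line deduction from the strict-decrease statement.
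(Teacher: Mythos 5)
Your proof is correct and follows exactly the route the paper intends: the corollary is stated as an immediate consequence of Lemma~\ref{lem:decroissante}, and your contradiction argument via its strict-decrease clause, together with the observation that $\widetilde\tau_{\tau,\TT}$ is uniquely determined by $\TT$ (Proposition~\ref{prop:prolongement}), is the whole content. The remark that flips preserve adaptedness is a reasonable extra precaution, already implicit in the paper's discussion preceding Definition~\ref{defi:immersed_hinge_flippable}.
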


						\begin{lem}\label{lem:max_quad}
						Let $\widetilde \tau$ be a non-negative distance-like function on $(\Sigma,S)$, if $\widetilde \tau$ is $\mathscr C^1$ on some geodesic of length $\ell$ then $$\max\widetilde \tau \geq \frac{\ell^2}{4}$$
					
						\end{lem}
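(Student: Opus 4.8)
The plan is to reduce the statement to an elementary fact about a single downward quadratic. Parametrize by arclength a geodesic segment $c:[0,\ell]\to\Sigma\setminus S$ along which $\widetilde\tau$ is $\mathscr C^1$. By the definition of a distance-like function (apply Lemma~\ref{lem:prolongement} triangle by triangle, as in the computation used in the proof of Proposition~\ref{prop:conv_Q_conv}), on each subinterval of $[0,\ell]$ cut out by the edges of an adapted triangulation the map $s\mapsto\widetilde\tau(c(s))$ coincides with a polynomial of the form $s\mapsto -s^2+\alpha s+\beta$; thus $\widetilde\tau\circ c$ is continuous, piecewise polynomial, with constant leading coefficient $-1$.

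The first step is to promote ``piecewise quadratic'' to ``globally quadratic''. At each of the finitely many breakpoints $s_0\in(0,\ell)$ the two adjacent quadratic pieces agree in value by continuity and in first derivative by the $\mathscr C^1$ hypothesis; two polynomials of degree $\le 2$ sharing the same leading coefficient and agreeing to first order at a point are equal. Propagating this across all breakpoints yields constants $\alpha,\beta\in\RR$ with $\widetilde\tau(c(s))=-s^2+\alpha s+\beta$ for every $s\in[0,\ell]$.

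The second step is to evaluate at the midpoint. For $q(s)=-s^2+\alpha s+\beta$ one has the identity
$$q(\ell/2)=\tfrac12\bigl(q(0)+q(\ell)\bigr)+\tfrac{\ell^2}{4},$$
which follows by expanding: the terms in $\alpha$ and in $\beta$ cancel between the two sides and the quadratic terms contribute $-\ell^2/4+\ell^2/2=\ell^2/4$. Since $\widetilde\tau$ is non-negative on $\Sigma$, both $q(0)=\widetilde\tau(c(0))$ and $q(\ell)=\widetilde\tau(c(\ell))$ are $\ge 0$, hence $\widetilde\tau(c(\ell/2))=q(\ell/2)\ge\ell^2/4$, and therefore $\max_\Sigma\widetilde\tau\ge\ell^2/4$.

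There is no real obstacle in this argument; the only mild care needed is that the geodesic be taken inside $\Sigma\setminus S$ so that the piecewise-quadratic description is valid, and that the $\mathscr C^1$ hypothesis be invoked at each edge crossing of the chosen adapted triangulation --- both of which hold in the situations where the lemma is applied.
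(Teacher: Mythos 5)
Your proof is correct and follows essentially the same route as the paper's: reduce to a single downward-opening parabola with leading coefficient $-1$ and non-negative values at the two endpoints, then bound its value at the midpoint below by $\ell^2/4$ (the paper phrases this as $f = u - (x-a)(x-b)$ with $u$ the non-negative affine interpolant, which is the identical computation). Your explicit use of the $\mathscr C^1$ hypothesis to upgrade the piecewise quadratic with fixed leading coefficient to a single global quadratic is a step the paper leaves implicit, and it is a welcome clarification.
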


						\begin{proof}

						Let $c:[a,b]\rightarrow \Sigma$ be an arc length parametrization of such a geodesic and let $f:=\widetilde\tau\circ c$, we have  $$\fonction{f}{[a,b]}{\RR}{x}{-x^2+\alpha x+ \beta}$$
						for some $\alpha,\beta\in \RR$. 
						Furthermore,$\widetilde\tau\geq 0$ thus  that $f(a)\geq 0$ et $g(b)\geq 0$.
					
						 Define $u:[a,b]\rightarrow \RR$ the unique affine function such that $u(a)=f(a)$ and $u(b)=b$. We thus have $\forall x\in [a,b], f(x)=u(x)-(x-a)(x-b)$.  
						 On the one hand,  $f(a)$ and $f(b)$ are non-negative so $u$ is non-negative.
						 On the other hand, $$\max_{x\in[a,b]}(-(x-a)(x-b))=\frac{(b-a)^2}{4}=\frac{\ell^2}{4}.$$ The result follows.

						\end{proof}

						\begin{lem} \label{lem:triangulation_finie}
						For $C\in \RR_+^*$, let $E_C$ the set of adapted triangulations $\TT$  of $(\Sigma,S)$ such that $$\exists \tau\in \RR_+^S, \max\widetilde \tau_{\tau,\TT}\leq C.$$
						Then, $E_C$ is finite.
						\end{lem}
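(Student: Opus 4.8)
The plan is to reduce the finiteness of $E_C$ to a uniform bound on the edge lengths of the triangulations it contains, and then to the finiteness of the set of geodesic arcs of bounded length on a compact singular Euclidean surface.

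First I would bound the edges. Fix $\TT\in E_C$ and a witness $\tau\in\RR_+^S$ with $\max_\Sigma\widetilde\tau_{\tau,\TT}\le C$. Applying Lemma~\ref{lem:prolongement} on each triangle of $\TT$ shows that $\widetilde\tau_{\tau,\TT}\ge 0$ and that along any edge $e$ of $\TT$ its restriction, in an arc-length parameter, has the form $x\mapsto -x^2+\alpha x+\beta$; in particular it is $\mathscr C^1$ along $e$. Lemma~\ref{lem:max_quad} applied to $e$ then yields $\ell(e)^2/4\le\max_\Sigma\widetilde\tau_{\tau,\TT}\le C$, so every edge of every $\TT\in E_C$ has length at most $L:=2\sqrt C$. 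Since an adapted triangulation has vertex set exactly $S$, Euler's relation (together with $3F=2E$) pins down the number of edges: every adapted triangulation of $(\Sigma,S)$ has exactly $E_0:=3(|S|+2g-2)$ edges, where $g$ is the genus of $\Sigma$.

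Let $\mathcal A_L$ denote the set of geodesic arcs of $(\Sigma,S)$ of length $\le L$ with endpoints in $S$ and interior disjoint from $S$. Every edge of a triangulation in $E_C$ lies in $\mathcal A_L$, and a geodesic triangulation is recovered from its (pairwise distinct) edges, so sending $\TT$ to its edge set embeds $E_C$ into the set of $E_0$-element subsets of $\mathcal A_L$; hence it suffices to prove $\mathcal A_L$ is finite. Choosing $\epsilon>0$ so small that the balls $B(s,\epsilon)$, $s\in S$, are pairwise disjoint embedded cone balls, one gets a uniform lower bound $\delta>0$ on the lengths of arcs in $\mathcal A_\infty$: an arc between distinct marked points has length $\ge\min_{s\ne s'}d(s,s')$, and an arc from a marked point to itself must leave its cone ball (a geodesic cannot return to a cone apex from inside its cone). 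Now suppose $\mathcal A_L$ is infinite. After extraction one obtains infinitely many distinct arcs with the same ordered endpoints $(p,q)$, lengths converging to some $\ell\le L$, and initial directions converging in the compact circle of directions at $p$; tracing the limiting geodesic ray from $p$ produces a broken geodesic from $p$ to $q$ with breakpoints in $S$, which by the systole bound has at most $L/\delta$ legs, so after a further extraction all the arcs pass each breakpoint on one prescribed side. The data with which such an arc enters the final cone ball $B(q,\epsilon)$ is then a real-analytic function of its initial direction $v$ — a composition of geodesic flows on flat pieces and on the intermediate cone balls, all real-analytic and, the curvature being zero, free of conjugate points — and the requirement that the arc end at the apex $q$ is the vanishing of a single real-analytic function $H(v)$. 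If $H\not\equiv 0$ near the limiting direction its zeros are isolated and the arcs eventually coincide, a contradiction; and $H\equiv 0$ is impossible, since it would produce a one-parameter family of geodesic arcs with fixed endpoints $p,q$, hence a non-trivial Jacobi field vanishing at both ends of a geodesic in a flat surface, which does not exist. So $\mathcal A_L$, and with it $E_C$, is finite.

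The main obstacle is this last step: the edge-length bound and the counting argument are routine, whereas the compactness argument for $\mathcal A_L$ requires care with geodesics passing arbitrarily close to cone points — bookkeeping the side on which the approximating arcs pass each breakpoint, and excluding exotic one-parameter families of arcs, which is precisely where the absence of conjugate points in zero curvature is used.
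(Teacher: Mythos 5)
Your proof is correct, and the first half (the bound $\ell(e)\le 2\sqrt C$ on edge lengths via Lemma \ref{lem:max_quad}, then reducing to the finiteness of the set of geodesic arcs of length $\le 2\sqrt C$ between marked points) coincides with the paper's. Where you genuinely diverge is in the proof of that last finiteness statement. The paper lifts to the universal cover $\widetilde\Sigma$ of a finite cover of $\Sigma$ branched over $S$ with all cone angles $\ge 2\pi$: this space is CAT(0), so local geodesics between two points are unique, and only finitely many lifts of a marked point lie in a ball of radius $2\sqrt C$; the arc is thus determined by the endpoint of its lift. You instead run a degeneration argument on $\Sigma$ itself: extract a limiting broken geodesic, bound its number of legs by a systole estimate, and exclude accumulation of arcs by a local transversality statement at the limiting direction. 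Both work. The paper's route is shorter and delegates the geometry to a standard CAT(0) fact, at the price of invoking (without construction) the existence of the branched cover; yours is self-contained on the surface but concentrates all the difficulty in the assertion that the ``passage maps'' past the breakpoints depend real-analytically on the initial direction up to and including the degenerate parameter, which as written is the one claim a referee would push back on. It is true, and the cleanest way to see it is to unfold a one-sided neighbourhood of the limiting broken geodesic into $\E^2$: since the angle on the passing side at each breakpoint is exactly $\pi$, the broken geodesic develops onto a single straight segment $[\bar p,\bar q]$, the nearby arcs develop onto straight rays issued from $\bar p$ in directions distinct from that of $\bar q$, and hence cannot terminate at $q$ --- which yields the isolated (indeed simple) zero directly and lets you dispense with both the analyticity language and the Jacobi-field discussion. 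With that substitution your argument is complete; the systole bound, the convergence to a broken geodesic with $\delta$-separated breakpoints, and the fact that distinct arcs from $p$ have distinct initial directions are all correctly handled.
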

						\begin{proof}
						 Let $\TT$ be an adapted triangulation such that such that $\exists \tau\in \RR_+^S, \max\widetilde \tau_{\tau,\TT}\leq C$. Choose such a $\tau$. 
						 Let $e$ the longuest edge of $\TT$, the restriction of $\widetilde \tau_{\tau,\TT}$ to $e$ is  non-negative distance-like and $\mathscr C^1$.
						 From Lemma \ref{lem:max_quad} with $L=\mathrm{length}(e)$
						 $$ \frac{L^2}{4} \leq\max_{[a,b]}\widetilde \tau_{\tau,\TT} \leq C $$
						 thus $L\leq 2\sqrt{C}$. Therefore, the triangulation $\TT$ only has edges of length less than $2\sqrt{C}$.
						 
						 Consider a finite covering $\widehat\Sigma$ of $\Sigma$ ramified above $S$ such that all cone angles of $\widehat\Sigma$ are bigger than $2\pi$. The universal (unbranched) covering $\widetilde{\Sigma}$ of $\widehat \Sigma$ is Cat(0) and for any two points above $S$, there exists at most one (unbroken) geodesic. Furthermore, to any (unbroken) geodesic of length at most $2\sqrt{C}$ in $\Sigma$ for a point $A$ of $S$ to a point $B$ of  $S$ one can associate an (unbroken) geodesic in $\widetilde \Sigma$ of same length starting from a a priori fixed $\hat A$ to some unfixed lift $\hat B$ of $B$ in the ball of radius $2\sqrt L$ around $\hat A$. 
						 There are finitely many such $\hat B\in\widetilde \Sigma$, thus finitely such geodesics. 
						 There are thus only finitely many geodesics of $\Sigma$ from $S$ to $S$ of length bounded by $2\sqrt{C}$, hence there are only finitely many triangulation with edges of length at most $2\sqrt{C}$. 
						\end{proof}

						\begin{lem} \label{lem:arrete_ext}
							Let $Q$ be an unflippable hinge with $Q=([ABCD],[AC])$.
							If there exists some distance like Q-convex function  $f$ on $[ABCD]$ extending $\tau : \{A,B,C,D\} \rightarrow \RR$ then $Q$ is $\tau$-legal.
						\end{lem}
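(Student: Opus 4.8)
The plan is to reduce the statement, via the $Q$-convexity criterion of Proposition~\ref{prop:Qconv_critere}, to a comparison argument along one well-chosen segment. Write $\tau_{ABC}$ and $\tau_{ACD}$ for the distance-squared extensions of $\tau$ to the (nondegenerate) triangles $[ABC]$ and $[ADC]$ provided by Lemma~\ref{lem:prolongement}, so that $\widetilde\tau_{\tau,Q}$ equals $\tau_{ABC}$ on $[ABC]$ and $\tau_{ACD}$ on $[ADC]$. By Proposition~\ref{prop:Qconv_critere} (equivalence (i)$\Leftrightarrow$(iv)), a failure of $\tau$-legality of $Q$ means precisely $\tau_{ABC}(D)>\tau_{ACD}(D)=\tau(D)$; assuming this, and using the given $f$, I would derive a contradiction. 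First I would record two facts. \emph{(a)} Since $f$ is $Q$-convex and $[ABCD]$ is flat, the restriction of $f$ to every straight segment of $[ABCD]$ is $Q$-convex: this holds by hypothesis for segments whose relative interior avoids $\{A,B,C,D\}$, and for the remaining segments it follows by approximating them by nearby parallel ones, $Q$-convexity of a piecewise-quadratic function being equivalent to convexity of $s\mapsto f(s)+s^2$ and hence stable under pointwise limits. \emph{(b)} Applying Lemma~\ref{lem:diff_Q_convexe} along the edge $[BC]$, on which $f|_{[BC]}$ is $Q$-convex and agrees with the $\mathscr C^1$ quadratic $\tau_{ABC}|_{[BC]}$ at both endpoints, gives $f\le\tau_{ABC}$ on $[BC]$.

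The geometric core is to produce a segment through $A$ contained in the tetragon. Because $Q$ is unflippable, either the four points $A,B,C,D$ fail to be in general position, or $[ABCD]$ is not a convex quadrilateral. Since $B$ and $D$ lie strictly on opposite sides of the line $(AC)$ and $[ABCD]=[ABC]\cup[ADC]$ with both triangles nondegenerate: in the first case one of the triples $\{A,B,D\}$, $\{B,C,D\}$ is collinear; in the second case the unique reflex vertex of $[ABCD]$ is $A$ or $C$. Either way, one of $A,C$ lies in the closed triangle spanned by the other three vertices. Exchanging the labels $A$ and $C$ if necessary --- which alters neither the hinge nor its $\tau$-legality --- I may assume $A\in[BCD]$, and then $[BCD]$ is a genuine triangle. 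Let $A_1$ be the point at which the ray issuing from $D$ through $A$ leaves the triangle $[BCD]$. I would then check that $A_1\in[BC]$ (with $A_1=B$ exactly when $A,B,D$ are collinear), that $A$ lies strictly between $D$ and $A_1$, and that $[DA_1]=[DA]\cup[AA_1]\subset[ADC]\cup[ABC]=[ABCD]$, where $[DA]$ is an edge of the tetragon and $[AA_1]$ lies in the triangle $[ABC]$.

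To conclude, suppose $Q$ is not $\tau$-legal, so $\tau_{ABC}(D)>\tau(D)=f(D)$. At the two endpoints of $[DA_1]$ one has $f(D)<\tau_{ABC}(D)$ and $f(A_1)\le\tau_{ABC}(A_1)$ (by fact \emph{(b)}, since $A_1\in[BC]$). As $f|_{[DA_1]}$ is $Q$-convex by fact \emph{(a)} and $\tau_{ABC}|_{[DA_1]}$ is a $\mathscr C^1$ quadratic, Lemma~\ref{lem:diff_Q_convexe} yields $f\le\tau_{ABC}$ on all of $[DA_1]$. The difference $g:=\tau_{ABC}|_{[DA_1]}-f|_{[DA_1]}$ is piecewise affine (the quadratic terms cancel), concave (its slope can only decrease, since $f|_{[DA_1]}$ is $Q$-convex and $\tau_{ABC}|_{[DA_1]}$ is $\mathscr C^1$), non-negative, and satisfies $g(A)=\tau(A)-\tau(A)=0$ at the interior point $A$; a concave non-negative function vanishing at an interior point is identically $0$. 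Hence $f\equiv\tau_{ABC}$ on $[DA_1]$, so $f(D)=\tau_{ABC}(D)>\tau(D)=f(D)$, a contradiction. Therefore $Q$ is $\tau$-legal.

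The hard part will be the second paragraph: correctly enumerating the possible unflippable configurations and verifying that the segment $[DA_1]$ stays inside $[ABCD]$ while passing through $A$ in its interior. Once that segment is secured, the remainder is a routine application of Lemma~\ref{lem:diff_Q_convexe} together with elementary convexity. A lesser technical point is fact \emph{(a)}: that $Q$-convexity of $f$ persists along segments meeting the vertices $\{A,B,C,D\}$, which is what licenses running the comparison along $[DA_1]$.
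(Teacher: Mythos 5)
Your proof is correct, and it shares the two load-bearing ingredients of the paper's argument: the observation that unflippability forces one endpoint of the diagonal into the closed triangle spanned by the other three vertices, and the comparison of the Q-convex $f$ against a globally $\mathscr C^1$ quadratic via Lemma~\ref{lem:diff_Q_convexe}. The execution, however, is genuinely different. The paper (with the inner vertex labelled $C$) introduces the auxiliary $\mathscr C^1$ extension $h$ of $\tau_{|\{A,B,D\}}$ to the big triangle $[ABD]\supset[ABCD]$ given by Lemma~\ref{lem:prolongement}, proves $f\le h$ on the tetragon by sweeping it with segments, deduces $\widetilde\tau_{\tau,Q}(C)=f(C)\le h(C)$, uses the fact that $\widetilde\tau_{\tau,Q}-h$ is piecewise affine and vanishes at the three outer vertices to get $\widetilde\tau_{\tau,Q}\le h$ everywhere, and closes with the convex/concave dichotomy along a transversal to the diagonal. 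You dispense with the auxiliary quadratic entirely: you compare $f$ with $\tau_{ABC}$ along the single segment $[DA_1]$ threaded through the inner vertex and conclude from the elementary fact that a non-negative concave piecewise-affine function vanishing at an interior point is identically zero, contradicting criterion (iv) of Proposition~\ref{prop:Qconv_critere}. Your route is more economical (one segment, no auxiliary function, no dichotomy), at the price of constructing and certifying $[DA_1]$ — which you do correctly, since $[DA_1]=[DA]\cup[AA_1]$ with $[DA]$ an edge of $[ACD]$ and $[AA_1]\subset[ABC]$. The two technical points you flag honestly — extending Q-convexity of $f$ to segments meeting the marked vertices by a limiting argument, and the case analysis showing one of $A,C$ lies in the triangle of the other three — are present at the same (or lesser) level of detail in the paper's own proof, so they do not constitute a gap relative to it.
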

						\begin{rem} Beware that the triangulation of $[ABCD]$ adapted to the distance-like function $f$ be different from $([ABC],[ACD])$. 
						 
						\end{rem}

						\begin{proof}\
							Without loss of generality, we may assume that $C$ is in the convex hull of 
							$[ABCD]$.
							Define $g:=\widetilde \tau_{Q,\tau}$ and $h$  the 
							distance-like extension of $\tau_{|\{A,B,D\}}$ on $[ABD]$ given by Lemma \ref{lem:prolongement}.
							Both fonctions $f$ and $g$ are defined on $[ABCD]\subset [ABD]$ and  $h$ is defined on $[ABD]$.

							$h$ is $\mathscr C^1$ on $[ABD]$ while $f$ is
							Q-convex on $[ABCD]$. By Lemma 
							\ref{lem:diff_Q_convexe} applied on the edges $[AB],[AC]$ and $[AD]$ then on the edges $[xy]\subset [ABCD]$ with $x\in [AB]$ and $y\in [AD]$ we have $f\leq h$.
							We show the same way that if $g$ is Q-concave then $g\geq h$, thus $g\leq h $ implies that $g$ is Q-convex.

							Furthermore, $g-h$ is affine on each triangles $[ACB]$ and $[ACD]$ and null at $A,B$ et $D$. Therefore,
							$g-h$ is non positive	if and only if $g(C)-h(C)\leq 0$.

							Since, $g(C)=f(C)\leq h(C)$, we deduce that $g\leq h$ hence
							$g$ is Q-convex.
							Finally, $\widetilde \tau_{Q,\tau}$ is Q-convex.

						\end{proof}
						\begin{lem}  \label{lem:arrete_deg} Let $\tau:S\rightarrow \RR_+$ and 
						 let $(Q,\eta)$ be an immersed hinge with $Q=([AB_{-1}B_0B_{1}],[AB_0])$ such that $[AB_0B_{1}]$ is obtained from $[AB_{-1}B_0]$ via  a rotation and  $\eta([AB_1])=\eta([AB_{-1}])$ and $\eta(B_0)=\eta(B_1)=\eta(B_{-1})$.
						 
						Then, there exists an immersed hinge $(\hat Q,\hat\eta)$ with $\hat Q$ unflippable such that 
						$(Q,\eta)$ is $\tau$-legal if and only if $(\hat Q,\hat\eta)$ is $\tau$-legal.				
						\end{lem}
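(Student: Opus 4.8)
Here is how I would approach the proof.

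The plan is to observe that the hypotheses force $Q$ to be a kite which is symmetric across its diagonal $[AB_0]$, carrying $\tau\circ\eta$-weights that are symmetric too, so that the $\tau$-legality of $(Q,\eta)$ reduces to a single affine inequality in $\tau(\eta(A))$, $\tau(\eta(B_0))$ and the length of the diagonal; one then produces the same inequality by an unflippable hinge. First I would record the symmetry: since $[AB_0B_1]$ is a rotation of $[AB_{-1}B_0]$, that rotation fixes $A$, the triangles $T:=[AB_{-1}B_0]$ and $T':=[AB_0B_1]$ are isometric with $|AB_{-1}|=|AB_0|=|AB_1|=:h$ and $\widehat{B_{-1}AB_0}=\widehat{B_0AB_1}=:\phi\in(0,\pi)$, and the reflection $s$ across the line $(AB_0)$ exchanges $T$ and $T'$. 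Moreover $\eta(B_{-1})=\eta(B_0)=\eta(B_1)=:\bar B$ and $\eta(A)=:\bar A$, so $\tau\circ\eta$ equals $\tau(\bar B)$ at $B_{-1},B_0,B_1$ and $\tau(\bar A)$ at $A$; in particular it is $s$-invariant. With the coordinates of \S\ref{sec:critere_Qconvexe} ($A$ at the origin, $B_0$ on the positive vertical axis), the uniqueness in Lemma \ref{lem:prolongement} then gives $\omega'=s(\omega)$ for the two centres $\omega,\omega'$ of $T,T'$, hence $x_{\omega'}=-x_\omega$, and by Proposition \ref{prop:Qconv_critere}(v) the hinge $(Q,\eta)$ is $\tau$-legal iff $x_\omega\ge 0$.

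Next I would turn this into an explicit inequality. From Lemma \ref{lem:prolongement}, the equality $|AB_{-1}|=|AB_0|$ yields $\langle\omega,B_{-1}\rangle=\langle\omega,B_0\rangle$ and $2\langle\omega,B_0\rangle=\tau(\bar B)-\tau(\bar A)-h^2$, so after solving the system $x_\omega=\tan(\phi/2)\,\dfrac{\tau(\bar B)-\tau(\bar A)-h^2}{2h}$. As $\tan(\phi/2)>0$, we get that $(Q,\eta)$ is $\tau$-legal iff $\varepsilon\bigl(\tau(\bar B)-\tau(\bar A)-h^2\bigr)\ge 0$, where $\varepsilon\in\{\pm 1\}$ is fixed by the cyclic order of the vertices of $Q$ alone. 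The key point is that this criterion depends only on $\tau(\bar A)$, $\tau(\bar B)$ and the length $h$ of the diagonal $[AB_0]$.

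Then I would build $(\hat Q,\hat\eta)$. If $2\phi=\widehat{B_{-1}AB_1}\ge\pi$, the tetragon of $Q$ is non-convex or has colinear vertices, so $Q$ is abstractly unflippable and we take $(\hat Q,\hat\eta)=(Q,\eta)$. If $2\phi<\pi$, I take $\hat Q=([A\hat B B_0\hat D],[AB_0])$ to be the kite symmetric across the same diagonal, with all legs of length $h$ and apex angle $2\hat\phi\in[\pi,2\pi)$: then $\hat Q$ is abstractly unflippable whatever $\hat\eta$ is, and applying the previous paragraph to $(\hat Q,\hat\eta)$ will give the \emph{same} inequality $\varepsilon(\tau(\bar B)-\tau(\bar A)-h^2)\ge 0$, whence the stated equivalence. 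To define $\hat\eta$ I develop $\eta$ further around $\bar A$: the fact that $\eta([AB_1])=\eta([AB_{-1}])$ for the isometric immersion $\eta$ forces $2\phi$ to be an integer multiple of the cone angle $\theta_A$ of $\Sigma$ at $\bar A$; writing $d_0,d_1$ for the directions at $\bar A$ of $g:=\eta([AB_0])$ and $g':=\eta([AB_{-1}])$ (at angular distance $\phi$, both with $\exp_{\bar A}(h\,d_i)=\bar B$), the directions $d_0+\theta_A\ZZ$ and $d_1+\theta_A\ZZ$ all send $h$ to $\bar B$, and since $2\phi<\pi$ one of them is at angular distance $\hat\phi\in[\pi/2,\pi)$ from $d_0$. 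Laying the legs $[AB_0]$ and $[A\hat B]$ of $\hat Q$ along $g$ and along that geodesic gives an isometric immersion of $[A\hat B B_0]$ into $\Sigma$ with $A,B_0,\hat B\mapsto\bar A,\bar B,\bar B$; precomposing with the isometry of the isoceles triangle exchanging its two legs gives an isometric immersion of $[AB_0\hat D]$ agreeing with it along $[AB_0]$, and the two glue to $\hat\eta:\hat Q\to\Sigma$ with all vertices in $S$.

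The hard part will be this last step — checking that $\hat\eta$ is a genuine isometric immersion with vertices in $S$ — and it is exactly where the hypotheses $\eta([AB_1])=\eta([AB_{-1}])$ and $|AB_{-1}|=|AB_0|$ are used: they are what let one enlarge the apex angle at $\bar A$ while keeping all base vertices on $\bar B$, and the counting of multiples of $\theta_A$ (possible precisely because $2\phi<\pi$) is what guarantees an admissible apex angle in $[\pi,2\pi)$. The computation of $x_\omega$ in the second paragraph and the bookkeeping of $\theta_A$ are routine once the symmetry is set up.
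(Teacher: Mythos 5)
Your reduction of $\tau$-legality to a single inequality is correct and is essentially the paper's own argument: both exploit the symmetry of the kite across $(AB_0)$ and the equality of the weights at $B_{-1},B_0,B_1$ to conclude that legality of $Q$ --- and of any symmetric kite with the same diagonal, the same diagonal length $h$ and the same weights at $\bar A,\bar B$ --- is equivalent to the projected centre $\omega_{[AB_0]}$ lying on the ray $[AB_0)$, i.e.\ to $\tau(\bar B)-\tau(\bar A)-h^2\ge 0$; your formula $x_\omega=\tan(\phi/2)\bigl(\tau(\bar B)-\tau(\bar A)-h^2\bigr)/(2h)$ checks out, and the degenerate case $2\phi\ge\pi$ (take $\hat Q=Q$) matches the paper.

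The gap is exactly where you flag it: the construction of $\hat\eta$. You propose to immerse the triangle $[A\hat BB_0]$ (apex angle $\hat\phi\in[\pi/2,\pi)$, legs of length $h$) by the exponential map at $\bar A$, laying the two legs along $g$ and along a geodesic whose direction is congruent to $d_0$ or $d_1$ modulo $\theta_A$. Knowing that these two \emph{boundary} geodesics exist does not produce the two-dimensional immersion: $\exp_{\bar A}$ is only defined on a star-shaped domain, and a geodesic issued from $\bar A$ in an intermediate direction of the sector may hit a point of $S$ (or a cone point) at distance less than $h$, in which case the sector of angle $\hat\phi$ and radius $h$ does not immerse into $\Sigma$ at all --- this is precisely the difficulty that Lemma \ref{lem:charniere_type_existence} has to fight with its functions $r_{\max}$ and $R_\pm$. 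The paper sidesteps it entirely: it tiles the enlarged kite $\hat Q=[AB_nB_0B_{-n}]$, $n=\lceil\pi/\phi\rceil-1$, by rotated copies of the two \emph{given} triangles and defines $\hat\eta$ on each copy as $\eta$ precomposed with the rotation carrying that copy back onto $[AB_{-1}B_0]$ or $[AB_0B_1]$; the pieces glue into an isometric immersion because of the hypothesis $\eta([AB_1])=\eta([AB_{-1}])$ (which, as you observe, forces $2\phi\in\theta_A\ZZ$ and places the two triangles on opposite sides of that common geodesic), and the triangles $[AB_{\pm n}B_0]$ of $\hat Q$ are contained in this fan since the chords $[B_0B_{\pm n}]$ cut deeper into the circle of radius $h$ than the polygonal arcs $B_0B_1\cdots B_{\pm n}$. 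The immersion is thus built only from data already in hand, with no appeal to $\exp_{\bar A}$ on a full sector; your last step needs this (or an equivalent) replacement.
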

						\begin{proof}  Let $\theta = \widehat{B_0AB_1}$ and $n=\lceil\pi/\theta\rceil-1$,
						if $\theta\geq \pi/2$ then take $\hat Q=Q$ and $\hat\eta=\eta$.
						
						 Otherwise, construct the polygon $[AB_{-n}\cdots B_0\cdots B_{n}]$, such that for each $k\in\lsem 1-n,n\rsem$, the triangle $[AB_kB_{k+1}]$  is obtained from $[AB_0B_{1}]$ via the rotation of center $A$ and  angle $\alpha_k=k\theta$. Define $\hat Q:=[AB_nB_0B_{-n}]$ and  $$\fonctiondeux{\hat\eta}{\hat Q}{\Sigma}{x\in [AB_{2k}B_{2k+1}]}{\eta(\rho_{-2\alpha_{k}}(x))}{x\in [AB_{2k-1}B_{2k}]}{\eta(\rho_{-2\alpha_{k}}(x))}$$ 
						 where $\rho_\beta$ denote the rotation of center $A$ and angle $\beta$.

						\def\sq{0.707106781}
						 \begin{center}
							\begin{tikzpicture}[scale=3]
								\draw (0,0) -- (-\sq,\sq) -- (0,1) -- cycle;
								\coordinate[label=$A$] (A) at (0,-0.2);
								\coordinate[label=$B_0$] (B) at (0,1);
								\coordinate[label=$B_{-1}$] (B) at (\sq,\sq);
								\coordinate[label=$B_{1}$] (B) at (-\sq,\sq);
								\coordinate[label=$B_{-2}$] (B) at (1.07,0);
								\coordinate[label=$B_{-3}$] (B) at (\sq,-\sq-0.2);
								\coordinate[label=$B_{2}$] (B) at (-1.1,0);
								\coordinate[label=$B_{3}$] (B) at (-\sq,-\sq-0.2);
								\draw[black,cm={cos(45) ,-sin(45) ,sin(45) ,cos(45) ,(0 cm,0 cm)}] (0,0) -- (-\sq,\sq) -- (0,1) -- cycle;
								\draw[black,cm={cos(90) ,-sin(90) ,sin(90) ,cos(90) ,(0 cm,0 cm)}] (0,0) -- (-\sq,\sq) -- (0,1) -- cycle;
								\draw[black,cm={cos(90) ,-sin(90) ,-sin(90) ,cos(90) ,(0 cm,0 cm)}] (0,0) -- (-\sq,\sq) -- (0,1) -- cycle;
								\draw[black,cm={cos(135) ,-sin(135) ,sin(135) ,cos(135) ,(0 cm,0 cm)}] (0,0) -- (-\sq,\sq) -- (0,1) -- cycle;
								\draw[black,cm={cos(90) ,-sin(-90) ,sin(-90) ,cos(90) ,(0 cm,0 cm)}] (0,0) -- (-\sq,\sq) -- (0,1) -- cycle;
								\draw[black,dashed] (\sq,-\sq) -- (0,1) -- (-\sq,-\sq);
							\end{tikzpicture}
						\end{center}
						We have $\forall k\in \lsem -n,n\rsem, \hat\eta(B_k)=\eta(B_0)$ the weights $\tau \in \RR^S$ thus induce weights $\hat\tau:=\tau\circ \hat\eta$ such that $\hat\tau(B_{k})=\tau(B_0)=\hat\tau(B_1)=\hat\tau(B_{-1})$.
						For $I,J,K \in \{A,B_{-n},\cdots,B_n\}$, denote by $\omega_{[IJK]}$ the center of $\widetilde \tau_{\tau,[IJK]}$ and by $\omega_{[IJ]}$ the orthogonal projection of $\omega_{[IJK]}$ on the line $(IJ)$. Note that $\omega_{[IJ]}$ does not depend on $K$.  
						
						Since $\hat\tau(B_n)=\hat\tau(B_0)=\hat\tau(B_{-1})$, then $\omega_{[B_0B_n]}$ (resp. $\omega_{[B_0B_{-n}]}$) is the middle of $[B_0B_n]$ (resp. of $[B_0B_{-n}]$). Since the lengths $(AB_k)_{k\in \lsem -n,n\rsem}$ are equal, the segment bissectors of $[B_0B_{-n}]$ and $[B_0B_{n}]$ intersect at $A$, therefore $\omega_{AB_{-n}B_0}$ on the right of $\omega_{AB_{n}B_0}$ on the normal to $(AB_0)$ at $\omega_{[AB_0]}$ if and only if $\omega_{[AB_0]}$
						 is on the ray $[AB_0)$. Hence, by Proposition \ref{prop:Qconv_critere}.(v), $\hat Q$ is $\hat\tau$-legal if and only if $\omega_{[AB_0]}$ is on the ray $[AB_0)$.
						
						The same argument shows $Q$ is $\hat\tau$-legal if and only if $\omega_{[AB_0]}$ is on the ray $[AB_0)$. Finally, 
						$(Q,\eta)$ is $\tau$-legal if and only if $(\hat Q,\hat \eta)$  is $\tau$-legal.

						\end{proof}

						\begin{proof}[Proof of Proposition \ref{prop:algo_stop}]
							By Corollary \ref{lem:decroissante}, the sequence on of distance-like functions given by the flipping algorithm is bounded above by the first of the sequence $\widetilde \tau_{\tau,\TT_0}$. Since $\widetilde\tau_{\tau,\TT_0}-\widetilde\tau_{0,\TT_0}$ is affine on each triangle of $\TT_0$ is is bounded by its value on $S$ thus by $\max_S \tau$.  
							Hence, $\forall i, \widetilde \tau_{\tau,\TT_i}\leq \max_S{\tau}+\max\widetilde \tau_{0,\TT_0}$.
							By Lemma \ref{lem:triangulation_finie}, the flipping algorithm runs through a finite set of triangulation. Finally, by Corollary \ref{cor:flip_injective}, the algorithm reach a given triangulation at most once and thus stops after finitely many steps, say $n\in \NN^*$. The algorithm stops when the set of flippable $\tau$-illegal hinges is empty, then $\TT_n$ has no flippable $\tau$-illegal hinges.
							
							If the final triangulation $\TT_n$ is $\tau$-Delaunay then by definition $\tau\in \PSigmaS$. Assume $\tau \in \PSigmaS$ and consider $(Q,\eta)$ some unflippable hinge of $\TT_n$. Either $\eta$ is an embedding, in which case the weighted hinge $(Q,\tau\circ \eta)$ satisfies the hypotheses of Lemma \ref{lem:arrete_ext} and the immersed hinge $(Q,\eta)$ is then $\tau$-legal; or $\eta$ is not an embedding in which case $(Q,\eta)$ satisfies the hypotheses of Lemma \ref{lem:arrete_deg}, the immersed hinge $(\hat Q,\hat \eta)$ provided by Lemma \ref{lem:arrete_deg} satisfies the hypotheses of  Lemma \ref{lem:arrete_ext}, it is thus $\tau$-legal and so is $(Q,\eta)$. Finally, $\TT_n$ is $\tau$-Delaunay.
						\end{proof}

					\subsection{Description of the Domain of admissible times}

						We may interpret Lemma \ref{lem:arrete_ext} together with \ref{lem:arrete_deg} in the following way: if $\tau \in \PSigmaS$, then all unflippable immersed hinges of $(\Sigma,S)$ with vertices in $S$ are $\tau$-legal. Furthermore, Proposition \ref{prop:algo_stop} shows the converse: the flipping algorithm stops on a triangulation $\TT$ which flippable hinges are all $\tau$-legal, if all unflippable hinges of $(\Sigma,S)$ are $\tau$-legal, in particular those of $\TT$ are $\tau$-legal, hence $\TT$ is $\tau$-Delaunay.
						We thus proved the following

						\begin{prop}\label{prop:domaine_convexe}
						Let $\mathcal H$ the set of unflippable immersed hinge of $(\Sigma,S)$ with vertices in $S$. Then:
						   $$\PSigmaS = \bigcap_{(Q,\eta)\in \mathcal H} (Q^*)^{-1}(\RR_-)$$
						   in particular $\PSigmaS$ is a convex domain of $\RR_+^{S}$.
						\end{prop}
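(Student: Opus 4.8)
The plan is a double inclusion, after which the convexity statement is automatic. Observe first that for an immersed hinge $(Q,\eta)$ the affine form $Q^*$ is, by the remark following Definition \ref{defi:Q_forme_affine}, a map $\RR_+^{S}\to\RR$, so each set $(Q^*)^{-1}(\RR_-)$ is the intersection of the positive orthant with an affine half-space, hence convex and contained in $\RR_+^{S}$; consequently $\bigcap_{(Q,\eta)\in\mathcal H}(Q^*)^{-1}(\RR_-)$ is a convex subset of $\RR_+^{S}$, and once it is shown to coincide with $\PSigmaS$ the final clause follows. Recall also that, by Remark \ref{rem:forme_linaire_Qconvexe}, an immersed hinge $(Q,\eta)$ is $\tau$-legal precisely when $Q^*(\tau)=Q^*(\tau\circ\eta_{|\{A,B,C,D\}})\le 0$, so the equality to prove is equivalent to: $\tau\in\PSigmaS$ if and only if every immersed hinge in $\mathcal H$ is $\tau$-legal.

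For the inclusion $\PSigmaS\subseteq\bigcap_{(Q,\eta)\in\mathcal H}(Q^*)^{-1}(\RR_-)$, fix $\tau\in\PSigmaS$ and let $\widetilde\tau_\tau$ be the associated distance-like $Q$-convex function on $(\Sigma,S)$ from Definition \ref{defi:temps_admissbles}. Given $(Q,\eta)\in\mathcal H$ with $Q=([ABCD],[AC])$, set $f:=\widetilde\tau_\tau\circ\eta\colon Q\to\RR$. Since $\eta$ is an isometric immersion sending the four corners into $S$ and every other point into $\Sigma\setminus S$, pulling back a triangulation adapted to $\widetilde\tau_\tau$ (and refining to triangles) yields a geodesic triangulation of $[ABCD]$ on each cell of which $f$ has the form $x\mapsto\tau_0-\d(x,\omega)^2$; thus $f$ is distance-like on $[ABCD]$ and extends $\tau\circ\eta$, and it is $Q$-convex because any geodesic segment in the interior of $Q$ is carried by $\eta$ to a geodesic of $\Sigma\setminus S$, along which $\widetilde\tau_\tau$ is $Q$-convex. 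Now two cases. If the abstract tetragon $Q$ is unflippable, Lemma \ref{lem:arrete_ext} applied to $Q$, the weights $\tau\circ\eta$ and the extension $f$ gives that $Q$ is $(\tau\circ\eta)$-legal, i.e. $Q^*(\tau)\le 0$. If instead $Q$ is a flippable tetragon, then $(Q,\eta)$ unflippable forces $\eta$ not to be an embedding (Definition \ref{defi:immersed_hinge_flippable}), and one checks that such an immersed hinge with vertices in $S$ is necessarily of the rotational shape of Lemma \ref{lem:arrete_deg}; that lemma produces an unflippable immersed hinge $(\hat Q,\hat\eta)$ with the same $\tau$-legality, to which the previous case applies, again giving $Q^*(\tau)\le 0$. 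Hence $\tau$ lies in every $(Q^*)^{-1}(\RR_-)$.

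For the reverse inclusion, let $\tau\in\RR_+^{S}$ satisfy $Q^*(\tau)\le 0$ for every $(Q,\eta)\in\mathcal H$. Choose an arbitrary adapted triangulation $\TT_0$ of $(\Sigma,S)$ and run the flipping algorithm for $\tau$ starting at $\TT_0$. By Proposition \ref{prop:algo_stop} it terminates after finitely many steps on a triangulation $\TT_\tau$ all of whose flippable immersed hinges are $\tau$-legal. An unflippable immersed hinge of $\TT_\tau$ has its four vertices among the vertices of $\TT_\tau$, which is exactly $S$ since $\TT_\tau$ is adapted, so it belongs to $\mathcal H$ and is therefore $\tau$-legal by hypothesis. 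Thus every immersed hinge of $\TT_\tau$ is $\tau$-legal, i.e. $\TT_\tau$ is $\tau$-Delaunay, i.e. $\widetilde\tau_{\tau,\TT_\tau}$ is $Q$-convex, whence $\tau\in\PSigmaS$ by Proposition \ref{prop:algo_stop}. This proves the set equality; combined with the first paragraph it also gives the convexity of $\PSigmaS$.

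The main obstacle is the case distinction in the forward inclusion concerning immersed hinges whose underlying tetragon is flippable but which are themselves unflippable: one must verify that, among hinges with vertices in $S$, the only such configuration is the degenerate rotational one treated by Lemma \ref{lem:arrete_deg}, so that the reduction to Lemma \ref{lem:arrete_ext} genuinely exhausts $\mathcal H$ — this is where the combinatorics of isometric immersions of tetragons into $(\Sigma,S)$ must be pinned down. The complementary point, that the restriction $\widetilde\tau_\tau\circ\eta$ of a global distance-like $Q$-convex function to an immersed tetragon is again distance-like and $Q$-convex, is routine but should be stated explicitly, since it is precisely what allows Lemmas \ref{lem:arrete_ext} and \ref{lem:arrete_deg} — phrased for hinges in $\E^2$ — to be invoked for immersed hinges.
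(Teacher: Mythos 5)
Your proof is correct and follows essentially the same route as the paper: the forward inclusion is obtained by applying Lemma \ref{lem:arrete_ext} (and, for non-embedded hinges, Lemma \ref{lem:arrete_deg}) to the pullback along $\eta$ of the Q-convex distance-like extension $\widetilde\tau_\tau$, and the reverse inclusion comes from running the flipping algorithm and invoking Proposition \ref{prop:algo_stop}. You are merely more explicit than the paper about why $\widetilde\tau_\tau\circ\eta$ is again distance-like and Q-convex on the tetragon, and about the residual combinatorial check that non-embedded immersed hinges fall under Lemma \ref{lem:arrete_deg} --- both points the paper leaves implicit.
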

						\begin{rem}
						 Lemma \ref{lem:charniere_type_existence} belows implies that  $\mathcal H$ is non-empty. For now we can, take the convention that the intersection is $\RR_+^S$ if $\mathcal H =\emptyset$.
						\end{rem}

						\begin{prop}
						   For $\tau\in \PSigmaS$, if $\widetilde \tau$ is the unique Q-convex distance-like extension of $\tau$ to $(\Sigma,S)$ then
						   $$\widetilde \tau = \min_{\TT'} \widetilde \tau_{\TT',\tau} $$
						   where $\TT'$ runs through all adapted triangulations of $(\Sigma,S)$.
						\end{prop}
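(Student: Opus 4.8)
The plan is to prove the two inequalities separately. \textbf{First I would observe that $\widetilde\tau$ is itself attained as one of the $\widetilde\tau_{\tau,\TT'}$.} Since $\tau\in\PSigmaS$, Definition \ref{defi:temps_admissbles} supplies a ($\tau$-equivalence class of) adapted triangulation $\TT_\tau$ for which $\widetilde\tau_{\tau,\TT_\tau}$ is Q-convex, and by uniqueness of the Q-convex distance-like extension this $\widetilde\tau_{\tau,\TT_\tau}$ must coincide with $\widetilde\tau$. Consequently, provided the minimum on the right exists and equals $\widetilde\tau$, it is attained at $\TT_\tau$; what remains is the lower bound $\widetilde\tau_{\tau,\TT'}\geq\widetilde\tau$ for every adapted triangulation $\TT'$.

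\textbf{For that lower bound I would use the flipping algorithm as a deformation device.} Fix an arbitrary adapted triangulation $\TT'$ and run the flipping algorithm for the weights $\tau$ starting from $\TT_0:=\TT'$. By Proposition \ref{prop:algo_stop} it terminates after finitely many steps on a triangulation $\TT_n$, and, because $\tau\in\PSigmaS$, that terminal $\TT_n$ is $\tau$-Delaunay; hence $\widetilde\tau_{\tau,\TT_n}$ is Q-convex and, by uniqueness of the Q-convex distance-like extension (Proposition \ref{prop:unique_Q_convexe}), equals $\widetilde\tau$. On the other hand Lemma \ref{lem:decroissante} tells us the sequence $(\widetilde\tau_{\tau,\TT_i})_i$ produced by the algorithm is pointwise non-increasing, so $\widetilde\tau_{\tau,\TT'}=\widetilde\tau_{\tau,\TT_0}\geq\widetilde\tau_{\tau,\TT_n}=\widetilde\tau$. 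Combining with the first paragraph yields $\widetilde\tau=\min_{\TT'}\widetilde\tau_{\tau,\TT'}$, the minimum ranging over all adapted triangulations of $(\Sigma,S)$.

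\textbf{I do not expect a genuine obstacle here:} the statement is essentially a corollary of the finite termination of the flipping algorithm on a $\tau$-Delaunay triangulation together with the monotonicity of Lemma \ref{lem:decroissante}. The two points to treat with a little care are the role of $\tau$-equivalence — the algorithm may return a triangulation differing from $\TT_\tau$ only up to equivalence, but $\tau$-equivalent triangulations share the same distance-like extension by Definition \ref{defi:triang_equi}, so this is harmless — and the (already granted) uniqueness of the Q-convex extension, which is exactly what lets us identify $\widetilde\tau_{\tau,\TT_n}$ with $\widetilde\tau$ independently of the starting triangulation $\TT'$.
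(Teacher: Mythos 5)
Your proof is correct, but it takes a genuinely different route from the paper. The paper's argument is purely local: fix any adapted triangulation $\TT'$ and any triangle $T$ of $\TT'$; on $T$ the function $\widetilde\tau_{\TT',\tau}$ is $\mathscr C^1$ of the distance-like form, while $\widetilde\tau$ is Q-convex and agrees with it at the vertices, so Lemma \ref{lem:diff_Q_convexe} (applied first along the edges of $T$, then along chords of $T$, exactly as in the proof of Proposition \ref{prop:unique_Q_convexe}) gives $\widetilde\tau\leq\widetilde\tau_{\TT',\tau}$ on $T$, hence on all of $\Sigma$. You instead derive the same inequality globally, by running the flipping algorithm from $\TT'$ and invoking the termination statement of Proposition \ref{prop:algo_stop} (the terminal triangulation is $\tau$-Delaunay since $\tau\in\PSigmaS$) together with the pointwise monotonicity of Lemma \ref{lem:decroissante}. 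Both arguments are sound and both correctly note that the minimum is attained at $\TT_\tau$. The paper's version is lighter — it needs only the elementary comparison lemma and in particular does not depend on the finiteness and termination analysis of Section 4 — whereas yours presents the proposition as a near-immediate corollary of the flipping-algorithm machinery, at the cost of importing that heavier apparatus; your handling of $\tau$-equivalence and of the identification $\widetilde\tau_{\tau,\TT_n}=\widetilde\tau$ via Proposition \ref{prop:unique_Q_convexe} is the right way to close the loop in that approach.
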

						\begin{proof}
						 Take any adapted triangulation $\TT$ of $(\Sigma,S)$ and consider $T$ a triangle of $\TT$. On $T$, $\widetilde \tau_{\tau,\TT}$ is $\mathscr C^1$  on $T$  while $\widetilde \tau$ is Q-convex on $T$. By Lemma \ref{lem:diff_Q_convexe},  $\widetilde \tau \leq \widetilde \tau_{\tau,\TT}$ on $T$. 
						 The triangle $T$ is arbitrary, thus  $\widetilde \tau \leq \widetilde \tau_{\tau,\TT}$ on $\Sigma$. 
						\end{proof}

				\begin{prop}\label{prop:delaunay_interieur}
				  The indicator function $\mathbf{1}_S$ of $S$ is in the interior of $\PSigmaS$.
				\label{prop:penint}
				\end{prop}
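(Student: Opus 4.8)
The plan is to show that for every $v\in\RR^S$ the whole segment $[\mathbf 1_S-tv,\mathbf 1_S+tv]$ lies in $\PSigmaS$ for all small enough $t>0$; since $\PSigmaS$ is convex (Proposition \ref{prop:domaine_convexe}) and sits in the finite–dimensional space $\RR^S$, running this over $v$ and $-v$ with $v$ a basis produces a full–dimensional cross–polytope around $\mathbf 1_S$ inside $\PSigmaS$, hence $\mathbf 1_S\in\mathrm{int}(\PSigmaS)$. Note that for $t$ small the coordinates of $\mathbf 1_S+tv$ are all near $1$, so membership in $\RR_+^S$ is automatic and only the existence of a $(\mathbf 1_S+tv)$--Delaunay triangulation needs to be produced.

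The first step is to recast $Q$--convexity as a lower–hull condition. On a triangle $T$ of an adapted triangulation $\TT$, a developing map $\D$ into $\E^2$ turns $\widetilde\tau_{\tau,\TT}$ into $\tau_0-\d(\D(\cdot),\omega)^2$ by Lemma \ref{lem:prolongement}, so, after fixing an origin in $\E^2$ and writing $\lVert\cdot\rVert$ for the corresponding norm, $h_{\tau,\TT}:=\lVert\D(\cdot)\rVert^2+\widetilde\tau_{\tau,\TT}$ is affine on $T$; comparing left and right derivatives along a geodesic then shows that $\widetilde\tau_{\tau,\TT}$ is $Q$--convex if and only if, cell by cell in the developing picture, the graph of $h_{\tau,\TT}$ over a $2$--cell $P$ is the lower convex hull of the lifted marked points $\bigl(\D(\sigma),\lVert\D(\sigma)\rVert^2+\tau_\sigma\bigr)_{\sigma\in\mathrm{vert}(P)}$. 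For $\tau=0$ this lower hull is realised by the Delaunay cellulation $\calC$ of $(\Sigma,S)$ (cf.\ \cite{MR1135877}); and by Lemma \ref{lem:kernel_hinge} every hinge $Q$ satisfies $Q^*(\mathbf 1_S)=Q^*(0)$, so any adapted triangulation refining $\calC$ is also $\mathbf 1_S$--Delaunay; in particular $\mathbf 1_S\in\PSigmaS$.

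Now fix $v\in\RR^S$ and build a refinement $\TT^{(v)}$ of $\calC$ by subdividing each $2$--cell $P$ according to the regular subdivision of $\mathrm{vert}(P)$ induced by the heights $(v_\sigma)$ (refined to a triangulation if $v$ is not generic on $\mathrm{vert}(P)$). I would then check that $\TT^{(v)}$ is $(\mathbf 1_S+tv)$--Delaunay for all small $t>0$, which yields $\mathbf 1_S+tv\in\PSigmaS$. Every hinge $Q$ of $\TT^{(v)}$ has $Q^*(\mathbf 1_S)=Q^*(0)\le0$ because $\TT^{(v)}$ refines $\calC$. If $Q^*(\mathbf 1_S)<0$, this persists for $\mathbf 1_S+tv$ with $t$ small. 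If $Q^*(\mathbf 1_S)=0$, the four vertices of $Q$ are cocircular with empty circumdisc, hence lie on a single cell $P$ and $Q$ is internal to $P$ (an edge of $\calC$ identified with itself is handled the same way after unfolding $P$ along it, or via Lemma \ref{lem:arrete_deg}); over $P$ the heights $\lVert\cdot\rVert^2+1+tv$ differ from $t\,v$ by an affine function since $\mathrm{vert}(P)$ is cocircular, so for $t>0$ small the lower hull of the lifted points over $P$ is exactly the chosen $v$--subdivision, and convexity of a lower hull forces the linear part of $Q^*$ to be $\le0$ at $v$; thus $Q^*(\mathbf 1_S+tv)=t\cdot(\text{linear part of }Q^*)(v)\le0$. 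As $\calC$ has finitely many cells and edges, one $t_0>0$ makes all hinges of $\TT^{(v)}$ simultaneously $(\mathbf 1_S+tv)$--legal for $0<t\le t_0$, and Remark \ref{rem:forme_linaire_Qconvexe} then gives that $\widetilde\tau_{\mathbf 1_S+tv,\TT^{(v)}}$ is $Q$--convex.

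The closing step (convexity of $\PSigmaS$ plus the two–sided segments through $\mathbf 1_S$) is routine. The hard part is the third paragraph: near $\mathbf 1_S$ there are infinitely many unflippable internal hinges, so one cannot simply bound $K_Q/\lVert Q^*\rVert$ from below, and the point of the lower–hull reformulation is precisely that it replaces those sign conditions by the tautological convexity of a lower hull, which is what singles out the regular subdivision induced by $v$ as the right triangulation to use.
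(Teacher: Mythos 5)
Your proof is correct, but it follows a genuinely different route from the paper's. The paper does not pass through convexity of $\PSigmaS$ at all: it defines the explicit set $\U:=\RR_+^S\cap\bigcap_{\TT}\bigcap_e (Q^*_{e,\TT})^{-1}(\RR_-^*)$, where $e$ runs over the (finitely many) edges of the Delaunay cellulation $\calC$ and $\TT$ over its (finitely many) subtriangulations; the strict inequalities $Q^*_{e,\TT}(0)<0$ hold because the circumcentres of adjacent Delaunay cells develop to opposite sides of their common edge, and they transfer to $\mathbf 1_S$ by Lemma \ref{lem:kernel_hinge}, so $\U$ is an open neighbourhood of the whole ray $\RR_+\mathbf 1_S$. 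The inclusion $\U\subset\PSigmaS$ is then obtained by running the flipping algorithm: the constraints defining $\U$ prevent it from ever flipping a cell-boundary edge, every hinge interior to a Delaunay cell is flippable, and Proposition \ref{prop:algo_stop} guarantees termination on a triangulation all of whose flippable hinges are legal — hence a $\tau$-Delaunay one. You instead produce, for each direction $v$, an explicit $(\mathbf 1_S+tv)$-Delaunay triangulation $\TT^{(v)}$ via the lifted-lower-hull (regular subdivision) picture, exploiting that $\|\cdot\|^2$ is affine on a cocircular vertex set so that inside each Delaunay cell the combinatorics for $\mathbf 1_S+tv$ is governed by $v$ alone; you then need Proposition \ref{prop:domaine_convexe} to assemble the segments $[\mathbf 1_S-tv,\mathbf 1_S+tv]$ into a neighbourhood. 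Your approach dispenses with the flipping algorithm and is more informative — it essentially identifies the cell structure of $\PSigmaS$ near $\mathbf 1_S$ with the secondary fans of the Delaunay cells — at the cost of importing the convexity of $\PSigmaS$ as an input and of some care with immersed cells (which you do address); the paper's approach is more self-contained at this point of the text and yields an explicit polyhedral open set containing all of $\RR_{>0}\mathbf 1_S$, not just a ball around $\mathbf 1_S$. Both arguments are sound.
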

				\begin{proof}
					To begin with,  by Theorem 4.4 of \cite{MR1135877} each cell of the Delaunay cellulation $\C$ of $(\Sigma,S)$ is isometric to a polygon inscribed into a circle of $\E^2$ whose center is a vertex of the Voronoi cellulation which is in the interior of the cell. For any given cell $C$ of the Delaunay cellulation,  with $R_C$ the radius and $\omega_C\in C$ the center of its circumscribed circle, the function 
					$$\fonction{f}{C}{\RR_+}{x}{R_C^2-\d(x,\omega_C)^2}$$
					is distance-like $\mathscr C^1$ on $C$ and $f(p)=0$ for any vertex $p$ of $C$ hence for any subtriangulation $\TT$ of $\C$, $\forall x\in C, \widetilde \tau_{0,\TT}(x)=f(x)$. Let $e$ be an edge of the Delaunay cellulation, let $C,C'$ be the two cells on each sides of $e$, since $\omega_C,\omega_{C'}$ are each in the interior of their respective cell, in particular their developpement in $\E^2$ are disjoint and each on the same side of $e$ as their respective cell. Therefore, denoting by $Q^*_{e,\TT}$ the affine form associated with the hinge of axis $e$ for any subtriangulation $\TT$ of $\C$, we have $Q^*_{e,\TT}(0)<0$.
					Define $$\U:=\RR_+^S\cap \bigcap_{\TT}\bigcap_e {Q^*_{e,\TT}}^{-1}(\RR_-^*)$$
					where $\TT$ runs through the subtriangulations of the Delaunay cellulation and $e$ runs through the edges of the Delaunay cellulation. The intersection is finite since there are only finitely many subtriangulations and edges.
					$\U$ is thus an open subset of $\RR^S_+$ which contains $\RR_+ \mathbf{1}_S$.
					
					We now show $\U\subset \PSigmaS$. Apply the flipping algorithm to a substriangulation of the Delaunay cellulation for some $\tau\in \U$. The conditions $Q_{e,\TT}^*(\tau)<0$ ensure that the edges $e$ bording Delaunay Cells are always $\tau$-legal, hence the flipping algorithm  will never flip such edges and only runs through subtriangulations of the Delaunay cellulation. 
				From Proposition \ref{prop:algo_stop} the algorithm stops on some triangulation $\TT$, which is a subtriangulation of the Delaunay and all flippable hinges are $\tau$-legal. On the one hand, the edges of $\C$ are $\tau$-legal since $\tau\in \U$. On the other hand all hinges inside a cell of the Delaunay cellulation are flippable. Finally, all the edges of $\TT$ are $\tau$-legal. 
				$\U$ is thus a subset of $\PSigmaS$.

				\end{proof}

				In order obtain a finite cellulation of $\PSigmaS$ as well as caracterising its boundary we prove his transverse compactness.
				By transverse compactness of $\PSigmaS$ we mean that the projection of $\PSigmaS$ into the hyperplane $\{\tau\in \RR^S~|~\sum_{s\in S}\tau(s) = 0 \}$ is compact. Note that, for instance, if $\PSigmaS $ was equal to the whole $\RR_+^S$ then it wouldn't be transversaly compact in this sense.  The proof that $\PSigmaS$ is transversaly compact rely upon the construction of affine constraints  of the form $\tau_A-\tau_C\leq \varepsilon (\tau_A+\tau_B+\tau_C+\tau_D) + K$ with $\varepsilon>0$ arbitrary small and $A,C$ arbitrary in $S$.
				Such constraints are provided by type $(x,L)$ hinges, see Definition \ref{defi:charniere_type}, via Lemma \ref{lem:charniere_type_rel}. Lemma \ref{lem:charniere_type_existence} focuses on the construction of such immersed hinges.

				\begin{defi}[Type $(x,L)$ hinge]\label{defi:charniere_type}
					Let $x,L>0$, a hinge $([ABCD],[AC])$ of $\E^2$ is of type $(x,L)$ if it is non-convex with $C\in [ABD]$
					and \begin{eqnarray*}
					        \d(B,(AC))\leq x,&&  \d(D,(AC))\leq x\\
							AB>L,&& AD>L
					       \end{eqnarray*}

				\end{defi}

				\begin{lem}\label{lem:charniere_type_rel} Let $l>0$ and $x>0$.
				For all hinge $Q$, write
				$$Q^*:\tau \mapsto \alpha(Q)\tau_A+\beta(Q)\tau_B+\gamma(Q)\tau_C+\delta(Q)\tau_D+K(Q)$$
				associated affine form $Q$.

				Then, for all sequence  $(Q_n)_{n\in \NN}$ of hinges
				such that for all $n\in \NN$, $Q_n$ is of type $(x,n)$  and axis length $l$ we have :
				\begin{eqnarray*}
				 \lim_{n\rightarrow +\infty} \frac{\alpha(Q_n)}{\gamma(Q_n)}= -1&\quad\quad&\displaystyle
				 \lim_{n\rightarrow +\infty} \frac{\beta(Q_n)}{\gamma(Q_n)}= 0\\
				  \lim_{n\rightarrow +\infty} \frac{\delta(Q_n)}{\gamma(Q_n)}= 0&&\forall n\in\NN,\,\gamma(Q_n)>0
				\end{eqnarray*}
				\end{lem}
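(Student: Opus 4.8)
The plan is to compute, after a normalisation of coordinates, the four coefficients $\alpha(Q_n),\beta(Q_n),\gamma(Q_n),\delta(Q_n)$ explicitly, and then to combine two observations: $\beta(Q_n)$ and $\delta(Q_n)$ stay bounded while $\gamma(Q_n)$ grows, and the four coefficients sum to $0$ by Lemma~\ref{lem:kernel_hinge} (so the behaviour of $\alpha/\gamma$ is forced by that of $\beta/\gamma$ and $\delta/\gamma$).

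First I would normalise each $Q_n$ by an element of $\isom(\E^2)$, as in Subsection~\ref{sec:critere_Qconvexe}: take the vertices to be $A=(0,0)$, $C=(0,l)$ on the positive vertical axis (recall $l=AC$ is the common axis length), $B=B_n=(u_n,p_n)$ in the half-plane $\{x>0\}$ and $D=D_n=(-v_n,q_n)$ in $\{x<0\}$; the apices $B_n,D_n$ sit on opposite sides of the line $(AC)$ because the tetragon of a hinge is glued from the triangles $[ABC]$ and $[ACD]$ along $[AC]$. An orientation-reversing isometry turns every wedge product occurring in Definition~\ref{defi:Q_forme_affine} into its opposite, hence sends $Q^\ast$ to $-Q^\ast$ and preserves the three ratios; so I may also assume $p_n,q_n>0$, i.e.\ that both apices lie on the $C$-side of $A$ — this is the configuration of the type-$(x,n)$ hinges produced in Lemma~\ref{lem:charniere_type_existence}. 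In these coordinates $\d(B_n,(AC))=u_n\le x$ and $\d(D_n,(AC))=v_n\le x$, while $AB_n>n$ and $AD_n>n$ force $p_n^2=AB_n^2-u_n^2>n^2-x^2$ and $q_n^2>n^2-x^2$.

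Then I would substitute $\overrightarrow{AB_n}=(u_n,p_n)$, $\overrightarrow{AD_n}=(-v_n,q_n)$, $\overrightarrow{AC}=(0,l)$, $\overrightarrow{CA}=(0,-l)$, $\overrightarrow{CB_n}=(u_n,p_n-l)$ into Definition~\ref{defi:Q_forme_affine}, getting $\gamma(Q_n)=\overrightarrow{AB_n}\wedge\overrightarrow{AD_n}=u_nq_n+v_np_n$, $\beta(Q_n)=-\overrightarrow{AC}\wedge\overrightarrow{AD_n}=-l v_n$ and $\delta(Q_n)=-\overrightarrow{CA}\wedge\overrightarrow{CB_n}=-l u_n$. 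Here $|\beta(Q_n)|$ and $|\delta(Q_n)|$ are just $l$ times the heights of $D_n$, resp.\ $B_n$, above the line $(AC)$ (twice the areas of the Euclidean triangles $ACD_n$ and $ACB_n$), so both are $\le lx$, bounded uniformly in $n$. Since $u_n,v_n,p_n,q_n>0$ both summands of $\gamma(Q_n)$ are positive, so $\gamma(Q_n)>0$ — the last assertion of the statement — and $\gamma(Q_n)\ge v_np_n$, $\gamma(Q_n)\ge u_nq_n$. Hence
\begin{equation*}
\Bigl|\tfrac{\beta(Q_n)}{\gamma(Q_n)}\Bigr|\le\tfrac{l v_n}{v_n p_n}=\tfrac{l}{p_n}\le\tfrac{l}{\sqrt{n^2-x^2}}\longrightarrow 0,\qquad
\Bigl|\tfrac{\delta(Q_n)}{\gamma(Q_n)}\Bigr|\le\tfrac{l}{q_n}\le\tfrac{l}{\sqrt{n^2-x^2}}\longrightarrow 0,
\end{equation*}
using that $l$ is fixed. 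Finally, rather than computing $\alpha(Q_n)$ I would invoke Lemma~\ref{lem:kernel_hinge}, which gives $\alpha(Q_n)+\beta(Q_n)+\gamma(Q_n)+\delta(Q_n)=0$; dividing by $\gamma(Q_n)>0$ yields $\alpha(Q_n)/\gamma(Q_n)=-1-\beta(Q_n)/\gamma(Q_n)-\delta(Q_n)/\gamma(Q_n)\to-1$, which settles all four limits.

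The one point that is not pure bookkeeping — the main obstacle — is the claim that the two apices escape to infinity along the axis on the $C$-side, equivalently $p_n,q_n\to+\infty$ with the summands of $\gamma(Q_n)=u_nq_n+v_np_n$ both nonnegative. This is exactly what makes $\gamma(Q_n)$ dominate $u_n+v_n$ by the factor $\sqrt{n^2-x^2}$ and thereby collapses the ratios; without it (for instance if one apex sat far below $A$ along $(AC)$) one could have $\gamma(Q_n)$ bounded while $u_n+v_n$ was not, and $\alpha(Q_n)/\gamma(Q_n)$ would not tend to $-1$. I would therefore take care to carry this sign information through the normalisation from the outset, it being part of the description of the type-$(x,n)$ hinges furnished by Lemma~\ref{lem:charniere_type_existence}. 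Everything else is a two-line computation.
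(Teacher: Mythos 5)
Your argument is the paper's argument: normalise so that $A=(0,0)$, $C=(0,l)$, $B$ and $D$ on opposite sides of the axis, bound $|x_B|,|x_D|\le x$ and the heights from below by $\sqrt{n^2-x^2}$, and read off the ratios. The only cosmetic differences are that you work directly with the wedge products of Definition \ref{defi:Q_forme_affine} and obtain the limit of $\alpha/\gamma$ from Lemma \ref{lem:kernel_hinge}, where the paper reads everything off the explicit coefficients in Proposition \ref{prop:Qconv_critere}(vi).

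The step you yourself single out as the only non-trivial one --- that $p_n,q_n>0$ --- is, however, not established by what you write. An orientation-reversing isometry can exchange which of $B,D$ lies to the left of $(AC)$, but no isometry compatible with the normalisation $A=(0,0)$, $C=(0,l)$ can change the sign of $p_n$, so ``I may also assume $p_n,q_n>0$'' does not follow. Nor is positivity a consequence of Definition \ref{defi:charniere_type}: the quadrilateral with $A=(0,0)$, $B=(x,-n)$, $C=(0,l)$, $D=(-x,\,n+2l+1)$ is a non-convex hinge with $C$ in the interior of $[ABD]$, $\d(B,(AC))=\d(D,(AC))=x$ and $AB,AD>n$, hence of type $(x,n)$ with axis length $l$; yet $\gamma(Q_n)=\overrightarrow{AB}\wedge\overrightarrow{AD}=x(2l+1)$ stays bounded and $|\beta(Q_n)|/\gamma(Q_n)=l/(2l+1)\not\to 0$. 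So the statement needs the extra hypothesis that $B$ and $D$ lie in the half-plane $\{y>0\}$, i.e.\ that the angles $\widehat{CAB}$ and $\widehat{CAD}$ are acute. To be fair, the paper's own proof makes exactly the same silent assumption --- it writes $y_B\ge\sqrt{L^2-x^2}$ where the hypotheses only give $|y_B|\ge\sqrt{L^2-x^2}$ --- and the assumption does hold for the hinges produced by Lemma \ref{lem:charniere_type_existence} (there $B$ and $D$ make an angle less than $\pi/6$ with the direction of $e$ at $A$), which are the only ones fed into Lemma \ref{lem:admissible_compact}. Your proof is therefore as complete as the paper's and valid for the application; just do not present the positivity of $p_n,q_n$ as the output of an isometric normalisation --- either add it to the hypotheses or verify it for the hinges actually constructed.
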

				\begin{proof} Let $L>0$,
				and let $Q=([ABCD],[AC])$ be a hinge of type $(x,L)$ such that $AC=l$.
				 Whithout loss of generality, we may choose cartesian coordinates of $\E^2$ such that $A:(0,0)$ is the origin,
				 $C :(0,l)$, $x_B>0$ and  $x_D<0$.

				 There exists some $\lambda>0$ such that
				 \begin{eqnarray*}
					\beta(Q)=\lambda\frac{l}{|x_B|}&\quad\quad& \alpha(Q)=\lambda\left(\frac{l-y_B}{|x_B|}+\frac{l-y_D}{|x_D|} \right)\\
					\delta(Q) = \lambda\frac{l}{|x_D|} &\quad\quad & \gamma(Q)=\lambda\left(\frac{y_B}{|x_B|}+\frac{y_D}{|x_D|}\right) \\
				 \end{eqnarray*}

				We have $|x_B|\leq x$, $|x_D|\leq x$,  $y_B\geq \sqrt{L^2-x^2}$ and  $y_D\geq \sqrt{L^2-x^2}$ ;
				 thus $\gamma(Q)>0$ and  $$ -1\leq\frac{\alpha(Q)}{f(Q)}\leq -1+ \frac{l}{\sqrt{L^2-x^2}}$$
				  $$ 0\leq\frac{\beta(Q)}{\gamma(Q)}\leq \frac{l}{\sqrt{L^2-x^2}} \quad  0\leq\frac{\delta(Q)}{\gamma(Q)}\leq \frac{l}{\sqrt{L^2-x^2}}.$$
				The result follows.
				\end{proof}
				\begin{lem}\label{lem:charniere_type_existence}
				 	Let $e$ be non trivial geodesic segment of $(\Sigma,S)$ going from some $P_1 \in S$ to some $P_2\in S$.

				 	There exists $x_0>0$ such that for all $L>0$, there is an immersed hinge
				 	$Q=([ABCD],[AC],\eta)$ of type 
				 	$(x_0,L)$ such that $\eta([AC])=e$.
				\end{lem}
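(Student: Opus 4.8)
The plan is to build each hinge as two very thin Euclidean triangles glued along $e$ and developed into $\Sigma$. Fix a developing chart identifying $e$ with the segment from $\tilde A=(0,0)$ to $\tilde C=(0,\ell)$ of $\E^2$, where $\ell$ is the length of $e$, with $\eta(\tilde A)=P_1$ and $\eta(\tilde C)=P_2$; write $v_1$ for the common direction of $e$ and of $[\tilde A\tilde C]$ at $P_1$. For a prescribed $L$, the hinge will be $Q_L=([\tilde A\tilde B\tilde C\tilde D],[\tilde A\tilde C])$ with $\tilde B=(b,h_B)$ and $\tilde D=(-b',h_D)$, where $0<b,b'\le x_0$ and $h_B,h_D$ are large; each of the triangles $[\tilde A\tilde B\tilde C]$ and $[\tilde A\tilde C\tilde D]$ then has Euclidean width at every height at most $x_0\ell/\min(h_B,h_D)$, hence can be made arbitrarily thin.

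The key step is to produce, for every $L$, the far vertices $\tilde B$ and $\tilde D$, i.e. two geodesic arcs $\sigma_R,\sigma_L$ of $\Sigma$ issued from $P_1$, ending at marked points of $S$, of length $>L$, with $\sigma_R$ (resp. $\sigma_L$) leaving $P_1$ on the right (resp. left) of $v_1$ at angle at most $\arcsin(x_0/\mathrm{length})$, such that the corresponding thin triangles $[\tilde A\tilde B\tilde C]$ and $[\tilde A\tilde C\tilde D]$ — of width at most $x_0\ell/\mathrm{length}$ — develop isometrically into $\Sigma$ starting from $e$, their interiors missing the singular locus; this last requirement forces $\sigma_R$ and $\sigma_L$ to stay, away from their endpoints, farther than $x_0\ell/\mathrm{length}$ from the other cone points. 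I would obtain such arcs from classical facts about compact flat cone surfaces: the geodesic arcs between marked points are dense in direction and of unbounded length (only finitely many are shorter than any given bound, since their holonomy vectors form a discrete subset of $\RR^2$). Because the admissible angular error $\arcsin(x_0/\mathrm{length})$ and the allowed width $x_0\ell/\mathrm{length}$ both tend to $0$ with the length, what one really needs is a sequence of such arcs whose holonomy vectors lie in the vertical strip $\{|x|\le x_0\}$ of the chart, tend to infinity, and keep their distance to the remaining cone points bounded below in proportion to the inverse of their length; this separation estimate — controlling how close a long geodesic arc confined to a narrow cone of directions can pass to the other cone points — is the delicate ingredient, to be supplied either by the quantitative geometry of geodesic arcs on flat surfaces or by a Poincar\'e recurrence argument for the straight-line flow followed by a perturbation forcing the almost-recurrent segment to close up first at a cone point. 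This is the main obstacle of the proof; once it is granted, $x_0>0$ is fixed depending only on $\Sigma$ and $e$, and $\tilde B,\tilde D$ are available for all $L$.

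It then remains to verify that $Q_L$ has the asserted type. Gluing the two developed triangles along their common side (whose image is $e$) yields a continuous map $\eta:Q_L\to\Sigma$ which is an isometric immersion away from the four vertices; at $\tilde C$ the interior angle of the non-convex quadrilateral $Q_L$ equals $\angle\tilde A\tilde C\tilde B+\angle\tilde A\tilde C\tilde D$, close to $2\pi$, so $\eta$ merely wraps a punctured neighbourhood of $\tilde C$ in $Q_L$ around $P_2$, which is permitted. Since $h_B,h_D\gg\ell$, the chord $[\tilde B\tilde D]$ meets the line $(\tilde A\tilde C)$ at height $(h_Bb'+h_Db)/(b+b')\ge\min(h_B,h_D)>\ell$, while $v_1$ lies strictly between the directions of $\tilde B$ and $\tilde D$ seen from $\tilde A$; hence $\tilde C=(0,\ell)$ is interior to the triangle $[\tilde A\tilde B\tilde D]$, so $Q_L$ is non-convex with $\tilde C\in[\tilde A\tilde B\tilde D]$ (in particular unflippable). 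Finally $d(\tilde B,(\tilde A\tilde C))=b\le x_0$, $d(\tilde D,(\tilde A\tilde C))=b'\le x_0$, and $\tilde A\tilde B=\mathrm{length}(\sigma_R)>L$, $\tilde A\tilde D=\mathrm{length}(\sigma_L)>L$, so $(Q_L,[\tilde A\tilde C],\eta)$ is an immersed hinge of type $(x_0,L)$ with $\eta([\tilde A\tilde C])=e$ and all vertices in $S$. Every step outside the middle paragraph is elementary planar geometry and bookkeeping about developing maps.
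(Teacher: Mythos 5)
Your overall architecture matches the paper's: develop two long, thin triangles, one on each side of $e$, with far vertices at points of $S$ and perpendicular offsets bounded by a constant $x_0$ depending only on $\Sigma$, then check the type $(x_0,L)$ conditions by elementary planar geometry. The final verification paragraph is fine. The problem is that your middle paragraph, which you yourself flag as ``the main obstacle,'' is precisely the content of the lemma, and you do not prove it: you only name two candidate strategies (quantitative counting of saddle connections, or Poincar\'e recurrence plus a perturbation) without carrying either out. Neither is routine --- the simultaneous requirements that the arc be long, leave $P_1$ within an angle $O(x_0/\mathrm{length})$ of $v_1$, terminate at a point of $S$, \emph{and} have the whole thin triangle it subtends miss $S$, are exactly what must be established, and density-of-directions statements do not by themselves give the clearance estimate. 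As written, the proposal is a correct reduction plus an unproved existence claim, i.e.\ it has a genuine gap.

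The paper closes this gap with a short self-contained argument that you may find worth comparing with your plan. Let $\Phi$ be the exponential map at $P_1$ on its maximal star-shaped domain $\U$ avoiding $S$, let $r_{\max}(\theta)$ be the radial extent of $\U$ in direction $\theta$ (measured from the direction of $e$), and set $R_+(\theta):=\min_{\theta'\in]0,\theta]}r_{\max}(\theta')$. Two observations then do all the work. First, the circular sector of angle $\theta$ and radius $R_+(\theta)$ lies in $\U$ by construction, and its inscribed disc contains no point of $S$, so its radius is at most $M:=\max_{x}\d(x,S)$; this gives $\theta R_+(\theta)\lesssim 2M$, i.e.\ the perpendicular offset of the far vertex is bounded by a constant $x_0>M$ independent of $L$ --- your ``separation estimate'' is obtained for free, because the whole thin triangle sits inside the star-shaped domain by definition of the running minimum $R_+$. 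Second, choosing angles $\theta_n$ where $r_{\max}(\theta_n)=R_+(\theta_n)$, the points $\Phi(r_{\max}(\theta_n),\theta_n)$ lie in $S$ (they are the first singular points hit), are pairwise at distance at least $m:=\min_{s\neq s'}\d(s,s')$, and hence force $R_+(\theta)\to+\infty$ as $\theta\to0^+$; this produces, for every $L$, a far vertex in $S$ at distance $>L$. You would need to supply an argument of this kind (or a worked-out version of one of your two suggested strategies) for the proof to be complete.
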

				\begin{proof}
				 Let $M := \max_{x\in \Sigma} \d(x,S)$ and $m:=\min_{s\in S} \min _{s'\in S\setminus\{s\}}\d(s,s')$.
				 
				 Define $\Phi : \U\rightarrow \Sigma$ as the exponential map at $P_1$ defined on some maximal starshaped open neighborhood $\U$ of $0$ in the tangent plane $T_{P_1}\Sigma$ above $P_1$ such that $\Phi(\U\setminus \{0\})\subset \Sigma\setminus S$.  We identify $T_{P_1}\Sigma$ with $\E^2_\alpha$ where $\alpha$ is the cone angle at $P_1$ so that $\Phi$ is an isometric map from an open of $\E^2_\alpha$ to $\Sigma$. We choose polar coordinates $(r,\theta)$ of $\E^2_\alpha$ so that the direction $\theta=0$ is the initial derivative of the segment $e$.

				 With $\beta=\min(\alpha/2,\pi/6)$ define \begin{eqnarray*}r_{\max}&:&
				 \fonctionn{]-\beta,\beta[}{\RR_+^*\cup\{+\infty\}}{\theta}{ \max \{r \in \RR_ + ~|~  (r,\theta') \in \U \}}
				                                          \\R_{\pm}&:& \fonctionn{]0,\beta[}{\RR_+^*\cup\{+\infty\}}{\theta}{\min_{\theta'\in ]0,\theta]} r_{\max}(\pm\theta')}. 
				                                          \end{eqnarray*}

				 For any given $\theta \in ]-\beta,\beta[$, if $r_{\max}(\theta)<+\infty$ we extends $\Phi$ continuously to $(r_{\max}(\theta),\theta)$; note that in this case $\Phi(r_{\max}(\theta),\theta)\in S$.
				 
%
%
				\begin{claim}{$\varlimsup_{\theta\rightarrow 0^+} \theta R_\pm(\theta)< 2M$.}
				\end{claim}
				 \\ Let $\theta\in ]0,\beta[$, $\Phi$ is defined on the interior  the triangle $[OAB]\subset \E^2_{\alpha}$ with $A=(R_+(\theta),0)$ and $B=(R_+(\theta),\theta)$ in polar coordinates. The inscribed circle of $[OAB]$ bounds an open disc whose image by $\Phi$ does not contain any element of $S$, hence the radius $\frac{1}{2}R_+(\theta)\left(\cos(\theta)+\sin(\theta)-1)\right)$ of this inscribed circle is lesser than $M$. One easily checks that $\cos(\theta)+\sin(\theta)-1\sim_{\theta \rightarrow 0^+} \theta$. The result follows for $R_+$ and one may proceed the same way for $R_-$.

				\begin{claim}{$\lim_{\theta\rightarrow 0^+} R_\pm(\theta)= +\infty$.}
				\end{claim}\\
					The function $R_+$ is non-decreasing by definition so the limit is well defined. Define a sequence $(\theta_n)_{n\in\NN}$ as follows: choose some $\theta_0\in]0,\beta[$ such that $r_{\max}(\theta_0)=R_+(\theta_0)$ and $\sin(\theta_0)\leq m/2$, then for all $n\in\NN$ take $\theta_{n+1}\in]0,\theta_{n}/2[$ such that $r_{\max}(\theta_{n+1})=R_+(\theta_{n+1})$. The map $\Phi$ can be continously extended to the domain $$D:=\bigcup_{n\in\NN}\left\{(r,\theta)~:~\theta \in [0,\theta_n], r\leq R(\theta_n)\right\}.$$
					Write $Q_n:=(R_+(\theta_n),\theta_n)$; since $\forall n\in\NN, \Phi(Q_n)\in S$, then for all $n\in\NN$,
					$$R_+(\theta_{n+1})-R_+(\theta_n)+\theta_n-\theta_{n+1} =\d_D(Q_n,Q_{n+1})\geq \d_\Sigma(\Phi(Q_n),\Phi(Q_{n+1}))\geq m,$$ thus 
					$$\forall n\in\NN, \quad R_+(\theta_{n})\geq nm + R_+(\theta_0)+\theta_0-\theta_n\xrightarrow{n\rightarrow +\infty} +\infty$$
					One may proceed the same way for $R_-$.
									\def\sq{0.707106781}
				\def\myrad{6cm}
				\def\deltarad{1cm}
				\def\deltatheta{7}
				\def\thetamax{30}
						 \begin{center}
							\begin{tikzpicture}[scale=1]
								\coordinate (O) at (0,0);
								\draw  (\thetamax:\myrad) coordinate (slcoord) -- (\thetamax:0);
								\draw 
									(\thetamax-\deltatheta:\myrad) coordinate (slcoord) arc[start angle=\thetamax-\deltatheta,end angle=\thetamax,radius=\myrad];
								\draw  (\thetamax-\deltatheta:\myrad) coordinate (slcoord) -- (\thetamax-\deltatheta:\myrad+\deltarad) ;
								\draw 
									(\thetamax-2*\deltatheta:\myrad+\deltarad) coordinate (slcoord) arc[start angle=\thetamax-2*\deltatheta,end angle=\thetamax-\deltatheta,radius=\myrad+\deltarad];
								\draw  (\thetamax-\deltatheta:\myrad) coordinate (slcoord) -- (\thetamax-\deltatheta:\myrad+\deltarad) ;
								\draw 
									(\thetamax-3*\deltatheta:\myrad+2*\deltarad) coordinate (slcoord) arc[start angle=\thetamax-3*\deltatheta,end angle=\thetamax-2*\deltatheta,radius=\myrad+2*\deltarad];
								\draw  (\thetamax-2*\deltatheta:\myrad+\deltarad) coordinate (slcoord) -- (\thetamax-2*\deltatheta:\myrad+2*\deltarad) ;
								\draw[dashed]
									(0:\myrad+3*\deltarad) coordinate (slcoord) 
									arc[start angle=0,end angle=\thetamax-3*\deltatheta,radius=\myrad+3*\deltarad];
									
								\draw  (\thetamax-3*\deltatheta:\myrad+2*\deltarad) coordinate (slcoord) -- (\thetamax-3*\deltatheta:\myrad+3*\deltarad) ;
								
								\draw (0,0) -- (\myrad+4*\deltarad,0);
								\coordinate[label=$O$] (O) at (0:0) coordinate (slcoord) ;
								\coordinate[label=$Q_0$] (A) at (\thetamax:\myrad) coordinate (slcoord) ;
								\coordinate[label=$Q_1$] (B) at (\thetamax-\deltatheta:\myrad+\deltarad) coordinate (slcoord) ;
								\coordinate[label=$Q_2$] (C) at (\thetamax-2*\deltatheta:\myrad+2*\deltarad) coordinate (slcoord) ;
								\coordinate[label=$Q_3$] (D) at (\thetamax-3*\deltatheta:\myrad+3*\deltarad) coordinate (slcoord) ;


							\end{tikzpicture}
						\end{center}
					
				We now come back to the proof of the Lemma. Take some $x_0>M$, for any $L\in \RR_+$, from the claims above, there exists some $\theta_+\in]0,\beta[$ and $\theta_-\in ]-\beta,0[$ such that $|\sin(\theta_\pm)r_{\max}(\theta_\pm)| \leq x_0 $  and $r_{\max}(\theta_{\pm})\geq L$. Choose such a $\theta_\pm\in ]-\beta,\beta[$ and notice $\Phi$ is well defined on the hinge
				$Q=[ABCD]$ with $A=O$ $B:=(r_{\max}(\theta_-),\theta_-)$, $C:=(\mathrm{len}(e),0)$ and $D:=(r_{\max}(\theta_+),\theta_+)$. The hinge $Q$  is of type $(x_0,L)$ and $\eta := \Phi_{|Q}$ is an isometric immersion.  $(Q,\eta)$  is then an immersed hinge of type $(x_0,L)$ of $(\Sigma,S)$ with vertices in $S$ and such that $\Phi([AC])=e$.
				
				\end{proof}

				\begin{lem} \label{lem:admissible_compact}
					There exists  $C>0$ such that for all $A,B\in S$ and all $\tau\in \PSigmaS$, $$|\tau(A)-\tau(B)|\leq C.$$

					\end{lem}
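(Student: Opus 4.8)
The plan is to combine the affine description $\PSigmaS=\bigcap_{(Q,\eta)\in\mathcal H}(Q^{*})^{-1}(\RR_-)$ from Proposition~\ref{prop:domaine_convexe} with the asymptotics of type $(x,L)$ hinges (Lemmas~\ref{lem:charniere_type_existence} and~\ref{lem:charniere_type_rel}), feeding the resulting family of affine inequalities into a bootstrap over a connected graph of geodesics on $(\Sigma,S)$. First I would dispose of the case $\#S=1$, which is trivial, and assume $\#S\ge 2$. Then I would fix once and for all a geodesic triangulation $\TT_0$ of $(\Sigma,S)$ (e.g.\ one refining the Delaunay cellulation); its $1$-skeleton $G$ is a connected graph on the vertex set $S$ whose edges are non trivial geodesic segments between points of $S$ meeting $S$ only at their endpoints, and any two vertices of $G$ are joined by a path of length at most $\#S-1$. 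By the triangle inequality it then suffices to bound $|\tau(A)-\tau(C)|$ uniformly over $\tau\in\PSigmaS$ for $\{A,C\}$ ranging over the finitely many edges of $G$. Moreover, using Corollary~\ref{cor:translation} I would normalise $\tau$ by subtracting $(\min_S\tau)\,\mathbf 1_S$ — this stays inside $\PSigmaS$, keeps $\tau\ge 0$, and changes no difference $\tau(A)-\tau(C)$ — so that $\min_S\tau=0$; write $|\tau|:=\sum_{s\in S}\tau(s)$.

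The heart of the argument is a uniform \emph{per-edge} inequality. Fix a small $\varepsilon>0$ (pinned down at the very end only in terms of $\#S$) and an oriented edge $(P_1,P_2)$ of $G$, of length $l$. By Lemma~\ref{lem:charniere_type_existence} there is $x_0>0$ so that for every $L>0$ there is an immersed hinge $(Q_L,\eta_L)$ of $(\Sigma,S)$ of type $(x_0,L)$ with vertices in $S$, with $Q_L=([ABCD],[AC])$ of axis length $l$, and with $\eta_L$ carrying $A\mapsto P_1$, $C\mapsto P_2$ and the diagonal $[AC]$ onto the edge. Since a type $(x_0,L)$ hinge is non convex (Definition~\ref{defi:charniere_type}) it is not flippable, so $(Q_L,\eta_L)$ is unflippable and hence belongs to the set $\mathcal H$ of Proposition~\ref{prop:domaine_convexe}; therefore $Q_L^{*}(\tau\circ\eta_L)\le 0$ for all $\tau\in\PSigmaS$. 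Writing $Q_L^{*}$ in the form $\nu\mapsto\alpha\nu_A+\beta\nu_B+\gamma\nu_C+\delta\nu_D+K$ with $\gamma>0$ (Lemma~\ref{lem:charniere_type_rel}), putting $b:=\eta_L(B)\in S$ and $d:=\eta_L(D)\in S$, dividing by $\gamma$ and using $\tau\ge 0$, one obtains
$$\tau(P_2)-\tau(P_1)\ \le\ \Bigl|1+\tfrac{\alpha}{\gamma}\Bigr|\,\tau(P_1)+\Bigl|\tfrac{\beta}{\gamma}\Bigr|\,\tau(b)+\Bigl|\tfrac{\delta}{\gamma}\Bigr|\,\tau(d)+\Bigl|\tfrac{K}{\gamma}\Bigr|.$$
By Lemma~\ref{lem:charniere_type_rel}, along a sequence of such hinges of type $(x_0,n)$ and fixed axis length $l$ the ratios $\alpha/\gamma,\beta/\gamma,\delta/\gamma$ converge to $-1,0,0$; so I would pick one fixed $L=L(\varepsilon)$ large enough that the first three coefficients above are $\le\varepsilon$, and then $|K/\gamma|$ is merely a constant $C_{P_1,P_2}$. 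Using $\tau(P_1),\tau(b),\tau(d)\le|\tau|$ this gives, for all $\tau\in\PSigmaS$,
$$\tau(P_2)-\tau(P_1)\ \le\ 3\varepsilon\,|\tau|+C_{P_1,P_2}.$$

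With these inequalities in hand I would close the bootstrap. Let $C_0$ be the largest of the finitely many constants $C_{P_1,P_2}$. For $\tau\in\PSigmaS$ normalised so that $\tau(s_0)=0$ for some $s_0\in S$, chaining the per-edge inequality along a path in $G$ from $s_0$ to an arbitrary $s\in S$ yields $\tau(s)\le(\#S-1)(3\varepsilon|\tau|+C_0)$; summing over $s\in S$ gives $|\tau|\le\#S(\#S-1)(3\varepsilon|\tau|+C_0)$. Choosing $\varepsilon$ so small that $3\varepsilon\,\#S(\#S-1)\le\tfrac12$ absorbs the $|\tau|$ on the right, bounding $|\tau|$, hence $\max_S\tau$, hence every $|\tau(A)-\tau(B)|$, by a constant depending only on $(\Sigma,S)$ and the fixed data $\TT_0,\varepsilon$; undoing the translation (which leaves differences unchanged) yields the stated $C$.

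I expect the per-edge inequality to be the delicate step, or rather the place where one must be careful. The key realisation is that one must \emph{not} let $L\to\infty$: for a single sufficiently large $L$ the term $|K/\gamma|$ is just a harmless constant, and it is the smallness of the coefficient $3\varepsilon$ of $|\tau|$ — together with the chaining over the connected spanning graph $G$ — that makes the final bootstrap close. Two subsidiary points also rely on the (already proven) Lemma~\ref{lem:charniere_type_existence}: that a type $(x,L)$ hinge is genuinely unflippable, so that Proposition~\ref{prop:domaine_convexe} applies to it, and that $\eta_L$ sends all four vertices of the hinge into $S$, so that $Q_L^{*}$ really is an affine constraint on $\RR^S$.
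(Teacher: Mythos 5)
Your proof is correct and follows essentially the same route as the paper's: normalise via Corollary \ref{cor:translation}, extract the affine inequalities $|\tau_A-\tau_B|\leq\varepsilon\,(\text{size of }\tau)+K$ from type $(x_0,L)$ hinges via Lemmas \ref{lem:charniere_type_existence} and \ref{lem:charniere_type_rel} and Proposition \ref{prop:domaine_convexe}, chain along broken geodesics with breaking points in $S$, and absorb the small $\varepsilon$-term. The only cosmetic difference is that you bootstrap on $\sum_{s}\tau(s)$ while the paper works directly with $\max_S\tau$; your version is, if anything, slightly more explicit about how the path-length factor is absorbed into the choice of $\varepsilon$.
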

					\begin{proof}
						For Corollary \ref{cor:translation}, it is enough to find a $C>0$ such that 
						$$\forall \tau\in \PSigmaS,\quad  \min\tau=0~\Rightarrow~ \max \tau\leq C$$
					
					From Lemmas \ref{lem:charniere_type_rel} and \ref{lem:charniere_type_existence} and from  Proposition \ref{prop:domaine_convexe},
						for all $\varepsilon>0$, and $A,B\in S$, if there exists a geodesic from $A$ to $B$, then  there exists $K>0$ such that $\tau\in \PSigmaS$,
						$$ \left|\tau_A-\tau_B\right| \leq \varepsilon \max \tau +K $$
						For all $A,B\in S$ there exists a broken geodesic from $A$ to $B$ with breaking points in $S$ hence 
						$$\forall \varepsilon>0, \forall A,B\in S,\,\exists K>0,\quad \mathrm{s.t.}\quad \forall \tau \in \PSigmaS,\, |\tau_A-\tau_B|\leq \varepsilon \max \tau + K $$

						Since $S$ is finite, 
						$$\exists K>0,\forall A,B\in S,\forall \tau\in \PSigmaS,\quad  |\tau_A-\tau_B|\leq \frac{1}{2}\max \tau +K$$

						Choose such a $K>0$ and define $C=2K$, then for all $\tau\in \PSigmaS$ such that $\min \tau=0$
						 $$\max \tau = |\max \tau-\min\tau| \leq \frac{1}{2} \max\tau +K $$
						thus 
						$$\max \tau \leq 2K = C.$$
					\end{proof}

					\begin{proof}[Proof of Theorem \ref{theo:domain_description}]
						Let $\pi$ be the orthogonal projection of $\RR^S$ onto $H:=\{\tau\in \RR^S~|~ \sum_{s\in S}\tau(s)=0\}$, note that the kernel of $\pi$ is $\RR\cdot \mathbf{1}_S$. 
						For each triangulation $\TT$, the set of $\tau\in \RR_+^S$ such that $\widetilde \tau_{\tau,\TT}$ is Q-convex is the domain $$\CellPSigma := \RR_+^S\cap \bigcap_{e \in \mathrm{Edge(\TT)}} (Q_e^*)^{-1}(\RR_-).$$
						Since $\mathbf{1}_S$  is in the kernel of the linear part of all the affine form $Q^*_e$ and since the number of edges of $\TT$ is finite, $\overline \CellPSigma := \pi(\CellPSigma)$ is a convex polyhedron and $\CellPSigma = (\overline \CellPSigma +\RR\cdot \mathbf{1}_S)\cap \RR_ +^S$.
						
						On the one hand, $$\PSigmaS = \bigcup_{\TT}P_{\TT}$$ where $\TT$ runs through all adpated triangulations of $(\Sigma,S)$. Then defining $\overline{\PSigmaS} := \bigcup_{\TT}\overline{\P}_{\TT}$ we have $\PSigmaS= (\overline{\PSigmaS} + \RR\cdot\mathbf{1}_S)\cap \RR_+^S$.
						
						On the other hand, by Lemma \ref{lem:triangulation_finie}, there are only finitely adapted triangulations $\TT$ such that $\CellPSigma$ is non empty. The domain $\overline \PSigmaS$ is thus a polyhedron. 
						
						By Lemma \ref{lem:admissible_compact}, $\overline \PSigmaS = \pi(\PSigmaS)$ is compact and by Proposition \ref{prop:domaine_convexe}, $\PSigmaS$ is convex hence $\overline \PSigmaS$ is convex. 
						
						Let $B$ be an open ball in $\RR_+^S$ of center $0$ and some radius $R$ such that $\pi(B)\supset\overline \PSigmaS$; such a ball exists by compactness of $\overline \PSigmaS$.					
						Let $\TT_0$ be a subtriangulation of the Delaunay triangulation and define $C:=\max\widetilde\tau_{0,\TT_0}+R$. By  Lemma \ref{lem:triangulation_finie}, there exists only finitely triangulations $\TT_0,\cdots, \TT_n$ such that $\exists \tau\in \RR_+^S, \widetilde \tau_{\tau,\TT_i}\leq C$. Let $Q_1,\cdots,Q_N$ the unflippable immersed hinges appearing in these triangulations $\TT_1,\cdots,\TT_n$.
						
						The following claim will imply the last point of the Theorem.
						\begin{claim}{$\PSigmaS = (B+\RR_+\cdot \mathbf{1}_S)\cap \bigcap_{i=1}^N {Q^*_i}^{-1}(\RR_-)$}
						 
						\end{claim}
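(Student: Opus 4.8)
The plan is to prove the two inclusions separately; the inclusion ``$\subseteq$'' is elementary while ``$\supseteq$'' carries all the content.

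\textit{The inclusion $\PSigmaS\subseteq(B+\RR_+\mathbf{1}_S)\cap\bigcap_{i=1}^N (Q_i^*)^{-1}(\RR_-)$.} Fix $\tau\in\PSigmaS$. The inequalities $Q_i^*(\tau)\le 0$, $i\in\lsem 1,N\rsem$, are immediate from Proposition~\ref{prop:domaine_convexe}, each $Q_i$ being an unflippable immersed hinge of $(\Sigma,S)$ with vertices in $S$. To get $\tau\in B+\RR_+\mathbf{1}_S$, set $\tau^0:=\tau-(\min_S\tau)\mathbf{1}_S$; by Corollary~\ref{cor:translation} $\tau^0\in\PSigmaS$, and $\min_S\tau^0=0$. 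Since $\pi(\tau^0)\in\overline{\PSigmaS}\subseteq\pi(B)$, some $b'\in B$ satisfies $\pi(b')=\pi(\tau^0)$, i.e.\ $b'=\tau^0+c\,\mathbf{1}_S$ for some $c$; from $b'\ge 0$ and $\min_S\tau^0=0$ one gets $c\ge 0$, hence $0\le\tau^0\le b'$ coordinatewise and $\|\tau^0\|\le\|b'\|<R$, so $\tau^0\in B$ and $\tau=\tau^0+(\min_S\tau)\mathbf{1}_S\in B+\RR_+\mathbf{1}_S$.

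\textit{The inclusion $\supseteq$.} Let $\tau$ lie in the right-hand side and write $\tau=b+\lambda\,\mathbf{1}_S$ with $b\in B$ (so $b\ge 0$ and each $|b_\sigma|<R$) and $\lambda\ge 0$. The crucial observation is Lemma~\ref{lem:kernel_hinge}: since $\mathbf{1}_S$ lies in the kernel of the linear part of every affine form $Q^*$, a hinge is $\tau$-illegal if and only if it is $b$-illegal. Hence the flipping algorithm started from $\TT_0$ with weights $b$ and the one started from $\TT_0$ with weights $\tau$ may be run through the same sequence of triangulations $\TT_0=\TT^{(0)},\dots,\TT^{(m)}=\TT_\tau$ and stop on the same $\TT_\tau$ (Proposition~\ref{prop:algo_stop}). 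I would then bound these triangulations uniformly: $\widetilde{\tau}_{b,\TT_0}-\widetilde{\tau}_{0,\TT_0}$ is affine on each triangle of $\TT_0$ with vertex values $b_\sigma$, all of absolute value $<R$, so $\max_\Sigma\widetilde{\tau}_{b,\TT_0}<\max_\Sigma\widetilde{\tau}_{0,\TT_0}+R=C$, and the monotonicity of Lemma~\ref{lem:decroissante} gives $\max_\Sigma\widetilde{\tau}_{b,\TT^{(k)}}<C$ for all $k$. By Lemma~\ref{lem:triangulation_finie} each $\TT^{(k)}$ belongs to the finite family $\{\TT_0,\dots,\TT_n\}$, so every unflippable immersed hinge carried by an edge of $\TT_\tau$ is one of $Q_1,\dots,Q_N$.

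To finish, I would observe that every edge of $\TT_\tau$ is $\tau$-legal: the flippable ones because the algorithm for $b$ terminated, so they are $b$-legal, hence $\tau$-legal by Lemma~\ref{lem:kernel_hinge}; the unflippable ones because they appear among the $Q_i$, and $Q_i^*(\tau)\le 0$ by hypothesis makes them $\tau$-legal by Remark~\ref{rem:forme_linaire_Qconvexe}. Thus $\widetilde{\tau}_{\tau,\TT_\tau}$ is Q-convex, i.e.\ $\TT_\tau$ is $\tau$-Delaunay; since $\TT_\tau$ is a triangulation on which the flipping algorithm for $\tau$ stops, Proposition~\ref{prop:algo_stop} yields $\tau\in\PSigmaS$. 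The hard part, and essentially the only nontrivial step, is replacing the a priori infinite intersection of Proposition~\ref{prop:domaine_convexe} by a finite one: a direct run of the flipping algorithm for $\tau$ could, when $\lambda$ is large, wander through triangulations on which $\widetilde{\tau}_{\tau,\cdot}$ is arbitrarily large and so escape any fixed finite list. The device above circumvents this by transferring the whole combinatorial run onto the bounded weight $b$ via the $\mathbf{1}_S$-invariance of legality, where Lemmas~\ref{lem:decroissante} and~\ref{lem:triangulation_finie} confine it to the finite set $E_C$, whose unflippable hinges are by construction exactly $Q_1,\dots,Q_N$.
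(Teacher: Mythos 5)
Your proof is correct and follows essentially the same route as the paper: the forward inclusion via Proposition \ref{prop:domaine_convexe} and the projection $\pi(\PSigmaS)=\overline{\PSigmaS}\subset\pi(B)$, and the backward inclusion by translating $\tau$ into $B$ along $\mathbf{1}_S$, running the flipping algorithm on the bounded weight, confining the run to the finite family $E_C$ via Lemmas \ref{lem:decroissante} and \ref{lem:triangulation_finie}, and checking legality of flippable hinges by termination and of unflippable ones by the hypothesis $Q_i^*(\tau)\le 0$ together with Lemma \ref{lem:kernel_hinge}. The only (harmless) cosmetic differences are that you verify $\tau\in B+\RR_+\mathbf{1}_S$ in more detail than the paper does, and that you phrase the combinatorial run as a single algorithm valid simultaneously for $b$ and $\tau$, where the paper runs it for $\tau'=\tau-\lambda\mathbf{1}_S$ and transfers Delaunayness to $\tau$ at the end.
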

						Let $\tau\in \PSigmaS$, by Proposition \ref{prop:domaine_convexe} $\tau\in \bigcap_{i=1}^N {Q^*_i}^{-1}(\RR_-)$ and $\pi(\tau)\in \overline \PSigmaS \subset \pi(B)$ thus $\tau\in B+\RR_+ \mathbf{1}_S$. 
						
						Let $\tau \in  (B+\RR_+\cdot \mathbf{1}_S)\cap \bigcap_{i=1}^N {Q^*_i}^{-1}(\RR_-)$, let $\lambda\in\RR_+$ such that $\tau' := \tau-\lambda\mathbf{1}_S \in B$. 						
						Apply the flipping algorithm for $\tau'$ starting from $\TT_0$;  by Proposition \ref{prop:algo_stop}, the ending triangulation $\TT$ is such that $\widetilde\tau_{\tau,\TT}\leq C$ hence $\exists i\in \lsem1,n\rsem,\TT=\TT_i$. By the same Proposition, every flippable hinge of $\TT$ are $\tau'$-legal; since we have $\forall i\in \lsem 1,N\rsem, Q_i^*(\tau)\leq 0$, 
						by Lemma \ref{lem:kernel_hinge} we have $\forall i\in \lsem 1,N\rsem, Q_i^*(\tau')\leq 0$, in particular all unflippable hinges of $\TT$ are $\tau'$-legal. Finally $\TT$ is $\tau'$-Delaunay thus, by Lemma \ref{lem:kernel_hinge}, $\TT$  is $\tau$-Delaunay and $\tau\in \PSigmaS$. We have proven the claim.
						
						Any support plane of $\PSigmaS$ is thus given either by the boundary of $\RR_+^S$, hence "$\tau(\sigma)=0$", or boundary of a halfspace "$Q_i^*\leq 0$", hence "$Q^*_i=0$", for some $i\in\lsem1,N\rsem$.

					\end{proof}

			\section{Volkov Lemma for Lorentzian Convex cones}
				\label{sec:etude_domaine_convexe_bord}
				In effective methods used to prove Alexandrov-like Theorems, at some point a Volkov Lemma bounding the cone angle  $\Theta$ of a cone around a singular line of angle $\kappa$ in a Riemannian manifold.
				Though results such as stated below are used in one way or another \cite{MR2127379,MR2410380,MR2453328,MR2469522,MR2813423}, to our knowledge a complete proof of the bounds we use is not available in english (one may appear in the original thesis of Volkov which is in Russian and only a summary is available in english \cite{MR3904042}), we thus provide a complete proof.

				Let $\SS^{1,1}_\kappa:=\{(t,x,y) \in \mass{\kappa}~|~ r^2-t^2=1\}$, it is a timelike surface of constant positive curvature 1.
				\begin{theo}\label{theo:volkov_lorentz} Let $\Theta>0$ and $\kappa>0$. Let $\C$ be a convex spacelike cone in $\mass{\kappa}$ of cone angle $\Theta$. 
					
					 Assuming  has a coplanar wedge of euclidean angle at least $\min(\pi,\Theta)$:

					\begin{itemize}
					 \item if $\Theta> 2\pi$  then $\kappa > 2\pi$.;
					 \item if $\Theta = 2\pi$ then $\kappa = 2\pi
					 $;
					 \item if $\Theta\in ]\pi,2\pi[$ then  $\kappa \geq \Theta$;
					 \item if $\Theta = \pi$ then $\kappa = \pi$;
					 \item if $\Theta<\pi$ then  $\kappa\in ]0,\Theta]$ with $\kappa=\Theta$ if and only if $\C$ is the horizontal plane;
					\end{itemize}
					and all the bounds above are sharp.
					
					\end{theo}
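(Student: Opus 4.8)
The plan is to translate the statement into a question about convex spacelike curves in the de Sitter link of the singular line, and then to run a comparison argument which is the Lorentzian counterpart of the ``spherical'' Volkov lemma used in the Riemannian settings. First I would place the apex of $\C$ at the singular point $O$ of $\mass{\kappa}$ and project $\C$ radially onto (a rescaled copy of) $\SS^{1,1}_\kappa$. Since every planar spacelike sector with apex $O$ projects to a geodesic arc of $\SS^{1,1}_\kappa$, the image $\bar c$ is a closed spacelike polygonal curve of $\SS^{1,1}_\kappa$ homotopic to the core. Three elementary facts, each reducing by a Lorentz isometry to a sector of the horizontal plane $\{t=0\}$, underlie the translation: (i) the euclidean angle of a planar spacelike sector at its apex equals the de Sitter length of its link arc, so the de Sitter length of $\bar c$ equals the cone angle $\Theta$; (ii) convexity of $J^+(\C)$ is equivalent to local convexity of $\bar c$ (it turns toward the same side at each of its breaks), and the prescribed coplanar wedge projects to a geodesic sub-arc $\gamma_0\subset\bar c$ of length $\ge\min(\pi,\Theta)$; (iii) passing to the universal cover $\widetilde\SS$, metrically $(\RR^2_{\psi,\phi},\,-\d\psi^2+\cosh^2\psi\,\d\phi^2)$, on which the covering group is generated by the $\phi$-translation of amplitude $\kappa$, the curve $\bar c$ lifts to a path $\widetilde c$ with the same $\psi$ at its two endpoints and $\phi$-displacement exactly $\kappa$. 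Thus $\kappa$ and $\Theta$ become the $\phi$-displacement and the length of one convex spacelike curve $\widetilde c$ of $\widetilde\SS$, a sub-arc of which, of length $\ge\min(\pi,\Theta)$, is a geodesic.

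For $\Theta\le\pi$ the hypothesis forces the coplanar wedge to exhaust $\C$, i.e.\ $\C$ is a planar spacelike cone at $O$; but a tilted spacelike plane through $O$ is not invariant under the rotational holonomy of $\mass{\kappa}$ and hence does not close up in $\mass{\kappa}$, so $\C$ must be the horizontal plane and $\kappa=\Theta$. This disposes of $\Theta<\pi$ and $\Theta=\pi$. For $\Theta>\pi$ I would act by an isometry of $\widetilde\SS$ so that $\gamma_0$ is the waist arc $\{\psi=0,\ \phi\in[-\tfrac\pi2,\tfrac\pi2]\}$ (all spacelike geodesic segments of length $<2\pi$ are $\mathrm{Isom}(\widetilde\SS)$-equivalent) and restrict $\bar c$ to this arc, which preserves convexity; one may then take $\gamma_0$ of length exactly $\pi$ and its complementary arc $\gamma_1$ of length $\ell:=\Theta-\pi>0$, spacelike, convex, joining $(0,\tfrac\pi2)$ to $(0,\kappa-\tfrac\pi2)$ and forming a convex curve with $\gamma_0$, so that $\kappa=\pi+d$ with $d$ the $\phi$-displacement of $\gamma_1$.

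The heart of the proof is then the inequality: such a $\gamma_1$ satisfies $d\ge\ell$ when $\ell<\pi$, with equality only if $\gamma_1\subset\{\psi=0\}$; $d=\pi$ when $\ell=\pi$; and $d>\pi$ when $\ell>\pi$. I would establish it by first using the Lorentzian form of ``a convex arc stays on one side of each of its tangent geodesics'' to show that the turning of $\bar c$ forces $\gamma_1$ into the half $\{\psi\ge0\}$ bounded by the waist and its $\phi$-coordinate to be monotone, and then bounding $d$ from below by comparison with the extremal configurations --- geodesic bigons and triangles of $\widetilde\SS$ with one side on the waist --- computed explicitly by de Sitter trigonometry; concretely, by straightening the polygonal $\gamma_1$ one reduces to the case of one, then two, geodesic sub-arcs, where $d$ is an explicit function of $\ell$ and of the remaining parameters which one optimizes. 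Feeding this back into $\kappa=\pi+d$ yields $\kappa\ge\Theta$ for $\Theta\in(\pi,2\pi)$ (equality iff $\C$ is the horizontal plane), $\kappa=2\pi$ for $\Theta=2\pi$, and $\kappa>2\pi$ for $\Theta>2\pi$. Sharpness is then immediate: the horizontal plane realizes $\kappa=\Theta$ for every $\Theta\le2\pi$, and for $\Theta>2\pi$ a family of ``two-wedge'' cones (the coplanar wedge of angle $\pi+\varepsilon$ glued along a spacelike ridge through $O$ to a convex planar fan of angle $\Theta-\pi-\varepsilon$, the fold tending to flat) has $\kappa\to2\pi^+$, and perturbations of these give sharpness of the remaining strict bounds.

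The main obstacle is the core estimate of the third paragraph: the sharp lower bound for the $\phi$-displacement of a convex spacelike curve of $\widetilde\SS$ in terms of its length, together with the exact equality case. It is genuinely Lorentzian --- the pointwise bound $|\dot\phi|\ge 1/\cosh\psi$ along a unit-speed spacelike curve runs the wrong way --- so the argument must exploit the \emph{global} convexity of $\bar c$, in particular that $\gamma_1$ bulges to the side of the waist dictated by the turning of $\bar c$, rather than any purely local bound; and the range $\Theta>2\pi$, where $\gamma_1$ may oscillate in $\psi$, will require extra care.
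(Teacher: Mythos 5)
Your reduction of the theorem to a statement about the link curve in (the universal cover of) $\SS^{1,1}_\kappa$ is correct and is essentially the paper's ``stalk'' formalism: the link curve has Lorentzian length $\Theta$, winds with $\phi$-displacement $\kappa$, planar wedges become geodesic arcs, and convexity of $\C$ becomes convexity of the curve. But your argument contains one outright error and one genuine gap. The error is the disposal of $\Theta\le\pi$: a coplanar convex cone need \emph{not} be the horizontal plane. A tilted planar sector of Euclidean angle $\Theta$ whose two boundary rays are exchanged by the elliptic holonomy of $\mass{\kappa}$ closes up into a convex cone of $\mass{\kappa}$ with a ridge along the identified ray; in stalk coordinates these are $\rho_\alpha(\theta)=\sinh(\alpha)\cos\theta$ extended $\Theta$-periodically, with $\kappa=2\arctan\left(\tan(\Theta/2)/\cosh\alpha\right)$, which for $\Theta<\pi$ sweeps out all of $]0,\Theta]$ as $\alpha$ varies (Proposition \ref{prop:volkov_inf_pi}). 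So your conclusion ``$\kappa=\Theta$'' for $\Theta<\pi$ contradicts the very statement being proved, which asserts only $\kappa\in]0,\Theta]$ with equality iff the cone is horizontal. (For $\Theta=\pi$ this family happens to give $\kappa=\pi$ for every $\alpha$, so your conclusion is accidentally correct there, but not for the reason you give.)

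The gap is exactly where you flag it: the lower bound on the $\phi$-displacement $d$ of the convex arc $\gamma_1$ in terms of its length $\ell$. The mechanism you propose --- straightening to reduce to one or two geodesic arcs, then de Sitter trigonometry --- does not work as stated, because in Lorentzian signature replacing a spacelike arc by the geodesic between its endpoints controls the length in neither direction: a spacelike curve between two waist points that bulges to latitude $\psi=h$ has length of order $\cosh(h)$ times the waist distance, so spacelike geodesics neither minimize nor maximize among the competitors you must beat, and convexity alone does not rescue a step-by-step straightening. The paper's resolution is a change of variables you are missing: setting $\chi=\arctan\rho$ turns $\kappa=\int\sqrt{1+\rho^2+\rho'^2}/(1+\rho^2)\,\d s$ --- which, since $\phi$ is monotone along the link of a cone, is exactly your displacement $d=\int\dot\phi\,\d s$ --- into the \emph{Riemannian} length of the curve $s\mapsto(\chi(s),s)$ on the branched cover $\SS^2_\infty$ of the round sphere, and planar wedges into great-circle arcs (Lemma \ref{lem:stalk_to_geodesic}). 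The bound then becomes an honest minimization: the coplanar wedge contributes a geodesic arc of length at least $\pi$ joining two antipodal points, and the complementary arc is at least as long as the (possibly pole-broken) minimizing geodesic between them, computed explicitly in Propositions \ref{prop:volkov_lorentz_case_bigger_than_2pi} and \ref{prop:volkov_inf_pi}. Without some such Lorentzian-to-Riemannian conversion your ``main obstacle'' remains unresolved, and the cases $\Theta>\pi$ are not proved.
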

					
				\begin{rem} In Minkoswki, a convex cone always has a cone angle bigger than $2\pi$. One may expect this to carry out in $\mass{\kappa}$ for arbitrary $\kappa\geq0$. Theorem \ref{theo:volkov_lorentz} shows this intuition is  only valid for $\kappa \in [0,\pi]\cup\{2\pi\}$
				 
				\end{rem}

				\subsection{Stalks of Lorentzian cones}

					\begin{defi}[Stalk of a spacelike cone]
					Let $\kappa>0$, $\Theta>0$ and $\C$ a spacelike cone of $\mass{\kappa}$ of cone angle $\Theta$
					and class $\mathscr{C}^2_{pw}$. We assume the vertex of $\C$ is on the origin of $\mass{\kappa}$.
					In cylindrical coordinates $(r,\theta, t)$, the set $\SS^{1,1}_\kappa\cap \mathcal C$ can be parametrized by arc length 
					$$\mathcal C\cap \SS^{1,1}_\kappa= \left\{ \left(\begin{array}{c}
						t(s)\\
						r(s)\\
						\theta(s)
					\end{array}\right) ~:~ s\in \RR\right\}.
					$$
					The stalk $\rho_{\mathcal C}$ of $\C$ is the function $ t : \RR\rightarrow \RR$ of this  parametrization.
					\end{defi}
					\begin{rem}
					  The stalk $\rho$ of a cone is unique up to pre-composition by an affine transformation slope $\pm1$.
					\end{rem}

					\begin{prop}\label{prop:T_convexe_base}
					Let $\kappa>0$, $\Theta>0$ and $\mathcal C$ a polyhedral cone of $\mass{\kappa}$ of cone angle $\Theta$ whose vertex is on the origin and of stalk $\rho:=\rho_{\mathcal C}$. We have the following:
					\begin{enumerate}
					\item  $\rho: \RR \rightarrow \RR$ is $\Theta$-periodic continuous and $\mathscr C^\infty_{pw}$;
					\item  $\rho$ is piecewise trigonometric (ie piecewise of the form $\theta \mapsto A\cos(\theta+\varphi)$);
					\item  $\C$ is convex if and only if $\rho$  Q-convex;
					\item  $\displaystyle\kappa=\int_{0}^{\Theta} \frac{\sqrt{1+\rho(\theta)^2+\rho'(\theta)^2 }}{1+\rho(\theta)^2}\d \theta$.
					\end{enumerate}
					\end{prop}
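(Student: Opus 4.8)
The plan is to set up the geometry of the intersection $\mathcal C \cap \SS^{1,1}_\kappa$ explicitly in cylindrical coordinates and then read off the four assertions essentially by direct computation, face by face. First I would unwind the definitions: since $\mathcal C$ is a polyhedral cone with vertex at the origin, $\reg(\mathcal C)$ is a finite union of flat angular sectors (wedges) meeting along lightlike... no—along rays from the origin; each wedge is a piece of an affine plane through $O$. The curve $\mathcal C\cap \SS^{1,1}_\kappa$ therefore decomposes into finitely many arcs, one per face, and on each face the arc is the intersection of $\SS^{1,1}_\kappa$ with a plane through $O$. The key local computation is thus: describe $\SS^{1,1}_\kappa \cap P$ for a spacelike-cone face $P$, parametrized by arclength, and compute its $t$-coordinate as a function of the angular coordinate $\theta$. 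In the cone model $\mass\kappa$, the metric on $\SS^{1,1}_\kappa$ is $\d s^2 = -\d t^2 + (\tfrac{\kappa}{2\pi})^2 r^2 \d\theta^2$ restricted to $r^2 - t^2 = 1$; a face of $\mathcal C$ unrolls isometrically into $\SS^{1,1} \subset \mass{}$, where the analogous curve is a planar section of the one-sheeted hyperboloid, which is a (branch of a) conic. This gives item (2): on each face the stalk is of the form $\theta \mapsto A\cos(\theta + \varphi)$ (or a hyperbolic-cosine type, but the hypothesis "spacelike cone" forces the section to be the trigonometric branch—this is the point to be careful about). Item (1) then follows: $\rho$ is continuous because the faces glue continuously along their common rays, it is $\mathscr C^\infty_{pw}$ because each piece is a real-analytic trigonometric function, and $\Theta$-periodicity is just the statement that going once around the cone returns to the start, $\Theta$ being by definition the total cone angle $=\int$ of the arclength element, which closes up.

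For item (4) I would integrate the arclength element: parametrizing $\mathcal C\cap\SS^{1,1}_\kappa$ by the angular coordinate $\theta$ rather than arclength $s$, we have on $\SS^{1,1}_\kappa$ the induced metric, so $\d s = \sqrt{\left|g_{\theta\theta} + 2 g_{\theta t}\tfrac{dt}{d\theta} + g_{tt}(\tfrac{dt}{d\theta})^2\right|}\,\d\theta$ along the curve $r = \sqrt{1+t^2}$, $t=\rho(\theta)$. A direct substitution of $r^2 = 1+\rho^2$, $r\,r' = \rho\rho'$ into the metric (with the $(\kappa/2\pi)^2 r^2$ and $-1$ coefficients) collapses the radicand, after using $r^2 - t^2 = 1$, to $\dfrac{1+\rho^2+\rho'^2}{1+\rho^2}$ up to the overall $(\kappa/2\pi)^2$ factor coming from the $\d\theta^2$ term; since $\Theta = \int_0^{2\pi}(\kappa/2\pi)\sqrt{\cdots}\,\d\theta$ would double-count, one takes $\theta$ running over $[0,2\pi)$ with the cone-metric factor, OR equivalently reparametrizes so that $\kappa = \int_0^\Theta \sqrt{(1+\rho^2+\rho'^2)/(1+\rho^2)}\,\d\theta$ — the stated formula, where now $\theta$ is the arclength-adjacent coordinate on the $\SS^{1,1}$ factor and $\kappa$ appears as the total angle swept in the $r$--$\theta$ plane. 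I would verify the normalization on the model case $\rho\equiv 0$ (the horizontal plane), where the formula must return $\kappa = \int_0^\Theta \d\theta = \Theta$, consistent with the last bullet of Theorem \ref{theo:volkov_lorentz}.

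For item (3) — convexity of $\mathcal C$ iff $\rho$ is Q-convex — I would argue that convexity of a polyhedral cone is a purely local condition along each edge ray: $\mathcal C$ is convex iff at every edge the two adjacent faces bend the correct way, which is exactly the condition that a spacelike geodesic of $\mass\kappa$ crossing that edge near the vertex stays on the far side of $\mathcal C$; pulling this back to $\SS^{1,1}_\kappa$, this is the condition that at each breakpoint $\theta_0$ of $\rho$ one has $\lim_{\theta_0^-}\rho' \le \lim_{\theta_0^+}\rho'$, i.e. precisely Q-convexity of $\rho$ (the $-x^2$-type curvature of each trigonometric piece is automatically the "right" sign, matching the $Q$-convexity definition which builds in the $-s^2$ leading term). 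The main obstacle I anticipate is item (2)/(3): getting the precise form of the planar sections and, especially, nailing down the sign conventions so that "convex cone" corresponds to "Q-convex stalk" and not to "Q-concave" — this requires a careful check that a coplanar spacelike section of the one-sheeted hyperboloid, written in the $(\theta, t)$ coordinates, genuinely has the shape $t = A\cos(\theta+\varphi)$ with $|A|$ in the admissible range, and that the hinge-type second-order condition translates without a sign flip. The periodicity and the integral formula are, by contrast, routine once the coordinates are fixed.
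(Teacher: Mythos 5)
Your overall approach matches the paper's: the paper dismisses items (1)--(3) as ``simple enough'' and only details item (4), and your face-by-face description of the planar sections of $\SS^{1,1}_\kappa$ is a correct way to fill in (1)--(3) (in particular, a spacelike plane through $O$ meets the hyperboloid in a curve whose $t$-coordinate, as a function of \emph{arclength}, is exactly $A\cos(s+\varphi)$ --- your worry about hyperbolic branches is indeed resolved by the spacelike hypothesis).

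The one place where your write-up is genuinely muddled is item (4), and the confusion is precisely the one you half-notice. By definition the stalk $\rho$ is the $t$-coordinate as a function of the \emph{arclength} parameter $s$ of $\mathcal C\cap\SS^{1,1}_\kappa$, and the integration variable in the stated formula (called $\theta$ there by abuse of notation) is this arclength. If you instead parametrize by the cylindrical angular coordinate, as you propose, the unit-speed relation becomes $\d s^2=\bigl[(\kappa/2\pi)^2(1+\rho^2)-(\rho')^2/(1+\rho^2)\bigr]\d\theta^2$ with $\rho$ now a function of the cylindrical angle, which is \emph{not} the stated integrand; your remark about ``double-counting'' and the hedge ``OR equivalently reparametrizes'' are symptoms of trying to have it both ways. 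The clean derivation (the paper's) goes the other way: keep $s$ as the variable, write the unit-speed condition $-(\rho')^2+(r')^2+(\kappa/2\pi)^2 r^2(\theta')^2=1$ together with $r^2-\rho^2=1$ (hence $rr'=\rho\rho'$), solve for
$$\theta'(s)=\frac{2\pi}{\kappa}\,\frac{\sqrt{1+\rho^2+(\rho')^2}}{1+\rho^2},$$
and plug into $2\pi=\int_0^\Theta\theta'(s)\,\d s$. This is a bookkeeping rather than a conceptual error, but as written your item (4) would not produce the stated formula without an explicit change of variables that you do not carry out; your sanity check $\rho\equiv 0\Rightarrow\kappa=\Theta$ passes in both conventions and so does not detect the discrepancy.
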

					\begin{proof} The first 3 points are simple enough. 
					To obtain the last item we first choose $\theta$ increasing and notice:
					$$2\pi= \int_{0}^\Theta \theta'(s)\d s,\quad \quad
					r^2-\rho^2=1, \quad\quad -(\rho')^2+(r')^2+\left(\frac{\kappa}{2\pi}\right)^2 r^2(\theta')^2=1.$$
					Therefore $rr'= \rho\rho' $ and
					\begin{eqnarray*}
					 (\theta')^2&=&\left(\frac{2\pi}{\kappa}\right)^2 \frac{1+(\rho')^2-(r')^2}{r^2}\\
					 &=& \left(\frac{2\pi}{\kappa}\right)^2 \frac{1+(\rho')^2-\left( \frac{\rho \rho'}{r}\right)^2}{1+\rho^2}\\
					 &=& \left(\frac{2\pi}{\kappa}\right)^2 \frac{\left(1+(\rho')^2\right)\left(1+\rho^2\right)-\rho^2 (\rho')^2}{(1+\rho^2)^2}\\
					 (\theta')^2&=& \left(\frac{2\pi}{\kappa}\right)^2 \frac{1+(\rho')^2 + \rho^2}{(1+\rho^2)^2}\\
					 \theta'&=& \frac{2\pi}{\kappa}\frac{\sqrt{1+(\rho')^2 + \rho^2}}{1+\rho^2}.
					\end{eqnarray*}
					Insert the last line  in $2\pi = \int_0^\Theta \theta'$ to get the result.

					\end{proof}
					\begin{rem}\label{rem:inversion_sens} For $\rho:I\rightarrow\RR$ continuous $\mathscr C^1_{pw}$, $\rho$ is Q-convex if and only if $s\mapsto \rho(-s)$ is Q-convex.
					\end{rem}
					
					\begin{cor}\label{cor:kappa_coplanaire}
					Let $\kappa>0$, $\Theta>0$ and $\C$ a spacelike polyhedral cone in
					$\mass{\kappa}$ of cone angle $\Theta$.

					If its stalk $\rho$ is $\mathscr C^1$ then $\kappa=\Theta$. 
					Furthermore, if $\kappa$ is not a multiple of $2\pi$ then $\rho=0$.
					\end{cor}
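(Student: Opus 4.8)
The plan is to feed the three structural properties of the stalk $\rho$ supplied by Proposition~\ref{prop:T_convexe_base} into one another: (1) $\rho$ is $\Theta$-periodic and $\mathscr C^\infty_{pw}$; (2) $\rho$ is piecewise of the form $\theta\mapsto A\cos(\theta+\varphi)$; (3) $\kappa=\int_0^{\Theta}\frac{\sqrt{1+\rho(\theta)^2+\rho'(\theta)^2}}{1+\rho(\theta)^2}\,\d\theta$. The $\mathscr C^1$ hypothesis will rigidify (2) into a single global cosine, and then (3) will be a routine computation.

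First I would upgrade ``piecewise trigonometric'' to ``globally trigonometric''. Each smooth piece of $\rho$ is a solution of the linear ODE $y''+y=0$, and such a solution is determined by its value and first derivative at a single point; hence at every junction two consecutive pieces of a $\mathscr C^1$ function $\rho$ must coincide. Since $\C$ is polyhedral, $\rho$ has only finitely many pieces per period, so they glue into one global solution $\rho(\theta)=A\cos(\theta+\varphi)$ on $\RR$. Now invoke $\Theta$-periodicity: if $A\neq 0$ the period lattice of $\rho$ is exactly $2\pi\ZZ$, forcing $\Theta=2\pi n$ for some $n\in\NN^*$; if $A=0$ then $\rho\equiv 0$.

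It then remains to evaluate (3) in the two cases. If $\rho\equiv 0$ the integrand is identically $1$, so $\kappa=\int_0^{\Theta}\d\theta=\Theta$. If $\rho(\theta)=A\cos(\theta+\varphi)$ with $A\neq 0$ and $\Theta=2\pi n$, then $\rho^2+\rho'^2=A^2$ is constant; using the $2\pi$-periodicity of the integrand together with the classical value $\int_0^{2\pi}\frac{\d\theta}{1+A^2\cos^2\theta}=\frac{2\pi}{\sqrt{1+A^2}}$ (Weierstrass substitution $u=\tan\theta$), one obtains
$$\kappa=\sqrt{1+A^2}\int_0^{2\pi n}\frac{\d\theta}{1+A^2\cos^2(\theta+\varphi)}=\sqrt{1+A^2}\cdot\frac{2\pi n}{\sqrt{1+A^2}}=2\pi n=\Theta.$$
In both cases $\kappa=\Theta$, which is the first assertion. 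For the second assertion: if $\kappa$ is not a multiple of $2\pi$ then neither is $\Theta=\kappa$, so the case $A\neq 0$ is impossible and $\rho\equiv 0$.

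Everything above is routine once Proposition~\ref{prop:T_convexe_base} is in hand; the one step needing a moment's thought — and the nearest thing to an obstacle here — is the ODE-uniqueness observation that promotes the $\mathscr C^1$, piecewise-trigonometric $\rho$ to a single global cosine, after which the remaining computation (including the elementary integral $\int_0^{2\pi}\frac{\d\theta}{1+A^2\cos^2\theta}=\frac{2\pi}{\sqrt{1+A^2}}$) is purely mechanical.
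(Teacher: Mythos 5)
Your proof is correct and follows the same route as the paper's: use the $\mathscr C^1$ hypothesis to promote the piecewise trigonometric stalk to a single global cosine (or zero), deduce $\Theta\in 2\pi\NN^*$ in the nonzero case, and evaluate the mass integral of Proposition \ref{prop:T_convexe_base}.(4) in each case. You merely spell out two steps the paper leaves implicit — the ODE-uniqueness gluing argument and the explicit value of $\int_0^{2\pi}\frac{\d\theta}{1+A^2\cos^2\theta}$ — so there is nothing to correct.
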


					\begin{proof}
					If $\rho=0$, the result follows from point 4 of Proposition \ref{prop:T_convexe_base}. 
					Otherwise, the minimal period $\rho$ is $2\pi$ thus $\Theta \in 2\pi \NN^*$.
				 	Noticing the integral  in point 4 of Proposition \ref{prop:T_convexe_base}  yields $2\pi$ on a period of a trigonometric function and we obtain $\Theta = \kappa$.
					\end{proof}

					\begin{defi}[Mass of a stalk] \
					For $\rho : [a,b]\rightarrow \RR$ (resp. $\rho : \RR\rightarrow \RR$ $\Theta$-periodic), define 	$$ \kappa(\rho) : = \int_a^b \frac{\sqrt{1+\rho^2+\rho'^2}}{1+\rho^2}\quad \left(\mathrm{resp.} ~~\int_0^\Theta \frac{\sqrt{1+\rho^2+\rho'^2}}{1+\rho^2} \right).$$

					\end{defi}

					\begin{rem}
					  Every $\rho:\RR\rightarrow \RR$ piecewise trigonometric Q-convex and $\Theta$-périodique induces
					a convex embedding of $\mathbb E^2_\Theta$ into $\mass{\kappa(\rho)}$.
					Furthermore, this embedding is essentially unique:
					From Proposition \ref{prop:T_convexe_base}, the mass $\kappa$ is given by $\rho$  there is thus no choice for the space $\mass{\kappa}$ and two embeddings of same germ only differ by rotation or plane symmetry.
					\end{rem}

					\begin{defi} Let $\SS^2_\infty$ be the universal covering of the round sphere branched over its north and south poles eg $[-\pi/2,\pi/2]\times \RR /\sim$ endowed the metric $$\d s^2 = \d \phi^2 +\cos(\phi)^2\d \theta^2$$ 
					where $\sim$ identifies all points such that $\phi=\pi/2$ together as the north pole $N$ and all points such that $\phi=-\pi/2$ as the south pole $S$.
					\end{defi}
					\begin{defi} A  piecewise geodesic curve $\gamma: I\rightarrow \SS^2_\infty$ is Q-convex if $\theta\circ \gamma$ is injective and $\phi\circ \gamma$ is Q-convex.

					\end{defi}

					\begin{lem} \label{lem:stalk_to_geodesic}
						Let $\rho : [a,b]\rightarrow \RR$ continous piecewise trigonometric Q-convex. Then $\gamma: [a,b]\rightarrow \SS^2_\infty, \theta \mapsto (\arctan \rho(\theta),\theta)$ is a piecewise geodesic Q-convex curve. Furthermore, $$\kappa(\rho)=\mathrm{length}(\gamma).$$
					\end{lem}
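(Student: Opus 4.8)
The plan is to check the three assertions separately; they are all essentially direct computations, so I expect no serious obstacle, the only point deserving a little care being the identification of the relevant curves with great circles of the round sphere and the fact that $\gamma$ stays in the regular locus of $\SS^2_\infty$.

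First I would prove that $\gamma$ is piecewise geodesic. Recall that $\SS^2_\infty\setminus\{N,S\}$ is locally isometric to the round unit sphere minus its two poles, the point with coordinates $(\phi,\theta)$ corresponding to $(\cos\phi\cos\theta,\cos\phi\sin\theta,\sin\phi)\in\SS^2$. Since $\rho$ is piecewise trigonometric, on each interval of a suitable subdivision $a=s_0<\cdots<s_N=b$ one may write $\rho(\theta)=A\cos(\theta+\varphi)$; as $\arctan\rho$ stays in $]-\pi/2,\pi/2[$, the curve $\gamma$ never meets the poles, and on such an interval $\gamma$ satisfies $\tan\phi=A\cos(\theta+\varphi)$. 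Multiplying by $\cos\phi$ and passing to Cartesian coordinates this reads $z=px+qy$ for suitable constants $p,q$, that is, $\gamma$ lies on the intersection of $\SS^2$ with the plane through the origin of normal $(p,q,-1)$, a great circle. Hence $\gamma|_{[s_{i-1},s_i]}$ is a geodesic arc and $\gamma$ is piecewise geodesic.

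Next I would verify that $\gamma$ is a Q-convex curve in the sense of the definition above. On the one hand $\theta\circ\gamma$ is the identity of $[a,b]$, hence injective. On the other hand $\phi\circ\gamma=\arctan\circ\,\rho$ is continuous and piecewise $\mathscr C^1$, and at every $t_0$ the one-sided derivatives satisfy $\lim_{t_0^-}(\arctan\rho)'=\dfrac{\lim_{t_0^-}\rho'}{1+\rho(t_0)^2}$ and $\lim_{t_0^+}(\arctan\rho)'=\dfrac{\lim_{t_0^+}\rho'}{1+\rho(t_0)^2}$; since $1+\rho(t_0)^2>0$ and $\rho$ is Q-convex, the inequality $\lim_{t_0^-}\rho'\le\lim_{t_0^+}\rho'$ forces $\lim_{t_0^-}(\arctan\rho)'\le\lim_{t_0^+}(\arctan\rho)'$, so $\arctan\circ\,\rho$ is Q-convex. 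Thus $\gamma$ is a Q-convex piecewise geodesic curve.

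Finally, for the length identity I would set $\phi:=\arctan\rho$, so that $\phi'=\rho'/(1+\rho^2)$ and $\cos^2\phi=1/(1+\rho^2)$; parametrising $\gamma$ by $\theta$, the metric $\d s^2=\d\phi^2+\cos^2\phi\,\d\theta^2$ gives
$$\mathrm{length}(\gamma)=\int_a^b\sqrt{\phi'(\theta)^2+\cos^2\phi(\theta)}\;\d\theta=\int_a^b\frac{\sqrt{1+\rho(\theta)^2+\rho'(\theta)^2}}{1+\rho(\theta)^2}\;\d\theta=\kappa(\rho).$$
This is the same computation underlying point~4 of Proposition~\ref{prop:T_convexe_base}, now carried out on an arbitrary interval instead of a full period.
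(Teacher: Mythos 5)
Your proposal is correct and follows essentially the same route as the paper: the paper's proof simply asserts that curves with $\phi=\arctan(\alpha\cos(\theta+\varphi))$ are geodesics and that Q-convexity is trivial, then performs the identical length computation using $\cos^2(\arctan\rho)=1/(1+\rho^2)$. Your added details (the great-circle identification $z=px+qy$, the observation that $\gamma$ avoids the poles, and the chain-rule argument showing $\arctan$ preserves the one-sided derivative inequality) are exactly the right way to fill in what the paper leaves implicit.
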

					\begin{proof} 
					 Curves of the form $t\mapsto (\phi(t),\theta(t))$ with $\phi(t)= \arctan(\alpha\cos(\theta+\phi)$ and $\theta(t)=t$ are geodesic of $\SS^2_\infty$ and  Q-convexity is trivial. Then, simply write \begin{eqnarray*}\mathrm{length}(\gamma)&=&\int_a^b \sqrt{\frac{(\rho')^2(\theta)}{(1+\rho^2(\theta))^2}+\cos^2(\arctan\circ \rho(\theta))  }\d \theta \\
					 &=&  \int_a^b \sqrt{\frac{(\rho')^2(\theta)}{(1+\rho^2(\theta))^2}+\frac{1}{1+
					 \rho^2(\theta)} }\d \theta   \\
					 &=& \int_a^b \sqrt{\frac{\rho'(\theta)+1+\rho^2(\theta)}{(1+\rho^2(\theta))^2} }\d \theta   \\
					 &=& \kappa(\rho)
					 \end{eqnarray*}

					\end{proof}

					\subsection{Lower bounds}
					Lemma \ref{lem:stalk_to_geodesic} provides a neat geometrical traduction from Lorentzian to Riemannian. Indeed spacelike minimizing curves do not exists in a Lorentzian manifold while in a Riemannian manifold they are geodesics.

					\begin{prop}\label{prop:volkov_lorentz_case_bigger_than_2pi} Let $\Theta\geq\pi$  then 
					$$\inf\kappa(\rho) = \min(2\pi,\Theta)$$ the infimum being taken over $\rho:\RR/\Theta\ZZ\rightarrow \RR$  piecewise trigonometric and Q-convex which are trigonometric on a interval of length at least $\pi$. 
					Furthermore, the infimum is a minimum if and only if $\Theta\leq 2\pi$

					\end{prop}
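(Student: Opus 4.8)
Plan of proof (forward-looking sketch).

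\medskip

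\noindent The plan is to transport the statement to Riemannian geometry via Lemma~\ref{lem:stalk_to_geodesic}. To a $\Theta$-periodic, piecewise trigonometric, Q-convex stalk $\rho$ one associates the curve $\gamma=\gamma_\rho:[0,\Theta]\to\SS^2_\infty$ given by $\gamma(\theta)=(\arctan\rho(\theta),\theta)$, which by that Lemma is a piecewise geodesic Q-convex curve with $\mathrm{length}(\gamma)=\kappa(\rho)$; moreover the hypothesis that $\rho$ be trigonometric on an interval of length at least $\pi$ translates into: $\gamma$ contains one geodesic sub-arc whose $\theta$-span is at least $\pi$, i.e.\ at least half a great circle. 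So it is enough to prove that, over all such $\gamma$, one has $\inf\mathrm{length}(\gamma)=\min(2\pi,\Theta)$, the infimum being a minimum if and only if $\Theta\leq2\pi$.

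\medskip

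\noindent For the upper bound, the constant stalk $\rho\equiv0$ is admissible --- it is trigonometric on all of $\RR/\Theta\ZZ$ (with amplitude $0$) and trivially Q-convex --- and $\kappa(0)=\int_0^\Theta\d\theta=\Theta$; this already gives $\inf\leq\min(2\pi,\Theta)$ and settles attainment when $\Theta\leq2\pi$. When $\Theta>2\pi$ one exhibits a family $\rho_A$ with $\kappa(\rho_A)\to2\pi$: in $\SS^2_\infty$ the associated $\gamma_A$ is the concatenation of a long chain of small geodesic ``bumps'' hugging the north pole within a distance $\epsilon_A\to0$ and sweeping most of the $\theta$-range with total length $O(\epsilon_A)$, then a steep near-meridian geodesic arc descending to the equator (length $\to\pi/2$), then a half great circle (serving as the planar wedge, length exactly $\pi$), then a steep near-meridian arc climbing back up (length $\to\pi/2$). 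One checks that the junctions are continuous, that all the kinks of $\arctan\rho_A$ point upward so $\rho_A$ is Q-convex, and that the $\theta$-spans sum to $\Theta$; hence $\mathrm{length}(\gamma_A)\to2\pi$ and $\inf\leq2\pi$.

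\medskip

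\noindent The heart of the proof is the lower bound $\kappa(\rho)\geq\min(2\pi,\Theta)$, with strict inequality when $\Theta>2\pi$. Writing $f=\arctan\rho$, one has $\mathrm{length}(\gamma)=\int_0^\Theta\sqrt{f'^2+\cos^2 f}\,\d\theta$. Inside the wedge, select a geodesic sub-arc $g$ of $\theta$-span exactly $\pi$: being a half great circle, it has length exactly $\pi$, and its two endpoints lie at opposite latitudes $\pm\psi$ (the symmetry $\theta\mapsto\theta+\pi$ of $A\cos(\theta+\varphi)$, compare Proposition~\ref{prop:T_convexe_base}). After rotating the parametrisation of $\RR/\Theta\ZZ$ so that $g=\gamma|_{[\Theta-\pi,\Theta]}$, the complementary arc $\gamma|_{[0,\Theta-\pi]}$ joins the point at latitude $\psi$ --- here one uses $f(0)=f(\Theta)$ --- to the point at latitude $-\psi$, its $\theta$-coordinate sweeping an interval of length $\Theta-\pi$. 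One then bounds $\mathrm{length}(\gamma|_{[0,\Theta-\pi]})\geq\min(\pi,\Theta-\pi)$ by combining $\sqrt{f'^2+\cos^2 f}\geq|f'|$ with $\sqrt{f'^2+\cos^2 f}\geq|\cos f|$: if $f$ stays uniformly away from the two poles, the $|\cos f|$-estimate together with the piecewise-geodesic structure (which excludes latitude circles) forces length $\geq\Theta-\pi$; if instead $f$ makes an excursion toward a pole, the total variation of $f$ --- it must climb from $\psi$ toward a pole and then descend past the equator down to $-\psi$ --- weighed against the length spent at moderate latitudes, forces length $\geq\pi$. Adding $\mathrm{length}(g)=\pi$ gives $\kappa(\rho)=\mathrm{length}(\gamma)\geq\pi+\min(\pi,\Theta-\pi)=\min(2\pi,\Theta)$. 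Since $\rho=A\cos(\theta+\varphi)$ is bounded, $f$ never reaches $\pm\pi/2$, so these estimates are in fact strict, whence $\kappa(\rho)>2\pi$ when $\Theta>2\pi$ and $2\pi$ is not attained there.

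\medskip

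\noindent The main obstacle is exactly this lower bound, and within it the regime $\Theta\geq2\pi$: the crude bound ``length $\geq$ distance between the endpoints'' is too weak, because in $\SS^2_\infty$ two points of opposite latitude can be joined by a short path passing near a branch point. One therefore has to balance the total variation of $f$ against its $|\cos f|$-contribution to the arclength, while exploiting both the branched structure of $\SS^2_\infty$ (no latitude circle is geodesic, and the poles are never reached) and the global Q-convexity of $\gamma$; making this accounting yield exactly $2\pi$, and strictly more when $\Theta>2\pi$, is the delicate point.
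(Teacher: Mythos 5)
Your overall strategy coincides with the paper's: transport everything to $\SS^2_\infty$ via Lemma \ref{lem:stalk_to_geodesic}, get the upper bound from explicit stalks hugging the pole (your ``chain of bumps'' family is essentially the paper's $\rho=\sinh(\alpha)\sin(\theta)$ capped at $\sinh(\alpha)$, which gives $\kappa=2\pi+(\Theta-2\pi)/\cosh(\alpha)$ in closed form), and get the lower bound by splitting off a half--great-circle of length exactly $\pi$ from the wedge and estimating the complementary arc joining $(\psi,0)$ to $(-\psi,\Theta-\pi)$. Up to and including that reduction, your argument is sound.

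The gap is in the estimate of the complementary arc, which is the heart of the proposition, and you flag it yourself as ``the delicate point'' without carrying it out. Worse, one of your two sub-cases is wrong as stated: if $f=\arctan\rho$ stays uniformly away from the poles, the $|\cos f|$-estimate gives $\mathrm{length}\geq\int_0^{\Theta-\pi}|\cos f|\,\d\theta$, which is \emph{at most} $\Theta-\pi$ (with equality only for $f\equiv 0$), so it cannot ``force length $\geq\Theta-\pi$''; indeed the pointwise inequality $\sqrt{f'^2+\cos^2f}\geq 1$ fails near the apex of any tilted great circle, so no pointwise balancing of $|f'|$ against $|\cos f|$ will give $\d s\geq\d\theta$. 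In the other sub-case the total-variation bound only yields $\pi-2\epsilon$ for an excursion stopping at latitude $\pi/2-\epsilon$, and recovering the missing $2\epsilon$ from the moderate-latitude portion requires a quantitative accounting you do not supply --- and which must also produce \emph{strict} inequality when $\Theta>2\pi$ to settle non-attainment. The paper avoids all of this by replacing the complementary arc with a length-minimizing curve between its endpoints in $\SS^2_\infty$ and classifying such minimizers: they are geodesics possibly broken at the branch points $N,S$, an unbroken one has length $\geq\pi$ when its $\theta$-span exceeds $\pi$ (and length $\Theta-\pi$, attained at $\alpha=0$, when $\Theta\leq 2\pi$), a once-broken one has length exactly $\pi$, and a stalk-curve never meets $N$ or $S$, so for $\Theta>2\pi$ it is not a minimizer and its length is strictly greater than $\pi$. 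If you want to keep your framework, the cleanest repair for $\Theta\leq 2\pi$ is the crude bound $\mathrm{length}\geq d_{\SS^2}\bigl((\psi,0),(-\psi,\Theta-\pi)\bigr)=\arccos(\cos^2\psi\cos(\Theta-\pi)-\sin^2\psi)\geq\Theta-\pi$, and for $\Theta>2\pi$ you still need the minimizing-curve classification (or an equivalent global argument) to get strictness.
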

					\begin{proof} Define $S_\Theta$ the set of $\Theta$-periodic convex stalk which have a trigonometric interval of length at least $\pi$.
					\begin{itemize}
					 \item Assume $\Theta\geq2\pi$. Consider for $\alpha>0$ the stalk  $$\fonction{\rho}{\RR/\Theta\ZZ}{\RR}{\theta}{\left\lbrace \begin{array}{ll}
																			\sinh(\alpha) \sin(\theta) & \mathrm{for }~ \theta \in [-3\pi/2,\pi/2] \\
												\sinh(\alpha)	& \mathrm{otherwise}
																		\end{array}\right.
					} $$
				so that $\rho$ is convex and $\kappa(\rho)=2\pi+\frac{\Theta-2\pi}{\cosh(\alpha)}$. As a result, $\inf_{\rho\in S_\Theta} \kappa(\rho)\leq 2\pi.$
				
				\item Assume $\Theta< 2\pi$, then the stalk $\rho=0$ is such that $\kappa(\rho)=\Theta$.
				
				\item Let $\gamma : [0,\Theta-\pi]\rightarrow \SS^2_\infty$ be a Lipchitz curve  from $(\phi_0,0)$ to $(-\phi_0,\Theta)$ minimizing the length with $\gamma(0),\gamma(\Theta-\pi) \notin \{N,S\}$.  
				The curve $\gamma$ is a possibly broken geodesic with breaking points in $\{N,S\}$.

				If $\gamma$ is unbroken, then up to reparametrization, $\phi\circ \gamma$ is of the form $\theta\mapsto\arctan(\alpha\cos(\theta+\theta_0))$ and since $\phi\circ \gamma(0)=-\phi\circ \gamma(\Theta-\pi)$. If $\Theta\geq2\pi$ then the length of such a curve is at least $\pi$; otherwise up to reparametrization $\rho(\theta)=\sinh(\alpha)\sin(\theta)$ with $\theta\in [m-\pi/2,\pi/2-m]$ and $m=\pi-\Theta/2$. Then $$\kappa(\rho) = \pi-2\arctan\left(\frac{\tan(m)}{\cosh(\alpha)}\right)$$ which is minimal if and only if $\alpha=0$ in which case the length of $\gamma$ is $\Theta-\pi$.
				
				If $\gamma$ has one breaking point, then $\gamma$ is formed of a geodesic from $\gamma(0)$ to $N$ (resp. $S$) and then a geodesic from $N$ (resp. $S$) to $\gamma(\Theta)$. Hence, the length of $\gamma$ is exactly $\pi$. 
				
				If $\gamma$ has more than one breaking point, then it contains a geodesic from $N$ to $S$ and its length is strictly bigger than $\pi$.
								
				In any case, the length of the curve associated by Lemma \ref{lem:stalk_to_geodesic} to a stalk $\rho$ in $S_\Theta$ is bounded from below by $\pi$ plus the length of such a minimizing curve $\gamma$ hence $\inf_{\rho\in S_\Theta} \kappa(\rho)= \min(\Theta,2\pi)$. Furthermore, the infimum is a minimum if and only if a minimizing curve can be associated to a stalk, which is only possible if $\gamma$ reach neither $N$ nor $S$; this excludes the broken geodesic cases and the unbroken geodesic case is minimizing if and only if $\Theta\leq 2\pi$.

					\end{itemize}

					\end{proof}

					\begin{prop}\label{prop:volkov_inf_pi} Let $\Theta < \pi$ then 
					 $$\inf\kappa(\rho) = 0$$ the infimum being taken over $\rho:\RR/\Theta\ZZ\rightarrow \RR$  piecewise trigonometric and Q-convex which are trigonometric on a interval of length at least $\Theta$. 
					 
					 Furthermore, among such stalks, $\kappa(\rho)\leq \Theta$ with equality if and only if $\rho= 0$.
					\end{prop}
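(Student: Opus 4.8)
The plan is to reduce the statement to an explicit one-parameter computation, exploiting the rigidity forced by the hypothesis together with the dictionary of Lemma~\ref{lem:stalk_to_geodesic}.

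\emph{Step 1 (the admissible stalks).} Since $\rho$ is $\Theta$-periodic and trigonometric on an interval of length at least $\Theta$, it agrees on some full period $[\theta_0,\theta_0+\Theta]$ with a single trigonometric function $g(\theta)=A\cos(\theta+\varphi)$, and its only possible corner is the seam $\theta\equiv\theta_0$; moreover, if the trigonometric interval is strictly longer than one period then $g$ must itself be $\Theta$-periodic, forcing $A=0$ since $0<\Theta<\pi$. Assume $A\neq 0$. The matching $g(\theta_0)=g(\theta_0+\Theta)$ then forces $\theta_0+\varphi\equiv-\Theta/2\pmod{\pi}$, so after replacing $\rho$ by $\rho(\cdot-\theta_0)$ (which affects neither $\kappa(\rho)$ nor Q-convexity) we may assume $\rho(\theta)=A\cos(\theta-\Theta/2)$ on $[0,\Theta]$. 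Computing $\rho'(0^-)=-A\sin(\Theta/2)$ and $\rho'(0^+)=A\sin(\Theta/2)$ and imposing Q-convexity at the seam, $\rho'(0^-)\le\rho'(0^+)$, gives $2A\sin(\Theta/2)\ge0$, i.e. $A\ge0$. Hence, up to rotation, the admissible stalks are exactly the $\rho_A(\theta)=A\cos(\theta-\Theta/2)$ with $A\ge0$, the value $A=0$ being $\rho=0$.

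\emph{Step 2 (evaluating $\kappa$).} For $\rho_A$ one has $\rho_A^2+\rho_A'^2=A^2$, so by Proposition~\ref{prop:T_convexe_base}.(4) and the substitution $v=\tan(\theta-\Theta/2)$,
$$\kappa(\rho_A)=\int_0^{\Theta}\frac{\sqrt{1+A^2}}{1+A^2\cos^2(\theta-\Theta/2)}\,\d\theta=2\arctan\!\left(\frac{\tan(\Theta/2)}{\sqrt{1+A^2}}\right).$$
Equivalently, by Lemma~\ref{lem:stalk_to_geodesic} this is the length of the minimizing great-circle arc in $\SS^2_\infty$ between two points of latitude $\arctan(A\cos(\Theta/2))$ whose longitudes differ by $\Theta<\pi$; that arc is no longer than the constant-latitude arc with the same endpoints, of length $\Theta\cos\!\big(\arctan(A\cos(\Theta/2))\big)\le\Theta$, with equality only for endpoints on the equator.

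\emph{Step 3 (conclusion and main obstacle).} The map $A\mapsto 2\arctan\!\big(\tan(\Theta/2)/\sqrt{1+A^2}\big)$ is continuous and strictly decreasing on $[0,\infty)$, equal to $2\arctan(\tan(\Theta/2))=\Theta$ at $A=0$ (as $0<\Theta/2<\pi/2$) and tending to $0$ as $A\to\infty$. Therefore $\kappa(\rho)\le\Theta$ for every admissible stalk, with equality if and only if $A=0$, i.e. $\rho=0$, and $\inf_{A\ge0}\kappa(\rho_A)=0$ with the infimum not attained. I expect the one delicate point to be Step~1: correctly delimiting the admissible class — checking that the unique corner is the seam, that a trigonometric interval longer than a period forces $\rho=0$, and that the Q-convexity inequality at the seam is precisely what fixes the sign of $A$. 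Once Step~1 is in place, Steps~2 and~3 are a routine integral and a monotonicity argument.
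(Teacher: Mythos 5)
Your proof is correct and follows essentially the same route as the paper: reduce the admissible stalks to the one-parameter family $\rho_A(\theta)=A\cos(\theta-\Theta/2)$, $A\ge 0$ (the paper writes $A=\sinh\alpha$), evaluate $\kappa(\rho_A)=2\arctan\bigl(\tan(\Theta/2)/\sqrt{1+A^2}\bigr)$ explicitly, and conclude by monotonicity in $A$. Your Step 1 in fact supplies the classification argument that the paper only asserts ("any element of $S_\Theta$ is of the form\ldots up to translation"), and it is carried out correctly.
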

					\begin{proof}
					 Define $S_\Theta$ the set of $\Theta$-periodic convex stalk which have a trigonometric interval of length at least $\Theta$.
					 Any element of $S_\Theta$ is of the form $$ \fonction{\rho_\alpha}{\RR/\Theta\ZZ}{\RR}{\theta+\Theta\ZZ}{\sinh(\alpha) \cos(\theta)} $$ 
					 for some $\alpha\in \RR$ up to translation.  On the one hand, since $\rho_\alpha$ is Q-convex only for $\alpha\geq 0$ then  $$\forall \alpha\in\RR,\quad  \rho_\alpha \in S_\Theta ~~\Leftrightarrow ~~ \alpha\geq 0.$$ On the other hand $$\forall \alpha\geq  0,\quad  \kappa(\rho_\alpha) = 2\arctan\left(\frac{\tan(\Theta/2)}{\cosh(\alpha)}\right)\xrightarrow{\alpha\rightarrow +\infty} 0.$$
					
					The infimum $\inf_{\rho \in S_\Theta} \kappa(\rho)=0$ follows. Note that $\alpha\mapsto \kappa(\rho_\alpha)$ is decreasing, the maximum is thus reached for $\alpha=0$ hence $\rho=0$. The formula above gives $\kappa(0)=\kappa(\rho_0) = 2\arctan(\tan(\Theta/2))=\Theta$.
					\end{proof}
%
%
%
%
%
%
%
%
%

			\section{Einstein-Hilbert functional}
			\label{sec:fonctionnelle}
				\def\PsqTT{\CellPSigma^{1/2}}
				\def\Psq{\PSigmaS^{1/2}}
				We give ourselves a Euclidean surface $\Sigma$ with conical singularities and marked points $S\supset \sing(\Sigma)$, we will keep this surface fixed in the whole section.
				
				To sum up the results of the preceding sections, we have a construction that associate to any $\tau\in \PSigmaS$ a radiant spacetime $M(\tau)$ and a convex polyhedral embedding $\iota_\tau$ of $(\Sigma,S)$ into $M(\tau)$, we know from Proposition \ref{prop:plong_conv} this construction reaches every equivalence classes of such couple $(M,\iota)$ and is injective. By Theorem \ref{theo:domain_description}, $\PSigmaS$ is a convex domain of $\RR_+^S$ and is the union of finitely many convex cells each corresponding to a triangulation of $(\Sigma,S)$.
				
				The objective is now to construct polyhedral embeddings $(M,\iota)$ such that the singularities of $M$ have masses (hyperbolic angles) we gave ourselves a priori.

				\begin{defi}[Mass function]

				Let $\tau \in \PSigmaS$ and $(M(\tau),\iota_\tau)$ its associated polyhedral embedding of $(\Sigma,S)$, for $\sigma\in S$ define  $\kappa_\sigma(\tau)$ the mass (or hyperbolic angle) of $M(\tau)$ at $\iota_\tau(x) \in M(\tau)$.

				We define $\kappa : \PSigmaS \rightarrow \RR_+^{S}$ the map that associat to $\tau$ the vector $(\kappa_\sigma(\tau))_{\sigma\in S}$.
				\end{defi}
				\begin{rem} On each cell $\CellPSigma:=\{\tau\in \PSigmaS ~|~ \TT_\tau=\TT \}$, the function $\tau \mapsto \kappa$ is continuous and furthermore $\mathscr C^1$. Since we will actually compute the derivative later on we do not prove it now.
				Furthermore, if  $\tau \in \CellPSigma\cap \P_{\TT'}$ the triangulations  $\TT$ and $\TT'$ are  $\tau$-equivalent, $\kappa$ computed with either triangulation yields the same result since $M(\tau,\TT)\simeq M(\tau,\TT')$. The map $\tau\mapsto \kappa$ is thus continuous on $\PSigmaS$.
				\end{rem}

				Reformulating with these notations, we thus aim at solving the following.
				\begin{problem}\label{prob:kappa_bar}
				 Let  $\bar\kappa \in \RR_+^{S}$, is there some $\tau\in \PSigmaS$ such that $\kappa(\tau)=\bar\kappa$ and if so, is is unique?
				\end{problem}
				There is an restriction on the possible $\bar\kappa$. Indeed, for any $\tau\in \PSigmaS$, the spacetime $M(\tau)$ is the suspension of some marked closed $\H_{\geq0}$-surface $(\Sigma_{\H^2},S')$ homeomorphic to $\Sigma$ and the cone angles at $S'$ are the masses $\kappa(\tau)$. Therefore, by Gauss-Bonnet formula, 
				$\sum_{\sigma\in S} (2\pi-\kappa(\tau)_\sigma)- \mathrm{Area}(\Sigma_{\H^2}) = 2\pi\chi(\Sigma) = \sum_{\sigma\in S}(2\pi-\theta_\sigma)$
				hence 
				$$\forall \tau\in \PSigmaS, \quad \sum_{\sigma \in S} \theta_\sigma> \sum_{\sigma\in S}\kappa(\tau)_\sigma $$
				We are unable to prove that all $\bar\kappa$ such that  $\sum_{\sigma \in S} \theta_\sigma> \sum_{\sigma\in S}\bar\kappa_\sigma$ are reached and settle for a weaker statement.
				 \begin{theon}  Let $(\theta_\sigma)_{\sigma\in S}$ be the cone angles of $\Sigma$ at $S$. Recall the notation $(M(\tau),\iota_\tau)$ introduced in section \ref{sec:tau_susp}.			 
					 With $\kappa(x)$ the cone angle around $x$ for $x$ some point of a radiant spacetime, the map $$\fonctionn{\PSigmaS}{\RR^S_+}{\tau}{(\kappa\circ \iota_\tau)_{|S}}$$ 
					 reaches each point of $\prod_{\sigma\in S} [0,2\pi]\cap [0,\theta_\sigma[$ exactly once.
					 
						\end{theon}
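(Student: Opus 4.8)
The plan is to run Volkov's variational method in the form developed by Bobenko and Izmestiev and already used for the Lorentzian cases in \cite{MR2410380,MR2453328,MR2469522,MR2813423}. By Proposition \ref{prop:plong_conv} the map in the statement is, after the parametrisation of convex polyhedral embeddings by $\PSigmaS$, exactly the mass map $\kappa:\PSigmaS\to\RR_+^S$, so what has to be shown is that for $\bar\kappa\in\prod_\sigma[0,2\pi]\cap[0,\theta_\sigma[$ the equation $\kappa(\tau)=\bar\kappa$ has exactly one solution. I would obtain that solution as the unique critical point of an Einstein--Hilbert functional.

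First I would build the functional. Working in the height chart $h=(\sqrt{\tau_\sigma})_\sigma$ on $\Psq$ (this is the natural coordinate, the time coordinate of $\iota_\tau(\sigma)$ being $\sqrt{\tau_\sigma}$), I attach to $\bar\kappa$ a Regge/Einstein--Hilbert-type functional $\mathcal H_{\bar\kappa}:\Psq\to\RR$. On each cell of the cellulation of $\PSigmaS$ from Theorem \ref{theo:domain_description} the embedding $\iota_\tau$ is assembled from the finitely many explicit cones $C_T$, $T\in\TT_\tau$, and a Schl\"afli-type variational identity for these cones shows that $\kappa$, viewed as a $1$-form in the appropriate coordinates, is closed on the cell, i.e. $\partial\kappa_\sigma/\partial h_{\sigma'}$ is symmetric. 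Since $\kappa$ is continuous across cell walls --- two $\tau$-equivalent triangulations give isomorphic $M(\tau)$, hence the same masses --- the local primitives glue into a globally $\mathscr C^1$ function $\mathcal H$ with $\grad\mathcal H$ proportional componentwise to $\kappa$. Subtracting the matching linear-in-$\bar\kappa$ term gives $\mathcal H_{\bar\kappa}$ with $\gradH=0\iff\kappa(\tau)=\bar\kappa$, so the solutions of Problem \ref{prob:kappa_bar} are precisely its critical points.

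Next I would prove that $\mathcal H_{\bar\kappa}$ is strictly concave, which reduces to computing the differential of $\kappa$ and checking that the resulting Jacobian is negative definite on each cell. As in the Euclidean Alexandrov problem I expect this Jacobian to be a weighted discrete Laplacian of the $\tau$-Delaunay triangulation $\TT_\tau$: the coefficient of an interior edge is nonpositive and can be read off from the affine form $Q_e^*$ of its hinge (Definition \ref{defi:Q_forme_affine}), while a strictly positive diagonal correction comes, at each singular point $\sigma$, from the strict $Q$-convexity of the stalk of the convex cone there, made quantitative by Proposition \ref{prop:T_convexe_base}. A Laplacian of this type is negative definite, so $\mathcal H_{\bar\kappa}$ is strictly concave; in particular it has at most one critical point, whence $\kappa$ is injective on $\PSigmaS$ --- the ``exactly once'' half. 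I expect this Hessian computation to be the main obstacle, just as it is the technical heart of the Bobenko--Izmestiev argument.

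Finally I would prove existence. A strictly concave $\mathscr C^1$ function on the convex set $\Psq$ has a critical point iff it attains its supremum, which is then the unique critical point; so it suffices to stop a maximising sequence from escaping to $\partial\Psq$. Along $\{\tau_\sigma\to 0\}$ the singular line at $\sigma$ becomes a BTZ line and $\kappa_\sigma\to 0$; since $\kappa_\sigma\ge 0$ throughout, vanishing exactly on $\{\tau_\sigma=0\}$, the concave $\mathcal H_{\bar\kappa}$ is nondecreasing in this direction, stationary only when $\bar\kappa_\sigma=0$, so its maximum is forced onto $\bigcap_{\bar\kappa_\sigma=0}\{\tau_\sigma=0\}$ and is interior in the other directions (the case $\bar\kappa=0$ being exactly the Penner point $\tau=0$). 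Along the non-compact direction $\RR_+\cdot\mathbf{1}_S$ allowed by Corollary \ref{cor:translation}, all masses of $M(\tau+\lambda\mathbf{1}_S)$ tend to $0$ as $\lambda\to+\infty$ (the transverse hyperbolic surface degenerates to cusps), so $\frac{\d}{\d\lambda}\mathcal H_{\bar\kappa}(\tau+\lambda\mathbf{1}_S)=\sum_\sigma\kappa_\sigma-\sum_\sigma\bar\kappa_\sigma$ is eventually negative when $\bar\kappa\ne 0$. On a face $\{Q_i^*(\tau)=0\}$ coming from an unflippable hinge, the convex cone at an involved vertex $\sigma$ flattens, producing a coplanar wedge, and here Theorem \ref{theo:volkov_lorentz} together with the infima computed in Propositions \ref{prop:volkov_lorentz_case_bigger_than_2pi}--\ref{prop:volkov_inf_pi} pin $\kappa_\sigma$ to its extreme value; combined with the Gauss--Bonnet inequality $\sum_\sigma\kappa_\sigma(\tau)<\sum_\sigma\theta_\sigma$, this extreme value lies strictly beyond $\bar\kappa_\sigma$ precisely because $\bar\kappa_\sigma\le 2\pi$ and $\bar\kappa_\sigma<\theta_\sigma$, so $\gradH$ points strictly inward. (This is exactly the step where the statement is forced to restrict to the sub-cube of the full Gauss--Bonnet region, and I expect it to be the second delicate point.) Hence the supremum is attained at a $\tau^\ast$ with $\kappa(\tau^\ast)=\bar\kappa$, and by strict concavity $\tau^\ast$ is the only solution.
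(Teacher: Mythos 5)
Your outline is the paper's: identify the map with the mass map $\kappa$ via Proposition \ref{prop:plong_conv}, build $\mathcal H_{\bar\kappa}$ on $\Psq$ with $\d\mathcal H_{\bar\kappa}=\sum_\sigma(\kappa_\sigma-\bar\kappa_\sigma)\d h_\sigma$ via Schl\"afli, get uniqueness from definiteness of the Hessian, and get existence from a boundary analysis using Volkov's lemma on unflippable critical hinges. But two of your concrete claims are wrong, and they sit exactly at the existence step. First, the behaviour at infinity: as $\tau\to+\infty$ along $\RR_+\mathbf{1}_S$ the masses do \emph{not} tend to $0$ --- the cusp degeneration happens at $\tau\to 0$ (the Penner point), whereas Lemma \ref{lem:limite_infty} shows $\kappa(\tau)\to\theta$ at infinity ($\rho_e\to 0$, so each wedge angle tends to its Euclidean value). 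Second, the sign: the Jacobian $\partial\kappa_\sigma/\partial h_{\sigma'}$ is a diagonally dominant matrix with \emph{nonnegative} diagonal (Proposition \ref{prop:H_convexe}), so $\mathcal H_{\bar\kappa}$ is convex, not concave, and one looks for a minimum. With the true limit and your concave convention, $\frac{\d}{\d\lambda}\mathcal H_{\bar\kappa}(\tau+\lambda\mathbf{1}_S)\to\sum_\sigma(\theta_\sigma-\bar\kappa_\sigma)\cdot(\cdots)>0$, so your functional would increase to $+\infty$ along the ray and its supremum would not be attained; the two errors do not cancel. (With both signs corrected the hypothesis $\bar\kappa_\sigma<\theta_\sigma$ gives coercivity, which is precisely where that hypothesis is used.)

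The second gap is the assertion that on a face $\{Q_i^*=0\}$ the gradient ``points strictly inward''. Volkov's lemma only pins down \emph{one} component, $\kappa_\sigma-\bar\kappa_\sigma>0$ at the reflex vertex $\sigma$ of the unflippable hinge; the other components can have either sign, so the gradient need not point inward, and what you actually need to exclude a boundary minimum is that $-\grad\mathcal H_{\bar\kappa}$ avoids the outward normal cone --- manageable on a single facet (the normal has positive $\sigma$-component $\lambda_C$), but delicate at corners where several hinge faces and faces $\{\tau_\sigma=0\}$ meet, and at the point at infinity. The paper closes this differently: it compactifies $\P_Z=\PSigmaS\cap\Pi_Z$ (restricting first to the slice $Z=\{\bar\kappa_\sigma=0\}$, as you also do), and shows the field $X=(\kappa_\sigma-\bar\kappa_\sigma)_\sigma$ is never antipodal on $\partial\P_Z$ to a radial field $Y$ centred near $0$ --- on $\{\tau_\sigma=0\}$ both have negative $\sigma$-component, on $\{Q^*=0\}$ both have positive $\sigma$-component at the reflex vertex, at $\infty$ one uses $\kappa\to\theta$. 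Hence $X$ has degree $1$ on $\partial\P_Z$, and since strict convexity forces every zero to have local index $+1$, there is exactly one zero. This degree argument delivers existence and the exact count simultaneously and handles corners and infinity uniformly; your attainment argument can probably be repaired along the normal-cone lines above, but as written it does not go through.
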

				
				The proof rely on the analysis of a so-called Einstein-Hilbert functional, the first step is thus, is to define 
				a functional $\mathcal H_{\bar\kappa}$ on $\PSigmaS$ for any $\bar\kappa$  whose critial points are solution to Problem \ref{prob:kappa_bar}. In fact, one could check that such a functional exists checking $\frac{\partial\kappa_{\sigma_1}}{\partial h_{\sigma_2}} = \frac{\partial\kappa_{\sigma_2}}{\partial h_{\sigma_1}} $.
				Actually for technical reasons which will shortly make themselves clear, it will be more appropriate to define such a functional on the domain $\Psq := \{h\in \RR_+^S~:~ h^2\in \PSigmaS\}$.
				
				A standard analysis of the critical points of $\mathcal H_{\bar\kappa}$ as well its gradient on the boundary of $\Psq$ follows. Under the assumption that for all $\sigma\in S$, $\bar\kappa_\sigma$ is no greater than $2\pi$ and lesser than the cone angle of $\Sigma$ at $\sigma$, we show that critical points of $\mathcal H_{\bar\kappa}$ are positive definite and that the gradient of $\mathcal H$ on the boundary of $\PSigmaS$ is homotopic to an outward vector field.

					\subsection{Reminders on Lorentzian angles and Shläffli's Formula}
					The following is an adaptation of the exposition of Rabah Souam \cite{MR2030165}.

					To begin with, the modulus  $|u|$ of a vector  $u$ of $\mass{}$ is
					$$|u|=\sqrt{\langle u|u\rangle}$$
					with the convention that when $\langle u|u\rangle<0$ then $|u|=\lambda i$ with $\lambda>0$ and $i^2=-1$.
					Let $u,v$ be two vectors of $\mass{}$, the angle $\angle uv$ is defined so that it satisfies the following properties:
					\begin{enumerate}
					 \item for all vectors $u,v$, $\angle uv\in \RR+i\RR/(2\pi\ZZ)$;
					 \item for all vectors $u,v$, $\langle u|v\rangle = |u||v|\cosh(\angle uv)$:
					 \item for all vectors $u,v,w$ coplanar,  $\angle uv+\angle vw=\angle uv'$.
					\end{enumerate}
					Beware that if $u,v$ are spacelike, $\angle uv$ is not the usual angle $\widehat{uv}$ but actually $\widehat{uv}\cdot i$
					Angles are well defined only if neither  $u$ nor $v$ are lightlike.
					\begin{defi}[Type of a vector of $\E^{1,1}$]
					Choose a direct cartesian coordinate system $(t,x)$ of the vector space underlying $\E^{1,1}$.
					Let $u$ be a non lisghtlike vector of $\E^{1,1}$, the type $k_u\in \ZZ/4\ZZ$   of $u$ is defined as follows :
					\begin{itemize}
					 \item $k_u=0$ if $u$ future timelike;
					 \item $k_u=1$ if $u$ is spacelike with negative spacelike coordinate,
					 \item $k_u=2$ if $u$ is past timelike;
					 \item $k_u=3$ if $u$ is spacelike with positive spacelike coordinate.
					\end{itemize}
					\end{defi}

					\begin{defi} Let $u,v$ be two non lightlike unit vector $\mass{}$ and let $\Pi$ the vectorial plane generated by $u$
					and $v$.
					\begin{itemize}
					 \item If $\Pi$ is spacelike, $$\angle uv=i\theta$$ with $\theta$ the angle from $u$ to $v$ in $\Pi$ oriented by future timelike normal.
					 \item  if $\Pi$ is timelike and $u,v$ of types $k_u,k_v$ in $\Pi$ identified with $\E^{1,1}$ oriented by the the basis $(u,v)$, then
					 $$\angle uv=\alpha+i(k_v-k_u)\frac{\pi}{2}$$
					 avec $\alpha$ the length of the geodesics from $u'$ to $v'$ where $u'$ (resp. $v'$) is the unique future unit timelike vector of $\Pi$ orthogonal or colinear to $u$ (resp. $v$)
					\end{itemize}

					\end{defi}

					\begin{defi}[Dihedral angle]
					Let $\Pi_1$ and $\Pi_2$ two vectorial half-planes intersecting along their common boundary $\Delta$.
					Assume none of $\Pi_1,\Pi_2$  and $\Delta$ are lightlike and write $\nu_i=\Delta^\perp\cap\Pi_i$, $i\in \{1,2\}$.
					We choose some $u\in \Delta$ and for $i\in \{1,2\}$ define $n_i$ the unique unit vector  normal to $\Pi_i$  such that $(u,\nu_i,n_i)$  is a direct basis.

					The signed dihedral  angle $\angle \Pi_1\Pi_2$ between the planes $\Pi_1$ and $\Pi_2$ is then defined as follows 
					$$\angle \Pi_1\Pi_2 := \angle n_1n_2.$$

					\end{defi}
					\begin{rem}
					In the definition above, the dihedral angle does not depend on the choice of $u$. 
					\end{rem}

					\begin{defi}[1-parameter family of locally Minkowski polyedron]\

						A 1-parameter family of locally Minkowski polyedron is the data of a simplicial complex $\mathcal K$ and a map $\psi:[0,1]\times \mathcal K \rightarrow \mass{}$ such that
						\begin{enumerate}
						 \item for all simplex $P$ of $\mathcal K$  and all $t\in [0,1]$,  $\psi_{|\{t\}\times P}$ is injective and $\psi(t,P)$ a polyhdron of $\mass{}$;
						 \item for all $t\in [0,1]$, the map  $\psi(t,\cdot)_{|
						 Int(\mathcal K)}$ is  a local homeomorphism ;
						 \item For all $x\in \mathcal K$, the map $\psi(\cdot,x)$ is $\mathscr C^1$.
						\end{enumerate}
					\end{defi}

					Let $(\mathcal K,\psi)$ a 1-parameter family of locally Minkowski polyedron.
					If $e$ is an edge of $\mathcal K$, then for all $t\in [0,1]$,
					 we write $l_{e,t}\geq0$ the length of the edge $\psi(e,t)\subset \mass{}$
					 and $\theta_{e,t}$ the sum of the dihedral angles between the faces of the simplices of $\mathcal K$ around the edge $e$.
					\begin{theon}[Schläffli's formula \cite{MR2030165}]\
					
					 Let $(\mathcal K,\psi)$ a 1-parameter family of locally Minkowski polyedron such that none of its faces or edges are lightlike.
					 Denoting by $\mathcal A$ the set of edges of $\mathcal K$, we have
					 $$ \sum_{e\in \mathcal A} l_{e,t} \frac{\d \theta_{e,t}}{\d t}=0.$$
					\end{theon}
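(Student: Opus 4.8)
The plan is to prove Schläffli's formula by reducing the global identity to a sum of purely local contributions, one per edge, and then showing that each local contribution is accounted for exactly once. First I would fix attention on the quantity $\sum_{e\in\mathcal A}l_{e,t}\,\frac{\d\theta_{e,t}}{\d t}$ and observe that $\theta_{e,t}$ is a sum of dihedral angles of individual simplices meeting along $e$. The strategy is therefore to prove the \emph{infinitesimal} version at the level of a single simplex: for a single Minkowski simplex (here a tetrahedron, since the relevant simplices are $3$-dimensional), with edge lengths $l_e$ and dihedral angles $\theta_e$, one has $\sum_{e} l_e\,\d\theta_e = 0$ as a $1$-form identity in the space of simplex shapes, \emph{provided} the volume is held in the picture correctly. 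The cleanest route is the classical one: I would establish the identity $\d V = \tfrac{1}{n-1}\sum_e l_e\,\d\theta_e$ for an $n$-simplex (with $n=3$, $V$ the Lorentzian volume), so that when the simplices are glued into a locally Minkowski polyhedron and the total combinatorial structure $\mathcal K$ is held fixed, summing over all simplices and collecting terms edge by edge produces $\sum_{e\in\mathcal A}l_{e,t}\,\d\theta_{e,t}$ on one side and a total-volume variation on the other.

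The key cancellation is that when $\mathcal K$ is an \emph{immersed closed} complex—i.e.\ the map $\psi(t,\cdot)$ is a local homeomorphism on the interior, so there is no boundary contributing—the total volume is, up to the gluing, a fixed combinatorial sum whose first variation vanishes in the appropriate sense; more precisely, each interior edge is shared among the simplices around it, and the single-simplex relation $\d V_P=\tfrac12\sum_{e\subset P} l_e\,\d\theta^P_{e}$ summed over all $P$ gives $\d\left(\sum_P V_P\right)=\tfrac12\sum_{e\in\mathcal A}l_e\,\d\theta_{e,t}$, where $\theta_{e,t}=\sum_{P\ni e}\theta^P_e$ is exactly the total angle around $e$. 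The point is then that there is no net volume being swept: because $\psi(t,\cdot)$ is a local homeomorphism with the combinatorics of $\mathcal K$ unchanged, the oriented total volume $\sum_P V_P$ is locally constant in $t$ (the deformation merely moves interior faces which are traversed with opposite orientation from the two adjacent simplices, so their contributions to the first-order volume change cancel pairwise). Hence the left-hand side vanishes, which is precisely the stated formula.

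Concretely I would proceed in these steps. First, reduce to establishing the single-simplex differential relation $\d V_P=\tfrac12\sum_{e} l_e\,\d\theta_e$ for a Lorentzian tetrahedron with no lightlike faces or edges; this is where the Lorentzian angle conventions from the preceding subsection (the modulus $|u|$, the dihedral angle $\angle\Pi_1\Pi_2$, and the type $k_u$) enter, and I would derive it by differentiating the Gram-matrix / Cayley–Menger description of the simplex, using that $\frac{\partial V}{\partial \theta_e}=\tfrac12 l_e$ after a direct computation with the law of sines/cosines for Lorentzian simplices. Second, sum this relation over all tetrahedra $P$ of $\mathcal K$ and regroup the right-hand side by edges, using that each edge $e$ has a well-defined length $l_{e,t}$ independent of which simplex computes it (since $\psi(t,\cdot)$ restricts to an isometry on each simplex and the gluings are by isometries) so that $l_e$ factors out of $\sum_{P\ni e}\d\theta^P_e=\d\theta_{e,t}$. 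Third, argue the left-hand side $\d\left(\sum_P V_P\right)$ vanishes identically in $t$: each interior face is shared by two simplices with opposite induced orientations, and the first-order change in total signed volume under a $\mathscr C^1$ deformation that preserves the combinatorics and keeps faces matched is a telescoping sum of face-flux terms that cancel in pairs.

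The main obstacle I expect is the careful bookkeeping of signs and types in the Lorentzian setting: the dihedral angles take values in $\RR+i\RR/(2\pi\ZZ)$ and the single-simplex volume-variation identity must be verified with the \emph{signed} complex angles, so the pairwise cancellation of volume flux across shared faces requires that the two incident simplices contribute angle increments that add up to the geometric total $\theta_{e,t}$ with consistent orientation and consistent type $k_u$ assignments. Verifying that the hypothesis "none of the faces or edges are lightlike" is exactly what is needed to keep all the $|u|$, $\cosh(\angle uv)$ and $\arctan$-type expressions well defined and differentiable along the deformation will be the technically delicate point; once the single-simplex identity is secured with the right sign conventions, the global summation and the volume-cancellation are formal.
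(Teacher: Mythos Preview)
The paper does not prove this statement; it is quoted as a known theorem with a citation to Souam \cite{MR2030165}. So there is no in-paper proof to compare against, and the task is simply to assess whether your sketch would produce a correct argument.

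Your proposal contains a genuine error at the single-simplex step. The identity $\d V_P=\tfrac{1}{n-1}\sum_e l_e\,\d\theta_e$ is the Schläffli relation in a space of constant sectional curvature $\pm 1$ (hyperbolic, spherical, de~Sitter, anti-de~Sitter), \emph{not} in flat space. In flat Minkowski space the curvature is zero, and the correct single-simplex identity is already
\[
\sum_{e\subset P} l_e\,\d\theta_e^P \;=\; 0
\]
for each tetrahedron $P$. This is immediate to see must be the right form: volume scales like the cube of length while dihedral angles are scale-invariant, so under a homothety $\d V_P\neq 0$ but every $\d\theta_e^P=0$, which rules out your claimed relation. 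Once the single-simplex identity is in hand, the global formula follows by summing over all simplices of $\mathcal K$ and regrouping by edges; the second step of your outline is exactly right, and nothing further is needed.

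Consequently your third step---arguing that $\sum_P V_P$ is locally constant in $t$---is both unnecessary and false. The total volume of a locally Minkowski polyhedron varies under a generic $\mathscr C^1$ deformation (again, scale it). The face-flux cancellation you describe does not compute $\d\!\left(\sum_P V_P\right)$; each simplex volume depends on all its vertices, not just on the position of its interior faces relative to neighbours. The correct architecture is: prove the flat single-simplex identity directly (e.g.\ by differentiating the vanishing of the Gram determinant of outward normals, or by a limiting argument from the curved case), then sum. The Lorentzian sign and type bookkeeping you flag in your last paragraph is indeed the delicate part, and is precisely what Souam's paper handles.
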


				\subsection{Einstein-Hilbert functional, definition and gradient}

				We begin by defining the set of squared root of admissible times:
				$$\PSigmaS^{1/2}:=\{(\sqrt{\tau_\sigma})_{\sigma\in S} ~:~\tau \in \PSigmaS \}.$$
				Elements of  $\PSigmaS^{1/2}$ will be denoted systematically by $h$ while elements of $\PSigmaS$ will be denoted by $\tau$. Going from the one to the other being simple we extends all definitions to $\Psq$:
				$M(h) := M(h^2,\TT_{h^2})$, etc.
				
				\begin{defi}[Einstein-Hilbert functional]

						Let $\bar\kappa \in \RR_+^{S}$.
						For $h\in \PSigmaS^{1/2}$ and for $e$ edge of $\TT_h$, we denote by
						$l_e$ the length of $e$ and by $\theta_e$ the dihedral angle of the embedding $\iota_h$ at the edge $e$.

	 					The Einstein-Hilbert functional is defined as follows :
						 $$\fonction{\mathcal H_{\bar\kappa}}{\PSigmaS}{\RR}{h}{\sum_{\sigma \in S} h_\sigma (\kappa_\sigma - \bar \kappa_\sigma)+ \sum_{e \in \A_\tau} l_e\theta_e}.$$
				\end{defi}

					\begin{prop} \label{prop:diff_H}
					Let  $\bar\kappa \in \RR_+^{S}$, 
					the functional $\mathcal H_{\bar \kappa}$ is well defined, $\mathscr C^1$ on $\PSigmaS^{1/2}$ and
					$$\d \mathcal H_{\bar \kappa} = \sum_{\sigma \in S} (\kappa_\sigma-\bar\kappa_\sigma)\d h_\sigma $$
					\end{prop}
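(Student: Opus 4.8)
The plan is to differentiate the two sums in the definition of $\mathcal H_{\bar\kappa}$ separately and show that all the terms coming from variations of the combinatorial data (the lengths $l_e$ and angles $\theta_e$, and the angle defect at the singular lines as the triangulation changes) cancel. First I would fix a cell $\CellPSigma$ of $\Psq$, i.e.\ restrict to $h$ with a fixed $\tau$-Delaunay triangulation $\TT$; on such a cell the edge set $\A_\tau$ is constant, the affine embedding $\iota_h$ of each triangle into $J^+(O)$ depends smoothly (in fact real-analytically) on $h$ via Proposition \ref{prop:affine_emb} and Lemma \ref{lem:prolongement}, so $l_e$, $\theta_e$ and $\kappa_\sigma$ are all $\mathscr C^1$ functions of $h$ there. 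Writing $\mathcal H_{\bar\kappa}=\sum_\sigma h_\sigma(\kappa_\sigma-\bar\kappa_\sigma)+\sum_{e\in\A_\tau} l_e\theta_e$, the product rule gives
\[
\d\mathcal H_{\bar\kappa}=\sum_{\sigma}(\kappa_\sigma-\bar\kappa_\sigma)\,\d h_\sigma
  +\sum_\sigma h_\sigma\,\d\kappa_\sigma
  +\sum_{e}\theta_e\,\d l_e+\sum_{e} l_e\,\d\theta_e .
\]
So it suffices to prove the identity
\[
\sum_\sigma h_\sigma\,\d\kappa_\sigma+\sum_{e}\theta_e\,\d l_e+\sum_{e} l_e\,\d\theta_e=0
\]
on the interior of each cell.

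The key tool is Schläffli's formula. Consider the $1$-parameter family of locally Minkowski polyhedra obtained by deforming $h$ along a curve $t\mapsto h(t)$ inside the cell, with $\mathcal K$ the simplicial complex given by the cones $C_T$ over the triangles $T\in\TT$, glued along the faces $\pi^{-1}(e)$. This complex has two kinds of edges: the ``horizontal'' edges, which are the images $\iota_h(e)$ of the edges $e$ of $\TT$ (of length $l_e$, total dihedral angle around them $\theta_e$), and the ``radial'' singular rays through the marked points $\sigma\in S$, which have infinite length — so to apply Schläffli's formula verbatim one truncates the cones at a fixed spacelike surface (say $\{Q=1\}$, i.e.\ the hyperbolic surface $\mathcal H^*$) and works with a compact polyhedron whose boundary includes that surface. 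The radial rays then become bounded edges; the total dihedral angle around the truncated ray at $\sigma$ is exactly $\kappa_\sigma$ (the hyperbolic cone angle), and I must check that the truncation surface contributes edges whose dihedral-angle derivatives either vanish or are accounted for. Schläffli's formula then reads $\sum_e l_{e,t}\,\dot\theta_{e,t}=0$ over all edges; the horizontal edges contribute $\sum_e l_e\,\dot\theta_e$, and the radial edges at $\sigma$ contribute a term of the form $h_\sigma\,\dot\kappa_\sigma$ (the ``length'' of the truncated singular ray between $O$ and its intersection with $\{Q=1\}$, measured in the natural way, being proportional to $h_\sigma=\sqrt{\tau_\sigma}$ — this is where the choice of the domain $\Psq$ rather than $\PSigmaS$ is essential, since $h_\sigma$, not $\tau_\sigma$, is the relevant edge length). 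Rearranging gives $\sum_\sigma h_\sigma\dot\kappa_\sigma+\sum_e l_e\dot\theta_e=-\sum_e \theta_e\dot l_e$ up to boundary-surface terms; a short separate computation — or the remark that along a spacelike edge the angle defect is concentrated at the vertices, analogous to the Euclidean computation of Bobenko–Izmestiev — shows the remaining contributions assemble precisely into $\sum_e\theta_e\dot l_e$, yielding the desired cancellation.

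Finally I would patch the cells together: $\mathcal H_{\bar\kappa}$ is continuous on all of $\Psq$ by the Remark preceding the statement (the value of $\kappa$, and indeed of the whole expression, is independent of which $\tau$-equivalent triangulation is used, since $M(\tau,\TT)\simeq M(\tau,\TT')$), it is $\mathscr C^1$ on the interior of each of the finitely many cells with the \emph{same} formula $\d\mathcal H_{\bar\kappa}=\sum_\sigma(\kappa_\sigma-\bar\kappa_\sigma)\d h_\sigma$ for its differential, and $\kappa$ is continuous on $\Psq$; hence the one-form $\sum_\sigma(\kappa_\sigma-\bar\kappa_\sigma)\d h_\sigma$ is a continuous extension of $\d\mathcal H_{\bar\kappa}$ across the cell walls, which forces $\mathcal H_{\bar\kappa}$ to be $\mathscr C^1$ on all of $\Psq$ with that differential. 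The main obstacle is the careful bookkeeping in the Schläffli argument: identifying exactly which edges of the truncated cone complex carry which length and angle, verifying that the truncation surface $\{Q=1\}$ can be chosen so its faces are spacelike and contribute no net term (using that the embedding is convex, so all the relevant planes are non-lightlike), and pinning down the constant relating the truncated radial edge length to $h_\sigma$. Once that is set up correctly the cancellation is forced by Schläffli, exactly as in the Euclidean and hyperbolic cases treated by Bobenko, Izmestiev and Fillastre.
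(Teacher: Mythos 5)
Your overall architecture is the same as the paper's (work cell by cell in $\PSigmaS^{1/2}$, apply Schl\"afli's formula on the interior of each cell, then patch across cell walls using the vanishing of $\theta_e$ at $h$-critical edges and the continuity of $\kappa$), and your final patching step is correct. But the central Schl\"afli computation, as you set it up, has two genuine problems. First, the term $\sum_e\theta_e\,\d l_e$ that you introduce and then try to dispose of by an unspecified ``separate computation'' is in fact identically zero: $l_e$ is the length of a geodesic edge of the \emph{fixed} surface $\Sigma$, and $\iota_h$ is isometric on each triangle, so within a cell $\CellPSigma^{1/2}$ the lengths $l_e$ do not depend on $h$ at all. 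There is nothing to cancel, and the cancellation mechanism you invoke does not exist.

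Second, your choice of compact polyhedron is the wrong one. Truncating the cones $C_T$ at $\{Q=1\}$ produces radial edges all of Lorentzian length $1$ --- not length proportional to $h_\sigma$, contrary to what you assert --- and creates additional edges along the truncation surface whose contributions you leave unaccounted for (``up to boundary-surface terms''). The correct and much simpler choice, which is what the paper uses, is the \emph{past of the Cauchy surface} $\iota_h(\Sigma)$ in $M(h)$: the union of the compact pyramids with apex $O$ and bases $\iota_h(T)$, $T\in\TT$. Its only edges are the spacelike edges $\iota_h(e)$ (length $l_e$, total dihedral angle $\theta_e$) and the timelike segments $[O,\iota_h(\sigma)]$, whose Lorentzian length is exactly $h_\sigma=\sqrt{\tau_\sigma}$ and whose total dihedral angle is $\kappa_\sigma$; no truncation surface and no leftover terms appear, and Schl\"afli's formula reads directly $\sum_{\sigma\in S}h_\sigma\,\d\kappa_\sigma+\sum_{e\in\A_h}l_e\,\d\theta_e=0$, which together with $\d l_e=0$ yields the stated differential. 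As written your argument does not close; once the polyhedron is replaced by the past of $\iota_h(\Sigma)$ and the $\theta_e\,\d l_e$ term is recognised as zero, it becomes the paper's proof.
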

					\begin{proof}
					 Consider the family of compact locally Minkowski polyhedron
						$(Q_h)_{h\in \PSigmaS^{1/2}}$
					 given by the past of the polyhedral Cauchy-surface $\iota_h(\Sigma)\subset M(h)$. 
					 
					 For  any triangulation $\TT$ defining a cell $\CellPSigma$ of $\PSigmaS$, the underlying simplicial complex $\mathcal K_h$  of $Q_h$ is constant on $\PsqTT$. Since the edges of $\A$ are always spacelike, the angles $\theta_e$ are well defined; furthermore one may write down exact an formula for each of the quantities $\theta_e,\kappa_\sigma$ depending on $h$, they are thus of class $\mathscr C^1$ relatively to $h$ and $\mathcal H_{\bar\kappa}$ is $\mathscr C^1$ on each cell $\PsqTT$. 
					 
					 At some $h$ in the intersection of two such cells $\CellPSigma^{1/2}, \P_{\TT'}^{1/2}$ of $\PsqTT$, the functional $\mathcal H_{\bar\kappa}$ can be computed by either triangulation but the edges $e$ changed from $\TT$ to $\TT'$ are $h$-critial ones. An edge is critical if and only if the distance like function on the induced hinge is $\mathscr C^1$ eg if their support plane in $M(h)$ are coplanar, hence if $\theta_e=0$. 
					 The functional is thus well defined and continuous on $\Psq$
					 
					 Consider a triangulation $\TT$ defining a cell $\PsqTT$ of $\Psq$. 
					  On the interior of $\PsqTT$,
					 none of the $h_\sigma$ are zero and none of the edges of the simplicial complex $\mathcal K_h$ are lightlike; Schläffli's thus formula applies.
					 Edges of $Q_h$ are of two types: the one parametrized by  $\A_h$  are spacelike, 
					 and those parametrized by $S$ are timelike. On the interior of a cell $\PsqTT$ Schläffli's formula thus gives:
					 $$\sum_{\sigma\in S} h_\sigma \d \kappa_\sigma + \sum_{e\in \A_h} l_e \d\theta_e = 0.$$
					 hence on the interior of $\PsqTT$:
					 \begin{eqnarray*}
					  \d \mathcal H_{\bar\kappa} &= & \sum_{\sigma \in S} (\kappa_\sigma-\bar\kappa_\sigma) \d h_\sigma  +\sum_{\sigma\in S}  h_\sigma \d \kappa_\sigma + \sum_{e\in \A_h} l_e \d \theta_e\\
					  &=& \sum_{\sigma \in S} (\kappa_\sigma-\bar\kappa_\sigma) \d h_\sigma
					 \end{eqnarray*}
					 The right hand side is well defined and continuous on the whole $\Psq$ even for possibly zero $h$,  $\mathcal H_{\bar\kappa}$ is thus $\mathscr C^1$ on $\Psq$ and
					 $$\d\mathcal H_{\bar\kappa}=\sum_{\sigma \in S}(\kappa_\sigma -\bar\kappa_\sigma)\d h_\sigma.$$
					\end{proof}
		
				\subsection{Convexity of the Einstein-Hilbert functional}

					We now study the hessian of Einstein-Hilbert on the interior of the domain of admissible times $\Psq$. The aim is to prove the following result.

					\begin{prop}\label{prop:H_convexe}
					 For $\bar\kappa\in\RR_+^S$,  the functional $\mathcal{H}_{\bar\kappa}$ is convex on $\Psq$ and strictly convex on
					 the interior of $\Psq$ in $\RR_+^S$.
					\end{prop}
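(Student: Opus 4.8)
The plan is to reduce the statement to a positivity property of a single ``Lorentzian tetrahedron'' and then use that $\mathcal H_{\bar\kappa}$ is already known to be $\mathscr C^1$. By Proposition \ref{prop:diff_H} we have $\d\mathcal H_{\bar\kappa}=\sum_{\sigma\in S}(\kappa_\sigma-\bar\kappa_\sigma)\,\d h_\sigma$ on $\Psq$, so $\mathcal H_{\bar\kappa}$ and $\mathcal H_{0}$ differ by a linear form; the Hessian does not depend on $\bar\kappa$, and it is equivalent to show that the gradient map $h\mapsto(\kappa_\sigma(h))_{\sigma\in S}$ is monotone on $\Psq$ and strictly monotone on its interior. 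Since $\Psq=\bigcup_{\TT}\PsqTT$ is a finite union of closed convex cells (Theorem \ref{theo:domain_description}) and $\mathcal H_{\bar\kappa}$ is globally $\mathscr C^1$, it is enough to prove that on the interior of each cell $\PsqTT$ the map $h\mapsto\kappa(h)$ is $\mathscr C^1$ with positive semidefinite Jacobian $\bigl(\partial\kappa_\sigma/\partial h_{\sigma'}\bigr)_{\sigma,\sigma'\in S}$, and that this Jacobian is positive definite whenever $\PsqTT$ meets $\mathrm{int}\,\Psq$. Indeed, along any segment $[h,h']\subset\Psq$ the function $t\mapsto\mathcal H_{\bar\kappa}\bigl(h+t(h'-h)\bigr)$ is $\mathscr C^1$ and piecewise $\mathscr C^2$, with second derivative $\ge 0$ everywhere and $>0$ on the subsegments meeting the interior, hence convex, resp. strictly convex; so $\mathcal H_{\bar\kappa}$ is convex on $\Psq$ and strictly convex on its interior.

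On a fixed cell $\PsqTT$ the combinatorics $\TT$ is constant, and the compact polyhedron $Q_h$ from the proof of Proposition \ref{prop:diff_H} (the causal past of $\iota_h(\Sigma)$ in $M(h)$) is glued from the Lorentzian tetrahedra $\Theta_T(h):=\mathrm{conv}\bigl(O,\iota_T(a),\iota_T(b),\iota_T(c)\bigr)$, $T=[abc]\in\TT$, all with apex $O$; its timelike edges $O\iota_T(x)$ have length $h_x$ and its spacelike edges $\iota_T(x)\iota_T(y)$ have the fixed length $d_{xy}$ coming from the Euclidean metric of $\Sigma$ (cf. Lemma \ref{lem:prolongement}, Proposition \ref{prop:affine_emb}). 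The dihedral angle of $\Theta_T(h)$ along $O\iota_T(\sigma)$ equals the angle $\kappa_{\sigma,T}(h)$ at the vertex $\iota_T(\sigma)/h_\sigma$ of the hyperbolic triangle $\Theta_T(h)\cap\H^2$, and this angle depends only on $(h_a,h_b,h_c)$, through $\cosh d_{\H^2}\bigl(\iota_T(x)/h_x,\iota_T(y)/h_y\bigr)=\frac{h_x^2+h_y^2+d_{xy}^2}{2h_xh_y}$; moreover $\kappa_\sigma=\sum_{T\ni\sigma}\kappa_{\sigma,T}$. On the interior of the cell every edge is spacelike of constant causal type, so the dihedral angles $\beta_{e,T}$ of $\Theta_T(h)$ along its spacelike edges vary smoothly, and the flat Lorentzian Schläffli formula applied to a deformation of $\Theta_T(h)$ alone gives
\[
\sum_{x\in\{a,b,c\}}h_x\,\d\kappa_{x,T}+\sum_{e\subset\partial T}d_e\,\d\beta_{e,T}=0 .
\]
Hence the local functional $\mathcal H_T(h_a,h_b,h_c):=\sum_{x}h_x\kappa_{x,T}+\sum_{e\subset\partial T}d_e\beta_{e,T}$ satisfies $\d\mathcal H_T=\sum_x\kappa_{x,T}\,\d h_x$, so its Hessian is the symmetric $3\times3$ matrix $\bigl(\partial\kappa_{x,T}/\partial h_y\bigr)$; summing over $T$, and using $\sum_\sigma h_\sigma\kappa_\sigma+\sum_{e\in\A}l_e\theta_e=\sum_T\mathcal H_T$ together with the fact that the remaining term $-\sum_\sigma h_\sigma\bar\kappa_\sigma$ is linear, we obtain $\mathrm{Hess}\,\mathcal H_{\bar\kappa}=\sum_{T\in\TT}\mathrm{Hess}\,\mathcal H_T$ on $\PsqTT$. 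Everything is thereby reduced to a single Lorentzian tetrahedron.

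The key point, which I would establish by the explicit computation in the style of Volkov and Bobenko--Izmestiev, is that for each $T$ the symmetric matrix $\mathrm{Hess}\,\mathcal H_T=\bigl(\partial\kappa_{x,T}/\partial h_y\bigr)_{x,y}$ is positive semidefinite, and positive definite as soon as $h_a,h_b,h_c>0$ and the three quantities above obey the strict triangle inequality, i.e.\ $\Theta_T(h)\cap\H^2$ is a nondegenerate hyperbolic triangle --- which is exactly what happens at an interior point of $\Psq$ (there all $h_\sigma>0$, the triangulation is $\tau$-Delaunay, and the cones $C_T$ are nondegenerate, since the three points $\iota_T(x)$ lie in a horizontal plane not through $O$). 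Concretely one writes $\kappa_{x,T}$ via the hyperbolic cosine rule in terms of the $\cosh d_{\H^2}$ above, differentiates twice in $(h_a,h_b,h_c)$, and checks that the resulting matrix is diagonally dominant with the correct signs, equivalently of the form $B^{\top}DB$ with $D$ positive; compared with the Euclidean computation of the cited references, the only changes are that the link of $O$ is a hyperbolic rather than a spherical triangle and that the edges at $O$ are timelike, which flip signs in a controlled way. Granting this, $\mathrm{Hess}\,\mathcal H_{\bar\kappa}=\sum_T\mathrm{Hess}\,\mathcal H_T\succeq 0$ on every cell, and on a cell meeting $\mathrm{int}\,\Psq$ it is positive definite: for $v\neq0$ pick $\sigma$ with $v_\sigma\neq0$ and a triangle $T\ni\sigma$; then $\langle\mathrm{Hess}\,\mathcal H_T\,v,v\rangle>0$ while all other terms are $\ge 0$. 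The reduction of the first paragraph then concludes.

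The main obstacle is precisely this single-tetrahedron Hessian estimate: it is the one genuinely computational lemma, and the delicate part is keeping track of the Lorentzian sign conventions for the dihedral angles $\beta_{e,T}$ --- which a priori take values of the form $\alpha+ik\pi/2$ --- and of the degenerate boundary behaviour as some $h_x\to0$ or as a tetrahedron collapses (where one must argue with limits, since $\mathcal H_T$ is only continuous up to the boundary). A minor additional point is that ``convex on $\Psq$'' should be read as convex along every segment contained in $\Psq$; that $\Psq$ is itself convex is a separate matter, which can be settled from the description of the support planes of $\PSigmaS$ in Theorem \ref{theo:domain_description}.
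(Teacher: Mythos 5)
Your reduction to the cells $\PsqTT$ via the global $\mathscr C^1$ regularity of $\mathcal H_{\bar\kappa}$, and the Schl\"affli bookkeeping showing that the Hessian equals $\bigl(\partial\kappa_\sigma/\partial h_{\sigma'}\bigr)$ on each cell, are sound and parallel to what the paper does. The proof, however, hinges entirely on the single claim you defer (``Granting this\dots''): that for each individual Lorentzian tetrahedron $\Theta_T(h)$ the $3\times3$ matrix $\bigl(\partial\kappa_{x,T}/\partial h_y\bigr)$ is positive semidefinite, and positive definite for nondegenerate data. This is a genuine gap, and I believe the claim is false, not merely unproven. The positivity of the full Hessian is an \emph{edge} phenomenon, not a \emph{simplex} phenomenon: the paper's computation (Proposition \ref{prop:diff_kappa}) shows that the coefficient attached to an edge $e$ is proportional to $\tanh(\alpha_e)+\tanh(\alpha_{-e})$, where $\alpha_e$ and $\alpha_{-e}$ come from the two tetrahedra on either side of $e$, and it is only this \emph{sum} that is nonnegative --- precisely because the embedding is convex, i.e.\ the hinge at $e$ is $\tau$-legal. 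A single tetrahedron contributes only $\tanh(\alpha_e)$, which has no definite sign (it is negative whenever the foot of the perpendicular from the apex falls outside the triangle, e.g.\ for an obtuse triangle with comparable heights), so your per-tetrahedron matrix is not diagonally dominant and there is no reason for it to be PSD. Indeed, if your lemma were true, the summed Hessian would be PSD for \emph{every} adapted triangulation and all positive heights, making the Delaunay/convexity condition irrelevant to convexity of the functional --- contrary to the architecture of the paper and of the Bobenko--Izmestiev computation you invoke. A further warning sign: every tetrahedron $\Theta_T(h)$ is nondegenerate throughout $\Psq$ (the triangles of $\TT$ sit in spacelike planes off the origin), so your positive-definiteness criterion would yield strict convexity up to and including boundary points of $\Psq$ carrying an unflippable critical hinge, whereas the paper's strictness argument breaks down exactly there.

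For comparison, the paper avoids any per-simplex decomposition: it differentiates $\kappa_\sigma$ directly using the hyperbolic kite relations (Proposition \ref{lem:loisinus}, Corollary \ref{lem:drapeau}) and the Lorentzian cosine law, obtains the full Hessian with the edge-paired coefficients $\tanh(\alpha_e)+\tanh(\alpha_{-e})\geq 0$, and concludes by (strict) diagonal dominance, strictness in the interior coming from the fact that a vertex all of whose outgoing edges are $h$-critical forces $h\in\partial\Psq$. To repair your argument you would either have to carry out that edge-based computation anyway, or regroup your per-tetrahedron Hessians into per-edge blocks before attempting any positivity statement --- at which point you have reproduced the paper's proof.
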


					Consider an adapted triangulation $\TT$ of $(\Sigma,S)$ and consider a cell $\PsqTT$ of  $h\in \PSigmaS^{1/2}$ of non-empty interior.

					For $h\in \PsqTT$, the past of $\Sigma$ in $M(h)$ is
					a locally Minkowski polyhedron with each simplex being a pyramid of $\mass{}$ as represented on  Figure \ref{fig:pyramide} the notations of which we give a more precise meaning. If $T$ is a triangle of $\TT$ of vertices sommets $\sigma_1,\sigma_2,\sigma_3$ while
					$e=\overrightarrow{\sigma_1\sigma_2}$ and $e'=\overrightarrow{\sigma_1\sigma_3}$ are two edges on the boundary of $T$,
					define : $\rho_e$ the real part of the angle from $\overrightarrow{\sigma_1O}$ to  $\overrightarrow{\sigma_1\sigma_2}$,
					$\theta_{ee'}$ the real part of the angle from $\overrightarrow{\sigma_1\sigma_2}$ to $\overrightarrow{\sigma_1\sigma_3}$
					and $\alpha_e$ the real part of the dihedral angle from the plane $(O\sigma_1\sigma_1)$ to the plane $(\sigma_1\sigma_2\sigma_3)$.
					In this section, edges are oriented so that we distinguish $\alpha_e$ and $\alpha_{-e}$ :
					the angle $\alpha_e$ is on the left of $e$, thus $\alpha_{-e}$ is the angle on the right of $e$.

					\begin{figure}[h]
					\caption{Simplex cell of the past of $\Sigma$ in $M(h)$.
					}
					The following angles are represented: $\rho_e$ the angle from $\overrightarrow{\sigma_1O}$ to  $\overrightarrow{\sigma_1\sigma_2}$,
					$\theta_{ee'}$ the angle from $\overrightarrow{\sigma_1\sigma_2}$ to $\overrightarrow{\sigma_1\sigma_3}$
					and $\alpha_e$ the angle from the plane $(0\sigma_1\sigma_1)$ to the plane $(\sigma_1\sigma_2\sigma_3)$.

					\begin{center}\label{fig:pyramide}
					\includegraphics[width=0.48\linewidth]{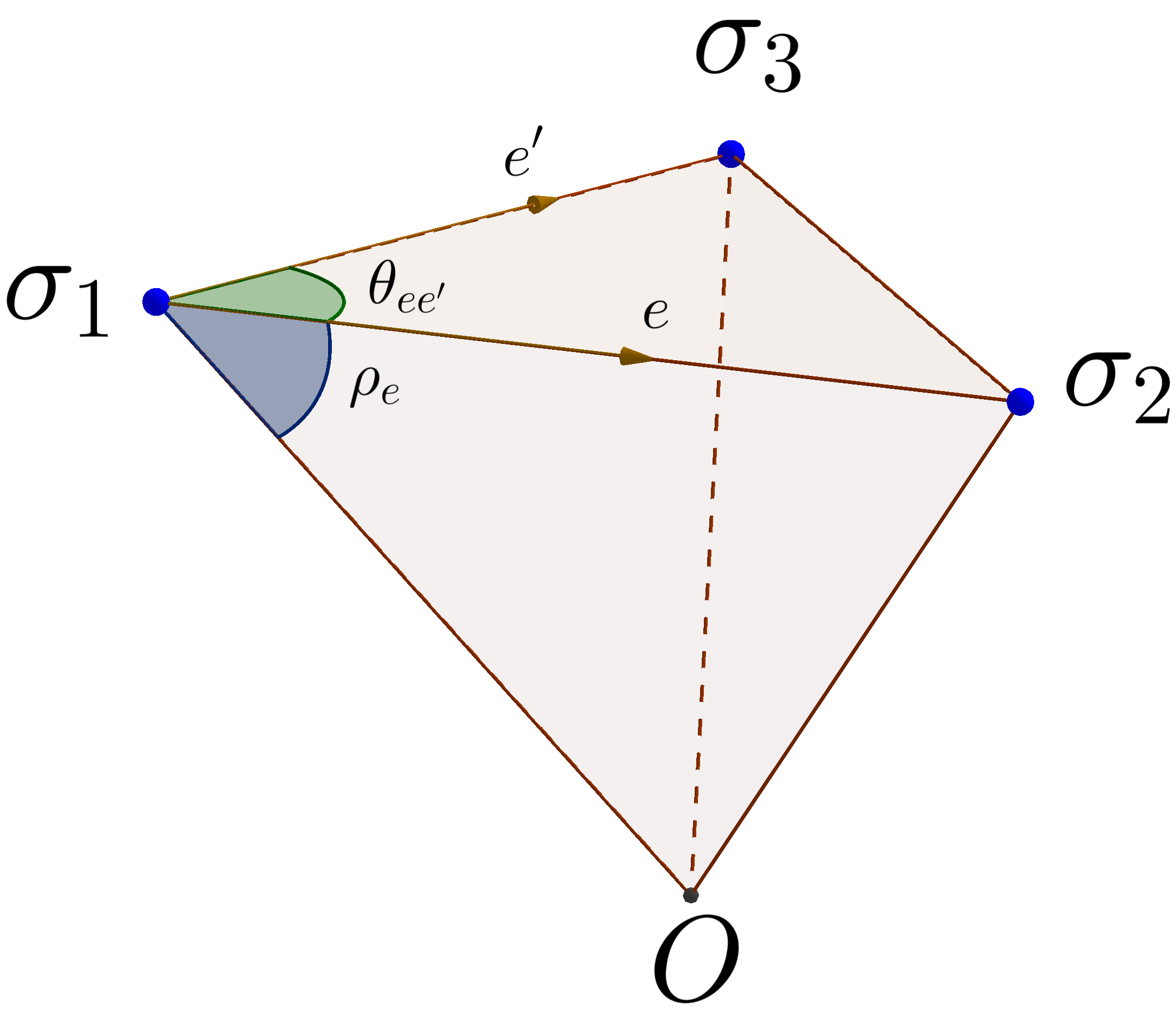} \includegraphics[width=0.48\linewidth]{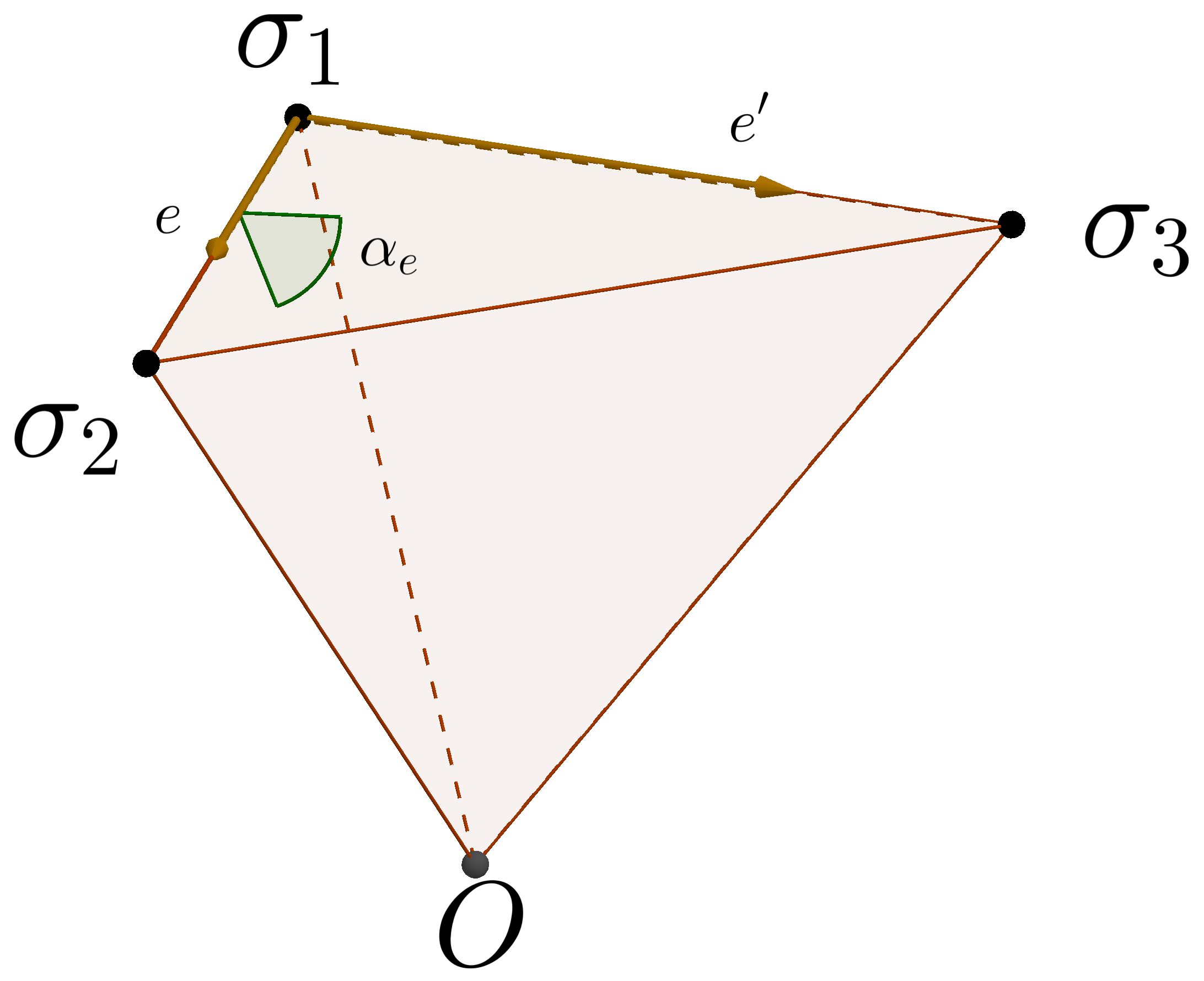}
					\end{center}
					\end{figure}

					We aim at computing the partial derivatives $$\frac{\partial \kappa_{\sigma_1}}{\partial {h_{\sigma_2}}},\quad \quad \sigma_1,\sigma_2\in S.$$
					If there is no edge from $\sigma_1$ to $\sigma_2$, then this derivative is null.
					Il there is an edge $e$ from $\sigma_1$ to $\sigma_2$, then in both pyramids $P_+$ and $P_-$ on both sides of $e$, we need to study the variations of the dihedral  on the edge $[O\sigma_1]$ with respect to $h_{\sigma_2}$ and $h_{\sigma_1}$.
					Consider $P_+$ and use the notations of Figure \ref{fig:pyramide}, the idea is to consider the tetragon  of $\H^2$ with 2 sides given by the geodesics
					corresponding to the planes $(O\sigma_1\sigma_2)$, $(O\sigma_1\sigma_3)$ and 2 other sides given by the planes given by the planes normal to $(\sigma_1\sigma_2)$ and $(\sigma_1\sigma_3)$ through $O$; see Figure \ref{fig:cerf_volant}.
					Notice that this tetragon has two right angles and that the lengths are the angles $\alpha_e,\alpha_{-e'},\rho_{e}$ and $\rho_{e'}$. These lengths can be positive or negative, the resulting tetragon may then have autointersections. The two non right angles are $\kappa_{ee'}$ or $\pi-\kappa_{ee'}$ and  $\pi-\theta_{ee'}$ or $\theta_{ee'}$ depending on the signs of  $\alpha_e,\alpha_{-e'},\rho_{e}$ and $\rho_{e'}$.
					In such a hyperbolic tetragon, which we call a {\it kite} we have the following relations.
					
					\begin{prop}[\cite{Fenchel_hyp_kite}]\label{lem:loisinus}~\\~	\begin{minipage}[t]{7cm}
					\begin{minipage}[t]{7cm}
					 A kite such as on the adjacent figure is fully determined
					 up to isometry by 3 of the 6 parameters  $\alpha_1,\alpha_2,\rho_1,\rho_2,\kappa,\Theta$ which can be either positive or negative. Furthermore,
					  \begin{eqnarray*}
					   \cos(\kappa) &=& \frac{\sinh(\rho_1) \sinh(\rho_2)-\cos(\Theta)}{\cosh(\rho_1)\cosh(\rho_2)}\\
					   \sinh(\rho_2) &=& \frac{\cos(\kappa)\sinh(\alpha_1)+\sinh(\alpha_2)}{\sin(\kappa)\cosh(\alpha_1)}\\
					   \frac{\sin(\kappa)}{\sin(\Theta)}&=& \frac{\cosh(\alpha_2)}{\cosh(\rho_2)} = \frac{\cosh(\alpha_1)}{\cosh(\rho_1)}.
					  \end{eqnarray*}
					\end{minipage}
					\end{minipage}
					\begin{minipage}[t]{4cm}
					\ \\

					\includegraphics[width=\linewidth]{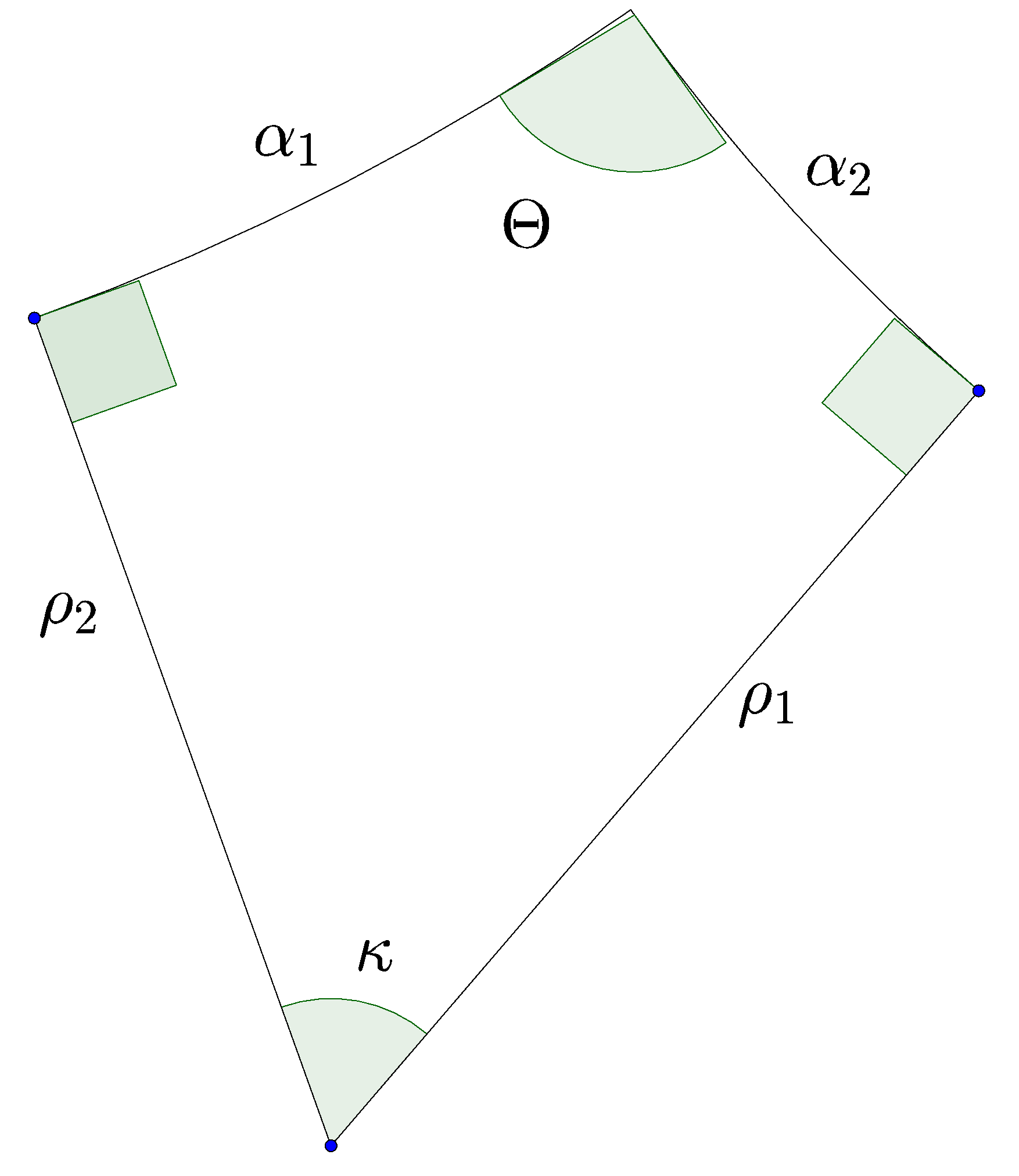}
					\end{minipage}
					\end{prop}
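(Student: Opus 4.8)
The plan is to reduce the kite to two hyperbolic right triangles glued along a diagonal, read off the relations from elementary hyperbolic trigonometry, and then remove all sign restrictions by analytic continuation. A kite is a geodesic quadrilateral of $\mathbb{H}^2$ with right angles at two of its vertices, the two remaining vertices carrying the angles $\Theta$ and $\kappa$, the four sides being labelled so that each right-angle vertex is bounded by one $\rho$-side and one $\alpha$-side. I would first establish the formulas in the \emph{regular} regime, where all six parameters are positive and the quadrilateral is convex and embedded. In that regime the diagonal joining the vertex of angle $\Theta$ to the vertex of angle $\kappa$ lies inside the kite and splits it into two right triangles sharing that diagonal as common hypotenuse, of some length $d$, and inducing decompositions $\Theta=\Theta_1+\Theta_2$, $\kappa=\kappa_1+\kappa_2$.

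To each of the two right triangles I would apply the standard identities of hyperbolic right-triangle trigonometry: the hyperbolic Pythagorean relation $\cosh d=\cosh\rho_i\cosh\alpha_i$, the sine rule $\sinh(\mathrm{leg})=\sinh d\cdot\sin(\mathrm{opposite\ angle})$, and the mixed relations $\tanh(\mathrm{leg})=\tanh d\cdot\cos(\mathrm{adjacent\ angle})$ and $\cos(\mathrm{angle})=\cosh(\mathrm{opposite\ leg})\cdot\sin(\mathrm{other\ angle})$. Feeding these into the addition formulas for $\sin(\Theta_1+\Theta_2)$ and $\cos(\kappa_1+\kappa_2)$ and eliminating the auxiliary quantities $d,\Theta_1,\Theta_2,\kappa_1,\kappa_2$ should produce exactly the three displayed identities: dividing the two sine rules and using $\cosh d=\cosh\rho_i\cosh\alpha_i$ yields the ratio relation between $\sin\kappa$ and $\sin\Theta$ (which is then manifestly independent of $i$), the cosine-type relations give the first formula, and one further elimination gives the second. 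For the rigidity claim I would argue by a moduli count: a geodesic quadrilateral of $\mathbb{H}^2$ has $2\cdot 4-3=5$ parameters up to isometry, and the two right-angle conditions leave $3$; concretely, from $(\Theta,\rho_1,\alpha_1)$ one lays down the three vertices $A,B,C$, after which the perpendicularity condition at the fourth vertex pins down $\rho_2$ and hence $\alpha_2$, while other admissible triples are handled by inverting the three relations just proved, with the appropriate branch choices.

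Finally I would drop the regularity hypothesis. Each side of the three identities is a real-analytic function of the tuple $(\alpha_1,\alpha_2,\rho_1,\rho_2,\kappa,\Theta)$ on a connected region — recall that the Lorentzian angle $\angle uv$ takes values in $\RR+i\RR/2\pi\ZZ$ and that the side lengths are allowed to be negative — so the identities, being valid on the open set of regular kites, extend to the whole region by analytic continuation, which is precisely what licenses negative lengths and self-intersecting kites. The hard part will be the sign and orientation bookkeeping in the regular case: deciding whether the angle at a vertex splits as a sum or a difference along the diagonal, and fixing the signs in the right-triangle formulas so that they agree with the conventions read off the figure. This is exactly why I would prefer to nail the identities once, in the convex embedded case where the picture is unambiguous, and then invoke analyticity, rather than attempt an exhaustive case analysis.
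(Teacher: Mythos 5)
The paper offers no proof of this statement: it is imported wholesale from \cite{Fenchel_hyp_kite}, so there is nothing internal to compare your argument against, and I can only assess it on its own terms. Your strategy is sound and does work: splitting the kite along the diagonal joining the $\kappa$-vertex to the $\Theta$-vertex gives two right triangles with common hypotenuse $d$, and the three displayed identities do fall out of the standard right-triangle relations ($\cosh d=\cosh(\mathrm{leg})\cosh(\mathrm{leg})$, $\sin A=\sinh(\mathrm{opp})/\sinh d$, $\cos A=\cosh(\mathrm{opp})\sin B$) combined with the addition formulas for $\sin(\kappa_1+\kappa_2)$, $\cos(\kappa_1+\kappa_2)$, $\sin(\Theta_1+\Theta_2)$, $\cos(\Theta_1+\Theta_2)$; in each case the cross terms collapse precisely because of the two Pythagorean relations. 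The analytic-continuation step to signed lengths and self-intersecting kites is legitimate provided you check that the configuration space on which the six parameters are globally defined real-analytic functions is connected, and the rigidity claim is adequately handled by your moduli count ($5$ parameters for a quadrilateral, minus $2$ for the right angles).

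The one point you must not leave to ``bookkeeping'' is the labelling convention, because it is exactly where the computation can silently go wrong. The identities as stated are \emph{true} only if $\rho_i$ and $\alpha_i$ denote \emph{opposite} sides of the kite, i.e.\ the two legs meeting at one right-angle vertex are $\rho_1$ and $\alpha_2$ (and at the other, $\rho_2$ and $\alpha_1$), so that $\cosh d=\cosh\rho_1\cosh\alpha_2=\cosh\rho_2\cosh\alpha_1$. With that convention the ratio $\sin\kappa/\sin\Theta$ computed from the addition formulas reduces to $\cosh\alpha_1/\cosh\rho_1=\cosh\alpha_2/\cosh\rho_2$ exactly because of the equality of the two expressions for $\cosh d$. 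Under the other, equally natural, convention in which $\rho_i$ and $\alpha_i$ share the right-angle vertex $P_i$, the third identity is simply false (the difference of the two cross-multiplied sides is proportional to $\cosh\rho_1-\cosh\rho_2$ and does not vanish for a generic kite). That the opposite-side convention is the intended one can be read off from how the result is consumed downstream: Corollary \ref{lem:drapeau} gives $\partial\kappa/\partial\rho_1=-\tanh(\alpha_2)/\cosh(\rho_1)$, and the proof of Proposition \ref{prop:diff_kappa} pairs $\rho_e$ with $\alpha_e$ in that slot, which forces $\alpha_2$ to be the leg adjacent to $\rho_1$. So fix the labelling first, from the figure and from Corollary \ref{lem:drapeau}, and only then run the trigonometry; everything else in your plan goes through.
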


					\begin{figure}[h] 
						\caption{Kite associated to an edge\label{fig:cerf_volant}}
						With $e$ the edge $\overrightarrow{\sigma_1\sigma_2}$ and $e'$ the edge $\overrightarrow{\sigma_1\sigma_3}$.
						\begin{center}
							\includegraphics[width=0.6\linewidth]{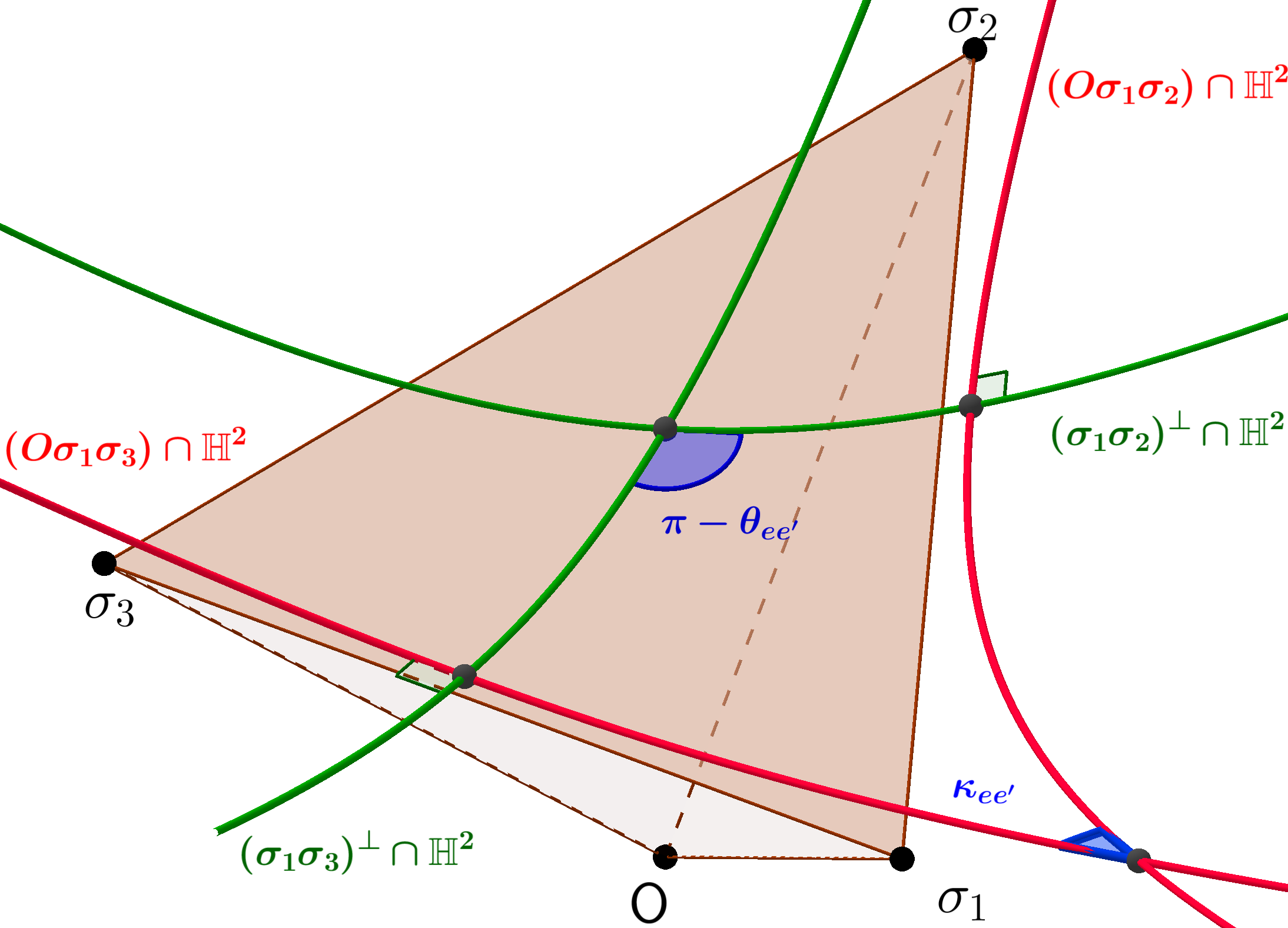}
							\includegraphics[width=0.35\linewidth]{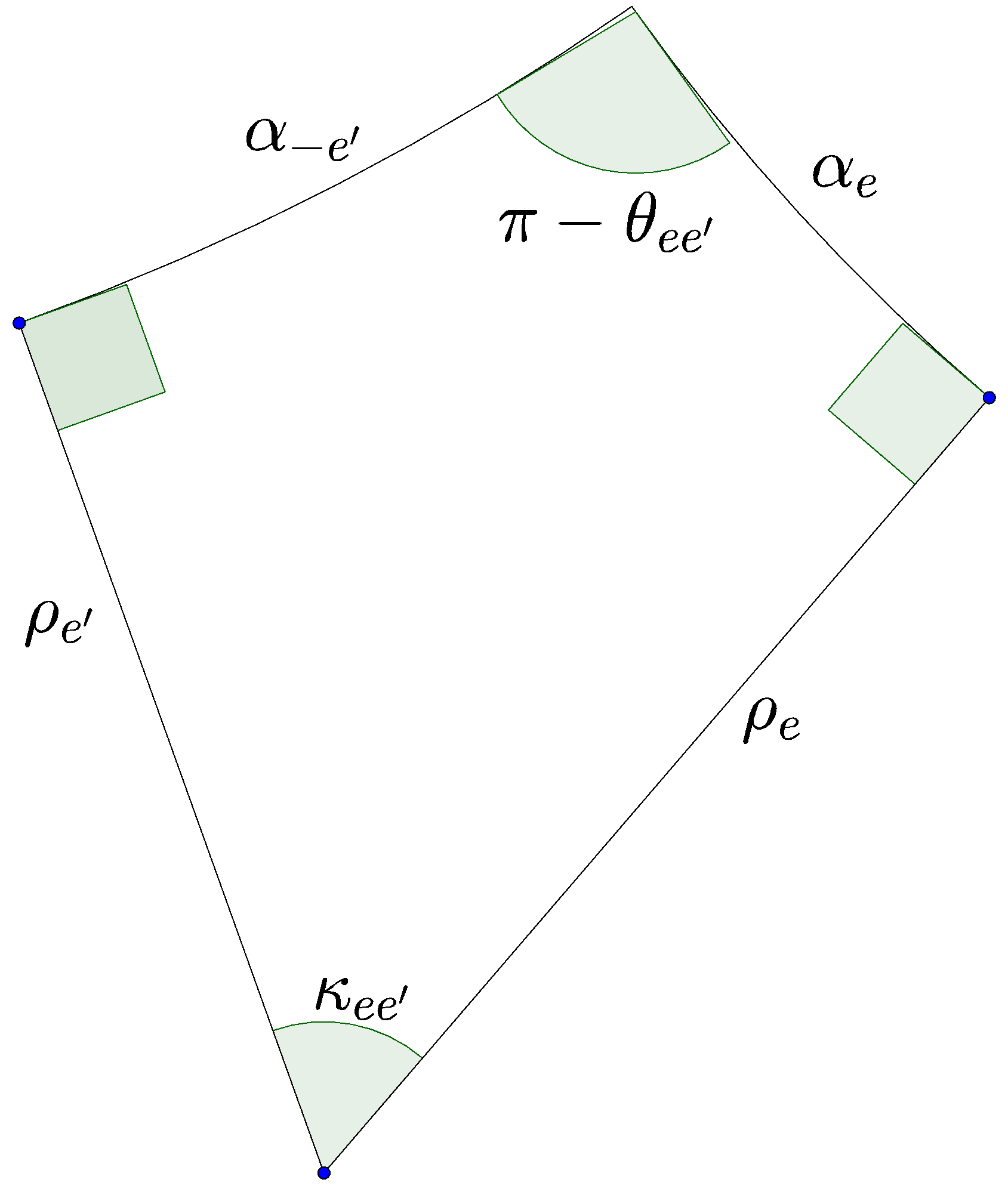}
						\end{center}

					\end{figure}
					\begin{cor} \label{lem:drapeau}
						Using the same notations as in Proposition \ref{lem:loisinus} and choosing
						$\Theta,\rho_1,\rho_2$ as paramaters we have:
							$$\frac{\partial \kappa}{\partial \rho_1} = -\frac{\tanh(\alpha_2)}{\cosh(\rho_1)}.$$
					\end{cor}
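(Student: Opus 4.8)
The plan is to obtain the formula by implicit differentiation of the relations recorded in Proposition~\ref{lem:loisinus}, regarding $\kappa$ as a function of the chosen parameters $(\Theta,\rho_1,\rho_2)$, and then to rewrite the outcome in terms of $\alpha_2$ using the remaining relations. First I would differentiate the identity $\cos\kappa=\dfrac{\sinh\rho_1\sinh\rho_2-\cos\Theta}{\cosh\rho_1\cosh\rho_2}$ with respect to $\rho_1$, with $\Theta$ and $\rho_2$ held fixed; the quotient rule together with $\cosh^2\rho_1-\sinh^2\rho_1=1$ collapses the right-hand side, yielding
\[
-\sin\kappa\,\frac{\partial\kappa}{\partial\rho_1}=\frac{\sinh\rho_2+\cos\Theta\,\sinh\rho_1}{\cosh^2\rho_1\,\cosh\rho_2},
\qquad\text{hence}\qquad
\frac{\partial\kappa}{\partial\rho_1}=-\frac{\sinh\rho_2+\cos\Theta\,\sinh\rho_1}{\sin\kappa\,\cosh^2\rho_1\,\cosh\rho_2}.
\]
Invoking the law of sines $\sin\kappa\,\cosh\rho_2=\sin\Theta\,\cosh\alpha_2$ from Proposition~\ref{lem:loisinus}, the announced equality $\partial\kappa/\partial\rho_1=-\tanh\alpha_2/\cosh\rho_1$ becomes equivalent to the purely trigonometric identity
\[
\sinh\rho_2+\cos\Theta\,\sinh\rho_1=\sin\Theta\,\cosh\rho_1\,\sinh\alpha_2 \qquad (\star).
\]

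To prove $(\star)$ I would use, besides the relation $\sinh\rho_2=\dfrac{\cos\kappa\,\sinh\alpha_1+\sinh\alpha_2}{\sin\kappa\,\cosh\alpha_1}$ of Proposition~\ref{lem:loisinus}, its image under the reflection of the kite that interchanges the roles of $(\rho_1,\alpha_1)$ and $(\rho_2,\alpha_2)$ (an isometry fixing the two non-right vertices, hence fixing $\kappa$ and $\Theta$), namely $\sinh\rho_1=\dfrac{\cos\kappa\,\sinh\alpha_2+\sinh\alpha_1}{\sin\kappa\,\cosh\alpha_2}$. Treating these two relations as a linear system in $(\sinh\alpha_1,\sinh\alpha_2)$ and solving gives $\sinh\alpha_2=\dfrac{\cosh\alpha_1\,\sinh\rho_2-\cos\kappa\,\cosh\alpha_2\,\sinh\rho_1}{\sin\kappa}$; substituting $\cosh\alpha_i=\sin\kappa\,\cosh\rho_i/\sin\Theta$ (again Proposition~\ref{lem:loisinus}) turns this into $\sinh\alpha_2=\dfrac{\cosh\rho_1\,\sinh\rho_2-\cos\kappa\,\cosh\rho_2\,\sinh\rho_1}{\sin\Theta}$. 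Plugging this into the right-hand side of $(\star)$ and eliminating $\cos\kappa\,\cosh\rho_1\cosh\rho_2$ via $\cos\kappa\,\cosh\rho_1\cosh\rho_2=\sinh\rho_1\sinh\rho_2-\cos\Theta$ reduces it to $\sinh\rho_2\,(\cosh^2\rho_1-\sinh^2\rho_1)+\cos\Theta\,\sinh\rho_1=\sinh\rho_2+\cos\Theta\,\sinh\rho_1$, which is precisely the left-hand side, so $(\star)$ holds.

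The one point that is not entirely routine is $(\star)$, and within it the appeal to the reflected form of the second relation of Proposition~\ref{lem:loisinus}: without that symmetric partner one is stuck with $\sinh\alpha_1=\sqrt{\cosh^2\alpha_1-1}$ and a computation encumbered by square roots. Alternatively one could cut the kite along the diagonal joining its two non-right vertices, apply the right-angled hyperbolic triangle formulas in the two resulting triangles, and recognise $(\star)$ as the addition law for $\Theta$ written as a sum of two angles; but exploiting the reflection symmetry is shorter, and everything else amounts to a bounded manipulation of hyperbolic functions.
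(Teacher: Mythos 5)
Your proof is correct; the paper states this corollary without any proof, and your derivation (implicit differentiation of the relation $\cos\kappa=\frac{\sinh\rho_1\sinh\rho_2-\cos\Theta}{\cosh\rho_1\cosh\rho_2}$ at fixed $\Theta,\rho_2$, followed by elimination via the law of sines) is exactly the computation the author leaves to the reader. The only delicate point, the identity $(\star)$, is handled soundly: the reflected form of the second kite relation is legitimate because reflecting the kite swaps the pairs $(\rho_1,\alpha_1)\leftrightarrow(\rho_2,\alpha_2)$ while fixing $\kappa$ and $\Theta$ (and even with the paper's signed-length conventions the relation is unchanged, since a global sign flip of all four lengths leaves it invariant), and the remaining algebra checks out.
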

					
					We thus need to compute the  derivative of $\rho_e$ with respect to the heights $(h_\sigma)_{\sigma\in S}$ for each edge $e$.
					\begin{lem}
					Using the notations of Figure \ref{fig:pyramide} then
					$$\d \rho_{e} =  - \frac{(h_{\sigma_1}^2+h_{\sigma_2}^2+l_e^2)\d h_{\sigma_1}  - 2 h_{\sigma_1} h_{\sigma_2} \d h_{\sigma_2} }{2 l_e h_{\sigma_1}^2\cosh(\rho_{e})}$$

					\end{lem}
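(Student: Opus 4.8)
The plan is to reduce the statement to differentiating a single explicit closed form for $\sinh(\rho_e)$ in the variables $h_{\sigma_1}$, $h_{\sigma_2}$ and the edge length $l_e$. Once such a closed form is available, the asserted expression for $\d\rho_e$ follows in one line from $\cosh(\rho_e)\,\d\rho_e = \d(\sinh\rho_e)$ together with the quotient rule, treating $l_e$ as a constant.

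First I would record the three scalar invariants attached to the spacelike face containing $e$. Write $P_i := \iota_h(\sigma_i)$ for $i\in\{1,2\}$, so that $P_1,P_2$ are future causal vectors of $\mass{}$ realising the two endpoints of $e$. By the defining property of the affine embedding of Proposition \ref{prop:affine_emb}, the height of a vertex is its squared Lorentzian modulus and the edge is realised as a spacelike segment of the prescribed Euclidean length, which in the signature convention of the angle section reads $\langle P_i|P_i\rangle = h_{\sigma_i}^2$ and $\langle P_2-P_1|P_2-P_1\rangle = -l_e^2$. Polarising the last identity yields the cross term
$$\langle P_1|P_2\rangle = \tfrac12\left(h_{\sigma_1}^2+h_{\sigma_2}^2+l_e^2\right).$$

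Next I would identify $\rho_e$. By definition it is the real part of $\angle(\overrightarrow{\sigma_1 O},\overrightarrow{\sigma_1\sigma_2}) = \angle(-P_1,\,P_2-P_1)$. Here $-P_1$ is timelike with $|-P_1| = h_{\sigma_1}$, the vector $P_2-P_1$ is spacelike with $|P_2-P_1| = i\,l_e$, and they span a timelike plane; so by the definition of the Lorentzian angle in a timelike plane one has $\angle(-P_1,P_2-P_1) = \rho_e + i\tfrac{\pi}{2}$ for the orientation fixed in the text, whence $\cosh\bigl(\angle(-P_1,P_2-P_1)\bigr) = i\sinh(\rho_e)$. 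Substituting $\langle -P_1|P_2-P_1\rangle = \langle P_1|P_1\rangle - \langle P_1|P_2\rangle = \tfrac12(h_{\sigma_1}^2-h_{\sigma_2}^2-l_e^2)$ into the identity $\langle u|v\rangle = |u|\,|v|\cosh(\angle uv)$ and dividing by $|-P_1|\,|P_2-P_1| = i\,h_{\sigma_1}l_e$ gives the closed form
$$\sinh(\rho_e) = \frac{h_{\sigma_2}^2+l_e^2-h_{\sigma_1}^2}{2\,h_{\sigma_1}\,l_e}.$$
It then remains to differentiate, with $l_e$ held fixed (it is intrinsic to $(\Sigma,S)$ and independent of $h$) and $h_{\sigma_1},h_{\sigma_2}$ the variables: applying $\cosh(\rho_e)\,\d\rho_e = \d(\sinh\rho_e)$ and the quotient rule to the expression above produces exactly
$$\d\rho_e = -\frac{(h_{\sigma_1}^2+h_{\sigma_2}^2+l_e^2)\,\d h_{\sigma_1} - 2h_{\sigma_1}h_{\sigma_2}\,\d h_{\sigma_2}}{2\,l_e\,h_{\sigma_1}^2\,\cosh(\rho_e)}.$$

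The only genuinely delicate point is the bookkeeping of conventions in the second step: one must keep the bilinear form $\langle\cdot|\cdot\rangle$ consistent with the modulus convention $|u|=\sqrt{\langle u|u\rangle}$, and one must read off the imaginary part $i\tfrac{\pi}{2}$ of the Lorentzian angle correctly from the types $k_{-P_1}$ and $k_{P_2-P_1}$ in the timelike plane they span. It is precisely this sign that pins down $\sinh(\rho_e)$ rather than its negative, and hence the overall minus sign in the final formula; everything after the closed form for $\sinh(\rho_e)$ is a routine differentiation.
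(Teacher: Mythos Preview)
Your proof is correct and essentially the same as the paper's. The paper invokes the Lorentzian cosine law in the triangle $O\sigma_1\sigma_2$ to write $-h_{\sigma_2}^2 = -h_{\sigma_1}^2 + l_e^2 - 2l_e h_{\sigma_1}\sinh(\rho_e)$, which is exactly your closed form $\sinh(\rho_e) = (h_{\sigma_2}^2 + l_e^2 - h_{\sigma_1}^2)/(2h_{\sigma_1}l_e)$; you simply derive that law explicitly from the inner-product identity $\langle u|v\rangle = |u||v|\cosh(\angle uv)$, after which both proofs perform the identical differentiation.
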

					\begin{proof}
					From the cosine law in $\mass{}$:
					\begin{eqnarray*} -h_{\sigma_2}^2   &=&-h_{\sigma_1}^2+l_e^2 - 2 l_e h_{\sigma_1}  \sinh(\rho_e) \\
					\cosh(\rho_e)\d \rho_e &=& \frac{h_{\sigma_1}(-2h_{\sigma_1})-(h_{\sigma_2}^2+l_e^2-h_{\sigma_1}^2)}{2l_eh_{\sigma_1}^2} \d h_{\sigma_1} + \frac{2 h_{\sigma_2} h_{\sigma_1}}{2 l_e h_{\sigma_1}^2} \d h_{\sigma_2} \\
					\d \rho_e&=&- \frac{(h_{\sigma_2}^2+l_e^2 + h_{\sigma_1}^2)\d h_{\sigma_1}  - 2 h_{\sigma_1} h_{\sigma_2} \d h_{\sigma_2} }{2 l_e h_{\sigma_1}^2\cosh(\rho_e)}
					\end{eqnarray*}
					\end{proof}

					\begin{prop}\label{prop:diff_kappa}
					The map $\kappa$ is  $\mathscr C^1$ on $\PSigmaS^{1/2}$ and for all $h\in \PSigmaS^{1/2}$ and all $\sigma\in S$, we have
					$$\d_h \kappa_\sigma=\sum_{e\in \mathcal E_h, e:\sigma \rightsquigarrow \sigma'} \frac{\tanh(\alpha_e)+\tanh(\alpha_{-e})}
					{\cosh^2(\rho_{e})} \frac{(h_{\sigma'}^2+l_e^2 + h_{\sigma}^2)\d h_{\sigma}  - 2 h_{\sigma} h_{\sigma'} \d h_{\sigma'} }{2 l_e h_{\sigma}^2}$$
					where $\mathcal E_h$ is the set of edges of any $h$-Delaunay triangulation.
					\end{prop}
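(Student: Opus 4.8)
The plan is to reduce first to a single cell of the decomposition of $\PSigmaS$ furnished by Theorem \ref{theo:domain_description}, where a fixed triangulation controls the combinatorics, to compute the differential there by the chain rule through the kite angles, and finally to glue the cell-wise formulas across the walls.

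First I would fix a triangulation $\TT$ and work on the interior of the corresponding cell $\CellPSigma^{1/2}\subset\PSigmaS^{1/2}$, on which $\TT_{h^2}=\TT$ is constant. There the singular line over $\sigma$ has a well-defined link inside the hyperbolic surface $\{T=1\}$, and this link splits into the angular sectors cut out by the triangles of $\TT$ incident to $\sigma$. The sector contributed by a triangle $T=[\sigma\sigma'\sigma'']$ is exactly the kite angle $\kappa$ of Proposition \ref{lem:loisinus} attached to its two edges $e=\overrightarrow{\sigma\sigma'}$ and $e'=\overrightarrow{\sigma\sigma''}$ at $\sigma$, so that $\kappa_\sigma=\sum_{T\ni\sigma}\kappa_{ee'}$. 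By the closed formulas of Proposition \ref{lem:loisinus} each $\kappa_{ee'}$ is a smooth function of the kite parameters $\rho_e,\rho_{e'},\Theta$; but $\Theta=\theta_{ee'}$ is the interior Euclidean angle of $T$ at $\sigma$, a quantity fixed by $(\Sigma,S)$ and independent of $h$. Since the preceding Lemma shows that $\rho_e$ is a smooth function of $(h_\sigma,h_{\sigma'})$ on $\{h_\sigma>0\}$ (solve $\sinh\rho_e=(h_\sigma^2+l_e^2-h_{\sigma'}^2)/(2l_e h_\sigma)$), the map $h\mapsto\kappa_\sigma$ is smooth on the cell interior.

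I would then differentiate by the chain rule, keeping only $\d\rho_e,\d\rho_{e'}$ since $\Theta$ is constant: $\d\kappa_{ee'}=\frac{\partial\kappa}{\partial\rho_e}\d\rho_e+\frac{\partial\kappa}{\partial\rho_{e'}}\d\rho_{e'}$. Corollary \ref{lem:drapeau} evaluates each partial derivative, and the one point that must be handled carefully is the side-labeling of the kite: the half-dihedral appearing in $\partial\kappa/\partial\rho_e$ is the one carried by the kite side adjacent to the $\rho_e$-side, which—by the construction of the kite from the plane $(O\sigma\sigma')$ and the plane normal to $(\sigma\sigma')$—is the dihedral $\alpha_e$ measured at the edge $e$ itself on the $T$-side. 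Hence $\partial\kappa_{ee'}/\partial\rho_e=-\tanh(\alpha_e)/\cosh(\rho_e)$. Next I would reorganise $\sum_{T\ni\sigma}\d\kappa_{ee'}$ as a sum over the edges issuing from $\sigma$: each oriented edge $e:\sigma\rightsquigarrow\sigma'$ bounds two triangles incident to $\sigma$, contributing the half-dihedral $\alpha_e$ on the left and $\alpha_{-e}$ on the right, so collecting the two occurrences of $\d\rho_e$ produces the coefficient $-(\tanh\alpha_e+\tanh\alpha_{-e})/\cosh\rho_e$. Substituting the preceding Lemma in the form $-\cosh(\rho_e)\d\rho_e=\big((h_{\sigma'}^2+l_e^2+h_\sigma^2)\d h_\sigma-2h_\sigma h_{\sigma'}\d h_{\sigma'}\big)/(2l_e h_\sigma^2)$ converts this coefficient into $(\tanh\alpha_e+\tanh\alpha_{-e})/\cosh^2(\rho_e)$ multiplied by that bracket, which is precisely the asserted formula on the cell interior.

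Finally I would promote this to a global $\mathscr C^1$ statement. The map $\kappa$ is already known to be continuous on $\PSigmaS^{1/2}$, and it is smooth with the formula above on each open cell; it remains to see that the right-hand side extends continuously across a wall. Two cells meeting along a wall carry triangulations differing by flips of $\tau$-critical edges, and a $\tau$-critical edge $e$ has coplanar adjacent faces in $M(h)$, whence $\alpha_{-e}=-\alpha_e$ and $\tanh\alpha_e+\tanh\alpha_{-e}=0$: critical edges contribute nothing. Thus the edge-sum computed from either adjacent triangulation agrees on the wall (in particular the formula is independent of the choice of $h$-Delaunay triangulation, so $\mathcal E_h$ is unambiguous), the differential extends continuously across the walls, and a continuous function that is $\mathscr C^1$ on each closed cell with matching derivatives along the walls is $\mathscr C^1$. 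I expect the main obstacle to be the middle step: pinning down the kite side-labeling so that Corollary \ref{lem:drapeau} delivers the \emph{same-edge} dihedral $\alpha_e$ rather than the dihedral at the opposite edge, since only then does the triangle-sum reorganise into the stated edge-sum; the cross-wall consistency is then a short consequence of the vanishing of the critical contributions.
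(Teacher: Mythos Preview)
Your proposal is correct and follows essentially the same route as the paper: decompose $\kappa_\sigma$ as a sum of kite angles over the triangles around $\sigma$, differentiate each via Corollary~\ref{lem:drapeau} with $\Theta$ fixed, reindex the triangle-sum into an edge-sum picking up $\tanh\alpha_e+\tanh\alpha_{-e}$, substitute the lemma for $\d\rho_e$, and then glue across cell walls using that $\tau$-critical edges have $\alpha_{-e}=-\alpha_e$. The only difference is expository order (the paper disposes of the wall argument first), and your explicit flagging of the kite side-labeling is, if anything, more careful than the paper's terse application of the corollary.
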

					\begin{proof}
						On the interface of two cells $\P^{1/2}_{\TT}$ and $\P^{1/2}_{\TT'}$, the edges that change are those which are $\tau$-critical hence $\alpha_e=-\alpha_{-e}$ and the corresponding term in the right hand side of the formula is zero. Hence, the right hand side is continuous on $\PSigmaS^{1/2}$. 
						
						On a given cell $\PsqTT$, denote by $\mathcal E$ the set of edges of $\TT$.
						For $\sigma\in S$ and $h\in \PsqTT$, denote by $(e_i)_{i\in \ZZ/n\ZZ}$ the family of outgoing edges from $\sigma$ enumerated coherently with the orientation of $\Sigma$. Define $\sigma_i\in S$ the other end of 
						$e_i$ so that:

					 \begin{eqnarray*}
					  \d_h \kappa_\sigma &=& \sum_{i\in \ZZ/n\ZZ} \d_h \kappa_{e_i e_{i+1}}\\
					  &=& \sum_{i\in \ZZ/n\ZZ} \left(-\frac{\tanh(\alpha_{e_i})}{\cosh(\rho_{e_i})} \d \rho_{e_i}-
					  \frac{\tanh(\alpha_{-e_{i+1}})} {\cosh(\rho_{e_{i+1})}}\d \rho_{e_{i+1}}\right)\\
					  &=&-\sum_{i\in \ZZ/n\ZZ} \left(\frac{\tanh(\alpha_{e_i})}{\cosh(\rho_{e_i})}+
					  \frac{\tanh(\alpha_{-e_{i}})} {\cosh(\rho_{e_{i})}}\right)\d \rho_{e_{i}}\\
					  &=& -\sum_{i\in \ZZ/n\ZZ}\frac{\tanh(\alpha_{e_i})+
					  \tanh(\alpha_{-e_{i}})} {\cosh(\rho_{e_{i})}}\d \rho_{e_{i}}\\
					  &=&\sum_{i\in \ZZ/n\ZZ}\frac{\tanh(\alpha_{e_i})+
					  \tanh(\alpha_{-e_{i}})} {\cosh(\rho_{e_{i})}} \frac{(h_{\sigma}^2+h_{\sigma_i}^2+l_e^2)\d h_{\sigma}  - 2 h_{\sigma} h_{\sigma_i} \d h_{\sigma_i} }{2 l_e h_{\sigma}^2\cosh(\rho_{e})}\\
					  &=&\sum_{i\in \ZZ/n\ZZ}\frac{\tanh(\alpha_{e_i})+
					  \tanh(\alpha_{-e_{i}})} {\cosh^2(\rho_{e_{i})}} \frac{(h_{\sigma}^2+h_{\sigma_i}^2+l_e^2)\d h_{\sigma}  - 2 h_{\sigma} h_{\sigma_i} \d h_{\sigma_i} }{2 l_e h_{\sigma}^2}\\
					 \end{eqnarray*}
					\end{proof}

					\begin{proof}[Proof of Proposition \ref{prop:H_convexe}]
					  From Propositions \ref{prop:diff_H} and \ref{prop:diff_kappa}, $\mathcal H_{\bar\kappa}$ is  $\mathscr C^2$ and 
					  its Hessian matrix $H$ has the following coefficients:
						\begin{center}
					  \begin{tabular}{rcll}
					   $\displaystyle H_{\sigma,\tau}$ &$\displaystyle=$ &$\displaystyle-\sum_{e:\sigma\rightsquigarrow \tau}
					   \frac{\tanh(\alpha_e)+\tanh(\alpha_{-e})}
					{\cosh^2(\rho_{e})} \frac{ 2 h_{\sigma} h_{\tau}  }{2 l_e h_{\sigma}^2} $ & $\displaystyle\forall \sigma,\tau\in S, \sigma\neq \tau$\\
					$\displaystyle H_{\sigma,\sigma}$&$\displaystyle=$&$\displaystyle\sum_{\tau\in S}\sum_{e:\sigma\rightsquigarrow \tau}
					\frac{\tanh(\alpha_e)+\tanh(\alpha_{-e})}
					{\cosh^2(\rho_{e})} \frac{ h_{\sigma}^2 +h_{\tau}^2+l_e^2  }{2 l_e h_{\sigma}^2}  $&$\displaystyle\forall \sigma\in S$\\
					&&$\displaystyle\quad\quad-\sum_{e:\sigma\rightsquigarrow \sigma}
					\frac{\tanh(\alpha_e)+\tanh(\alpha_{-e})}
					{\cosh^2(\rho_{e})} \frac{ 2 h_{\sigma} h_{\sigma}  }{2 l_e h_{\sigma}^2} $&
					\end{tabular}
					 \end{center}
					 Since the embedding of $\Sigma$ into $M(h)$ is convex, $\tanh(\alpha_e)+\tanh(\alpha_{-e})\geq 0$ with equality if and only if the edge is $h$-critical.
					 Therefore, for all $\sigma\in S$ :
					 \begin{eqnarray*}
					 H_{\sigma,\sigma} - \sum_{\tau\neq \sigma} |H_{\sigma,\tau}|&\geq& \sum_{\tau\in S} \sum_{e:\sigma\rightsquigarrow \tau}
					 \frac{\tanh(\alpha_e)+\tanh(\alpha_{-e})}
					{\cosh^2(\rho_{e})} \frac{h_{\tau}^2+l_e^2 + h_{\sigma}^2 - 2 h_{\sigma} h_{\tau}  }{2 l_e h_{\sigma}^2}\\
					&=&\sum_{\tau\in S} \sum_{e:\sigma\rightsquigarrow \tau}
					 \frac{\tanh(\alpha_e)+\tanh(\alpha_{-e})}
					{\cosh^2(\rho_{e})} \frac{(h_{\tau}-h_{\sigma})^2 +l_e^2  }{2 l_e h_{\sigma}^2}\\
					&\geq&0.
					 \end{eqnarray*}
					The Hessian matrix of $\mathcal H_{\bar\kappa}$ is thus diagonally dominant.

					Consider some $h\in \Psq$ and $\sigma\in S$ such that $ H_{\sigma,\sigma} - \sum_{\tau\neq \sigma} |H_{\sigma,\tau}| = 0$. Then all outgoing edges from $\sigma$ are $h$-critical. One can construct an immersed unflippable hinge $(Q,\eta)$ of $\Sigma$  with vertices in $S$ such that $Q$ is unflippable, the vertices contains at least two points of $S$ and inscribed into the neighborhood of $\sigma$ given by the union of the triangles of $\TT_h$ containing $\sigma$.  Such a hinge is $h$-critical and unflippable hence $h$ is in the boundary of $\Psq$. Finally, the Hessian matrix $H$  is strictly diagonally dominant on the interior of $\Psq$.
					\end{proof}

		\subsection{Proof of the main Theorem}
			\label{sec:conclusion_alexandrov}

			We now prove the main Theorem, 
			\begin{theo} \label{theo:main}
				Let $\Sigma$ be a closed locally Euclidean surface of genus $g$ with $s$ marked conical singularities of angles $(\theta_i)_{i\in \lsem 1,s\rsem}$.
				For all $\bar\kappa\in \prod_{i=1}^s[0,2\pi]\cap [0,\theta_i[$, there exists a radiant singular flat spacetime   $M$ homeomorphic to $\Sigma\times \RR$ with exactly $s$ marked lines $\Delta_1,\cdots,\Delta_s$ of respective masses $\bar\kappa_1,\cdots, \bar\kappa_s$ and
				a convex polyhedral embedding $\iota : (\Sigma,S) \rightarrow (M,(\Delta_i)_{i\in \lsem 1,s\rsem})$ 
				
				Furthermore, such a couple $(M,\iota)$ is unique up to equivalence.
			\end{theo}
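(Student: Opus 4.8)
The plan is to solve Problem \ref{prob:kappa_bar} by a variational argument on the Einstein--Hilbert functional, and then to carry the answer back through the dictionary of Section \ref{sec:tau_susp}. By Proposition \ref{prop:plong_conv} the assignment $\tau\mapsto(M(\tau),\iota_\tau)$ is a bijection from $\PSigmaS$ onto the set of equivalence classes of convex polyhedral embeddings of $(\Sigma,S)$ into a radiant spacetime; moreover by Theorem \ref{theo:radiant_struc} every such $M(\tau)$ is homeomorphic to $\Sigma\times\RR$, has exactly $s$ marked lines $\Delta_\sigma$ (the line through $\iota_\tau(\sigma)$, for $\sigma\in S$), and the mass of $\Delta_\sigma$ is $\kappa_\sigma(\tau)$. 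So the statement amounts to: the mass map $\kappa\colon\PSigmaS\to\RR_+^S$ hits every $\bar\kappa\in\prod_{\sigma\in S}\bigl([0,2\pi]\cap[0,\theta_\sigma[\bigr)$ exactly once. Passing to the coordinates $h=(\sqrt{\tau_\sigma})_\sigma\in\Psq$, Proposition \ref{prop:diff_H} gives $\d\mathcal H_{\bar\kappa}=\sum_{\sigma\in S}(\kappa_\sigma-\bar\kappa_\sigma)\,\d h_\sigma$, so $\kappa(\tau)=\bar\kappa$ is equivalent to $\sqrt\tau$ being a critical point of $\mathcal H_{\bar\kappa}$ on $\Psq$ (on a face $\{h_\sigma=0\}$, where $\kappa_\sigma\equiv 0$, criticality being taken relative to the face).

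\emph{Uniqueness.} By Proposition \ref{prop:H_convexe}, $\mathcal H_{\bar\kappa}$ is convex on $\Psq$ and strictly convex on the interior (and, restricting, on the relative interior of every face of $\Psq$ not cut out by an unflippable hinge). Hence any critical point is the global minimum and is unique, which gives uniqueness of $\tau$, hence of $(M,\iota)$.

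\emph{Existence.} One shows $\mathcal H_{\bar\kappa}$ attains its minimum at such a critical point. First, coercivity: by Corollary \ref{cor:translation} and Lemma \ref{lem:kernel_hinge} the triangulation $\TT_\tau$ and all edge lengths $l_e$ are constant along $\tau\mapsto\tau+t\mathbf 1_S$, while a limit computation with the formulas of Propositions \ref{prop:T_convexe_base} and \ref{prop:diff_kappa} gives $\kappa_\sigma(\tau+t\mathbf 1_S)\to\theta_\sigma$ and $\sum_e l_e\theta_e$ bounded as $t\to+\infty$; since $h_\sigma\to+\infty$ and $\theta_\sigma-\bar\kappa_\sigma>0$, the functional $\mathcal H_{\bar\kappa}=\sum_\sigma h_\sigma(\kappa_\sigma-\bar\kappa_\sigma)+\sum_e l_e\theta_e$ tends to $+\infty$ along this, the only unbounded direction of $\PSigmaS$ (Theorem \ref{theo:domain_description}). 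Second, the boundary behaviour of $\nabla\mathcal H_{\bar\kappa}=(\kappa_\sigma-\bar\kappa_\sigma)_\sigma$ on $\partial\Psq$. On a face $\{h_\sigma=0\}$ the line $\Delta_\sigma$ degenerates to a lightlike BTZ line, so $\kappa_\sigma\to 0$ and the outward normal derivative of $\mathcal H_{\bar\kappa}$ is $\bar\kappa_\sigma-\kappa_\sigma\to\bar\kappa_\sigma$, positive when $\bar\kappa_\sigma>0$. On a face $\{Q^*(h^2)=0\}$ coming from an unflippable immersed hinge $Q=([ABCD],[AC],\eta)$ (Theorem \ref{theo:domain_description}(e)), the embedding develops a coplanar wedge of Euclidean angle at least $\min(\pi,\theta_\sigma)$ around the vertex $\iota_h(\sigma)$, $\sigma:=\eta(C)$: if $Q$ is embedded it is nonconvex, so its interior angle at $C$ exceeds $\pi$; if $\eta$ is not an embedding, Lemmas \ref{lem:arrete_ext} and \ref{lem:arrete_deg} replace $Q$ by an embedded one with a wedge of angle $\geq\min(\pi,\theta_\sigma)$. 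Volkov's Lemma \ref{theo:volkov_lorentz} applied to this convex spacelike cone in $\mass{\kappa_\sigma}$ then yields $\kappa_\sigma>\bar\kappa_\sigma$: for $\theta_\sigma\geq\pi$ it gives $\kappa_\sigma\geq\min(2\pi,\theta_\sigma)$, even $\kappa_\sigma>2\pi$ when $\theta_\sigma>2\pi$, which combined with $\bar\kappa_\sigma\leq 2\pi$ and $\bar\kappa_\sigma<\theta_\sigma$ does the rest; for $\theta_\sigma<\pi$ the wedge exhausts the whole cone, so the cone is coplanar, its stalk is a $\theta_\sigma$-periodic trigonometric function, hence identically $0$, hence $\kappa_\sigma=\theta_\sigma>\bar\kappa_\sigma$. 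In particular $\nabla\mathcal H_{\bar\kappa}$ is non-vanishing on $\partial\Psq$ away from the coordinate faces with $\bar\kappa_\sigma=0$, and is homotopic there, through non-vanishing fields, to an outward field; together with the coercivity above and the fact that the relevant region is a topological ball, a Poincar\'e--Hopf count produces a zero of $\nabla\mathcal H_{\bar\kappa}$ inside, of index $+1$ by the strict convexity recorded in the uniqueness step. (When $S_0:=\{\sigma:\bar\kappa_\sigma=0\}\neq\emptyset$ the argument is run on the compact face $\bigcap_{\sigma\in S_0}\{h_\sigma=0\}$, on which the same problem is posed for $\Sigma$ with the points of $S_0$ declared to carry BTZ lines.) Thus $\mathcal H_{\bar\kappa}$ has a unique critical point $h^*$; $\tau^*:=(h^*)^2$ is the unique admissible time with $\kappa(\tau^*)=\bar\kappa$, and $(M(\tau^*),\iota_{\tau^*})$ is the required couple.

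The crux is the boundary analysis in the existence step: matching each face of $\PSigmaS$ to an unflippable immersed hinge, producing around a marked point a coplanar wedge large enough to feed Volkov's Lemma \ref{theo:volkov_lorentz}, and reading off from that lemma together with the hypothesis $\bar\kappa_\sigma\in[0,2\pi]\cap[0,\theta_\sigma[$ that $\kappa_\sigma$ stays strictly above $\bar\kappa_\sigma$ there. The coercivity limit and the mild degeneracy at $\bar\kappa_\sigma=0$ are comparatively routine.
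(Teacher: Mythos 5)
Your proposal is correct and follows essentially the same route as the paper: reduce to the critical points of $\mathcal H_{\bar\kappa}$ via Proposition \ref{prop:plong_conv} and Proposition \ref{prop:diff_H}, restrict to the face $\{\tau_\sigma=0,\ \sigma\in Z\}$ when some $\bar\kappa_\sigma=0$, exclude critical points on $\partial\PSigmaS$ by feeding the coplanar wedge of an unflippable immersed hinge into Theorem \ref{theo:volkov_lorentz}, and conclude by a degree count on a topological ball using $\kappa(\tau)\to\theta$ at infinity (the paper compactifies $\P_Z$ with a point at infinity via Lemma \ref{lem:limite_infty} where you argue coercivity of $\mathcal H_{\bar\kappa}$; the two devices are interchangeable here). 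One soft spot: your uniqueness step treats a critical point as the global minimum of a convex function on a convex domain, but $\Psq$ (the coordinatewise square root of the convex set $\PSigmaS$) is not shown to be convex, and Proposition \ref{prop:H_convexe} only asserts positive (semi)definiteness of the Hessian; the paper instead derives uniqueness from the same index count you invoke for existence, since every zero of the gradient has local index $+1$ and the total index is $1$ --- your argument is repaired by leaning on that count rather than on global convexity.
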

			
			Denoting by $\kappa(x)$ the cone angle at $x$ if $x$ is a point in a $\H_{\geq0}$-manifold, in view of Theorem \ref{theo:radiant_struc}  the Theorem can  also be stated as follows.
			\begin{cor} 
			 	Let $\Sigma$ be a closed locally Euclidean surface of genus $g$ with $s$ marked conical singularities of angles $(\theta_\sigma)_{\sigma\in S}$.
				For all $\bar\kappa\in \prod_{\sigma \in S}[0,2\pi]\cap [0,\theta_\sigma[$, there exists a closed $\H_{\geq0}$-manifold $\Sigma_{\bar \kappa}$ together with an homeomorphism  $h:\Sigma\rightarrow \Sigma_{\bar\kappa}$  and a polyhedral embedding $\iota: (\Sigma,S)\rightarrow \susp(\Sigma_{\bar\kappa})$ such that :
				\begin{itemize}
				 \item for all $\sigma\in S$, $\bar\kappa_\sigma = \kappa(h(\sigma))$
				 \item with $\susp(\Sigma_{\bar\kappa})\xrightarrow{~~\pi~~} \Sigma_{\bar\kappa}$ the natural projection, we have $\pi\circ \iota = h$
				\end{itemize}
				Furthermore, such a triple $(\Sigma_{\bar\kappa},h,\iota)$ is unique up to equivalence.
			\end{cor}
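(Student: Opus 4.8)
The plan is to recast the statement of Theorem \ref{theo:main}, by Proposition \ref{prop:plong_conv}, as an assertion about the mass map $\kappa\colon\PSigmaS\to\RR_+^S$: equivalence classes of pairs $(M,\iota)$, with $M$ radiant and $\iota$ a convex polyhedral embedding of $(\Sigma,S)$, are in bijection with $\PSigmaS$ via $(M,\iota)\mapsto\tau:=(Q\circ\iota)|_S$; and by Theorem \ref{theo:radiant_struc} the spacetime $M(\tau)$ is the suspension of a closed $\H^2_{\geq0}$-surface homeomorphic to $\Sigma$, hence homeomorphic to $\Sigma\times\RR$, whose singular and marked lines are exactly the rays $\RR_+\cdot\iota_\tau(\sigma)$, $\sigma\in S$, with masses $\kappa_\sigma(\tau)$. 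So the theorem amounts to Problem \ref{prob:kappa_bar}: $\kappa$ attains each $\bar\kappa\in\prod_{i=1}^s[0,2\pi]\cap[0,\theta_i[$ exactly once. Passing to $h=\sqrt\tau\in\PSigmaS^{1/2}$ and using Proposition \ref{prop:diff_H}, which gives $\d\mathcal H_{\bar\kappa}=\sum_{\sigma\in S}(\kappa_\sigma-\bar\kappa_\sigma)\,\d h_\sigma$, a solution of $\kappa(\tau)=\bar\kappa$ is exactly a critical point of $\mathcal H_{\bar\kappa}$; so it suffices to show $\mathcal H_{\bar\kappa}$ has a unique critical point.

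First I would locate the solutions. Put $S_0:=\{\sigma:\bar\kappa_\sigma=0\}$, $S_1:=S\setminus S_0$. By point~4 of Proposition \ref{prop:T_convexe_base} a massive particle has positive mass (the integrand is bounded below by a positive constant over a period), so $\kappa_\sigma=0$ forces $h_\sigma=0$; and $h_\sigma=0$ gives a BTZ line, hence $\kappa_\sigma=0$ by continuity of $\kappa$. Thus every solution lies in the relative interior of the face $F:=\PSigmaS^{1/2}\cap\{h_\sigma=0:\sigma\in S_0\}$ and is, on $F$, precisely a critical point of $\mathcal H_{\bar\kappa}|_F$ (the $S_0$-components of $\d\mathcal H_{\bar\kappa}$ vanishing there automatically). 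The diagonal-dominance computation in the proof of Proposition \ref{prop:H_convexe}, restricted to the coordinates indexed by $S_1$ (mixed second derivatives involving an index of $S_0$ vanish since the corresponding $h_\sigma$ are zero), shows $\mathcal H_{\bar\kappa}|_F$ is convex, and strictly convex on the relative interior of $F$, which by Theorem \ref{theo:domain_description}(e) avoids the supporting hyperplanes of $\PSigmaS$. A strictly convex function has at most one critical point, so there is at most one solution.

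For existence I would show $\mathcal H_{\bar\kappa}|_F$ attains its infimum in the relative interior of $F$. If $S_0\neq\emptyset$ then $\PSigmaS\cap\{\tau_\sigma=0:\sigma\in S_0\}$ is bounded, since by Theorem \ref{theo:domain_description}(a)--(b) the recession cone of $\PSigmaS$ is $\RR_+\cdot\mathbf{1}_S$ and $\mathbf{1}_S$ has nonzero $S_0$-coordinates; hence $F$ is compact and continuity gives a minimiser. If $S_0=\emptyset$, so $F=\PSigmaS^{1/2}$, I would establish coercivity along the only unbounded direction, $\lambda\mapsto\sqrt{\tau+\lambda\mathbf{1}_S}$: as $\lambda\to+\infty$ the vertices $\iota_\tau(\sigma)$ keep fixed mutual Minkowski distances while escaping to infinity, so the convex polyhedron flattens, its stalks become $\mathscr C^1$ in the limit, and by Corollary \ref{cor:kappa_coplanaire} $\kappa_\sigma\to\theta_\sigma>\bar\kappa_\sigma$; whence $\partial_\lambda\mathcal H_{\bar\kappa}\geq c/\sqrt\lambda$ for large $\lambda$, uniformly over the compact base $\overline{\PSigmaS}$, and $\mathcal H_{\bar\kappa}\to+\infty$, so its sublevel sets are compact. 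It remains to push the infimum off the relative boundary of $F$. On a facet $\{h_\sigma=0\}$, $\sigma\in S_1$, continuity gives $\kappa_\sigma\to0<\bar\kappa_\sigma$, so $\partial_{h_\sigma}\mathcal H_{\bar\kappa}<0$ near it while increasing $h_\sigma$ points into $F$: the infimum is not there. On a facet $\{Q^*_{(Q,\eta)}=0\}$ coming from an unflippable immersed hinge (Theorem \ref{theo:domain_description}(e)), the hinge becomes critical, i.e.\ its two triangles become coplanar, so the $\eta$-image $\sigma\in S$ of the reflex vertex of the (non-convex) hinge acquires a coplanar wedge of Euclidean angle $>\pi\geq\min(\pi,\theta_\sigma)$; then Theorem \ref{theo:volkov_lorentz} forces $\kappa_\sigma\geq\theta_\sigma>\bar\kappa_\sigma$, so $\partial_{h_\sigma}\mathcal H_{\bar\kappa}>0$ while decreasing $h_\sigma$ points into $F$ across that facet: again the infimum is not there, and at lower-dimensional faces one combines these strict inequalities. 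Hence the infimum is attained at a point of the relative interior of $F$, a critical point of $\mathcal H_{\bar\kappa}|_F$, i.e.\ the unique solution of $\kappa(\tau)=\bar\kappa$.

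The main obstacle I anticipate is this last boundary analysis: identifying, for $\tau$ tending to a $\{Q^*=0\}$ supporting hyperplane, which cone of $M(\tau)$ degenerates, and checking it acquires a coplanar wedge large enough for Theorem \ref{theo:volkov_lorentz} to yield $\kappa_\sigma\geq\theta_\sigma$ --- this is where the combinatorics of unflippable, possibly non-embedded, immersed hinges (cf.\ Lemmas \ref{lem:arrete_ext} and \ref{lem:arrete_deg}) must be matched with the Lorentzian geometry near a singular line. A secondary difficulty is making the flattening argument for coercivity quantitative and uniform over $\overline{\PSigmaS}$, and checking that the strict convexity of Proposition \ref{prop:H_convexe} descends to each face $F$.
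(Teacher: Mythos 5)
Your reduction to Problem \ref{prob:kappa_bar} and your uniqueness argument follow the paper: the paper likewise restricts to the face $\Pi_Z=\{\tau_\sigma=0,\ \sigma\in Z\}$ with $Z=\{\sigma:\bar\kappa_\sigma=0\}$, invokes Proposition \ref{prop:H_convexe} for strict convexity, and uses Theorem \ref{theo:domain_description}(e) together with Theorem \ref{theo:volkov_lorentz} to show that on $\partial\PSigmaS\cap\Pi_Z$ some component of $\kappa-\bar\kappa$ is strictly positive or strictly negative, so there is no critical point on the boundary. Where you genuinely diverge is the existence step. You propose direct minimization: coercivity of $\mathcal H_{\bar\kappa}$ along the recession direction $\RR_+\mathbf 1_S$ (using $\kappa\to\theta>\bar\kappa$, i.e.\ the content of Lemma \ref{lem:limite_infty}), plus a first-order optimality analysis showing the infimum cannot sit on the relative boundary of the face. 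The paper instead runs a degree argument: it compactifies $\P_Z=\PSigmaS\cap\Pi_Z$ to a closed ball (adjoining $\infty$ when $Z=\emptyset$, where the field extends continuously to $\theta-\bar\kappa$ by Lemma \ref{lem:limite_infty}), shows the gradient field $X=(\kappa_\sigma-\bar\kappa_\sigma)_\sigma$ is nowhere anti-parallel on $\partial\P_Z$ to a radial field $Y$ based at a small interior point, concludes that $X$ has index $1$ on the boundary sphere, and then counts: strict convexity forces every interior zero to have index $1$, so there is exactly one. (The cases $|Z|=s$ and $|Z|=s-1$ are dispatched separately.) What the paper's route buys is precisely the avoidance of the step you flag as your main obstacle: to exclude a boundary minimiser you must verify that $-(\nabla\mathcal H_{\bar\kappa})$ lies outside the normal cone of $\P_Z$ at every boundary point, which on a facet $\{Q^*=0\}$ requires knowing the sign of the coefficient $\lambda_C$ of $\tau_{\eta(C)}$ in $Q^*$ (it is positive for a non-convex hinge with reflex vertex $C$, but this must be checked, and separately for the non-embedded hinges of Lemma \ref{lem:arrete_deg}), and on lower-dimensional faces requires combining several such conditions. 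The homotopy argument only needs one coordinate on which $X$ and $-Y$ disagree in sign, which follows immediately from the Volkov bound without any normal-cone computation.

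Two smaller points to tighten if you pursue your route. First, strict convexity of $\mathcal H_{\bar\kappa}$ restricted to the face $F$ is not literally Proposition \ref{prop:H_convexe}, since the relative interior of $F$ lies in $\partial\Psq$ when $Z\neq\emptyset$; your fix (the $S_0$-rows and columns of the Hessian vanish, and the relative interior of $F$ avoids every $\{Q^*=0\}$ supporting plane because $0\in F$ lies in the interior of $\overline\PSigmaS$ by Theorem \ref{theo:domain_description}(c)) is the right one but needs to be said. Second, your coercivity estimate $\partial_\lambda\mathcal H_{\bar\kappa}\gtrsim\lambda^{-1/2}$ should be made uniform over the compact slice via Lemma \ref{lem:limite_infty} and the finiteness of the cells $\CellPSigma$; as stated it is only a limit along each ray.
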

			\begin{rem} Equivalence  between triple $(\Sigma_{\bar\kappa}^{(i)},h^{(i)},\iota^{(i)})$ for $i\in\{1,2\}$ is understood as an isomorphism $\varphi : \Sigma_{\bar\kappa}^{(1)} \rightarrow \Sigma_{\bar\kappa}^{(2)}$ such that $\iota^{(2)} = \hat\varphi \circ \iota^{(1)} $ with $\hat\varphi : \susp(\Sigma_{\bar\kappa}^{(1)}) \xrightarrow{\sim} \sup(\Sigma_{\bar\kappa}^{(2)})$ the isomorphism induced by $\varphi$
			 
			\end{rem}

			Before diving into the proof, we prove a last Lemma.
			\begin{lem}\label{lem:limite_infty}With $\theta=(\theta_\sigma)_{\sigma\in S}$ the cone angles of $\Sigma$, we have
				  $$\lim_{\tau\in \PSigmaS,\tau\rightarrow+\infty} \kappa(\tau)=\theta.$$
				\end{lem}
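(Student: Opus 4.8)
The claim is that as $\tau \in \PSigmaS$ tends to infinity, the mass vector $\kappa(\tau)$ converges to $\theta = (\theta_\sigma)_{\sigma \in S}$, the vector of cone angles of $\Sigma$. By Corollary \ref{cor:translation}, tending to infinity in $\PSigmaS$ essentially means adding a large multiple of $\mathbf{1}_S$ (since $\overline{\PSigmaS}$ is compact by Theorem \ref{theo:domain_description}(a)); so it suffices to understand the behaviour of $\kappa(\tau + \lambda \mathbf{1}_S)$ as $\lambda \to +\infty$ for $\tau$ ranging over a fixed compact transversal. The key geometric intuition is that when all heights $h_\sigma = \sqrt{\tau_\sigma}$ are large and of comparable size, the polyhedral embedding $\iota_\tau(\Sigma)$ sits far from the origin $O$, where the cones $\C_\alpha$ look more and more like flat Minkowski space; in that regime the hyperbolic angle (mass) around each singular line should converge to the Euclidean cone angle $\theta_\sigma$ of $\Sigma$.

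The plan is to make this precise using the integral formula of Proposition \ref{prop:T_convexe_base}(4), or rather its avatar via the stalk and the identification with a length in $\SS^2_\infty$ (Lemma \ref{lem:stalk_to_geodesic}): the mass $\kappa_\sigma(\tau)$ equals $\kappa(\rho_\sigma)$ for the stalk $\rho_\sigma$ of the spacelike cone cut out near $\iota_\tau(\sigma)$ by the link of the singular line. First I would fix a triangulation $\TT$ (by Theorem \ref{theo:domain_description}(d) finitely many cells cover $\overline{\PSigmaS}$, and the direction $\mathbf{1}_S$ eventually stays in one cell, namely $\TT_\tau$ is eventually constant along the ray $\tau + \lambda\mathbf 1_S$), so the combinatorics of the link of $\sigma$ stabilises: it is a cyclic sequence of Euclidean triangles with total angle $\theta_\sigma$. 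Then I would write the stalk $\rho_\sigma$ on each angular piece explicitly in terms of the heights using the cosine law in $\mass{}$ (as in the Lemma preceding Proposition \ref{prop:diff_kappa}): on the piece corresponding to an edge $e : \sigma \rightsquigarrow \sigma'$ one has $\sinh(\rho_e) = \frac{h_\sigma^2 + l_e^2 - h_{\sigma'}^2}{2 l_e h_\sigma}$, which tends to a finite limit while $\cosh(\rho_e) \to \infty$ as $h_\sigma, h_{\sigma'} \to \infty$ proportionally (more precisely $\rho_e \to +\infty$ like $\log h_\sigma$). Plugging into the integrand $\frac{\sqrt{1+\rho^2+\rho'^2}}{1+\rho^2}$ and tracking the substitution back to the angular variable $\theta$ of the Euclidean link, the dominant contribution is exactly $\d\theta$ and the correction terms are $O(1/h_\sigma^2)$ uniformly over the compact transversal; integrating over the whole link gives $\kappa_\sigma(\tau) \to \theta_\sigma$.

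The cleanest way to organise the estimate is probably through Lemma \ref{lem:stalk_to_geodesic}: the curve $\gamma_\sigma : \theta \mapsto (\arctan \rho_\sigma(\theta), \theta) \in \SS^2_\infty$ has length $\kappa(\rho_\sigma)$, and as the heights blow up, $\arctan \rho_\sigma(\theta) \to \pm \pi/2$ on each angular piece, i.e. $\gamma_\sigma$ converges (after reparametrisation) to a union of geodesic arcs running near the poles $N, S$; the point is that the $\phi$-contribution to the length, $\int |\phi'|$, goes to zero while the $\theta$-contribution $\int \cos\phi \, |\theta'|$... no — rather, parametrised by $\theta$ on the link, $\cos(\arctan\rho_\sigma) = (1+\rho_\sigma^2)^{-1/2} \to $ the right factors so that $\mathrm{length}(\gamma_\sigma) \to \sum_{\text{triangles}} (\text{Euclidean angle at } \sigma) = \theta_\sigma$. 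I would prove this convergence is uniform over $\pi(\tau)$ in the compact $\overline{\PSigmaS}$, which, together with Corollary \ref{cor:translation} reducing "$\tau \to \infty$ in $\PSigmaS$" to "$\lambda \to \infty$ along $\tau_0 + \lambda\mathbf 1_S$ with $\pi(\tau_0)$ bounded", yields the stated limit. The main obstacle I anticipate is purely bookkeeping: keeping the estimates uniform in $\pi(\tau) \in \overline{\PSigmaS}$ while the triangulation $\TT_\tau$ and hence the edge lengths $l_e$ and the partition of the link vary — this requires knowing $l_e$ stays bounded above and below, which follows from Lemma \ref{lem:triangulation_finie} (finitely many relevant triangulations, hence finitely many edge geodesics of $(\Sigma,S)$) and from the fact that along $\tau_0 + \lambda\mathbf 1_S$ the edge lengths of $\TT_\tau$ are eventually constant.
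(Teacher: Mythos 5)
Your overall strategy is the right one and matches the paper's: use the compactness of $\overline{\PSigmaS}$ (equivalently Lemma \ref{lem:admissible_compact}, which bounds $|\tau_\sigma-\tau_{\sigma'}|$ uniformly) to reduce to rays $\tau_0+\lambda\mathbf 1_S$, work cell by cell so the triangulation and the edge lengths $l_e$ are fixed, and extract the limit of $\kappa_\sigma$ from the cosine-law formula for $\rho_e$. The paper then finishes with the kite relation of Proposition \ref{lem:loisinus} rather than the stalk integral, but that difference is cosmetic.

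The genuine problem is your asymptotic for $\rho_e$, which is both internally contradictory and pointing in the wrong direction. From the formula you yourself write, $\sinh(\rho_e)=\frac{h_\sigma^2+l_e^2-h_{\sigma'}^2}{2l_eh_\sigma}=\frac{\tau_\sigma-\tau_{\sigma'}+l_e^2}{2l_e\sqrt{\tau_\sigma}}$, and since the numerator is uniformly bounded on $\PSigmaS$ while the denominator blows up, one gets $\sinh(\rho_e)\to 0$, hence $\rho_e\to 0$ and $\cosh(\rho_e)\to 1$. Your claims that $\cosh(\rho_e)\to\infty$ and that $\rho_e\to+\infty$ like $\log h_\sigma$ cannot hold if $\sinh(\rho_e)$ has a finite limit, and they describe the opposite geometric regime: $\rho_e\to\infty$ is the degenerate situation of Proposition \ref{prop:volkov_inf_pi}, where the link cone hugs the light cone, the spherical curve of Lemma \ref{lem:stalk_to_geodesic} collapses toward the poles (since $\cos(\arctan\rho)=(1+\rho^2)^{-1/2}\to 0$), and the length tends to $0$, not to $\theta_\sigma$. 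So if you carried your stated asymptotics through the integral $\int\frac{\sqrt{1+\rho^2+\rho'^2}}{1+\rho^2}$ you would conclude $\kappa_\sigma\to 0$, which is false; the phrase ``the right factors so that $\mathrm{length}(\gamma_\sigma)\to\theta_\sigma$'' hides this. The correct picture is that the surface becomes asymptotically orthogonal to the radial direction, the stalk of the link cone tends to $0$ uniformly, and $\kappa(\rho)\to\kappa(0)=\theta_\sigma$ (equivalently, in the paper's route, $\cos(\kappa_{ee'})=\frac{\sinh\rho_e\sinh\rho_{e'}-\cos\theta_{ee'}}{\cosh\rho_e\cosh\rho_{e'}}$ with $\rho_e,\rho_{e'}\to 0$ gives $\kappa_{ee'}\to\theta_{ee'}$ wedge by wedge). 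Once you replace your asymptotic by $\rho_e\to 0$, the rest of your plan goes through.
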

				\begin{proof}We use the same notations as in the preceding section.
				 In a given cell $\CellPSigma$ of $\PP_\Sigma$, for each vertex $\sigma \in S$  and for all edge $e$ of $\TT$ outgoing from $\sigma$ by cosine law 
				 $$ -\tau_{\sigma_2}   =-\tau_{\sigma_1}+l_e^2 - 2 l_e \sqrt{\tau_{\sigma_1}}  \sinh(\rho_e).$$
				 Since $|\tau_{\sigma_1}-\tau_{\sigma_2}|$ is uniformly bounded on $\PSigmaS$ and $l_e$ is constant, we have $\rho_e \xrightarrow{\tau\rightarrow+\infty} 0$. Then from Proposition
				 \ref{lem:loisinus}, with $e'$ the subsequent edge around $\sigma$, we have $\kappa_{ee'} \xrightarrow{\tau\rightarrow +\infty} \theta_{ee'}$. Hence, $$\kappa(\tau)_\sigma\xrightarrow{\tau\in \CellPSigma,\tau\rightarrow +\infty} \theta_\sigma.$$
				 Finally, there are only finitely many cells $\CellPSigma$ and $S$ is finite, the result follows.
				\end{proof}

			\begin{proof}[Proof of Theorem \ref{theo:main} ] Let $Z\subset S$, denote by $z:=|Z|$ and $s:=|S|$,  define $\Pi_Z := \{\tau \in \RR^S ~|~\forall \sigma\in Z, \tau_\sigma=0\}$ and recall  that $\kappa_\sigma(\tau)=0$ if and only if $\tau_\sigma=0$. We prove the Theorem for $\bar\kappa$ such that 
				$\{\sigma\in S~|~\bar\kappa_\sigma=0\}=Z$. 
				It suffices to show that for such $\bar\kappa$ the Einstein-Hilbert functional $\mathcal H_{\bar\kappa}$ has exactly one critical point in $\PSigmaS\cap \Pi_Z$.
				Define $K_Z:=\{\bar\kappa \in \prod_{\sigma\in S} [0,2\pi]\cap[0,\theta_\sigma[ ~|~ \forall \sigma\in Z,\bar \kappa_\sigma=0\}$.  If $z=s$ then $K_Z=\{0\}$ and $\PSigmaS\cap \Pi_Z = \{0\}$ by Theorem \ref{theo:domain_description}.(c), there is nothing else to prove. Otherwise, we proceed as follows.

				By Proposition \ref{prop:H_convexe} $\mathcal H_{\bar\kappa}$ is strictly convex in the interior of  $\PSigmaS^{1/2}$, defined on the interior of $\PSigmaS$ the functional $\mathcal H_{\bar\kappa}$ only has critical points of index 1. Hence, the restriction of $\mathcal H_{\bar\kappa}$ to $\PSigmaS\cap \Pi_Z$  only has index 1 critical points in the relative interior of $\PSigmaS\cap \Pi_Z$.
				
				Let $\tau \in \partial \PSigmaS\cap\Pi_Z$. By Theorem \ref{theo:domain_description}.(e), on the boundary of $\PSigmaS$, there exists $\sigma\in S\setminus Z$ such that either $\tau_\sigma =0$ or $\tau$ is in the kernel of the affine form of an unflippable immersed hinge. In the former situation, we have $0=\kappa_\sigma<\bar\kappa_\sigma$. In the latter situation, 
				consider such a hinge $(Q,\eta)$ with $Q=([ABCD],[AC])$.
				\begin{itemize}
				 \item  If $Q$ is embedded then $Q$ is unflippable and without loss of generality we may  assume $C\in [ABD]$, the cone around $\sigma=\eta(C)$ is then convex and contains a coplanar wedge of Euclidean angle at least $\pi$. By Lorentzian Volkov's Lemma  (Theorem \ref{theo:volkov_lorentz}), if $\theta_\sigma>2\pi$ we have $\kappa_\sigma>2\pi\geq \bar\kappa_\sigma$ and if $\pi\leq\theta_\sigma\leq 2\pi$ we have $\kappa_\sigma\geq \theta_\sigma>\bar\kappa_\sigma$. Either way, $\kappa_\sigma>\bar\kappa_\sigma$.
				 \item If $\eta$ is not an embedding,  then without loss of generality we may assume $\eta(A)=\eta(B)=\eta(D)$ and  the cone around $\sigma:=\eta(C)$ is coplanar. Then 
				 $\theta_\sigma<\pi$ and by Lorentzian Volkov's Lemma, $\kappa_\sigma=\theta_\sigma>\bar\kappa_\sigma$
				\end{itemize}
				Together with Proposition \ref{prop:diff_H} this implies that $\mathcal H_{\bar\kappa}$ has no critical points on the boundary $\partial\PSigmaS\cap \Pi_Z$. 
				
				If $z=s-1$, then $\kappa$ is a  function defined on an interval, continuous and increasing from $0$ to some $\kappa_{\max}>\bar\kappa$. The result follows.

				We now assume $z\leq s-2$. Define $\P_Z := \PSigmaS \cap \Pi_Z$ if $Z\neq\emptyset$ and   $\P_Z : = \PSigmaS \cup \{\infty\}$ if $Z=\emptyset$. This way $\P_Z$ is homeomorphic to a $s-z$ dimensionnal closed ball and its boundary $\partial \P_Z$ is homeomorphic to a $s-z-1$-dimensionnal sphere. The homeomorphism may be explicited by the radial map from some $\tau_0\in Int(\P_Z)$.
				Consider the family of vector fields $$\fonctiondeux{X}{K_Z\times \P_Z}{\SS^{s-1-z}}{\bar\kappa,\tau\neq\infty}{(\kappa_\sigma(\tau)-\bar\kappa_\sigma)_{\sigma\in S\setminus S}}{\bar\kappa,\infty}{(\theta_\sigma-\bar\kappa_\sigma)_{\sigma\in S\setminus Z}} $$
				and notice that  $X(\bar\kappa,\cdot)_{|Int(\P_Z)}$ is the gradient of $\mathcal H_{\bar\kappa|Int(\P_Z)}$ for $\bar\kappa\in K_Z$ by Proposition \ref{prop:diff_H}.
				 By Lemma \ref{lem:limite_infty}, $X$ is continuous at $\infty$ if $Z=\emptyset$; thus $X$ is continuous on $K_Z\times\partial\P_Z$ and, from the discussion above, non singular on the boundary of $\PSigmaS\cap \Pi_Z$. The number of singular points of the vector field  $X(\bar\kappa,\cdot)$ in the interior of $\P_Z$ is equal to the index of $\frac{X(\bar\kappa,\cdot)}{\|X(\bar\kappa,\cdot)\|}$ on $\partial \P_Z$. Since $\bar\kappa \mapsto X(\bar\kappa,\cdot)$ is continuous and $K_Z$ is connected, the index of $\frac{X(\bar\kappa,\cdot)}{\|X(\bar\kappa,\cdot)\|}$ is independant from $\bar\kappa$.
				 Finally, take some $\bar\tau$ in the interior of $\P_Z$ close enough to $0$ so that $\prod_{\sigma\in S\setminus Z} [0,2\kappa_\sigma] \subset \P_Z$ and consider the vector field  $Y:\tau \rightarrow \frac{\tau-\bar\tau}{\|\tau-\bar\tau\|}$ which can be continously extended to the whole $\P_Z$. On the one hand, for $\tau$ on a  $"\tau_\sigma=0"$ boundary component, $Y(\tau)_\sigma<0$ while $\kappa(\tau)_\sigma=0$; on the other hand, for $\tau$ on a $"Q^*(\tau)=0"$ boundary component, there is a $\sigma \in S\setminus Z$ such that $\kappa_\sigma-\bar\kappa_\sigma>0$ and on such a component, $\forall \sigma'\in S\setminus Z, (\tau-\bar\tau)_{\sigma'}>0$. In any case, $\forall \tau\in \partial \P_Z, Y(\tau)\neq - \frac{X(\bar\kappa,\tau)}{\|X(\bar\kappa,\tau)\|}$ thus  $\frac{X(\bar\kappa,\cdot)}{\|X(\bar\kappa,\cdot)\|}$ is homotopic to $Y$. The latter has index 1, thus so has the former and 
				 for all $\bar\kappa\in K_Z$, $\mathcal H_{\bar\kappa}$ has exactly one critical point on $\P_Z$.
				
			\end{proof}
\bibliographystyle{halpha2}
\bibliography{note.bib}
\end{document}